\newtheorem*{rep@theorem}{\rep@title}
\newcommand{\newreptheorem}[2]{%
	\newenvironment{rep#1}[1]{%
		\def\rep@title{#2 \ref{##1}}%
		\begin{rep@theorem}}%
		{\end{rep@theorem}}}
\newtheorem{theorem}{Theorem}[section]
\newtheorem{lemma}[theorem]{Lemma}
\newtheorem{proposition}[theorem]{Proposition}
\newtheorem{corollary}[theorem]{Corollary}
\newtheorem{conjecture}[theorem]{Conjecture}
\newtheorem{questions}[theorem]{\em Questions}
\newtheorem{definition}[theorem]{Definition}
\newtheorem{example}[theorem]{Example}
\newtheorem{examples}[theorem]{Examples}
\theoremstyle{remark}
\newtheorem{remark}[theorem]{Remark}
\newtheorem{StandingAssumptions}[theorem]{Standing Hypotheses}
\numberwithin{equation}{section}
\newcommand{\mc}{\mathcal}
\newcommand{\mb}{\mathbb}
\newcommand{\mbf}{\mathbf}
\newcommand{\mf}{\mathfrak}
\newcommand{\scr}{\mathscr}
\newcommand{\la}{\lambda}
\newcommand{\La}{\Lambda}
\newcommand{\norm}[1]{\left\lVert \vcenter{\hbox{$\displaystyle #1 $}} \right\rVert}
\newcommand{\inner}[1]{\left\langle #1 \right\rangle}
\newcommand{\ds}{\displaystyle}
\newcommand{\Isom}{\mathrm{Isom}}
\newcommand{\Aut}{\mathrm{Aut}}
\newcommand{\Der}{\mathrm{Der}}
\newcommand{\Z}{\mathbb{Z}}
\newcommand{\Q}{\mathbb{Q}}
\newcommand{\R}{\mathbb{R}}
\newcommand{\C}{\mathbb{C}}
\newcommand{\N}{\mathbb{N}}
\newcommand{\KH}{\mathbb{H}}
\newcommand{\KO}{\mathbb{O}}
\newcommand{\GL}{\mathrm{GL}}
\newcommand{\SL}{\mathrm{SL}}
\newcommand{\SO}{\mathrm{SO}}
\newcommand{\OO}{\mathrm{O}}
\DeclareMathOperator{\proj}{proj}
\DeclareMathOperator{\Span}{span}
\DeclareMathOperator{\Hom}{Hom}
\newcommand{\Sp}{\mathrm{Sp}}
\newcommand{\Spin}{\mathrm{Spin}}
\newcommand{\Diff}{\mathrm{Diff}}
\DeclareMathOperator{\im}{im}
\DeclareMathOperator{\id}{id}
\renewcommand{\bar}{\overline}
\newcommand{\what}{\widehat}
\newcommand{\til}{\widetilde}
\newcommand{\op}{\operatorname}
\newcommand{\of}{\circ}
\newcommand{\set}[1]{\left\{ #1 \right\}}
\newcommand{\cout}[1]{}
\definecolor{darkcyan}{rgb}{0. 0.65, 0.65}
\newcommand{\CC}{Carnot-Carath\'{e}odory\xspace}
\newcommand{\bs}{\backslash}
\newcommand\rest[1]{{\raisebox{-.5ex}{$|$}_{#1}}}
\newcommand{\eps}{\epsilon}
\newtheorem{Structural Stability Theorem}[theorem]{Structural Stability Theorem}
\def\bt{\begin{theorem}}
\def\et{\end{theorem}}
\def\bd{\begin{definition}}
\def\ed{\end{definition}}
\def\bl{\begin{lemma}}
\def\el{\end{lemma}}
\def\be#1\ee{\begin{align}\begin{split} #1 \end{split}\end{align}}
\def\beq#1\eeq{\begin{align*}\begin{split} #1 \end{split}\end{align*}}
\definecolor{grey}{rgb}{0.22, 0.32, 0.51}
\begin{document}

\title{Carnot metrics, Dynamics and Local Rigidity}

\author{Chris Connell, Thang Nguyen, Ralf Spatzier$^{\ddagger }$}

\address{Department of Mathematics,
Indiana University, Bloomington, IN 47405}
\email{connell@indiana.edu}

\address{Department of Mathematics, University of Michigan, 
    Ann Arbor, MI, 48109}
\email{qtn@umich.edu}

\address{Department of Mathematics, University of Michigan, 
    Ann Arbor, MI, 48109.}
\email{spatzier@umich.edu}

\thanks{$^{\ddagger}$ Supported in part by NSF grants DMS 1607260 and DMS 2003712.}

\subjclass[2010]{Primary 53C17, 53C24, 37D40; Secondary 53C20}

\date{}

\begin{abstract}

This paper develops new techniques for studying smooth dynamical systems in the presence of a \CC metric. Principally, we employ the theory of Margulis-Mostow, M\'etivier, Mitchell and Pansu on tangent cones to establish  resonances between
Lyapunov exponents. We apply these results in three different settings. First, we explore rigidity properties of smooth dominated splittings for Anosov diffeomorphisms and flows via associated smooth \CC metrics. Second, we obtain local rigidity properties of higher hyperbolic rank metrics in a neighborhood of a locally symmetric one. For the latter application we also prove structural stability of the Brin-Pesin asymptotic holonomy group for frame flows. Finally, we obtain local rigidity properties for uniform lattice actions on the ideal boundary of quaternionic and octonionic symmetric spaces.
  \end{abstract}

\maketitle

\tableofcontents
\section{Introduction}

Totally nonintegrable $k$-plane distributions $E$ on a manifold $N$ have been of considerable interest in partial differential equations, geometry and control theory. In particular they play a central role in H\"ormander's work on hypo-elliptic partial differential operators (e.g. \cite{Hormander67}). Such operators arise naturally from nilpotent groups. Fundamental work of Folland, Goodman, Kohn, Rothchild-Stein developed a converse. 

Later on, Gromov (\cite{Gromov81}) introduced tangent cones for general metric spaces and found criteria for their existence. Namely, for a metric space $(X,d)$ a tangent cone at $p\in X$ is a convergent limit in the pointed Gromov-Hausdorff topology of the sequence of pointed metric spaces $(X,p,\la d)$ as $\la\to \infty$. Note that tangent cones do not always exist, and when they do exist, they are not always unique. Spun from a key result of M\'etivier (\cite{Metivier}), Gromov, Mitchell, Margulis-Mostow, and Pansu, amongst others, have investigated the existence, uniqueness and structure of the pointed tangent cones for \CC metrics (\cite{Gromov81,Gromov96,MargulisMostow00,MargulisMostow95,Mitchell,Pansu}). 

In the present paper we explore these tangent cones with the goal of establishing new techniques for the study of Lyapunov exponents of dynamical systems respecting sub-Riemannian metrics. 
We suspect that such systems will typically possess algebraic features. For example, we use these techniques to establish criteria for the Lyapunov spectra to be arithmetic progressions. This fits into the scope of the recent duelling programs between flexibility versus rigidity of Lyapunov exponents. Indeed, pointwise arithmeticity of the Lyapunov spectrum  sometimes leads to complete rigidity of the dynamics \cite{Butler:17,Butler18}. Further afield, constancy or extremality of the Lyapunov spectrum in certain cases yields local rigidity (cf. for example \cite{Connell03,DeSimoiEtAl20,Gogolev11,GogolevEtAl20,SaghinYang19,DeWitt19}). These contrast with results about flexibility of the Lyapunov spectrum for tori and surfaces, (e.g. see \cite{ErchenkoKatok19,CarrascoSaghin21,BochiEtAl19,BarthelmeErchenko17}).

The arithmeticity of Lyapunov spectra have ramifications to both Riemannian geometry and group actions. Indeed, we will discuss rigidity of hyperbolic rank structures in Sections \ref{subsec:intro_hypRank} and local rigidity of projective actions in Section \ref{subsec:intro_boundary}.

\subsection{Sub-Riemannian Geometry}\label{subsec:intro_subRiem}

We recall that a smooth manifold $N$ is called a {\em sub-Riemannian manifold} if
\begin{enumerate}
\item $N$ is equipped with   a smooth distribution $E$, called a {\em horizontal distribution} which satisfies H\"{o}rmander's condition; that is, vector fields tangent to $E$ and their iterated brackets generate $TN$, and
\item $E$ is endowed with a smooth Riemannian metric $\inner{\cdot,\cdot}_x$.
\end{enumerate}
The \CC metric $d_C$ on $N$ between a pair of points $x$ and $y$ is defined as the infimum of length of curves tangent to $E$ from $x$ to $y$. By a theorem of Chow (\cite{Chow}), the \CC metric $d_C$ is finite on connected components of $N$.

For a horizontal distribution $E$, at each point $x\in N$ and $i\geq 1$ we define $E^i_x$ to be the subspace of $T_xN$ spanned by all $i$-fold commutators of vector fields tangent to $E$.
We call a horizontal distribution $E$ {\em generic (of order $r$)}  at a point $p\in N$ if on a neighborhood $U$ of $p$ the subspaces $E^i_x$ fit together to form a strictly increasing sequence of (smooth) subbundles
$$E=E^0\subset E^1 \subset \dots \subset E^r=TU.$$

In this setting, Mitchell (\cite{Mitchell}) showed that the tangent cones at generic points of $N$ are graded nilpotent Lie groups equipped with left-invariant \CC metrics which are unique by Margulis-Mostow \cite{MargulisMostow00}. Following Pansu (\cite{Pansu}), Margulis-Mostow also developed the notion of a Pansu derivative of a quasi-conformal map on a sub-Riemannian manifold. We will be applying this construction to maps preserving a distribution. 

We develop these ideas to understand the dynamics of differentiable maps preserving suitable distributions. 
Combining the main theorem of Mitchell (\cite{Mitchell}) together with Proposition \ref{prop:cont_of_tan_cone} and analyzing (cf. Proposition \ref{prop:df_isom}) the proof of the main theorem of Margulis-Mostow (\cite{MargulisMostow95}) provides the catalyst for our investigations:

\begin{theorem}[Tangent Cone Structure Theorem]\label{thm:tangentconeN}
	Let $f:N\to N$ be a local $C^\infty$ diffeomorphism of a sub-Riemannian manifold $N$ preserving the horizontal distribution $E$. Then the set 
	$\Omega:=\{x\in N: E \text{ is generic at } x\}$ is open dense and $f$-invariant. Moreover,
	\begin{enumerate}
		\item For every $x\in \Omega$, the tangent cone $TC_xN$ of $N$ at $x$ exists, is a graded nilpotent Lie group with a left invariant \CC metric and the tangent cones vary continuously on $\Omega$ with respect to the Gromov-Hausdorff topology.
		\item The map $f$ induces a graded Lie group isomorphism $(f_*)_x:TC_xN\to TC_{f(x)}N$ between the tangent cones $TC_xN$ of $N$ at $x\in \Omega$ and $TC_{f(x)}N$ of $N$ at $f(x)$. We call $(f_*)_x$ the {\em Carnot derivative} of $f$ at $x$.
	\end{enumerate}
\end{theorem}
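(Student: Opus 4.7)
The plan is to assemble the theorem from three ingredients already available: Mitchell's structure theorem for tangent cones at generic points of a sub-Riemannian manifold, the continuity statement (Proposition~\ref{prop:cont_of_tan_cone}), and a pointwise refinement of the Margulis--Mostow derivative theorem (Proposition~\ref{prop:df_isom}). I would first verify the topological properties of $\Omega$, then use Mitchell and the continuity proposition for part~(1), and finally construct $(f_*)_x$ algebraically before identifying it with the metric Pansu derivative for part~(2).

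For the set $\Omega$: openness holds because for each $i$ the function $x\mapsto\dim E^i_x$ is lower semi-continuous---any iterated bracket expression that is nonzero at $x$ remains nonzero nearby---so the integer-valued rank profile $(\dim E^i_x)_i$ attains its pointwise maximum on a neighborhood precisely where it is locally constant, which is exactly the condition for $E^\bullet$ to form strictly increasing smooth subbundles there. Density follows since, near any point, one can move to a point where every rank $\dim E^i_\cdot$ is locally maximal, and such a point is generic. For $f$-invariance, the naturality of the Lie bracket gives $f_*[X,Y]=[f_*X,f_*Y]$ for vector fields tangent to $E$, hence $f_*E^i=E^i$ as bundles on $\Omega$; the rank profile at $f(x)$ therefore matches that at $x$, so $x\in\Omega$ iff $f(x)\in\Omega$.

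For part~(1), at each $x\in\Omega$ Mitchell's theorem \cite{Mitchell} identifies $TC_xN$ with the Carnot group $G_x$ built from the graded nilpotent Lie algebra $\mathfrak{n}_x=\bigoplus_{i=1}^{r}E^i_x/E^{i-1}_x$, equipped with the left-invariant \CC metric determined by $\inner{\cdot,\cdot}_x$ on $E^0_x=E_x$; uniqueness is the Margulis--Mostow theorem \cite{MargulisMostow00}, and continuous variation in the pointed Gromov--Hausdorff topology is Proposition~\ref{prop:cont_of_tan_cone}.

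For part~(2), I would first define $(f_*)_x$ algebraically: because $f_*E^i=E^i$, the differential $df_x$ respects the filtration and so descends to a graded linear isomorphism $\overline{df_x}\colon\mathfrak{n}_x\to\mathfrak{n}_{f(x)}$, and compatibility of $f_*$ with brackets passes to the associated graded, so $\overline{df_x}$ is a graded Lie algebra isomorphism; exponentiating gives a graded Lie group isomorphism $G_x\to G_{f(x)}$. The main obstacle is to identify this algebraic isomorphism with the metric object one would call the Carnot derivative, namely the Gromov--Hausdorff limit of $f$ viewed as a map $(N,x,\lambda d_C)\to(N,f(x),\lambda d_C)$ as $\lambda\to\infty$. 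Margulis--Mostow \cite{MargulisMostow95} prove only an almost-everywhere existence statement for bi-Lipschitz maps, whereas we need existence at every point of $\Omega$ together with a precise identification of the limit. This is exactly the content of Proposition~\ref{prop:df_isom}, which revisits their proof and uses the smoothness of $f$ together with the fact that preserving $E$ makes $f$ locally bi-Lipschitz in $d_C$ to upgrade the conclusion to every $x\in\Omega$ and to pin the limit down to $\overline{df_x}$.
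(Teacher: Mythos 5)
Your proposal is correct and follows essentially the same route as the paper: the paper treats Theorem~\ref{thm:tangentconeN} as an assembly of Mitchell's theorem, the continuity statement of Proposition~\ref{prop:cont_of_tan_cone}, and the pointwise Pansu-derivative result of Proposition~\ref{prop:df_isom}, with the openness and $f$-invariance of $\Omega$ established via the lower semicontinuity of $x\mapsto\dim E^i_x$ and the naturality of the Lie bracket, exactly as you argue (and the same reasoning appears again in the foliated Lemma~\ref{lem:tangentcone}). Your extra observation---that $df_x$ already descends to a graded Lie algebra isomorphism of the associated graded filtration $\bigoplus_i E^i_x/E^{i-1}_x$, which must then agree with the metric Pansu derivative supplied by Proposition~\ref{prop:df_isom}---is a correct and clarifying framing, but it does not change the dependence on Proposition~\ref{prop:df_isom}, so this is the same proof with the bookkeeping made explicit.
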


With substantially more work, we later prove a foliated version of this theorem (Theorem \ref{thm:tangentcone}). Applying this to an unstable foliation will provide applications in the dynamical setting.

Next we consider the relation between the Carnot derivative and the ordinary one. The following three theorems we employ a detailed investigation of the proof of the Margulis-Mostow Theorem (\cite{MargulisMostow95}). 

\begin{theorem}[Subadditivity of the Spectrum at Periodic Points]\label{thm:spectrum_ineq}

	Let $f:N\to N$ be a local $C^\infty$ diffeomorphism of a sub-Riemannian manifold $N$ preserving the horizontal distribution $E$. Assume that $E$ is generic at a point $p\in N$ where $f(p)=p$. Suppose the tangent cone $TC_pN$ is $(r+1)$-step and set $n_{-1}=0$ and $n_i=\dim E^{i}(p)$ for $i\in \set{0,\dots,r}$. If $D_pf\rest{E^{i}_p}$ has Lyapunov exponents $(\log \la_1,\log \la_2,\dots,\log \la_{n_0},\log \la_{{n_0}+1},\dots,\log \la_{n_1},\dots,\log \la_{n_{i}})$ listed with multiplicity and with $\log \la_{n_{j-1}+1},\dots,\log \la_{n_{j}}$ in nondecreasing order for each $j\in\set{0,\dots,i}$, then for each $i\in \set{0,\dots, r}$ and $j\in\set{n_{i-1}+1,\dots,n_{i}}$, we have $(i+1) \log \la_1\leq \log \la_{j}\leq (i+1)\log \la_{n_0} $.

\end{theorem}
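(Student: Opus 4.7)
The plan is to apply Theorem~\ref{thm:tangentconeN} to transport the eigenvalue problem from $D_pf$ onto the tangent cone $TC_pN$, and then to exploit that a Carnot Lie algebra is generated in degree one.

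Because $f(p)=p$, the Carnot derivative $\varphi := (f_*)_p$ is a graded Lie group automorphism of $TC_pN$. Let its Lie algebra be $\mathfrak{g}=\mathfrak{g}_1\oplus\cdots\oplus\mathfrak{g}_{r+1}$, so that $\dim\mathfrak{g}_{i+1}=n_i-n_{i-1}$ and $[\mathfrak{g}_a,\mathfrak{g}_b]\subseteq\mathfrak{g}_{a+b}$, and still denote by $\varphi$ the induced graded Lie algebra automorphism. The first step, which is the main technical input, is to identify $\varphi|_{\mathfrak{g}_{i+1}}$ with the linear map on the associated graded piece $E^i_p/E^{i-1}_p$ induced by the ordinary derivative $D_pf$ (with the convention $E^{-1}=0$). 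This identification will be supplied by Proposition~\ref{prop:df_isom}, i.e.\ by the refinement of the Margulis--Mostow blow-up construction announced just above the theorem. Granting it, the sublist $\log\lambda_{n_{i-1}+1},\dots,\log\lambda_{n_i}$ consists precisely of the logs of absolute values of the eigenvalues of $\varphi|_{\mathfrak{g}_{i+1}}$.

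The remainder is a standard algebraic fact about graded automorphisms of Carnot algebras. Fix a generalized eigenbasis of $\mathfrak{g}_1\otimes\mathbb{C}$ for $\varphi|_{\mathfrak{g}_1}$ with eigenvalues $\mu_1,\dots,\mu_{n_0}$ satisfying $|\mu_k|=\lambda_k$. Since $\mathfrak{g}$ is Carnot, $\mathfrak{g}_{i+1}$ is spanned by iterated brackets $[X_{k_1},[X_{k_2},[\cdots,[X_{k_i},X_{k_{i+1}}]\cdots]]]$ with each $X_{k_\ell}$ in this basis. Because $\varphi$ is a Lie algebra homomorphism, such brackets are generalized eigenvectors of $\varphi|_{\mathfrak{g}_{i+1}}$ with generalized eigenvalue $\mu_{k_1}\cdots\mu_{k_{i+1}}$. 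Hence every eigenvalue of $\varphi|_{\mathfrak{g}_{i+1}}$ is a product of $i+1$ eigenvalues of $\varphi|_{\mathfrak{g}_1}$, and every Lyapunov exponent on $E^i_p/E^{i-1}_p$ is a sum of $i+1$ elements of $\{\log\lambda_1,\dots,\log\lambda_{n_0}\}$. Since each summand lies in $[\log\lambda_1,\log\lambda_{n_0}]$, the inequalities $(i+1)\log\lambda_1 \le \log\lambda_j \le (i+1)\log\lambda_{n_0}$ for $j\in\{n_{i-1}+1,\dots,n_i\}$ follow immediately.

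The main obstacle is the first step. The Carnot derivative is defined as a pointed Gromov--Hausdorff limit of rescalings of $f$, so its agreement with the associated-graded version of $D_pf$ on each $E^i_p/E^{i-1}_p$ is not formal; it requires the careful control of the Margulis--Mostow blow-up promised in Proposition~\ref{prop:df_isom}. Once that ingredient is in hand, everything else is just the standard principle that on a graded Lie algebra generated in degree one, the spectrum of a graded automorphism on the $k$-th graded piece is contained in the set of $k$-fold products of its spectrum on the degree-one piece.
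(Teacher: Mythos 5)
Your overall strategy is the same as the paper's: both reduce the inequality to a statement about the graded automorphism $\varphi = df_*$ of the Carnot Lie algebra $\mathfrak{n}_p$, using that the Lyapunov exponents of $D_pf$ in the $i$-th ``block'' coincide (as a multiset) with the logs of the eigenvalue moduli of $\varphi|_{\mathfrak{n}^i_p}$. Two comments.

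First, you defer the key identification (that $\varphi|_{\mathfrak{g}_{i+1}}$ matches the map induced by $D_pf$ on $E^i_p/E^{i-1}_p$) to Proposition~\ref{prop:df_isom}, but that proposition only establishes Carnot differentiability of $f$ and that $f_*$ is a group isomorphism; it does not by itself supply the compatibility between the ordinary derivative and the Carnot derivative level by level. The correct ingredient in the paper's own argument is Lemma~\ref{lem:homogDf} (giving $\what{f}_*\what{X}=\what{Df(X)}$ and bracket equivariance), together with the in-proof computation showing that $(Df\,X_{j,l}-\what{f}_*\what{X}_{j,l})(p)$ lies in the span of lower-level basis vectors, so that $D_pf$ is block-triangular in the graded basis with diagonal blocks equal to those of $\what{f}_*$. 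You correctly identify this as the ``main obstacle,'' but the reference you give for it is off; as written, the step is not actually supplied by what you cite.

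Second, once that identification is granted, your closing argument is genuinely cleaner than the paper's and worth noting as a different route. The paper chooses a real-Jordan basis of the horizontal level, lower-bounds the growth rate of $D_pf^n$ on level $i$ by the minimum growth rate of $\what{f}_*^n$ there, invokes the bracket relations to bound that minimum by $(i+1)\log\lambda_1$, and obtains the upper bound by applying the same reasoning to $f^{-1}$. You instead complexify, take a generalized eigenbasis of $\mathfrak{g}_1$, observe that a bracket of generalized eigenvectors is a generalized eigenvector whose eigenvalue is the product (by induction on the heights in the nilpotent filtration of the eigenspaces), and conclude that the entire spectrum of $\varphi|_{\mathfrak{g}_{i+1}}$ lies in the set of $(i+1)$-fold products of eigenvalues of $\varphi|_{\mathfrak{g}_1}$. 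This yields both the lower and upper bound simultaneously, with no $f^{-1}$ trick and no careful minimization over basis vectors. One small point of hygiene: the iterated brackets you list only span $\mathfrak{g}_{i+1}$ rather than form a basis, so you should phrase the conclusion as ``$\mathfrak{g}_{i+1}\otimes\mathbb{C}$ decomposes as a direct sum of $\varphi$-invariant subspaces on each of which $\varphi - c$ is nilpotent for $c$ a product of $i+1$ eigenvalues,'' which is exactly what the inductive nilpotency argument gives.
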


Note that at a fixed point, the Lyapunov exponents simply amount to being the logs of the magnitude of the eigenvalues of $D_pf$. In the special case when the tangent cone is the Heisenberg group with its standard contact structure and \CC metric, we have a stronger statement.
\begin{theorem}[Additivity of the Spectrum for Heisenberg Cones at Periodic Points]\label{thm:Hei-spec}
	Let $f:N\to N$ be a local $C^\infty$ diffeomorphism of a sub-Riemannian manifold $N$ preserving the horizontal distribution $E$. Assume that $E$ is generic at a point $p\in N$ where $f(p)=p$. Suppose the tangent cone $TC_pN$ is isomorphic to the Heisenberg group $H^{2n+1}$. If $\log \lambda_1, \log \lambda_2, \dots, \log \lambda_{2n}$ are the Lyapunov exponents of $D_pf\rest{E}$ listed with multiplicity and $\log \la_{2n+1}$ is the remaining Lyapunov exponent of $D_pf$, then $\log \lambda_1+\log \lambda_2+\dots+\log \lambda_{2n}=n \log \lambda_{2n+1}$.
\end{theorem}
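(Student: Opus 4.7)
The approach is to use Theorem \ref{thm:tangentconeN} to reduce the identity to a linear-algebraic fact about graded automorphisms of the Heisenberg algebra. Write $\mf h = \mf h_1 \oplus \mf h_2$ for the graded Heisenberg Lie algebra, with $\dim \mf h_1 = 2n$ and $\dim \mf h_2 = 1$. By Theorem \ref{thm:tangentconeN}, the Carnot derivative $\phi := (f_\ast)_p$ is a graded Lie algebra automorphism of $\mf h$, so it decomposes as $A_1 \oplus A_2$ with $A_i := \phi\rest{\mf h_i}$.

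The first step is to identify $A_1$ and $A_2$ with pieces of $D_pf$. Under the canonical identifications $\mf h_1 \cong E_p$ and $\mf h_2 \cong T_pN/E_p$ coming from the construction of the tangent cone, the Carnot derivative reduces on the horizontal layer to the ordinary derivative and on the top layer to the map induced on the quotient; this identification is implicit in the Margulis-Mostow analysis invoked in Proposition \ref{prop:df_isom}. Since $E_p$ is $D_pf$-invariant, the eigenvalue moduli of $A_1$ are exactly $\la_1,\dots,\la_{2n}$, while $A_2$ is multiplication by a scalar $c$ with $|c| = \la_{2n+1}$.

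The second step is to exploit the Heisenberg bracket. This bracket provides a non-degenerate skew-symmetric form $\omega : \mf h_1 \times \mf h_1 \to \mf h_2 \cong \R$, which, once a basis of $\mf h_2$ is fixed, is a symplectic form on $\mf h_1$. Since $\phi$ is a Lie algebra automorphism,
\[ \omega(A_1 X, A_1 Y) \;=\; A_2\bigl( \omega(X,Y) \bigr) \;=\; c\,\omega(X,Y) \qquad \text{for all } X,Y \in \mf h_1, \]
so $A_1^\ast \omega = c\,\omega$. Taking the $n$-th exterior power yields $A_1^\ast(\omega^n) = c^n\,\omega^n$, and since $\omega^n$ is a nonzero top form on the $2n$-dimensional space $\mf h_1$, this forces $\det A_1 = c^n$. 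Passing to absolute values gives $\la_1 \la_2 \cdots \la_{2n} = |c|^n = \la_{2n+1}^n$, and taking logarithms yields the claimed identity.

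The main obstacle is the identification of $A_1$ with $D_pf\rest{E_p}$ and $A_2$ with the induced action on $T_pN/E_p$; once this is secured from the construction of the Carnot derivative in the proof of Theorem \ref{thm:tangentconeN}, the rest is the classical linear algebra of conformally symplectic maps.
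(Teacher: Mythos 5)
Your proof is correct and the core argument --- reducing to a graded automorphism of the Heisenberg algebra and using the symplectic form supplied by the bracket --- is essentially the paper's. Your formulation is slightly cleaner: the paper classifies $\Aut_g(\mf{h}) \cong \Z_2 \ltimes (\Sp(2n,\R) \times \op{Dil})$, passes to $f^2$ to discard the $\Z_2$-factor, and then uses $\det(\sigma A) = \sigma^{2n}$ for $A\in\Sp(2n,\R)$; your exterior-power computation $A_1^*(\omega^n) = c^n\omega^n$ applies to any conformally symplectic map and absorbs a possible sign of $c$ automatically (it vanishes upon taking moduli), so the $f\mapsto f^2$ reduction is unnecessary. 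One clarification on the identification you flag as the main obstacle: it is genuinely where the work lies, and it is not supplied by Proposition \ref{prop:df_isom} alone. The horizontal identification $A_1 = D_pf\rest{E_p}$ is exactly Corollary \ref{cor:der-comparison}; identifying $A_2$ with the scalar induced on $T_pN/E_p$ further requires Lemma \ref{lem:homogDf} together with the homogeneous-degree analysis of vector fields carried out in the paper's proof, which shows that $D_pf$ is block upper triangular in a graded basis $\{X_i(p), Y_j(p), Z(p)\}$ with lower diagonal block equal to $A_2$. The subtlety is that $Z(p) \ne Z^{(2)}(p)$ in general, and one must check that the discrepancy $(DfZ)^{(\le 1)}(p)$ lies in $E(p)$, which is what makes the induced map on the quotient well-defined and equal to the center scalar.
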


For our next result, given an $(r+1)$-step graded nilpotent Lie group $\mc{N}$ we denote its graded Lie algebra by $\mf{n}=\bigoplus_{i=0}^{r} \mf{n}^i$ where $\mf{n}^0,\mf{n}^1=[\mf{n}^0,\mf{n}^0],\dots,\mf{n}^{r}=[\mf{n}^{r},\mf{n}^0]$. Suppose $f:N\to N$ is a diffeomorphism of a sub-Riemannian manifold $N$ preserving its distribution $E$. If the Carnot derivative of $f$ at a fixed point where $E$ is generic is a homothety, then the Lyapunov spectrum is arithmetic:

\begin{theorem}[Arithmeticity of the Spectrum at Periodic Points]\label{thm:Lyap_exp}
	Let $f:N\to N$ be a local $C^\infty$ diffeomorphism of a sub-Riemannian manifold $N$ preserving the horizontal distribution $E$. Assume that  $E$ is generic at a point $p\in N$ where $f(p)=p$, and that the graded automorphism $f_*:TC_pN\to TC_pN$ induced from $f$ is a homothety. Then there exists a $\la>1$ such that $D_pf\rest{E^{i}_p}$ has Lyapunov exponents $\log \la,2\log \la,\dots,(i+1)\log\la$ with corresponding multiplicities $\dim \mf{n}^0,\dots,\dim \mf{n}^{i}$ for $i=0,\dots,r$.

\end{theorem}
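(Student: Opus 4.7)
The plan is to combine Theorem \ref{thm:spectrum_ineq} with a direct analysis of what the word ``homothety'' imposes on the first graded layer of the Carnot group. A homothety of ratio $\lambda$ of a Carnot group factors as the canonical dilation $\delta_\lambda$ composed with an isometric graded automorphism; on $\mf{n}^0$, the dilation acts as $\lambda \cdot \id$ while any isometric graded automorphism restricts to an orthogonal transformation of $\mf{n}^0$. Therefore $(f_*)_p|_{\mf{n}^0} = \lambda \cdot O$ for some $O\in \OO(\mf{n}^0)$. Under the canonical identification $\mf{n}^0 \cong E_p$ built into the Margulis-Mostow construction (this is precisely the content of Proposition \ref{prop:df_isom} together with Theorem \ref{thm:tangentconeN}), this restriction coincides with $D_p f|_{E_p}$. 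Since $O$ is orthogonal its eigenvalues lie on the unit circle, so every eigenvalue of $D_p f|_{E_p}$ has modulus $\lambda$, and the horizontal Lyapunov spectrum, listed in nondecreasing order, satisfies $\log \la_1 = \dots = \log \la_{n_0} = \log \la$.

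With this in hand, I would apply Theorem \ref{thm:spectrum_ineq} level by level. For each $i\in\{1,\dots,r\}$ the bound $(i+1)\log \la_1 \le \log \la_j \le (i+1)\log \la_{n_0}$ at indices $j \in \{n_{i-1}+1,\dots,n_i\}$ collapses to the equality $\log \la_j = (i+1)\log \la$, because both endpoints now equal $(i+1)\log \la$. The graded structure of the tangent cone identifies $\mf{n}^i$ with the quotient $E^i_p/E^{i-1}_p$, so $\dim \mf{n}^i = n_i - n_{i-1}$, which is exactly the multiplicity of the value $(i+1)\log \la$ prescribed by the squeezed bound. Assembling the spectrum across levels $0, 1, \dots, r$ produces the claimed arithmetic progression $\log \la, 2\log \la, \dots, (i+1)\log \la$ on $E^i_p$ with multiplicities $\dim \mf{n}^0, \dots, \dim \mf{n}^i$. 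The condition $\la>1$ can be arranged in the setting of interest (for instance, expanding dynamics on an unstable distribution); otherwise one passes to $f^{-1}$, which swaps the sign of every exponent without affecting the arithmetic structure.

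The main obstacle is the identification in the first step: one must verify, inside the Margulis-Mostow construction of the Carnot derivative, that its restriction to the horizontal layer is literally $D_p f|_{E_p}$ and not merely some reparametrization. Once this is secured (via Proposition \ref{prop:df_isom}), everything else is a clean pigeonhole squeeze using Theorem \ref{thm:spectrum_ineq}; no further dynamical input is required beyond the presence of a single fixed point where $E$ is generic.
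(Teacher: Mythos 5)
Your argument is correct and reaches the same conclusion, but the mechanism is different from the paper's. The paper proves Theorem~\ref{thm:Lyap_exp} from scratch: it shows directly that in a graded basis $D_pf$ is block upper triangular, identifies the $(m+1)$-st diagonal block as (essentially) $\lambda^{m+2}\widehat{dh}$ restricted to the level-$(m+1)$ piece, and then uses the compactness of the isometric part $h$ (via a subsequence on which $\widehat{dh}^{n_k}\to\mathrm{id}$) to squeeze the growth rates of that block to exactly $(m+2)\log\lambda$. You instead make two observations and then invoke Theorem~\ref{thm:spectrum_ineq} as a black box: (i) the identification $\imath\colon E_p\to\mf{n}^0_p$ is an isometry under which $D_pf|_{E_p}$ corresponds to $d(f_*)|_{\mf{n}^0_p}=\lambda\cdot O$ with $O$ orthogonal (for this, the precise reference in the paper is Corollary~\ref{cor:der-comparison} together with Lemma~\ref{lem:isometricinclusion}, not Proposition~\ref{prop:df_isom} alone), so $\log\lambda_1=\cdots=\log\lambda_{n_0}=\log\lambda$; and (ii) the two-sided bound $(i+1)\log\lambda_1\le\log\lambda_j\le(i+1)\log\lambda_{n_0}$ from Theorem~\ref{thm:spectrum_ineq} then collapses to equality on each level, with the right multiplicities since $d_i=n_i-n_{i-1}=\dim\mf{n}^i$. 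Both routes ultimately rest on the same block upper triangular structure of $D_pf$ (that structure is the technical core of Theorem~\ref{thm:spectrum_ineq} as well), so neither is dramatically shorter overall; what your version buys is a cleaner separation of concerns --- the dynamical heavy lifting is isolated in Theorem~\ref{thm:spectrum_ineq}, and the homothety hypothesis enters only through the elementary linear-algebraic fact that $\lambda O$ has all eigenvalue moduli equal to $\lambda$. One small caveat shared with the paper: the conclusion $\lambda>1$ is not forced by the stated hypotheses (a homothety could have $\lambda\le 1$); the paper's proof silently takes $\lambda>0$, and your remark about passing to $f^{-1}$ or to the dynamically natural setting is a fair acknowledgment of that.
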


We call a graded nilpotent Lie group {\em asymmetric} (cf. Definition \ref{def:asymmetric} and Lemma \ref{lem:asymmetric_homotheties}) if its group of graded automorphisms consists of homotheties with respect to some \CC metric induced by an inner product on $\mf{n}^0$. The asymmetric groups are open in the space of isomorphism classes of nilpotent groups with a given grading, and sometimes nonempty (cf. Proposition \ref{examp:nilrigid} and Corollary \ref{cor:stable-asymmetry}). Since Theorem \ref{thm:Lyap_exp} holds for any choice of sub-Riemannian metric on $N$, induced from an inner product on $E$, we have the following.
\begin{corollary}
 In the above theorem, when $TC_pN$ is an asymmetric nilpotent group, the conclusion holds without the assumption that the induced graded automorphism $f_*$ is a homothety. 
\end{corollary}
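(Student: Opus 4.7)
The plan is to reduce to Theorem~\ref{thm:Lyap_exp} by replacing the sub-Riemannian metric on $N$ with a more favorable one. The key observation is that the quantities appearing in the conclusion are all intrinsic to $(N, E, f)$ and independent of the choice of inner product on $E$: the subspaces $E^i_p$ are defined purely from the distribution and its iterated brackets, and at the fixed point $p$ the Lyapunov exponents of $D_pf\rest{E^i_p}$ are simply the logs of the moduli of the eigenvalues of this linear map—as noted in the paper just after Theorem~\ref{thm:spectrum_ineq}.

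By asymmetry of $TC_pN$ (Definition~\ref{def:asymmetric} together with Lemma~\ref{lem:asymmetric_homotheties}), there exists an inner product $\langle\cdot,\cdot\rangle_0$ on the degree-zero component $\mf{n}^0 \cong E_p$ whose induced left-invariant \CC metric makes every graded automorphism of $TC_pN$ a homothety. Using a partition of unity, I would extend $\langle\cdot,\cdot\rangle_0$ to a smooth Riemannian metric $\langle\cdot,\cdot\rangle'$ on the bundle $E$ over all of $N$. Crucially, this modification leaves the \emph{Lie group} structure of the tangent cone at $p$ unchanged—it is determined by the osculating nilpotent Lie algebra of iterated brackets of horizontal fields, which involves $E$ but not its inner product—while replacing the left-invariant \CC metric on $TC_pN$ by precisely the one witnessing asymmetry.

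For the sub-Riemannian manifold $(N, E, \langle\cdot,\cdot\rangle')$, the Carnot derivative $(f_*)_p$ at the fixed point $p$ is a graded automorphism of the asymmetric group $TC_pN$ equipped with its distinguished \CC metric, hence a homothety by hypothesis. Theorem~\ref{thm:Lyap_exp} then applies verbatim and produces a $\la > 1$ such that $D_pf\rest{E^i_p}$ has Lyapunov exponents $\log\la, 2\log\la, \dots, (i+1)\log\la$ with multiplicities $\dim \mf{n}^0,\dots,\dim \mf{n}^i$. Since both sides of this statement are intrinsic to $(N, E, f)$, the conclusion transports back to the original sub-Riemannian structure. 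The only point worth care—and therefore the place an obstacle could arise—is the decoupling of the tangent cone as a graded Lie group from the \CC metric on it; this is implicit in the construction underlying Theorem~\ref{thm:tangentconeN}, so beyond flagging it, no further obstacles are anticipated.
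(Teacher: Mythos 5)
Your argument is correct and matches the paper's reasoning exactly: the paper's entire justification for the corollary is the one-line remark immediately preceding it --- ``Since Theorem~\ref{thm:Lyap_exp} holds for any choice of sub-Riemannian metric on $N$, induced from an inner product on $E$'' --- and you have simply unfolded that sentence. The steps you flag (asymmetry via Lemma~\ref{lem:asymmetric_homotheties} produces an inner product on $\mf{n}^0_p\cong E(p)$ for which all graded automorphisms are homotheties; the Lie-group structure of $TC_pN$ depends only on $E$ and its iterated brackets, not on the inner product, so the new \CC metric sits on the same Carnot group; the Lyapunov exponents of $D_pf$ at the fixed point are metric-independent eigenvalue moduli) are precisely the content the authors are compressing.
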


We will apply the natural extension of this theorem to the case of a $C^1$ diffeomorphism $f:X\to X$ (or flow $\varphi_t: X\to X$) leaving invariant a foliation $\mc{F}$ with $C^\infty$ leaves tangent to a continuous distribution $D$. 
If $E\subseteq D$ is a $df$-invariant continuous subdistribution, we define the set 
\[
\Omega=\set{v\in X: E \text{ is } C^\infty \text{ in a neighborhood of } v \text{ in } \mc{F}(v) \text{ and is horizontal and generic there}}.
\]
Suppose that the distribution $E$ is uniformly $C^k$ along $\mc{F}$ (cf. Definition \ref{def:Ckunif}) for a sufficiently large $k$ and that $\mc{F}$ is transversally H\"{o}lder continuous. Also assume that $f$ (or the flow $\varphi_t$) is $C^\infty$ along leaves, topologically transitive, satisfies the stable closing property (cf. Definition \ref{def:SCP}), and that $df\rest{D}$ is transversally H\"{o}lder continuous.

In this setting, the Foliated Tangent Cone Structure Theorem (Theorem \ref{thm:tangentcone} and cf. Corollary \ref{cor:uniqueclass}) provides a common isomorphism class, associated to the distribution $D$, of graded nilpotent Lie group structures for all tangent cones $TC_v\mc{F}(v)$ with $v\in \Omega$ and whose metrics vary continuously on $\Omega$. This leads to our main rigidity result for Lyapunov exponents along leaves of the foliation. 

\begin{theorem}[Arithmeticity of the Lyapunov Spectrum]\label{thm:equal}
	 Suppose that the graded nilpotent group associated to $D$ on $\Omega$ is asymmetric and $(r+1)$-step. Let $\mu$ be any finite $f$-invariant (resp. $\varphi_t$-invariant) ergodic measure whose support $\op{Supp}(\mu)$ satisfies $\op{Supp}(\mu)\cap\Omega\neq \emptyset$. Then the Lyapunov exponents of $f$ (resp. $\phi_1$) along $\mc{F}$ are $\log\lambda, 2\log\lambda, \dots, (r+1)\log\lambda$ with multiplicities $\dim E,\dim E^{1}-\dim E,\dots,\dim E^{r}-\dim E^{r-1}$ for some $\la=\la_\mu>1$.

\end{theorem}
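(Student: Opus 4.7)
The plan is to combine the Foliated Tangent Cone Structure Theorem (Theorem~\ref{thm:tangentcone}) with the pointwise arithmeticity Theorem~\ref{thm:Lyap_exp} at periodic orbits, and then propagate the conclusion to $\mu$-typical points by a periodic-orbit approximation argument enabled by the stable closing property. Since $\Omega$ is open and $f$-invariant (in the foliated sense, by the $f$-invariance of $\mc{F}$ and the $df$-invariance of $E$), the hypothesis $\op{Supp}(\mu)\cap\Omega\neq\emptyset$ together with ergodicity of $\mu$ forces $\mu(\Omega)=1$. By Theorem~\ref{thm:tangentcone} and Corollary~\ref{cor:uniqueclass}, for every $v\in\Omega$ the leafwise tangent cone $TC_v\mc{F}(v)$ is a graded nilpotent Lie group isomorphic to the fixed asymmetric $(r+1)$-step model $\mc{N}$ with graded Lie algebra $\mf{n}=\bigoplus_{i=0}^r \mf{n}^i$, and the Carnot derivative $(f_*)_v:TC_v\mc{F}(v)\to TC_{f(v)}\mc{F}(f(v))$ is a graded Lie group isomorphism depending continuously on $v\in\Omega$.

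Next I would apply Theorem~\ref{thm:Lyap_exp} at periodic orbits. For any periodic point $p\in\Omega$ of period $k$ for $f$ (or any closed orbit of period $k$ for $\varphi_t$, using $\varphi_k$), the iterated Carnot derivative $(f^k_*)_p=(f_*)_{f^{k-1}(p)}\circ\cdots\circ(f_*)_p$ is a graded automorphism of $TC_p\mc{F}(p)\cong\mc{N}$. By the asymmetry hypothesis (Definition~\ref{def:asymmetric} and Lemma~\ref{lem:asymmetric_homotheties}), this automorphism is automatically a homothety with respect to some \CC metric induced from an inner product on $\mf{n}^0$. Theorem~\ref{thm:Lyap_exp} applied to $f^k$ at the fixed point $p$ then furnishes a real $\La_p>1$ such that $D_pf^k\rest{E^i_p}$ has Lyapunov exponents $\log\La_p,2\log\La_p,\dots,(i+1)\log\La_p$ with multiplicities $\dim\mf{n}^0,\dots,\dim\mf{n}^i$ for each $i=0,\dots,r$. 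Taking $i=r$ and normalizing by $k$ gives the periodic-orbit leafwise Lyapunov spectrum of $f$ in precisely the claimed arithmetic form and multiplicity pattern.

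Finally I would upgrade from periodic orbits to the measure $\mu$. By Oseledec's theorem and ergodicity, the leafwise Lyapunov exponents $\chi_1\leq\dots\leq\chi_N$ of $f$ (resp.\ $\varphi_1$) with respect to $\mu$ are $\mu$-a.e.\ constant. Using the stable closing property (Definition~\ref{def:SCP}) together with topological transitivity and the transverse H\"{o}lder continuity of $df\rest{D}$, one constructs a sequence of periodic points $p_n\in\Omega$ with periods $k_n\to\infty$ whose normalized leafwise Lyapunov spectra converge to $(\chi_1,\dots,\chi_N)$. Each periodic spectrum sits on an arithmetic progression with integer multiplicities $\dim\mf{n}^j$, which cannot jump in the limit; hence $(\chi_j)$ is also arithmetic with the same multiplicities, and setting $\la_\mu:=\lim_{n}\La_{p_n}^{1/k_n}$ completes the proof.

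The main obstacle is the periodic-orbit approximation of the Oseledec spectrum in this foliated setting: one must control the leafwise derivative cocycle transversally well enough that the spectra at the approximating $p_n$ genuinely converge to the spectrum along a $\mu$-generic orbit. This is precisely what the combined hypotheses are designed to deliver—the transverse H\"{o}lder regularity of $\mc{F}$ and of $df\rest{D}$ together with the uniform $C^k$ regularity of $E$ along $\mc{F}$ allow the cocycle approximation to close, while the stable closing property and topological transitivity guarantee a sufficiently rich family of periodic points inside the full-measure open set $\Omega$.
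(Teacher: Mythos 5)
Your proposal is correct and follows essentially the same route as the paper's proof: reduce to periodic points in the full-measure set $\Omega$, use asymmetry to force the periodic Carnot derivative to be a homothety, apply Theorem~\ref{thm:Lyap_exp}, and then propagate to $\mu$ via periodic-orbit approximation of Lyapunov exponents. The only cosmetic difference is that the paper discharges the final approximation step by citing Kalinin's periodic approximation result \cite[Proposition 3.2]{Kalinin11} (adapted to derivative cocycles as in \cite{GS2}), whereas you describe the same closing-property mechanism informally.
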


Now we consider some applications of the above results.

\subsection{Smooth Slow Distributions}\label{subsec:smooth_slow}

One family of examples where asymmetric nilpotent groups naturally arise are the quaternionic hyperbolic and octonionic hyperbolic symmetric spaces. Here  the tangent cones of the stable and unstable foliations of a geodesic flow on compact quotients of these spaces are naturally isomorphic to certain asymmetric nilpotent groups of Heisenberg type (see \ref{examp:nilrigid} below) corresponding to the nilradical of the full isometry group.

In the case when a flow nearby one of these symmetric flows still has smooth slow distribution we show it is orbit equivalent to the symmetric one.

\begin{theorem}\label{thm:locally_rigid_flow}
	Let $\phi_t^0$ be the geodesic flow on a locally quaternionic hyperbolic or octonionic hyperbolic closed manifold $M$. Then if $\phi_t$ is any $C^\infty$ flow $C^1$ close to $\phi_t^0$ for which $E^u_{slow,\phi_t}$ remains $C^\infty$ along unstable leaves and is sufficiently uniformly $C^1$ close (in the sense of Definition \ref{def:unifCr}) to that of $\phi_t^0$, then $\phi_t$ is $C^\infty$ orbit equivalent to $\phi_t^0$. 
\end{theorem}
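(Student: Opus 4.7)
The plan is to deploy the foliated arithmeticity theorem (Theorem~\ref{thm:equal}) to the strong unstable foliation of $\phi_t$ with horizontal sub-distribution $E = E^u_{slow,\phi_t}$, extract an arithmetic Lyapunov spectrum on $E^u$ matching that of $\phi_t^0$, and then promote the topological orbit equivalence supplied by Anosov structural stability to a $C^\infty$ one by matching the smooth Carnot structures on unstable leaves and invoking the structural stability of the Brin--Pesin asymptotic holonomy group announced in the introduction.

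First I would verify the hypotheses of Theorem~\ref{thm:equal} for $\mc{F} = W^{uu}_{\phi_t}$. Because $\phi_t^0$ is Anosov and $\phi_t$ is $C^1$-close, Anosov structural stability gives an invariant hyperbolic splitting $TM = E^s \oplus \R \dot\phi \oplus E^u$, and the dominated splitting $E^u = E^u_{slow} \oplus E^u_{fast}$ persists under $C^1$-perturbation. The hypothesis on $\phi_t$ directly provides that $E^u_{slow,\phi_t}$ is $C^\infty$ along unstable leaves and uniformly $C^1$-close to the slow unstable distribution of $\phi_t^0$. For $\phi_t^0$ the slow unstable distribution on each unstable leaf bracket-generates the full unstable distribution in one step and yields a Carnot structure isomorphic to the Heisenberg-type nilpotent group $\mc{N}$ arising as the nilradical of the Iwasawa decomposition of the isometry group; this $\mc{N}$ is asymmetric by Proposition~\ref{examp:nilrigid}. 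Since the bracket-generating property with fixed step pattern is open in the uniform $C^1$-topology on distributions, and asymmetry is open (Corollary~\ref{cor:stable-asymmetry}), the same Carnot structure persists for $E^u_{slow,\phi_t}$ at every point of $M$. Topological transitivity, the stable closing property, and transverse H\"older regularity of $d\phi_1\rest{E^u}$ are standard for Anosov flows on closed manifolds. Theorem~\ref{thm:equal} then yields, for any fully supported $\phi_t$-invariant ergodic measure (e.g.\ the SRB measure), that the Lyapunov exponents of $d\phi_1$ along $E^u$ are $\log\lambda$ and $2\log\lambda$ with multiplicities $\dim E^u_{slow}$ and $\dim E^u_{fast}$.

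The $1:2$ ratio of unstable Lyapunov exponents now coincides with that of $\phi_t^0$, so after a constant time-change normalizing $\lambda$ one may assume the Lyapunov data of $\phi_t$ and $\phi_t^0$ coincide. Using the smooth Carnot structures on unstable leaves of both flows, together with smooth non-stationary normal forms (of Kalinin--Sadovskaya type, available thanks to the resonance pattern $1:2$), I would construct a leafwise $C^\infty$ map between the unstable foliations intertwining the two slow distributions and the induced group laws on the tangent cones, and then verify via a Liv\v{s}ic cohomological adjustment that composing with an appropriate $\phi_t^0$-equivariant reparametrization sends orbits of $\phi_t$ to orbits of $\phi_t^0$.

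The main obstacle is transverse smoothness. This is where the structural stability of the Brin--Pesin asymptotic holonomy group (announced in the introduction) is essential: together with the matching of leafwise Carnot structures it pins down the transverse behavior of the stable and unstable foliations of $\phi_t$ so that they mirror those of $\phi_t^0$. A Journ\'e-type regularity argument then glues leafwise $C^\infty$ regularity along the stable and unstable foliations to global $C^\infty$ regularity of the orbit equivalence. This last step, rather than the application of Theorem~\ref{thm:equal}, is the hardest part: it requires converting the purely infinitesimal Carnot data and the holonomy coincidence into an actual smooth global map, and any loss of transverse regularity, for instance from only H\"older dependence of the Oseledec splitting, must be absorbed by the combination of resonance rigidity and holonomy stability.
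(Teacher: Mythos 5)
Your first half is aligned with the paper's argument: you verify the hypotheses of Theorem~\ref{thm:equal} for the unstable foliation, observe that the slow distribution of $\phi_t^0$ is horizontal and generic of step $2$, use openness of genericity and of asymmetry (via Lemma~\ref{lem:generic}, Example~\ref{examp:nilrigid}, and Corollary~\ref{cor:stable-asymmetry}) to transport this to $\phi_t$, and extract the arithmetic Lyapunov spectrum $\log\lambda$, $2\log\lambda$ with the correct multiplicities. This matches the paper up to that point.

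The genuine gap is in the second half. Once the $1{:}2$ resonance is in hand, the paper does not build the $C^\infty$ orbit equivalence from scratch --- it invokes Butler's Lyapunov-spectrum rigidity theorem (\cite[Theorem~3.2]{Butler:17}), which converts the proportionality of the unstable Lyapunov spectrum directly into a $C^\infty$ orbit equivalence with the symmetric flow. The only additional work is to confirm that the ``horizontal measure'' $\mu_M$ in Butler's theorem has full support, which the paper does by identifying it as a Gibbs state for a H\"older potential. Your proposal instead sketches a construction via Kalinin--Sadovskaya normal forms, a Liv\v{s}ic adjustment, structural stability of the Brin--Pesin asymptotic holonomy group, and a Journ\'e-type gluing. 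None of these steps is carried out, and each hides substantial difficulties that you yourself flag (most seriously the transverse regularity problem, where the Oseledec splitting is only H\"older). In effect you are attempting to reprove Butler's theorem without proving it. Moreover, the Brin--Pesin holonomy structural stability is a tool the paper develops for the hyperbolic rank application in Section~\ref{sec:hyprank}, not for this theorem; invoking it here does not actually supply the transverse smoothness you need, since the holonomy group controls the frame flow rather than the differentiability class of a conjugating map. As written, the passage from arithmetic Lyapunov spectrum to $C^\infty$ orbit equivalence remains unestablished, and that is the decisive step.
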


We suspect that strong rigidity properties hold whenever slow (un)stable distributions are smooth along leaves of the (un)stable foliation.

\begin{questions}\label{ques:splitting}
	Can we characterize Anosov or partially hyperbolic diffeomorphisms or flows with smooth $E^u_{slow}$ distributions along the unstable foliation? When are these smoothly orbit equivalent to algebraic ones? Is it sufficient that $E^u_{slow}$ be smooth along unstable leaves and completely nonintegrable there?
\end{questions}

Note that there are simple nonalgebraic examples coming from smooth time changes and suspensions of products (see Section \ref{subsec:time-change}).
We also remark that there are numerous algebraic Anosov examples, e.g. on tori or products, where the slow distribution is integrable.

We obtain stronger rigidity properties if we restrict to geodesic flows.

\begin{theorem}\label{thm:locally_rigid}
	Let $g_0$ be a locally quaternionic hyperbolic or octonionic hyperbolic metric on a smooth closed manifold $M$. Then $g_0$ is locally rigid within the family of $C^2$ close $C^\infty$ metrics whose $E^u_{slow}$ remains $C^\infty$ along unstable leaves and is sufficiently uniformly $C^1$ close (in the sense of Definition \ref{def:unifCr}) to that of $g_0$.

\end{theorem}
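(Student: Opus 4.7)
The plan is to deduce metric rigidity from the orbit equivalence provided by Theorem \ref{thm:locally_rigid_flow}, upgraded via a higher hyperbolic rank argument. The hypotheses of Theorem \ref{thm:locally_rigid} imply those of Theorem \ref{thm:locally_rigid_flow}, since a $C^2$-small perturbation of $g_0$ induces a $C^1$-small perturbation of the geodesic flow on $T^1M$, and the hypotheses on $E^u_{slow}$ are identical. Applying Theorem \ref{thm:locally_rigid_flow}, we obtain a $C^\infty$ orbit equivalence $h$ between the geodesic flows $\phi_t$ of $g$ and $\phi_t^0$ of $g_0$.

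Second, we arrange to apply Theorem \ref{thm:equal} to the unstable foliation of $\phi_t$. The tangent cones of the strong unstable foliation of $\phi_t^0$ are the quaternionic or octonionic Heisenberg groups of Heisenberg type, which are asymmetric in the sense of Definition \ref{def:asymmetric} (Proposition \ref{examp:nilrigid}). By stability of asymmetry (Corollary \ref{cor:stable-asymmetry}) and the $C^1$-proximity of $E^u_{slow}$ to its model, the associated graded nilpotent groups for $\phi_t$ remain asymmetric of the same type on an open dense set $\Omega$. The remaining hypotheses of Theorem \ref{thm:equal} (leafwise smoothness, transverse H\"older regularity, topological transitivity, and the stable closing property) hold for any contact Anosov geodesic flow $C^1$-close to $\phi_t^0$. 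Theorem \ref{thm:equal} then yields that at every periodic orbit (by taking $\mu$ to be the corresponding periodic orbit measure), the Lyapunov spectrum of $D\phi_T$ along $E^u$ consists of exactly two values in ratio $2{:}1$, with multiplicities $\dim E^u_{slow}$ and $\dim E^u - \dim E^u_{slow}$, matching the symmetric model.

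Third, we translate this arithmeticity into a pointwise curvature condition. At a closed geodesic $\gamma$ of $g$, the eigenvalues of the linearized Poincar\'e return map are determined by the Jacobi operator along $\gamma$. The observed 2:1 ratio of Lyapunov exponents, together with the identification of the slow versus fast subbundles coming from the quaternionic/octonionic Heisenberg grading, forces the Jacobi fields orthogonal to $\dot\gamma$ to split into invariant subbundles spanning 2-planes of two constant sectional curvatures $-a$ and $-4a$ along $\gamma$. Hence every closed geodesic of $g$ admits a parallel Jacobi field generating a constant curvature 2-plane; by density of periodic orbits for the Anosov flow and continuity of the slow/fast splittings, every geodesic of $g$ does. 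This is precisely higher hyperbolic rank of quaternionic or octonionic type. By the higher hyperbolic rank rigidity theorem (treated in Section \ref{subsec:intro_hypRank}), $g$ is itself locally quaternionic or octonionic hyperbolic, and Mostow rigidity then forces $g$ to be isometric to $g_0$ up to a scalar multiple.

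The main obstacle will be Step 3: matching the Lyapunov decomposition at each periodic orbit with the geometric splitting of Jacobi fields by sectional curvature, and propagating this information from a dense set of closed geodesics to all geodesics in a way strong enough to give higher hyperbolic rank pointwise. A secondary subtlety is avoiding circularity in Step 4: the rank-rigidity input must be the Riemannian version available independently of the Carnot-Carath\'eodory machinery used to prove Theorem \ref{thm:equal}, and the scaling factor appearing in Mostow's conclusion must be controlled by $C^2$-closeness of $g$ to $g_0$.
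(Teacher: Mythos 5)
Your approach diverges from the paper's in a way that creates two serious problems.

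First, and most fundamentally, your Step~4 is circular within this paper. You propose to finish by invoking ``the higher hyperbolic rank rigidity theorem (treated in Section~\ref{subsec:intro_hypRank})'', i.e.\ Theorem~\ref{thm:local-hyp}. But the paper's proof of Theorem~\ref{thm:local-hyp} itself ends by citing Theorem~\ref{thm:locally_rigid} (``Therefore by Theorem~\ref{thm:locally_rigid}, $M$ is symmetric for sufficiently $C^3$ nearby smooth metrics''). You flag this as a ``secondary subtlety'' and say the rank-rigidity input must be an independent Riemannian version, but no such independent result is available in the paper: Conjecture~\ref{conj:hyp} remains a conjecture, and the only unconditional quaternionic/octonionic rank-rigidity result in this text is the local one that depends on Theorem~\ref{thm:locally_rigid}. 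Unless you supply an external, pre-existing theorem (which the paper does not rely on and which, to my knowledge, does not exist without pinching hypotheses), the argument is not closed.

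Second, Step~3 does not go through. Theorem~\ref{thm:equal} gives, for each invariant ergodic measure $\mu$, unstable exponents $\log\lambda_\mu$ and $2\log\lambda_\mu$ with $\lambda_\mu$ depending on $\mu$. At a periodic orbit these are logarithms of moduli of eigenvalues of the return map, which are asymptotic averages; they do not determine the pointwise sectional curvature along the geodesic. In particular there is no reason for the Jacobi operator along a closed geodesic of $g$ to have two constant eigenvalues $-a$, $-4a$ merely because the Floquet multipliers are in $2{:}1$ ratio on the logarithmic scale, nor to get a parallel Jacobi field of the exact form $e^{t}\,\|_t w$. The paper's own route from exponent~$1$ to higher hyperbolic rank (Corollary~\ref{cor:lyapunovcase} and the lemma following it) crucially requires the a priori curvature bound $\kappa\geq -1$, which is not among the hypotheses of Theorem~\ref{thm:locally_rigid}. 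So even the weak version of your Step~3 fails here.

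The paper avoids both problems by short-circuiting at Step~2: once the unstable Lyapunov spectrum is shown to be arithmetic at every periodic orbit with the correct multiplicities (via Lemma~\ref{lem:generic}, Example~\ref{examp:nilrigid}, Corollary~\ref{cor:stable-asymmetry}, and Theorem~\ref{thm:equal}, using Remark~\ref{rem:metrics} to pass from a $C^1$-neighborhood of flows to a $C^2$-neighborhood of metrics), it directly invokes Butler's Theorem~1.5 of~\cite{Butler:17}, which characterizes the locally symmetric metric by its periodic Lyapunov spectrum without any detour through hyperbolic rank or Mostow rigidity. Your Step~1 (passing through Theorem~\ref{thm:locally_rigid_flow} to get an orbit equivalence first) is also an unnecessary detour: Butler's Theorem~1.5 works directly with the geodesic flow of $g$, and the orbit equivalence from Theorem~\ref{thm:locally_rigid_flow} (which uses Butler's Theorem~3.2) does not by itself give an isometry or even a conjugacy of geodesic flows.
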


Another class we can handle are Anosov diffeomorphisms on nilmanifolds arising from asymmetric nilpotent groups: let $M$ be a closed manifold and let $f_0$ be a transitive $C^\infty$ Anosov diffeomorphism such that tangent cones of unstable leaves exist everywhere and are isomorphic to a fixed asymmetric $(r+1)$-step Carnot nilpotent Lie group $N$. Let $E^u_{slow,f_0}$ be the horizontal distribution along unstable leaves that gives rise to the (graded) structure of $N$. (As it turns out, $E^u_{slow,f_0}$ is the slow distribution of smallest dimension which is still horizontal.) We obtain the following local spectral rigidity theorem.

\begin{theorem}\label{thm:samespec}
	There is a $C^1$ open neighborhood $U$ of $f_0$ in $\Diff^\infty(M)$ such that if $f \in U$ admits a smooth splitting $E^u_f=E^u_{fast,f}\oplus E^u_{slow,f}$ along unstable leaves with $\dim(E^u_{slow,f})=\dim(E^u_{slow,f_0})$, and $E^u_{slow,f}$ is sufficiently uniformly $C^{r}$ close along unstable leaves to $E^u_{slow,f_0}$, then for any invariant ergodic measure $\mu$ there is $\lambda_\mu>1$ such that the unstable Lyapunov exponents of $f$ with respect to $\mu$, are $\log\lambda_\mu,2\log\lambda_\mu,\dots,(r+1)\log\lambda_\mu$ occurring with the same multiplicitites as for $f_0$.
\end{theorem}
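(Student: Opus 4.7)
The plan is to verify the hypotheses of Theorem \ref{thm:equal} (Arithmeticity of the Lyapunov Spectrum) for $f$ with foliation $\mc{F} = \mc{F}^u_f$, ambient distribution $D = E^u_f$, and horizontal subdistribution $E = E^u_{slow,f}$; the conclusion of that theorem then yields the asserted arithmetic Lyapunov spectrum with the required multiplicities.

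First I would invoke the structural stability of Anosov diffeomorphisms. Shrinking $U$ if necessary, every $f \in U$ is Anosov with continuous splitting $TM = E^s_f \oplus E^u_f$ close to that of $f_0$. The unstable foliation $\mc{F}^u_f$ has $C^\infty$ leaves (since $f$ is $C^\infty$ and uniformly expanding along $E^u_f$) and is transversally Hölder. Topological transitivity transfers from $f_0$ via the Hölder orbit-conjugacy produced by structural stability, and the Anosov Closing Lemma gives the stable closing property of Definition \ref{def:SCP}. Transversal Hölder regularity of $df|_{E^u_f}$ is standard. Thus the dynamical prerequisites of Theorem \ref{thm:equal} are satisfied for all $f \in U$.

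Next I would establish persistence of the graded nilpotent structure. The hypothesis that $TC_v \mc{F}^u_{f_0}(v)$ is isomorphic to the fixed asymmetric $(r+1)$-step group $N$ for every $v \in M$ forces $E^u_{slow,f_0}$ to be generic of order $r$ everywhere, with iterated-bracket dimensions $\dim E^i_v$ constant on $M$ and matching the grading of $\mf{n}$. The assumed uniform $C^r$-closeness of $E^u_{slow,f}$ to $E^u_{slow,f_0}$ along leaves makes the iterated Lie brackets of $E^u_{slow,f}$ uniformly $C^0$-close to those of $E^u_{slow,f_0}$, so the generic structure of $E^u_{slow,f}$ persists with identical bracket dimensions; in particular the set $\Omega$ from the statement preceding Theorem \ref{thm:equal} equals all of $M$. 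Pointwise, the graded Lie algebra structure constants of $TC_v \mc{F}^u_f(v)$ lie arbitrarily close to those of $\mf{n}$ for $f$ sufficiently close to $f_0$, and since the asymmetric graded nilpotent groups form an open class (Corollary \ref{cor:stable-asymmetry}), the common graded isomorphism class supplied by the Foliated Tangent Cone Structure Theorem (Theorem \ref{thm:tangentcone}) is still $N$, which remains asymmetric and $(r+1)$-step.

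With every hypothesis in place and $\Omega = M$, for any $f$-invariant ergodic measure $\mu$ the support condition $\op{Supp}(\mu) \cap \Omega \neq \emptyset$ is automatic, so Theorem \ref{thm:equal} yields the Lyapunov exponents $\log\lambda_\mu, 2\log\lambda_\mu, \dots, (r+1)\log\lambda_\mu$ with multiplicities $\dim \mf{n}^i - \dim \mf{n}^{i-1}$ along unstable leaves, exactly matching those of $f_0$. The principal obstacle is the persistence step: the smooth subdistribution $E^u_{slow,f}$ is not canonically attached to $f$, so we rely crucially on the uniform $C^r$-closeness hypothesis to transport both genericness and the isomorphism type of the tangent cone through the perturbation, with the openness of the asymmetric class (Corollary \ref{cor:stable-asymmetry}) providing the essential algebraic stability that makes the common class $N$ robust.
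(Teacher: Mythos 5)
Your approach matches the paper's: verify the hypotheses of Theorem \ref{thm:equal} by establishing persistence of genericity and the graded nilpotent structure under uniformly $C^r$-close perturbation (in the spirit of Lemma \ref{lem:generic}), persistence of asymmetry via Corollary \ref{cor:stable-asymmetry}, and the standard Anosov dynamical hypotheses (transitivity, closing property, H\"older regularity). One minor imprecision: the common tangent-cone class of $f$ supplied by Corollary \ref{cor:uniqueclass} need not be literally isomorphic to $N$ --- it is only $C^0$-close to $N$ in the variety of Carnot nilpotent structures --- but since Corollary \ref{cor:stable-asymmetry} shows nearby Carnot groups remain asymmetric and the step length is locally constant, the argument is unaffected.
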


We may apply the above theorem to certain Anosov diffeomorphisms that arise on compact nilmanifolds arising as quotients of products of asymmetric nilpotent groups.

\begin{corollary}\label{cor:nilexamp}
	Let $N$ be an asymmetric $r$-step Carnot rational nilpotent group. Let $\Gamma$ be the lattice and $f_0$ the Anosov automorphism of $M=(N\times N)/\Gamma$ obtained from Example \ref{ex:alg-Anosov}. Then there is a $C^1$ open neighborhood $U$ of $f_0$ in $\Diff^\infty(M)$ such that if $f \in U$ admits a smooth splitting $E^u_f=E^u_{fast,f}\oplus E^u_{slow,f}$ along unstable leaves and $E^u_{slow,f}$ is sufficiently uniformly $C^{r}$ close along unstable leaves to $E^u_{slow,f_0}$, then 

	for any invariant ergodic measure $\mu$ there is $\lambda_\mu>1$ such that the unstable Lyapunov exponents of $f$ with respect to $\mu$, are $\log\lambda_\mu,2\log\lambda_\mu,\dots,(r+1)\log\lambda_\mu$ with the corresponding multiplicities as for $f_0$.
\end{corollary}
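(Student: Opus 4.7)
The plan is to verify that $f_0$ satisfies the structural hypotheses of Theorem \ref{thm:samespec}, after which the corollary is a direct application. The heart of the verification is identifying the tangent cones of the unstable leaves with $N$ and the slow distribution $E^u_{slow,f_0}$ with the Carnot horizontal distribution.

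First I would unpack Example \ref{ex:alg-Anosov}: the automorphism $f_0$ of $M=(N\times N)/\Gamma$ is induced by a grading-preserving hyperbolic Lie group automorphism of $N\times N$ that contracts one factor and expands the other. On the universal cover, the unstable foliation of $f_0$ is therefore by cosets of one copy of $N$, each carrying the left-invariant Carnot sub-Riemannian structure of $N$ itself. Consequently the tangent cone of every unstable leaf at every point is canonically isomorphic, as a graded nilpotent Lie group, to $N$, which is asymmetric by hypothesis. Transitivity of $f_0$ is standard for hyperbolic algebraic Anosov diffeomorphisms on nilmanifolds.

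Next I would identify $E^u_{slow,f_0}$ with the first layer $\mf{n}^0$. Since the graded automorphism induced by $f_0$ on the expanding summand is a homothety scaling $\mf{n}^i$ by $\lambda^{i+1}$ for some $\lambda>1$ (the grading extends the action on $\mf{n}^0$ via brackets), the slowest Lyapunov direction of $Df_0$ along the unstable leaf is exactly $\mf{n}^0$. This is smooth along leaves, horizontal, and bracket-generating, i.e. precisely the horizontal distribution giving rise to the graded structure of $N$, as required by Theorem \ref{thm:samespec}.

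Finally I would invoke Theorem \ref{thm:samespec}, taking the $C^1$-open neighborhood $U$ it provides. For $f\in U$ satisfying the splitting and uniform $C^r$-closeness hypotheses of the corollary, the dimensional equality $\dim(E^u_{slow,f}) = \dim(E^u_{slow,f_0})$ is automatic from $C^0$-closeness of continuous subspace fields, so the ambient dimensional hypothesis of Theorem \ref{thm:samespec} holds without needing to be restated. The theorem then delivers, for every $f$-invariant ergodic $\mu$, the Lyapunov spectrum $\log\lambda_\mu, 2\log\lambda_\mu,\dots,(r+1)\log\lambda_\mu$ with multiplicities $\dim\mf{n}^0, \dim\mf{n}^1-\dim\mf{n}^0, \dots, \dim\mf{n}^{r}-\dim\mf{n}^{r-1}$, which by construction match those of $f_0$. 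The only real bookkeeping step I anticipate is reconciling the step-counting convention ($r$-step in the corollary against $(r+1)$-step in Theorem \ref{thm:samespec}); once the identifications above are in place, no additional analytic work is required.
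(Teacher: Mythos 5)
Your proposal is correct and follows the same route as the paper, which simply asserts that the corollary is immediate from Theorem \ref{thm:samespec}; you fill in exactly the verifications that are implicit there (the unstable leaves are copies of $N$, hence their tangent cones are $N$ itself, $E^u_{slow,f_0}$ is the Carnot horizontal layer $\mf{n}^0$ because the graded automorphism is a homothety by asymmetry, transitivity is standard, and the $\dim E^u_{slow,f}$ hypothesis of Theorem \ref{thm:samespec} is automatic from closeness). Your observation about the $r$-step versus $(r+1)$-step bookkeeping is also accurate — the paper's wording is inconsistent between the corollary statement and Theorem \ref{thm:samespec}, and the intended reading is that the conclusion has exactly as many distinct exponents as the nilpotency step.
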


Using Theorem \ref{thm:Hei-spec}, we obtain a similar corollary to the one above, but for an Anosov automorphism of a nilmanifold arising as a quotient of a non-asymmetric nilpotent group.

\begin{corollary}\label{cor:nilexamp2}
	Let $N$ be the 3-dimensional Heisenberg group. Let $\Gamma$ be the lattice and $f_0$ the Anosov automorphism of $M=(N\times N)/\Gamma$ obtained from Example \ref{ex:Smale-Anosov}. Then there is a $C^1$ open neighborhood $U$ of $f_0$ in $\Diff^\infty(M)$ such that if $f \in U$ admits a smooth splitting $E^u_f=E^u_{fast,f}\oplus E^u_{slow,f}$ along unstable leaves and $E^u_{slow,f}$ is sufficiently uniformly $C^{1}$ close along unstable leaves to $E^u_{slow,f_0}$, then 

	for any invariant ergodic measure $\mu$ the unstable Lyapunov exponents for $f$ are $\log \lambda_1,\log \lambda_2$ and $\log \lambda_1+\log \lambda_2$ for some $\la_1>1$ and $\la_2>1$ depending on $\mu$.
\end{corollary}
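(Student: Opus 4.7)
The argument parallels that of Corollary \ref{cor:nilexamp}, with Theorem \ref{thm:Hei-spec} replacing Theorem \ref{thm:Lyap_exp} (equivalently, the Heisenberg analog of Theorem \ref{thm:equal} replacing Theorem \ref{thm:equal} itself). First I would invoke structural stability of Anosov diffeomorphisms: for $f$ in a sufficiently small $C^1$-neighborhood $U$ of $f_0$, $f$ is itself Anosov and topologically conjugate to $f_0$, so $f$ is topologically transitive, has dense periodic orbits, and satisfies the stable closing property via the Anosov shadowing lemma. By the hypothesis, $f$ is $C^\infty$ and admits a smooth splitting $E^u_f=E^u_{slow,f}\oplus E^u_{fast,f}$ along unstable leaves, with $\dim E^u_{slow,f}=2$ and $\dim E^u_{fast,f}=1$ matching $f_0$.

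Second, I would verify the hypotheses of the Foliated Tangent Cone Structure Theorem (Theorem \ref{thm:tangentcone}) for the distribution $E^u_{slow,f}$ along the unstable foliation $\mathcal{F}^u$. For $f_0$, the distribution $E^u_{slow,f_0}$ is exactly the horizontal layer of the Heisenberg structure on each unstable leaf, so $E^u_{slow,f_0}$ is generic of order $1$ and its tangent cone at every point is the $3$-dimensional Heisenberg group $H^3$. Since the Lie bracket $[\cdot,\cdot]\colon E^u_{slow}\times E^u_{slow}\to TM/E^u_{slow}$ depends continuously on the distribution in the $C^1$ topology along leaves, and since nondegeneracy of this bracket is an open condition, the uniform $C^1$-closeness hypothesis on $E^u_{slow,f}$ implies that $E^u_{slow,f}$ is still horizontal and generic at every point of every unstable leaf, with $2$-step tangent cone isomorphic to $H^3$. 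Thus $\Omega$ is the whole unstable leaf, and the Foliated Tangent Cone Structure Theorem produces a continuously varying family of tangent cones, all isomorphic to $H^3$ with left-invariant \CC metrics.

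Third, at any periodic point $p$ of period $k$ I would apply Theorem \ref{thm:Hei-spec} to the local $C^\infty$ diffeomorphism $f^k$ acting on the unstable leaf $\mathcal{F}^u(p)$ equipped with its \CC metric, with $p$ as a fixed point and $n=1$. This yields Lyapunov exponents $\log\Lambda_1(p),\log\Lambda_2(p)$ for $D_pf^k\rest{E^u_{slow}}$ and a third exponent $\log\Lambda_3(p)$ for $D_pf^k\rest{E^u_{fast}}$ satisfying $\log\Lambda_3(p)=\log\Lambda_1(p)+\log\Lambda_2(p)$. Dividing through by $k$ recovers the orbit Lyapunov spectrum.

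Finally, to pass from periodic points to an arbitrary ergodic invariant measure $\mu$, I would use the same machinery that drives Theorem \ref{thm:equal}: Katok's approximation theorem for hyperbolic ergodic measures produces periodic orbits whose orbit-averaged exponents (together with the associated splitting into slow and fast parts) converge to the Lyapunov data of $\mu$, and the stable closing property plus the continuity of the Carnot derivatives on $\Omega$ established in the second step guarantees that the split $(\log\Lambda_1,\log\Lambda_2)\,\mid\, \log\Lambda_3$ respects the given smooth slow/fast decomposition. The resonance $\log\Lambda_3=\log\Lambda_1+\log\Lambda_2$ then passes to the limit, yielding the claimed spectrum $\log\lambda_1,\log\lambda_2,\log\lambda_1+\log\lambda_2$ for $\mu$ with $\lambda_i>1$ (positivity being automatic on the unstable bundle). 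The main technical obstacle will be this last step, specifically ensuring that in the Katok approximation the exponents restricted to $E^u_{slow}$ and $E^u_{fast}$ converge separately rather than merely as a multiset; this is handled exactly as in the proof of Theorem \ref{thm:equal}, using the smoothness and uniform closeness of the splitting together with the stable closing property.
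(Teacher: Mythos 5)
Your proposal follows essentially the same route as the paper: you apply Theorem \ref{thm:Hei-spec} in place of Theorem \ref{thm:Lyap_exp} at periodic points (since $H^3$ is not asymmetric and so Theorem \ref{thm:equal} cannot be invoked directly), you verify that $C^1$-closeness keeps the bracket pairing $\mf{n}^0_v\times\mf{n}^0_v\to\mf{n}^1_v$ nondegenerate so that the tangent cones along unstable leaves remain isomorphic to $H^3$, and you then propagate the resonance from periodic points to an arbitrary ergodic measure. This is exactly the paper's two-point modification of the proof of Theorem \ref{thm:samespec} (itself an application of the machinery of Theorem \ref{thm:equal}), so the structure is right.

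Two small remarks. First, the periodic-to-ergodic step in Theorem \ref{thm:equal} is carried out via \cite[Proposition 3.2]{Kalinin11} (applied to the derivative cocycle $df\rest{D}$, as justified via \cite{GS2}), not Katok's approximation of hyperbolic measures by horseshoes; Kalinin's result is the one tailored to cocycles over systems with the closing property and is what the paper actually uses. Second, the ``main technical obstacle'' you anticipate---that the slow and fast exponents might not converge separately in the approximation---is not an issue at all: the dominated splitting forces the $E^u_{fast}$-exponent to be the strict maximum of the unstable spectrum (indeed, since $\log\Lambda_3 = \log\Lambda_1+\log\Lambda_2$ and $\log\Lambda_1>0$, this is also evident from the resonance itself), so the relation $\chi_3=\chi_1+\chi_2$ is a closed condition on the \emph{ordered} spectrum and automatically passes to the limit with no extra argument needed. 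Aside from these minor points, your proposal is sound.
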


\subsection{Hyperbolic Rank Rigidity}\label{subsec:intro_hypRank}

A  Riemannian manifold $M$ has {\em higher hyperbolic rank}  if  every geodesic $c(t)$ in $M$ has   a  Jacobi field $J(t)$ that makes sectional curvature $\kappa \cong -1$ with $c'(t)$ where $J(t) \neq 0$.
	Clearly, all rank one locally symmetric spaces of negative curvature have higher hyperbolic rank after a trivial rescaling of the metric. We made the following conjecture regarding the converse in our recent paper \cite[Conjecture 1.3]{CNS2018}:
	
\begin{conjecture}\label{conj:hyp}
	A closed Riemannian manifold with sectional curvatures $\kappa\geq -1$ has higher hyperbolic rank only if it is locally symmetric.
\end{conjecture}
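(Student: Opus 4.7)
The strategy is to leverage the arithmetic Lyapunov spectrum machinery of the paper applied to a distinguished slow unstable distribution coming from the hyperbolic rank hypothesis, and then to upgrade the resulting spectral rigidity to local symmetry using entropy-type invariants. At each point $v$ in the unit tangent bundle $SM$, let $E^u_{slow}(v)\subset E^u(v)$ be the span of those unstable Jacobi fields $J(t)$ along the geodesic $c_v$ for which $\kappa(c_v',J)\equiv -1$ wherever $J\neq 0$. The hyperbolic rank condition makes $E^u_{slow}$ nontrivial along every orbit, while the pinching $\kappa\geq -1$ forces each such $J$ to grow precisely like $e^t$, so that $E^u_{slow}$ is a genuine slow subbundle of the Oseledets flag. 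A standard Rauch comparison/Perron argument shows that $\dim E^u_{slow}$ is upper semicontinuous, constant on an open dense $\phi_t$-invariant set $\Omega\subset SM$, and that $E^u_{slow}$ is $C^\infty$ along unstable leaves on $\Omega$ because it is cut out by a smooth Jacobi/curvature equation.

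Next, one would verify that the iterated Lie brackets of smooth sections of $E^u_{slow}$ inside $E^u$ eventually span $E^u$, so that $E^u_{slow}$ endows each unstable leaf with a sub-Riemannian structure. Granting this H\"{o}rmander condition along leaves, the Foliated Tangent Cone Structure Theorem (Theorem \ref{thm:tangentcone}), together with the $\phi_t$-invariance of $E^u_{slow}$, attaches to almost every orbit an isomorphism class of $(r+1)$-step graded Carnot nilpotent Lie group $\mc{N}$. In the rank one symmetric models $\mc{N}$ is the nilradical of the Iwasawa decomposition; this is asymmetric in the quaternionic and octonionic cases (Proposition \ref{examp:nilrigid}), while in the complex hyperbolic case one instead invokes the Heisenberg refinement (Theorem \ref{thm:Hei-spec}). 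By openness of the asymmetric locus (Corollary \ref{cor:stable-asymmetry}) and the matching of step, rank and dimension forced by $\kappa\geq -1$ together with the sharp $\kappa=-1$ constraint on $E^u_{slow}$, this algebraic type is forced globally. Theorem \ref{thm:equal} then yields that for every ergodic $\phi_t$-invariant measure $\mu$ the unstable Lyapunov spectrum is exactly $\log\lambda_\mu,2\log\lambda_\mu,\ldots,(r+1)\log\lambda_\mu$ with the symmetric-space multiplicities.

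Finally, to conclude local symmetry, I would feed this uniform arithmetic Lyapunov data into a Besson-Courtois-Gallot style entropy rigidity argument: Pesin's formula converts the arithmetic spectrum with the correct multiplicities into an equality between the measure-theoretic entropies of the Liouville and Bowen-Margulis measures, and the universal BCG volume entropy bound then forces the metric to be locally symmetric. Alternatively, one can attempt to globalize Theorem \ref{thm:locally_rigid_flow} by exploiting the matching Carnot derivatives at a dense set of periodic orbits, using the structural stability of the Brin-Pesin asymptotic holonomy group advertised in the abstract to propagate the identification off closed orbits. The main obstacle is the bracket-generating step of the second paragraph: while it is algebraically automatic in the locally symmetric models, under only $\kappa\geq -1$ and higher hyperbolic rank one must a priori rule out a pathological situation in which $E^u_{slow}$ is integrable inside $E^u$ with reducible asymptotic holonomy, which would prevent the Carnot tangent cone from having the required nonabelian Heisenberg-type form. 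Handling this degeneration appears to require a delicate argument controlling the Brin-Pesin holonomy along closed orbits, for which the structural stability theorem developed in the body of the paper should be the central tool.
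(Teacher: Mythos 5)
The statement you are attempting to prove is Conjecture~\ref{conj:hyp}, which the paper presents explicitly as an \emph{open conjecture}, not a theorem: the paper proves only local versions (Theorems~\ref{thm:local-hyp} and~\ref{thm:complexcase}) valid for metrics $C^3$-close to a rank one locally symmetric model. Your strategy mirrors that local argument, and you honestly flag the bracket-generating obstacle, but there is a concrete error and two structural gaps worth spelling out. First, your definition of $E^u_{slow}$ is inverted: under $\kappa\geq -1$, a Jacobi field $J$ with $\kappa(c',J)\equiv -1$ has the form $e^t\|_t w$ and grows exactly like $e^t$, which by Rauch comparison is the \emph{fastest} possible rate; these span the exponent-$1$ bundle $E^u_1=E^u_{fast}$ of Lemma~\ref{lem:facts}, while the slow bundle $E^u_{slow}=E^u_{<1}$ is its Sasaki-orthogonal complement, the sum of unstable Lyapunov spaces of exponent strictly less than $1$. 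The horizontal distribution for the Carnot machinery must be this orthogonal complement, not the rank-$1$ bundle you describe.

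Second, two inputs that the paper imports from proximity to the symmetric model are unavailable globally, and they are exactly why this is a conjecture rather than a theorem. The H\"ormander bracket-generating and genericity conditions on $E^u_{slow}$ along unstable leaves, together with the identification of the tangent cone's isomorphism type, are obtained in the local theorems from Lemma~\ref{lem:generic} (the slow distribution stays $C^1$-close to the symmetric one, hence remains generic and horizontal with nearby tangent cone) followed by Corollary~\ref{cor:stable-asymmetry} (asymmetry is an open condition). Under only $\kappa\geq -1$ and higher hyperbolic rank there is no mechanism to exclude $E^u_{slow}$ being integrable or sub-bracket-generating, nor to pin down the nilpotent type where a tangent cone does exist; note the paper itself points out (end of Section~\ref{subsec:smooth_slow}) that some irreducible Anosov nilautomorphisms have integrable slow distributions. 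Likewise, your final step from an arithmetic Lyapunov spectrum to local symmetry is not automatic: the paper invokes Butler's theorems, which themselves require a $C^2$ neighborhood of the symmetric metric, and a global entropy argument of the type you sketch would require resolving Katok's entropy conjecture, exactly as the remark following Theorem~\ref{thm:complexcase} observes (and even locally the complex hyperbolic case only yields equality of the Liouville and maximal entropy measures, not local symmetry). So your proposal is a faithful reconstruction of the paper's local method, but as a proof of the global conjecture it reduces to the two open problems that motivate the conjecture.
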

	
In that paper, we also proved the special case of Conjecture \ref{conj:hyp} when the sectional curvatures satisfy  $-\frac 14 \geq \kappa \geq -1$ \cite[Theorem 1.1]{CNS2018}.  Using ergodicity of 2-frame flows, Constantine \cite[Corollary 1]{Constantine08} had already characterized constant curvature manifolds by the hyperbolic rank condition for either odd dimensional manifolds, just assuming non-positive curvature, or under strong pinching assumptions on the curvature, $-(.93)^2 \geq \kappa \geq -1$.  We refer to \cite{CNS2018} for more historical discussion, in particular of Hamenst\"{a}dt's hyperbolic rank rigidity theorem when the curvature is bounded above by -1. 

The current paper began as a sequel to our previous work on the hyperbolic rank rigidity conjecture \cite{CNS2018}, and indeed we do achieve new results towards Conjecture \ref{conj:hyp}. Along the way, we also advance techniques surrounding the Brin-Pesin asymptotic holonomy group for frame flows. We  further establish various versions of local rigidity of geodesic flows near the rank one locally symmetric ones.

One of the main results of the current paper, arising as an application of the results above, is the following local rigidity result for perturbations of locally symmetric metrics with higher hyperbolic rank.

\begin{theorem}\label{thm:local-hyp}
	Let $(M, g_0)$ be a closed quaternionic or octonionic hyperbolic locally symmetric manifold. Then there is an open $C^3$ neighborhood $U$ of $g_0$ such that for any $g\in U$, if $(M,g)$ has higher hyperbolic rank and $\kappa_g\ge -1$ then $(M,g)$ is locally symmetric. 
\end{theorem}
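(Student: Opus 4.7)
The plan is to reduce Theorem~\ref{thm:local-hyp} to Theorem~\ref{thm:locally_rigid} by verifying, for any $g \in U$ with higher hyperbolic rank and $\kappa_g \geq -1$, that the slow unstable distribution $E^u_{slow,g}$ exists, is $C^\infty$ along unstable leaves of the geodesic flow $\phi^g_t$, and is uniformly $C^1$ close to $E^u_{slow,g_0}$. Since a $C^3$-close metric is automatically $C^2$ close and smooth, once these regularity statements are in place the conclusion that $(M,g)$ is locally symmetric follows immediately from Theorem~\ref{thm:locally_rigid}.

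First I would construct the candidate slow distribution from the higher hyperbolic rank hypothesis. For each $v \in T^1_g M$, the hypothesis supplies a nonzero Jacobi field $J$ along $\gamma_v$ with $\kappa_g(J,\gamma_v') \equiv -1$ on its zero-free support. The lower bound $\kappa_g \geq -1$ then forces $-1$ to be the minimum of the sectional curvature operator in the $J$ direction, and a standard Riccati/extremality argument (as in the authors' previous work \cite{CNS2018}) shows that any such $J$ is in fact a parallel Jacobi field with growth $e^{\pm t}$ and that the set of all such Jacobi fields forms a linear subspace of the Jacobi field space. Decomposing by exponential growth rate and intersecting with the (un)stable bundles yields the candidate subdistributions $E^u_{slow,g} \subset E^u_g$ and $E^s_{slow,g} \subset E^s_g$; in the unperturbed case $g = g_0$ these recover the familiar slow distributions of the quaternionic or octonionic hyperbolic symmetric space.

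The crucial next step is to show that $E^u_{slow,g}$ has the correct rank and is $C^\infty$ along unstable leaves. Here I would invoke the paper's structural stability theorem for the Brin-Pesin asymptotic holonomy group of the frame flow (advertised in the abstract). For $g_0$ the asymptotic holonomy acts transitively on the sphere of directions complementary to $E^u_{slow,g_0}$ inside $E^u_{g_0}$, coming from the compact isotropy part of the isometry group of the symmetric space. Structural stability conjugates the asymptotic holonomy of $\phi^g_t$ back onto this model action, and since $E^u_{slow,g}$ is by construction invariant under parallel transport and hence under asymptotic holonomy, transitivity on the complementary fiber forces $\dim E^u_{slow,g}(v) = \dim E^u_{slow,g_0}(v)$ for all $v$. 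The same transitivity, combined with the $C^\infty$ leafwise geometry of the geodesic flow, upgrades the a priori only continuous holonomy-invariant distribution $E^u_{slow,g}$ to one that is $C^\infty$ along unstable leaves; uniform $C^1$ closeness to $E^u_{slow,g_0}$ then follows from continuity of the curvature tensor and its first derivatives under perturbations in the $C^3$ topology.

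The main obstacle will be this last regularity upgrade. Continuity of $E^u_{slow,g}$ and its invariance under the Brin-Pesin asymptotic holonomy are reasonably direct consequences of the Riccati construction, but elevating holonomy-invariance plus continuity to genuine $C^\infty$ smoothness along unstable leaves—strong enough to feed into Theorem~\ref{thm:locally_rigid}—requires the full strength of the structural stability theorem together with a careful bootstrap of leafwise regularity from the holonomy action. Once this step is carried out, Theorem~\ref{thm:locally_rigid} applies to give that $g$ is isometric to $g_0$, completing the proof of local rigidity.
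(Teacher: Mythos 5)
Your overall reduction to Theorem~\ref{thm:locally_rigid} is the paper's strategy, and you correctly identify both the Jacobi-field construction of the slow bundle and the Brin--Pesin asymptotic holonomy as the key ingredients. However, you misallocate the role of the holonomy argument, and this leads you to manufacture a ``main obstacle'' where there is none.

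In the paper, the holonomy argument (Proposition~\ref{prop:perturb-rank} and Corollary~\ref{cor:perturb-rank}) is used for exactly one purpose: to show that for $g$ in a $C^2$-neighborhood of $g_0$, the hyperbolic rank of \emph{every} unit vector is the same fixed value as for $g_0$, i.e.\ the open dense set $\what{\mc{R}}$ of Lemma~\ref{lem:facts}(2) is all of $SM$. Once rank constancy is known, the regularity of the fast bundle is \emph{already available}: Lemma~\ref{lem:facts}(3)--(4), which quote Proposition~3.3 and Lemma~4.2 of \cite{CNS2018}, give that $\what{\mc{E}}$ is globally $C^\infty$ on $\what{\mc{R}}$ and that its lift $E^u_1$ is uniformly $C^\infty$ along unstable leaves; this comes from the Jacobi/Riccati analysis, not from holonomy. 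Then $E^u_{slow}$ is the Sasaki-orthogonal complement of $E^u_1$ inside $E^u$ by Lemma~\ref{lem:facts}(5), hence is uniformly $C^\infty$ along leaves. Your proposed alternative---upgrading a merely continuous, holonomy-invariant distribution to $C^\infty$ along leaves by exploiting transitivity of the asymptotic holonomy---is not only unnecessary but problematic: unstable holonomies for a perturbed geodesic flow are in general only H\"older, so invariance under them cannot improve regularity beyond what you start with. The step you flag as the main obstacle is a dead end in the direction you describe; it is resolved by citing the smoothness already established in \cite{CNS2018}.

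One further point of precision: to apply Theorem~\ref{thm:locally_rigid} you need $E^u_{slow,g}$ to be sufficiently \emph{uniformly $C^1$ close} to $E^u_{slow,g_0}$ in the sense of Definition~\ref{def:unifCr}. Your appeal to ``continuity of the curvature tensor and its first derivatives under perturbations in the $C^3$ topology'' is in the right spirit for $E^u_1$, but for $E^u_{slow}$ you also need the unstable bundle $E^u$ itself to vary $C^1$ in the metric; the paper invokes the Permanence Theorem \cite[Theorem~6.8]{HirschPughShub} for precisely this, which is why the statement requires a $C^3$ (rather than $C^2$) neighborhood.

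Finally, a small but worth-noting imprecision: the Jacobi fields realizing curvature $-1$ span the \emph{fast} bundle $E^u_1$ (Lyapunov exponent $1$); $E^u_{slow}$ is its complement. Your phrase ``decomposing by exponential growth rate and intersecting with the unstable bundles yields $E^u_{slow,g}$'' reads as if the extremal Jacobi fields directly produce the slow bundle; they produce the fast one, and the slow bundle is obtained as the orthogonal complement.
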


By Mostow Rigidity, it follows that $(M,g)$ is isometric to $(M,g_0)$. This of course fails for hyperbolic surfaces. Nevertheless, local rigidity in the real hyperbolic case already follows from our previous paper \cite{CNS2018}.

Our methods do not give us full local rigidity of higher hyperbolic metrics nearby the complex hyperbolic metric. However, we still obtain equality of the Liouville measure and the Bowen-Margulis measure (the unique measure of maximal entropy) for higher rank perturbations. 

\begin{theorem}\label{thm:complexcase}
	Let $(M,g_0)$ be a closed complex hyperbolic manifold. There is an open neighborhood $U$ of $g_0$ in the $C^3$-topology among $C^\infty$ metrics with the following property: for $g\in U$ with higher hyperbolic rank and sectional curvature $\kappa\ge -1$, the Liouville measure on $SM$ coincides with the (unique) measure of maximal entropy for the geodesic flow of $g$ on $SM$.
\end{theorem}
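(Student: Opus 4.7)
The plan is to combine the Heisenberg additivity of Lyapunov exponents (Theorem \ref{thm:Hei-spec}) with a Pesin--Ruelle entropy comparison. The principal distinction from the quaternionic/octonionic case of Theorem \ref{thm:local-hyp} is that the Heisenberg group is \emph{not} asymmetric, so Theorem \ref{thm:equal} does not apply to force pointwise constancy of individual Lyapunov exponents; however, Theorem \ref{thm:Hei-spec} still constrains the sum of the slow exponents, and this turns out to suffice.

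First I would set up the perturbed splitting. Since $g$ is $C^3$-close to $g_0$, the geodesic flow $\phi_t^g$ on $SM$ remains Anosov by structural stability, and the unstable bundle admits a continuous splitting $E^u_g = E^u_{slow,g} \oplus E^u_{fast,g}$ of real dimensions $2(n-1)$ and $1$, close to that of $g_0$. Using the hypotheses $\kappa \ge -1$, higher hyperbolic rank, and the structural stability of the Brin--Pesin asymptotic holonomy group established earlier in the paper, one shows that $E^u_{slow,g}$ is $C^\infty$ along unstable leaves and that its leafwise bracket closure yields a Heisenberg tangent cone isomorphic to $H^{2(n-1)+1}$. Applying Theorem \ref{thm:Hei-spec} at every periodic orbit, together with the Livsic theorem for transitive Anosov flows, the continuous potential
\begin{equation*}
\varphi := \partial_t\big|_{t=0}\log\det\bigl(D\phi_t\rest{E^u_{slow,g}}\bigr) - (n-1)\,\partial_t\big|_{t=0}\log \bigl\|D\phi_t\rest{E^u_{fast,g}}\bigr\|
\end{equation*}
is a coboundary. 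By Oseledec this yields the identity $\sum_{i=1}^{2(n-1)}\chi_i^{slow}(\mu) = (n-1)\,\chi^{fast}(\mu)$ for every $\phi_t^g$-invariant ergodic measure $\mu$.

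Next I would pin down the fast exponent. The matrix Riccati equation for the unstable Jacobi tensor together with $\kappa \ge -1$ bounds every unstable Lyapunov exponent by $1$, while the higher hyperbolic rank hypothesis produces, along every orbit, a Jacobi field realising sectional curvature exactly $-1$ and hence an unstable direction of exponential growth rate $1$. Structural stability of the Brin--Pesin group identifies this direction with $E^u_{fast,g}$, forcing $\chi^{fast}(\mu) = 1$ for every ergodic $\mu$; combined with the previous step this gives $\sum_i \chi_i^{slow}(\mu) + \chi^{fast}(\mu) = n$, a constant independent of $\mu$. The conclusion then follows by Pesin--Ruelle: the Liouville measure $\mu_L$ is the SRB measure for $\phi_t^g$, so Pesin's formula gives $h_{\mu_L}(\phi_1^g)=n$; Ruelle's inequality gives $h_{\mu_{MME}}\le n$; and the variational principle gives $h_{top}(\phi_1^g)=h_{\mu_{MME}} \ge h_{\mu_L}=n$. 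Hence $\mu_L$ attains the maximal entropy and, by uniqueness of the measure of maximal entropy for transitive Anosov flows, $\mu_L = \mu_{MME}$. The main obstacle is identifying the extremal Jacobi-growth direction produced by the hyperbolic rank hypothesis with the dynamically defined $E^u_{fast,g}$, and it is precisely here that the structural stability of the Brin--Pesin asymptotic holonomy group developed in the paper is essential.
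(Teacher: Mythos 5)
Your proof is essentially correct and follows the same strategic path as the paper: establish that the tangent cone of unstable leaves is the Heisenberg group via the perturbed rank analysis, extract the Heisenberg resonance at periodic orbits from Theorem~\ref{thm:Hei-spec}, transfer it to all invariant measures, pin the fast exponent at $1$ via $\kappa\ge-1$ and higher hyperbolic rank, and conclude with Pesin's formula and Ruelle's inequality. The one genuine difference is the mechanism for transferring periodic data to general ergodic measures: you apply the Livsic theorem to the resonance potential $\varphi$ (which is H\"older by the regularity of the dominated splitting) to conclude $\int\varphi\,d\mu=0$ for every invariant $\mu$, whereas the paper invokes Kalinin's periodic approximation of Lyapunov exponents \cite[Theorem~1.4]{Kalinin11} together with Ruelle's inequality to bound $h_\mu$ directly. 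Both routes are valid; the Livsic route is perhaps slightly more structured (it gives an exact identity, not just an entropy bound), while the Kalinin route is what the paper uses elsewhere (Theorem~\ref{thm:equal}) and hence is more uniform.

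One small imprecision worth flagging: you credit the Brin--Pesin structural stability with identifying the extremal Jacobi-growth direction with $E^u_{fast,g}$. In fact, Corollary~\ref{cor:perturb-rank} (which rests on the Brin--Pesin analysis) is used to show that the hyperbolic rank of the perturbed metric is exactly~$1$, which gives $\dim E^u_1=1$ and hence the codimension-one horizontal distribution yielding a Heisenberg tangent cone. The identification $E^u_{fast,g}=E^u_1$ and the value $\chi^{fast}\equiv 1$ then follow from Lemma~\ref{lem:facts}(4)--(5): $E^u_1$ is precisely the Lyapunov space of exponent~$1$, determined pointwise by the Jacobi-field/curvature condition, and $\kappa\ge-1$ caps all unstable exponents at $1$ so that the orthogonal complement is genuinely the slow space. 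This doesn't change your conclusion, but the emphasis on Brin--Pesin for that particular step is misplaced.
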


\subsection{Local Rigidity of Projective Actions}\label{subsec:intro_boundary}

Sullivan initiated the study of local rigidity of lattice actions on boundary spheres in \cite{Sullivan85}. There have since appeared a number of results of this kind in various contexts (see e.g. \cite{Ghys93,Yue95,Asaoka17}). We use our methods to establish local rigidity for certain lattice actions on spheres. First recall that the identity component $G$ of the isometry group of a symmetric space acts by diffeomorphisms on its ideal boundary sphere preserving a smooth distribution $E_0$, the projection of the slow distribution. This action may also be described in an algebraic way as follows. If $P<G$ is any minimal parabolic subgroup, then we may identify the ideal boundary sphere with $G/P$ on which $G$ naturally acts on the left. We will refer to this action by $G$, or any subgroup of $G$, as a projective action.

\begin{theorem}\label{thm:boundary_rigid}
	Let $\rho_0:\Gamma\to \op{Diff}^\infty(S^{k})$ for $k=4n-1$ (resp. $k=15$) be the projective representation of a cocompact lattice $\Gamma<Sp(n,1)$ (resp.  $\Gamma<F_4^{-20}$). Let $\rho:\Gamma\to \op{Diff}^\infty(S^{k})$ be a $C^1$ close perturbation of  $\rho_0$. If $\rho$ preserves a $C^\infty$ distribution $E$, $C^1$ close to $E_0$, then $\rho$ is $C^\infty$ conjugate to $\rho_0$.
\end{theorem}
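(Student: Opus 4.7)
The plan is to combine the Carnot derivative machinery of Theorem \ref{thm:tangentconeN} with the asymmetry of the quaternionic/octonionic Heisenberg tangent cones of $E_0$ and with Pansu's conformal rigidity theorem to show that $\rho$ factors through the projective action of $G=Sp(n,1)$ or $F_4^{-20}$, and then finish by Weil's local rigidity theorem for cocompact lattices in $G$.

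\textbf{Step 1: Reducing to $E=E_0$ via Gray-type stability.} The distribution $E_0$ on $S^k=G/P$ is the horizontal distribution of the $G$-invariant Carnot structure, whose tangent cone at every point is isomorphic to the quaternionic (resp. octonionic) Heisenberg group $\mathcal{H}$; these Carnot groups are asymmetric by Proposition \ref{examp:nilrigid}. Since $E$ is $C^\infty$ and $C^1$-close to $E_0$, it satisfies H\"ormander's condition and is generic at every point, with tangent cones still isomorphic to $\mathcal{H}$ by openness of asymmetry (Corollary \ref{cor:stable-asymmetry}). A Moser/Gray-type deformation argument along the path $\{(1-s)E_0+sE\}_{s\in[0,1]}$, using the rigidity of this tangent-cone type to integrate the linearized equation, should produce a $C^\infty$ diffeomorphism $\psi$ of $S^k$, $C^1$-close to the identity, with $\psi_*E=E_0$. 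After replacing $\rho$ by $\psi\rho\psi^{-1}$, which remains $C^1$-close to $\rho_0$, we may assume $\rho$ preserves $E_0$.

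\textbf{Step 2: Carnot derivatives are homotheties, hence $\rho(\gamma)$ is Carnot-conformal.} For each $\gamma\in\Gamma$, the smooth diffeomorphism $\rho(\gamma)$ preserves the generic horizontal distribution $E_0$, so Theorem \ref{thm:tangentconeN}(2) supplies Carnot derivatives $(\rho(\gamma)_*)_x:TC_xS^k\to TC_{\rho(\gamma)(x)}S^k$ at every $x\in S^k$ that are graded Lie group isomorphisms of $\mathcal{H}$. Asymmetry of $\mathcal{H}$ (Lemma \ref{lem:asymmetric_homotheties}) forces each such isomorphism to be a homothety with respect to the standard Carnot-Carath\'eodory metric on $\mathcal{H}$. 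Consequently, $\rho(\gamma)$ is $1$-quasiconformal for the CC-metric on $(S^k,E_0)$.

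\textbf{Step 3: Factor through $G$ and apply Weil rigidity.} By Pansu's $1$-quasiconformal rigidity theorem \cite{Pansu}, every $1$-quasiconformal diffeomorphism of $(S^k,E_0)$ lies in the image of the projective $G$-action on $G/P$. Thus $\rho$ factors through a homomorphism $\bar\rho:\Gamma\to G$ which is $C^0$-close to the inclusion $\rho_0$. By Weil's local rigidity theorem for cocompact lattices in the rank-one groups $Sp(n,1)$ and $F_4^{-20}$, $\bar\rho$ is conjugate to $\rho_0$ by some $g\in G$; the projective action of $g$ on $S^k$ is a $C^\infty$ diffeomorphism. Composing this with the diffeomorphism $\psi$ from Step 1 yields the required smooth conjugation between $\rho$ and $\rho_0$.

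\textbf{Main obstacle.} The delicate technical point is Step 1: a Gray-type stability theorem for the Carnot structure in the quaternionic/octonionic Heisenberg setting. For honest contact structures this is Gray's classical theorem, and the rigidity of the tangent-cone type here makes the analogous Moser deformation plausible, but the precise verification requires care since the higher-step nilpotent setting is more subtle than the genuine contact case. One could alternatively try to bypass this by establishing intrinsically that the Carnot-conformal group of any manifold whose Carnot structure has tangent cones uniformly isomorphic to $\mathcal{H}$ is isomorphic to $G$, so that $\rho$ factors through $G$ without first conjugating $E$ to $E_0$; but such an intrinsic classification is itself a nontrivial undertaking.
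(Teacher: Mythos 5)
Your approach is genuinely different from the paper's, and it contains a serious gap that is not easy to repair. The paper does not attempt to conjugate $E$ to $E_0$ as a first step, nor does it argue pointwise conformality. Instead it runs the Ghys--Yue suspension construction (built with the Benveniste-type Lemma from the appendix), producing an Anosov flow $\psi_t$ on $SM$ whose center-stable foliation is smooth and whose slow unstable distribution projects to $E$ on the boundary. It then shows this slow distribution is horizontal and generic along unstable leaves with asymmetric tangent cone, so that asymmetry forces homotheties at periodic orbits, yielding the arithmetic $1{:}2$ Lyapunov spectrum everywhere (via Theorem~\ref{thm:Lyap_exp} and Kalinin's periodic approximation). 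Butler's Theorem 3.6 then gives a $C^\infty$ orbit equivalence of the flow with the locally symmetric one, and this descends to a smooth conjugacy of the boundary actions. The entire argument is dynamical, routed through periodic orbit data; no pointwise conformal statement is ever needed.

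The gap in your proposal is Step 1. There is no Gray-type stability theorem known (to my knowledge, and certainly none in this paper) for the quaternionic or octonionic contact structures on $S^{4n-1}$ or $S^{15}$. Gray's argument for genuine contact structures relies on the Moser trick for the contact form, and the obvious analogue in the higher-step Carnot setting does not go through: the linearized equation is an overdetermined system tied to the Tanaka prolongation, and there is no cohomological vanishing that comes for free. Without Step 1, your Steps 2 and 3 start from the wrong hypothesis, since $\rho$ only preserves $E$, not $E_0$. Moreover, even if one grants $E=E_0$, Step 2 needs more care: at a non-fixed point $x$ the Carnot derivative is a graded Lie algebra \emph{isomorphism} $TC_xS^k\to TC_{\rho(\gamma)x}S^k$, and asymmetry alone (Lemma~\ref{lem:asymmetric_homotheties}) only tells you that graded \emph{automorphisms} of a single $(\mathcal H,g_0)$ are homotheties; you would need to show the identifications $TC_xS^k\cong\mathcal H$ can be chosen uniformly isometrically (using the $L$-invariant inner product) and that this pointwise homothety condition implies $1$-quasiconformality for the induced CC metric. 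The paper's periodic-orbit route is precisely designed to avoid both of these difficulties.
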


We note that for a sufficiently small $C^1$ perturbation $\rho$ of $\rho_0$ there always exists a distribution $E$ which is $C^0$ close to $E_0$ by stability of dominated splittings (see Section \ref{subsec:dom}).
  
\vspace{1em}  

\bigskip
\noindent {\em Acknowledgments:} We thank Boris Kalinin, Karin Melnick and Victoria Sadovskaya for discussions about normal forms, Jeremy Tyson and Anton Lukyanenko for discussions on \CC metrics, Boris Hasselblatt and Kurt Vinhage for discussions on the regularity of distributions, Livio Flaminio for discussions on local entropy rigidity for complex hyperbolic space, Tracy Payne for discussions on nilmanifolds and in particular for pointing us to the Lauret examples, and finally Clark Butler for discussions on Lyapunov exponent rigidity. We thank the Departments of Mathematics at Indiana University and the University of Michigan for their hospitality while parts of this work were completed.

\section{Background}\label{section:background}

\subsection{Dynamics: Dominated Splittings}\label{subsec:dom}

We begin by reviewing some basic definitions and results about dominated splittings. We will be using these in some of the arguments below. 
\begin{definition}\label{def:DS}
	Let $V\to X$ be a $C^0$ vector bundle equipped with a continuously varying norm on fibers over a compact smooth manifold $X$ and $\La\subset X$ be an closed invariant subset for a flow $\varphi_t:X\to X$. We say that $\La$ admits a {\em dominated splitting of index $k$} for a cocycle $\alpha:V\times \R \to V$ over $\varphi_t$ provided:
	\begin{enumerate}
		\item $V$ restricted to $\La$ splits into two continuous subbundles $V\rest{\La}=E\oplus F$, both invariant under $\alpha_t$ and with $\dim E=k$,  and
		\item there exist $C>0$ and $0<\la<1$ such that for any $x\in\La$,
		\begin{equation}\label{eq:DS_norm}
			\norm{\alpha_t\rest{E_x}} \norm{\alpha_{-t}\rest{F_{\varphi_t(x)}}}\le C\la^t \quad\text{for all}\quad t\ge 0.
		\end{equation}
	\end{enumerate} 
\end{definition}

Morally, this definition states that any vector not in $E$ converges uniformly exponentially fast to the subbundle $F$ under $\alpha_t$.

Examples of dominated splittings include:
\begin{itemize}
	\item  Anosov geodesic flows $\varphi_t$ on $X$ where $V=TX$, $E=E^{cs}=E^s\oplus  E^c$ the center (weak) stable, $F=E^u$ the strong unstable and $\alpha=d\varphi_t$,
	\item  geodesic flows of locally symmetric manifolds $M$ of nonconstant negative curvature restricted to the unstable bundle where $X=SM$, the unit tangent bundle,  $V=E^u$ the strong unstable, $E=E^u_{slow}$, $F=E^u_{fast}$ and $\alpha=d\varphi_t$,
	\item a one parameter semisimple subgroup $\varphi_t$ of a semisimple Lie group where $X=G/\Gamma$ for a cocompact lattice $\Gamma<G$, $V=E^u$ the strong unstable, $E=E^u_{<\la}$, $F=E^u_{\geq \la}$ for any $\la$ in the interior of the Lyapunov spectrum and $\alpha=d\varphi_t$,
	\item geodesic flows of nonconstant negatively curved manifolds with higher hyperbolic rank where $V=E^u$, $E=E^u_{slow}$, $F=E^u_{fast}$ and $\alpha=d\varphi_\cdot$ (cf. Section \ref{sec:hyprank}),
\end{itemize}

An important property of dominated splittings is that they are stable under perturbations (cf. Proposition 2.3 of \cite{Sambarino} for the case of a derivative of a map and Corollary 2.8 of \cite{CrovisierPotrie} applied to the time one map of a flow). The proofs of the following proposition were written for the case of tangent bundles and derivative cocycles but apply equally well to the case of general vector bundles and cocycles as in Definition \ref{def:DS} (see also \cite{BochiPotrieSambarino19}).

\begin{proposition}[\cite{Sambarino} and \cite{CrovisierPotrie}]\label{prop:dom-perturb}
	Any sufficiently small $C^0$ perturbation in the space of a cocycle admitting a dominated splitting also admits a dominated splitting of the same index. 

\end{proposition}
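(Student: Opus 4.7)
The plan is to use the classical cone field characterization of dominated splittings, which turns the inequality \eqref{eq:DS_norm} into an open condition on the cocycle. First I would fix $T>0$ large enough (depending on $C$ and $\lambda$) so that $\|\alpha_T|_{E_x}\|\cdot\|\alpha_{-T}|_{F_{\varphi_T(x)}}\| \leq \tfrac12$ uniformly on $\Lambda$. Pick a continuous extension of the splitting $E\oplus F$ from $\Lambda$ to a neighborhood in $X$, and over each point $x$ in this neighborhood define cone fields $C^F_x(\eta) = \{v+w : v\in F_x,\, w\in E_x,\, \|w\|\leq \eta\|v\|\}$ and $C^E_x(\eta)$ symmetrically. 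A direct computation from the domination inequality shows that for $\eta$ small and $T$ large, $\alpha_T$ sends $C^F_x(\eta)$ strictly into $C^F_{\varphi_T(x)}(\eta/2)$, and symmetrically $\alpha_{-T}$ sends $C^E_x(\eta)$ strictly into $C^E_{\varphi_{-T}(x)}(\eta/2)$, with uniform gap.

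The key observation is that strict cone invariance is robust under $C^0$ perturbations of the cocycle: if $\tilde{\alpha}$ is sufficiently $C^0$-close to $\alpha$, then $\tilde{\alpha}_T$ is close to $\alpha_T$ on the compact set $\Lambda$, hence it still maps $C^F_x(\eta)$ into, say, $C^F_{\tilde\varphi_T(x)}(3\eta/4)$ (where $\tilde\varphi_t$ denotes the perturbed flow, if the base dynamics is also perturbed — in the setting of the proposition only the cocycle is perturbed, so $\varphi_t$ is unchanged, which is actually easier). The same holds for $\tilde\alpha_{-T}$ and $C^E$.

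Next I would produce the invariant subbundles $\tilde F$ and $\tilde E$ for $\tilde\alpha$ by a standard graph transform argument. For each $x\in\Lambda$ the set of $k'$-dimensional ($k' = \dim F$) subspaces contained in $C^F_x(\eta)$ forms a compact convex subset of the Grassmannian, and by forward iteration of $\tilde\alpha_T$ and taking the nested intersection $\tilde F_x = \bigcap_{n\geq 0} \tilde\alpha_{nT}(C^F_{\varphi_{-nT}(x)}(\eta))$, one obtains a single $\tilde\alpha$-invariant $k'$-dimensional subspace at each $x$; the uniform contraction on cone angle gives uniqueness and continuity of $x\mapsto \tilde F_x$. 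Applying the same procedure with $\tilde\alpha_{-T}$ yields $\tilde E_x$ of dimension $k$, and these two subbundles span $V|_\Lambda$ because they lie in transverse cones.

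Finally I would deduce the domination estimate \eqref{eq:DS_norm} for $\tilde\alpha$ with slightly worse constants. The strict invariance $\tilde\alpha_T(C^F_x(\eta))\subset C^F(\tilde\eta)$ with $\tilde\eta<\eta$ translates, via a standard computation comparing norms of vectors in $\tilde F$ versus $\tilde E$ under one application of $\tilde\alpha_T$, into a uniform bound $\|\tilde\alpha_T|_{\tilde E_x}\|\cdot\|\tilde\alpha_{-T}|_{\tilde F_{\varphi_T(x)}}\| \leq \tilde\mu < 1$; iterating this and interpolating across $t\in[0,T]$ using uniform boundedness of $\tilde\alpha_t$ yields \eqref{eq:DS_norm} for $\tilde\alpha$ with some $\tilde C>0$ and $\tilde\lambda<1$. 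The main technical step, and the one requiring most care, is the cone contraction computation: making sure the numerical relation between the original constants $C,\lambda$, the cone aperture $\eta$, the time $T$, and the allowable $C^0$ perturbation size is consistent, so that the perturbed cocycle still satisfies the same type of strict invariance. Everything else is then a routine consequence of uniform hyperbolic-style estimates on compact invariant sets.
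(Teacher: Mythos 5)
Your proposal is correct and follows the standard cone-field stability argument, which is exactly the approach used in the cited references \cite{Sambarino} and \cite{CrovisierPotrie}; the paper itself does not supply a proof but simply points to those sources and notes that their arguments extend from derivative cocycles on tangent bundles to general cocycles on vector bundles as in Definition~\ref{def:DS}. Your observation that only the cocycle, not the base dynamics, is perturbed — making the construction of $\tilde F_x$ and $\tilde E_x$ by nested intersection of cone images cleaner — is the right reading of the statement.
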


\subsection{Algebra: Varieties of Lie Algebras and Automorphism Groups}\label{subsec:nil_structures}

The space of $n$-dimensional Lie algebras can be identified with the space of Lie brackets on $\R^n$. Thus it is
$$\mc{L}_n=\set{c\in \Hom(\Lambda^2\R^n,\R^n):c(c(u,v),w)+c(c(v,w),u)+c(c(w,u),v)=0~\forall u,v,w\in \R^n }.$$

Since the defining condition is a quadratic polynomial in the structure coefficients of the bracket $c$ relative to the standard basis of $\R^n$, $\mc{L}_n$ is an affine subvariety of $\Hom(\Lambda^2\R^n,\R^n)$. 
The space of $n$-dimensional nilpotent Lie algebras is
$$\mc{NL}_n=\set{c\in \mc{L}_n: c(c(\dots c(u_1,u_2),u_3),\dots),u_n)=0 \text{ for all } u_1,\dots,u_n\in \R^n}.$$

This is an algebraic affine subvariety of $\mc{L}_n$ since the defining condition is polynomial in the structure coefficients. 

Two Lie algebras on $\R^n$ are isomorphic if and only if there is general linear transformation of $\R^n$ that maps one Lie bracket to the other Lie bracket. Thus, the moduli space of isomorphism classes of nilpotent Lie algebras of dimension $n$ is $\mc{NL}_n/\GL(n,\R)$.

We will need the following in Section \ref{sec:subRiem}.
\begin{theorem}[\cite{BorelSerre64} cf. Theorem 3.1.3 of \cite{Zimmer}]\label{thm:BorelSerre}
Orbits of real points of an algebraic group acting on the real points of a real algebraic variety are locally closed in the Hausdorff topology.
\end{theorem}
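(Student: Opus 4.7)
The plan is to reduce the problem to classical facts about orbits of complex algebraic group actions and then descend to the real points. First, I would complexify: let $G_\C$ and $V_\C$ denote the complexifications of $G$ and $V$, carrying the extended complex algebraic action, and fix $v\in V(\R)$. The starting input is a classical result (Chevalley, see e.g.\ Borel's \emph{Linear Algebraic Groups}) asserting that the complex orbit $\Omega:=G_\C\cdot v$ is a locally closed subset of $V_\C$ in the Zariski topology, i.e.\ Zariski-open in its irreducible closure $\bar\Omega$. Since Zariski-open subsets are Hausdorff-open, $\Omega$ is also open in $\bar\Omega$ in the Hausdorff topology on $V_\C$.

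Next I would compare the real orbit $O:=G(\R)\cdot v$ with the real points $\Omega(\R):=\Omega\cap V(\R)$. The orbit map $g\mapsto g\cdot v$ factors through $G_\C/\op{Stab}_{G_\C}(v)$, endowing $\Omega$ with the structure of a smooth complex variety of complex dimension $\dim_\R G-\dim_\R G(\R)_v$ (the stabilizer being defined over $\R$). The real orbit $O$ is, similarly, a smooth injectively immersed real submanifold of $V(\R)$ of the same real dimension. Applying the constant rank theorem to the real analytic orbit map $G(\R)\to V(\R)$, one sees that $O$ is an initial submanifold of $V(\R)$ whose real dimension equals the local real dimension of $\Omega(\R)$ at each point of $O$; hence $O$ is open in $\Omega(\R)$ in the Hausdorff topology. (Equivalently, invoking Whitney's finiteness theorem, $\Omega(\R)$ has finitely many connected components, and $O$ --- being the image of the connected group $G(\R)^0$-action together with finitely many translates --- is a union of such components near each of its points.)

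Finally I would combine the two steps. Since $\Omega$ is Hausdorff-open in $\bar\Omega$ in $V_\C$, intersecting with $V(\R)$ shows $\Omega(\R)$ is open in $\bar\Omega(\R)$, and $\bar\Omega(\R)$ is closed in $V(\R)$. Hence $\Omega(\R)$ is locally closed in $V(\R)$ in the Hausdorff topology, and by the previous step $O$ is open in $\Omega(\R)$, so $O$ is itself locally closed in $V(\R)$.

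The main obstacle is the middle step: ensuring that the real orbit truly captures \emph{all} of some Hausdorff-neighborhood within $\Omega(\R)$. The subtlety is that a complex orbit can have real points not lying in $O$ (they belong to other $G(\R)$-orbits representing different real forms of the same complex orbit). The rigorous way to handle this is via the constant rank theorem applied to the orbit map, whose rank along $G(\R)$ equals $\dim_\R O$, matched against the local real dimension of $\Omega(\R)$ at points of $O$; alternatively one appeals to Whitney's theorem on finiteness of connected components of real algebraic sets combined with a Galois-action argument on the components of $\Omega(\R)$.
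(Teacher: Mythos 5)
The paper does not prove this statement; it cites Borel--Serre and Zimmer (Theorem 3.1.3) as a black box. Your reconstruction is correct and matches the standard argument in those references: pass to the complexification where Chevalley gives Zariski (hence Hausdorff) local closedness of $G_{\C}\cdot v$, then use the dimension count $\dim_\R G(\R) - \dim_\R \operatorname{Stab}_{G(\R)}(v) = \dim_\C \Omega = \dim_\R \Omega(\R)$ (valid because stabilizers of real points are defined over $\R$ and algebraic groups in characteristic zero are smooth) together with the constant-rank/submersion theorem to show $G(\R)\cdot v$ is open in $\Omega(\R)$, and finally that an open subset of a locally closed set is locally closed. One small caveat: the parenthetical Whitney-finiteness alternative you sketch is not self-contained as written --- knowing that $\Omega(\R)$ has finitely many components does not by itself show that $O$ is a union of them unless you already know $O$ is open in $\Omega(\R)$, which is exactly what the dimension/rank argument supplies --- so the constant-rank route is the one that closes the argument.
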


As a consequence, the orbits of $\GL(n,\R)$ on $\mc{NL}_n$ are locally closed and $\mc{NL}_n/\GL(n,\R)$ is countably separated and hence Hausdorff.

We now discuss the automorphism group of a Lie group. We start with the following proposition.
\begin{proposition}
	Let $G$ be a connected and simply connected Lie group with Lie algebra $\mf{g}$. Then the automorphism group $\Aut(G)$ is naturally isomorphic as a Lie group to $\Aut(\mf{g})$ and the latter is a real algebraic variety in $\GL(\mf{g})$.
\end{proposition}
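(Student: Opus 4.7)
The plan is to build the isomorphism $\Aut(G)\cong \Aut(\mf{g})$ via the differential at the identity and then cut out $\Aut(\mf{g})$ inside $\GL(\mf{g})$ by polynomial equations. First I would define the map $D:\Aut(G)\to \Aut(\mf{g})$ by $D(\phi)=d\phi_e$. The chain rule plus naturality of the Lie bracket (the fact that $d\phi_e[X,Y]=[d\phi_e X,d\phi_e Y]$ for any Lie group homomorphism $\phi$) shows $D$ lands in $\Aut(\mf{g})$ and is a homomorphism of abstract groups.

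Next I would verify bijectivity of $D$ using the two hypotheses on $G$ separately. Injectivity uses only connectedness: if $d\phi_e=\id$, then $\phi\of \exp =\exp$ on $\mf{g}$, so $\phi$ fixes a neighborhood $U$ of $e$, and since $G$ is connected it is generated by $U$, forcing $\phi=\id$. Surjectivity uses simple connectedness via the integration theorem for Lie algebra homomorphisms: given $\alpha\in \Aut(\mf{g})$, the graph $\Gamma_\alpha\subset \mf{g}\oplus \mf{g}$ is a Lie subalgebra, and since $G\times G$ is simply connected it integrates to a connected Lie subgroup that is the graph of a unique Lie group homomorphism $\phi:G\to G$ with $d\phi_e=\alpha$; applying the same construction to $\alpha^{-1}$ produces a two-sided inverse, so $\phi\in \Aut(G)$.

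For the algebraic structure, fix a basis $\set{X_1,\dots,X_n}$ of $\mf{g}$ with structure constants $c_{ij}^k$. Then $\Aut(\mf{g})$ is the set of $A\in \GL(\mf{g})$ such that $A[X_i,X_j]=[AX_i,AX_j]$ for all $i,j$; written in coordinates, each such equation is a polynomial (quadratic in the entries of $A$) identity. Hence $\Aut(\mf{g})$ is a Zariski closed subgroup of $\GL(\mf{g})$, i.e., a real algebraic subvariety. By Cartan's closed subgroup theorem it is in particular a Lie subgroup of $\GL(\mf{g})$, and the bijection $D$ transports this Lie group structure to $\Aut(G)$. To promote $D$ to a diffeomorphism one observes that on a neighborhood of the identity the map $D^{-1}$ is given by conjugating $\alpha$ with $\exp$ in a chart, which is smooth.

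The only subtle step is the surjectivity argument, which genuinely requires simple connectedness of $G$; without it, Lie algebra automorphisms need only integrate to automorphisms of the universal cover, descending to $G$ only when they preserve the kernel of the covering. Everything else is either formal functoriality of the Lie functor or the standard observation that bilinearity of the bracket makes the defining conditions of $\Aut(\mf{g})$ polynomial.
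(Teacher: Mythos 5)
Your proof is correct and follows essentially the same route as the paper: identify $\Aut(G)$ with $\Aut(\mathfrak{g})$ via the differential at $e$, and cut out $\Aut(\mathfrak{g})$ inside $\GL(\mathfrak{g})$ by the quadratic bracket-preservation equations. The only difference is expository — the paper simply cites Theorem 2.75 of Varadarajan for the identification $\Aut(G)\cong\Aut(\mathfrak{g})$, whereas you supply the standard proof of that citation directly (injectivity from connectedness and generation by a neighborhood of the identity, surjectivity from simple connectedness via the graph-subalgebra form of Lie's second theorem).
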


\begin{proof}
	For connected $G$, $\Aut(G)$ is a closed subgroup of $\Aut(\mf{g})$ with Lie algebra $\Der(\mf{g})$ and these coincide when $G$ is simply connected (cf. Theorem 2.75 of \cite{Varadarajan}).  Picking a basis of the Lie algebra, a Lie algebra automorphism is completely determined by its associated matrix with respect to this basis. The bracket relations on basis elements give us quadratic polynomial relations in these matrix coefficients. Moreover, any matrix satisfying these relations induces an automorphism. Hence, these polynomial relations provide $\Aut(\mf{g})$ with the structure of a real affine variety in $\GL(\mf{g})\subset \mb{P}_{\R}^{(\dim \mf{g})^2}$.
\end{proof}

A nilpotent Lie algebra $\mf{g}$ is {\em graded} if it admits a decomposition $\mf{g}=\bigoplus_{i=0}^r \mf{g}^i$ where $[\mf{g}^i,\mf{g}^j]\subseteq \mf{g}^{i+j}$ for all $i,j \geq 0$ with $i+j\leq r$ and $[\mf{g}^i,\mf{g}^j]=0$ otherwise. We call a nilpotent Lie algebra $\mf{g}$ {\em Carnot} if it is graded and $[\mf{g}^i,\mf{g}^j] = \mf{g}^{i+j}$ for all $i,j \geq 0$ and $i+j\leq r$. A nilpotent Lie group $G$ is called {\em  graded} (resp. {\em Carnot}) if its Lie algebra is graded (resp. Carnot). We also call an automorphism of a graded Lie algebra $\mf{g}$ graded if it respects the grading. Finally an automorphism of a connected graded nilpotent group $G$ is {\em graded} if its corresponding automorphism in $\Aut(\mf{g})$ is graded. 

\begin{proposition}\label{prop:Aut_algebraic}
	Let $G$ be a connected and simply connected graded nilpotent Lie group with Lie algebra $\mf{g}=\bigoplus_{i=0}^r \mf{g}^i$. Then the graded automorphism group $\Aut_g(G)$ is isomorphic as a Lie group to the graded automorphism group $\Aut_g(\mf{g})$ of the Lie algebra and the latter is a real algebraic variety in $\bigoplus_{i=0}^r \GL(\mf{g}^i)<\GL(\mf{g})$. In particular, $\Aut_g(\mf{g})$ is a real algebraic group.
\end{proposition}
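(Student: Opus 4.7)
The plan is to reduce to the Lie-algebra side using the preceding proposition and then cut out $\Aut_g(\mf{g})$ by polynomial equations. Since $G$ is connected and simply connected, the previous proposition gives an isomorphism of Lie groups $\Aut(G)\cong \Aut(\mf{g})$. Under this identification, a graded automorphism of $G$ corresponds, by definition, to an automorphism of $\mf{g}$ preserving each summand $\mf{g}^i$. The subset $\Aut_g(\mf{g})\subset \Aut(\mf{g})$ is the intersection with the linear subspace of $\End(\mf{g})$ consisting of endomorphisms preserving each $\mf{g}^i$, so it is closed and inherits the structure of a Lie subgroup. Hence $\Aut_g(G)\cong \Aut_g(\mf{g})$ as Lie groups.

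Next, fix a basis of $\mf{g}$ adapted to the grading, i.e., a union of bases of the summands $\mf{g}^0,\dots,\mf{g}^r$. With respect to such a basis, any $\varphi\in \Aut_g(\mf{g})$ is represented by a block-diagonal matrix whose $i$-th block lies in $\GL(\mf{g}^i)$ (invertibility on each block follows because $\varphi$ is a vector space isomorphism of $\mf{g}$ that preserves the direct sum decomposition, so each $\varphi\rest{\mf{g}^i}$ must itself be a linear isomorphism). This yields a canonical inclusion
\[
\Aut_g(\mf{g})\hookrightarrow \bigoplus_{i=0}^r \GL(\mf{g}^i)\subset \GL(\mf{g}).
\]
The subgroup $\bigoplus_{i=0}^r \GL(\mf{g}^i)$ is cut out in $\GL(\mf{g})$ by the vanishing of all off-diagonal blocks, which is a system of linear equations in the matrix entries, so it is a real algebraic subgroup.

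It remains to show $\Aut_g(\mf{g})$ is a real algebraic subvariety of $\bigoplus_i \GL(\mf{g}^i)$. Exactly as in the preceding proposition, if $\{e_a\}$ is the adapted basis with structure constants $[e_a,e_b]=\sum_c c_{ab}^c\, e_c$, then the automorphism condition $\varphi([e_a,e_b])=[\varphi(e_a),\varphi(e_b)]$ translates into quadratic polynomial equations in the matrix entries of $\varphi$. The solution set of these equations inside $\bigoplus_i \GL(\mf{g}^i)$ is exactly $\Aut_g(\mf{g})$, so the latter is a real algebraic subvariety of $\GL(\mf{g})$. Since it is closed under matrix multiplication and inversion (inversion being algebraic on $\GL$), it is a real algebraic group.

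The argument is largely a bookkeeping exercise built on the preceding proposition, with only two small points deserving care: verifying that every graded automorphism actually lands in the product of $\GL(\mf{g}^i)$ rather than merely in endomorphisms preserving the grading, and checking that the Jacobi/bracket-preservation conditions, when restricted to a grading-adapted basis, remain polynomial on the block-diagonal subspace. Both points are immediate once the adapted basis has been fixed, so the only mild obstacle is notational.
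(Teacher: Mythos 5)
Your proof is correct and follows essentially the same route as the paper: reduce to the Lie algebra side via the preceding proposition, observe that the grading-preservation condition adds only linear relations while bracket-preservation remains quadratic, and conclude that $\Aut_g(\mf{g})$ is a real algebraic subvariety of $\bigoplus_i \GL(\mf{g}^i)$ and hence an algebraic group. The extra details you supply (block-diagonal form in an adapted basis, invertibility of each $\varphi\rest{\mf{g}^i}$, algebraicity of inversion) are fine but do not change the argument.
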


\begin{proof}
	It is clear that automorphisms preserving the grading are a subgroup of automorphisms lying in $\bigoplus_{i=0}^r \GL(\mf{g}^i)<\GL(\mf{g})$. It is easy to check that the subgroup $\Aut_g(\mf{g})<\Aut(\mf{g})$ corresponds to the subgroup $\Aut_g(G)<\Aut(G)$ under the natural isomorphism given by the previous proposition. It remains to show that $\Aut_g(\mf{g})$ is a subvariety of $\Aut(\mf{g})$. However, as the graded subspaces are linear, the additional defining equations for the corresponding matrix elements are linear relations. Hence $\Aut_g(\mf{g})$ is also a real algebraic subvariety. Lastly, groups which are real algebraic subvarieties of an algebraic group are themselves algebraic groups.
\end{proof}

When $G$ is a simply connected graded nilpotent Lie group we can define a one parameter group of automorphisms $\delta_s:G\to G$ called the {\em  dilations}  whose associated Lie algebra automorphisms $\delta_s:\mf{g}\to\mf{g}$ are the linear maps given by $\delta_s(x)=s^i x$ for all $x\in \mf{g}^i$. 
We denote the group of all dilations by $\mc{D}=\set{\delta_s: s\in \R_+}$ and observe that it is isomorphic to the multiplicative group $\R_+$. The dilations are always in the center of $\Aut_g(\mf{g})$ since they are multiples of the identity on each level.

If $\mf{g}$ is a Carnot Lie algebra then any automorphism in $\Aut_g(\mf{g})<\bigoplus_{i=0}^r \GL(\mf{g}^i)$ is completely determined by its component in $\GL(\mf{g}^0)$.  Consequently we may identify $\Aut_g(\mf{g})$ with a subgroup of $\Aut_g(\mf{g}^0)$ and write $\Aut_g(\mf{g})=L \mc{D}$ where $L<\SL_{\pm}(\mf{g}^0)$. (Here $\SL_{\pm}(\mf{g}^0)$ is the subgroup of $\GL(\mf{g}^0)$ with elements of determinant $\pm 1$.) Henceforth we set $d_i=\dim \mf{g}^i$ for $i=0,\dots,r$.

\begin{definition}\label{def:asymmetric}
We call a graded nilpotent Lie group $G$, and its Lie algebra $\mf{g}$, \emph{asymmetric} if its graded automorphism group has the form $\Aut_g(\mf{g})=L\mc{D}$ where $L$ belongs to a compact subgroup of $\SL_{\pm}(d_0,\R)$. 
\end{definition}

Recall that a surjective map $f:X\to Y$ between metric spaces is a homothety if $d(f(x),f(y))=C d(x,y)$ for some constant $C>0$ and every $x,y\in X$. If a graded nilpotent Lie group $G$ is Carnot, then we may equip $G$ with a left-invariant \CC metric arising from the right invariant distribution corresponding to $\mf{g}^0$ equipped with any inner product. In this case, the dilations $\delta_s$ are homotheties as they simply act by the appropriate scalar multiple of the identity on each level preserving the bracket relations. While other nontrivial automorphisms may be homotheties, for example those that restrict to an isometry on the first level, in general they need not all be homotheties. The next lemma shows that being asymmetric is equivalent to $\Aut_g(\mf{g})$ consisting of homotheties for some \CC metric.

\begin{lemma}\label{lem:asymmetric_homotheties}
Let $\mf{g}$ be a Carnot Lie algebra equipped with a right invariant \CC metric. If $\Aut_g(\mf{g})$ consists entirely of homotheties, then $\mf{g}$ is asymmetric. Conversely, if $\mf{g}$ is an asymmetric Carnot Lie algebra then $\Aut_g(\mf{g})$ consists of homotheties with respect to some right invariant \CC metric.
\end{lemma}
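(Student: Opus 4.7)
The plan exploits the decomposition $\Aut_g(\mf{g})=L\mc{D}$ for a Carnot algebra (established just before the lemma) together with the observation that, because the dilation $\delta_s$ scales $\mf{g}^0$ by $s$, the restriction of any graded automorphism to $\mf{g}^0$ splits as an element of $L\subset\SL_\pm(\mf{g}^0)$ composed with a positive scalar. Throughout, I will use that a left/right-invariant \CC metric is determined by an inner product on $\mf{g}^0$, and that the Carnot derivative of any $\phi\in\Aut_g(\mf{g})$ acting on $G$ (viewed as its own tangent cone at the identity) coincides with $\phi$ itself, so that $\phi$ preserves the horizontal distribution and $\phi_*|_{\mf{g}^0}$ is just $\phi|_{\mf{g}^0}$.

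For the first implication, fix the inner product $\langle\cdot,\cdot\rangle$ on $\mf{g}^0$ giving rise to the hypothesized \CC metric $d_C$, and let $l\in L$. By hypothesis $l$ is a $d_C$-homothety with some ratio $C_l>0$. Because $l$ preserves the horizontal distribution and $l|_{\mf{g}^0}$ acts as the Pansu/Carnot derivative at $e$, the homothety condition forces $|l(v)|=C_l|v|$ for every $v\in\mf{g}^0$. Taking determinants on $\mf{g}^0$ yields $|\!\det l|_{\mf{g}^0}|=C_l^{d_0}$, and since $l\in\SL_\pm(\mf{g}^0)$ by construction of $L$, we get $C_l=1$. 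Thus $L$ embeds into the orthogonal group $O(\mf{g}^0,\langle\cdot,\cdot\rangle)\subset \SL_\pm(d_0,\R)$, which is a compact subgroup; this is exactly the asymmetry condition in Definition~\ref{def:asymmetric}.

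For the converse, assume $\mf{g}$ is asymmetric, so $L$ lies in a compact subgroup $K\subset\SL_\pm(d_0,\R)$. Choose any background inner product on $\mf{g}^0$ and average it against Haar measure on $K$ to produce a $K$-invariant (hence $L$-invariant) inner product $\langle\cdot,\cdot\rangle$. Use this to define the right-invariant \CC metric $d_C$. Any $\phi\in\Aut_g(\mf{g})$ can be written as $\phi=l\delta_s$ with $l\in L$ and $\delta_s\in\mc{D}$; on $\mf{g}^0$ this acts as $v\mapsto s\,l(v)$, so $|\phi(v)|=s|l(v)|=s|v|$. Since $\phi$ preserves the horizontal distribution and uniformly scales horizontal tangent vectors by the factor $s$, the induced length functional on horizontal curves scales by $s$, whence $d_C(\phi(x),\phi(y))=s\,d_C(x,y)$, i.e. $\phi$ is a homothety.

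The only genuinely substantive step is the determinant argument in the first direction; everything else is either the standard averaging trick for compact groups or a direct computation using that homotheties of a \CC metric are detected on the horizontal layer $\mf{g}^0$. The structural input doing the real work is the preceding identification $\Aut_g(\mf{g})=L\mc{D}$ with $L\subset\SL_\pm(\mf{g}^0)$, which is precisely what lets us convert "homothety" into "isometry on $\mf{g}^0$" via the determinant.
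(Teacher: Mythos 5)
Your proof is correct and follows essentially the same strategy as the paper's: both directions hinge on the decomposition $\Aut_g(\mf{g})=L\mc{D}$ with $L<\SL_\pm(\mf{g}^0)$ and the fact that a \CC metric is determined by the inner product on $\mf{g}^0$. You supply two details that the paper leaves implicit, and both are sound: (i) in the forward direction, you observe that a homothety with ratio $C_l$ scales the horizontal inner product uniformly, then use $|\det(l|_{\mf{g}^0})|=C_l^{d_0}=1$ to force $C_l=1$, which is the explicit reason behind the paper's terse "each element of $L$ must be an isometry"; (ii) in the converse, you obtain the $L$-invariant inner product by Haar averaging over the compact group $K\supset L$, whereas the paper phrases the same thing as conjugating $K$ into $\OO(d_0,\R)$ — these are two formulations of the same standard fact about compact linear groups, so the content is identical. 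No gaps.
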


\begin{proof}
If $\Aut_g(\mf{g})=L\mc{D}$ consists of homotheties of a \CC metric, then $L<\SL_{\pm}(\mf{g}^0)$ and each element of $L$ is uniquely determined by its action on $\mf{g}^0$. Moreover, each element of $L$ must be an isometry with respect to the given inner product $\inner{\cdot,\cdot}_0$ on $\mf{g}^0$. Hence $L$ belongs to the compact group $\OO(d_0,\inner{\cdot,\cdot}_0)$.

Conversely, if $\Aut_g(\mf{g})=L\mc{D}$ where $L<K$ for some compact subgroup $K<\SL_{\pm}(d_0,\R)$, then $K$ is conjugate into $\OO(d_0,\R)$. Equivalently, $K$ belongs to $\OO(d_0,\inner{\cdot,\cdot}_0)$ for the inner product on $\mf{g}^0$ conjugate to the standard one. This inner product then induces a \CC metric for which $\Aut_g(\mf{g})$ are homotheties.
\end{proof}

Asymmetric Carnot nilpotent groups may at first appear to be somewhat special. However, there are many examples and these algebras are generic in many situations. 

\begin{examples}\label{examp:nilrigid}
The following classes of graded nilpotent Lie groups $G$ are asymmetric.
\begin{enumerate}
\item (\cite[Proposition 13.1]{Pansu}) $G$ belongs to a certain Zariski open dense subset of all 2-step groups with grading $\mf{g}=\mf{g}^0\bigoplus \mf{g}^1$ where $\dim \mf{g}^0\geq 10$ and is even and $3\leq \dim \mf{g}^1 \leq 2 \dim \mf{g}^0-4$,
\item (\cite[Proposition 10.1]{Pansu}) $G$ is the (2-step) maximal unipotent subgroup of the isometry group of $\mb{H}_{\mb{H}}^n$ or $\mb{H}_{\KO}^2$ for any $n\geq 2$, 
\item (\cite[Section 13]{Pansu}) $G$ has an exceptional $r+1$-step {\em filiform} algebra of even dimension $r+3$ given by $\mf{g}=\bigoplus_{i=0}^{(r+1)}\mf{g}^i$ with $\mf{g}^0=\Span\set{y_0,z_0}$ and $\mf{g}^i=\Span\set{y_{i}}$ for $1\leq i\leq r+1$ with bracket relations $[z_0,y_i]=y_{i+1}$ and $[y_i,y_{r-i}]=(-1)^i y_{r+1}$ for $0\leq i\leq r$ and all other brackets $0$. (The filiform algebras are the graded algebras with minimal possible dimensions of each grading strata, there are two in each even dimension and one in each odd dimension \cite{Vergne}. However the nonexceptional ones admit nontrivial unipotent graded automorphisms.) Or,
\item (\cite[Theorem 1.2]{LeDonneEtAl14}) any $G$ with $\Aut(G)=\mc{D}$. Such groups are called {\em ultrarigid} nilpotent groups. There are examples of these in at least dimensions 16 and 17 (Examples 3.4 and 3.5 in the cited paper).
\end{enumerate}
\end{examples}

We do not know if asymmetry is always a generic property of Carnot nilpotent groups with respect to the Zariski topology in sufficiently high dimensions. It it is an open condition in the Hausdorff topology (see Corollary \ref{cor:stable-asymmetry}).

We observe that the underlying space of all Lie algebras $\mf{g}\in \mc{NL}_n$ is $\R^n$. In particular we have $\Aut{\mf{g}}<\GL(\mf{g})=\GL(n,\R)$. We will thus use the topology of pointed Hausdorff convergence on (closed) subgroups of $\GL(n,\R)$ which by abuse of notation we call the ``Hausdorff topology.'' In this topology, a sequence of groups $G_i$ converges to $G$ if the Hausdorff distance $d_H(G_i\cap B(1,r),G\cap B(1,r))$ tends to $0$ as $i\to\infty$ for each $r>0$, where $B(1,r)$ is the ball of radius $r$ around the identity in $\GL(n,\R)$ with respect to the operator norm.

We now show that automorphism groups vary semicontinuously in this topology for nilpotent $G$. 
\begin{proposition}\label{prop:auto-semicont}
Let $(G_k)$ be a sequence of connected $n$-dimensional graded nilpotent Lie groups whose Lie algebras converge to the Lie algebra $\mf{g}$ of $G$ in $\mc{NL}_n$. Then the corresponding graded automorphism groups $Aut_g(\mf{g}_k)$ converge to a Lie subgroup of $Aut_g(\mf{g})<\GL(\mf{g})$ with respect to the topology of pointwise convergence.
\end{proposition}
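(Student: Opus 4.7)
The plan is to realize each graded automorphism group $\Aut_g(\mf{g}_k)$ as a real algebraic subvariety of $\GL(\mf{g}) = \GL(n,\R)$ cut out by polynomial equations whose coefficients depend continuously on the structure constants of the bracket of $\mf{g}_k$, and then to use the resulting upper semicontinuity of solution sets together with the group axioms to extract a closed subgroup limit inside $\Aut_g(\mf{g})$.

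First I would fix a common underlying grading $\R^n = V_0 \oplus V_1 \oplus \dots \oplus V_r$ with $\dim V_i = d_i$ for all the algebras $\mf{g}_k$ and the limit $\mf{g}$. This data is implicit in the hypothesis, since the $\mf{g}_k$ are graded of a fixed type converging to $\mf{g}$, and up to a common element of $\bigoplus_i \GL(V_i)$ we may take the decomposition to be a single fixed one. Following the proof of Proposition \ref{prop:Aut_algebraic}, $\Aut_g(\mf{g}_k)$ is then cut out inside $\bigoplus_{i=0}^r \GL(V_i)$ by the polynomial system
\begin{equation*}
  \phi(c_k(e_\alpha, e_\beta)) - c_k(\phi(e_\alpha), \phi(e_\beta)) = 0
\end{equation*}
ranging over a basis $\set{e_\alpha}$ of $\R^n$, where $c_k$ denotes the bracket of $\mf{g}_k$. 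The coefficients of these polynomials, viewed as functions of the matrix entries of $\phi$, are linear in the structure constants of $c_k$, and so converge to those of the analogous system defining $\Aut_g(\mf{g})$ as $k \to \infty$.

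Next, by local compactness of the space of closed subsets of $\GL(n,\R)$ in the pointed Hausdorff topology described in the paragraph preceding the proposition, I pass to a subsequence along which $\Aut_g(\mf{g}_k)$ converges to a closed subset $H \subset \GL(n,\R)$. For any $\phi \in H$ select approximants $\phi_k \in \Aut_g(\mf{g}_k)$ converging to $\phi$ in operator norm. Passing the defining equations to the limit using $c_k \to c$ and $\phi_k \to \phi$ yields $\phi(c(u,v)) = c(\phi(u), \phi(v))$, and the condition $\phi \in \bigoplus_i \GL(V_i)$ is itself closed, so $\phi \in \Aut_g(\mf{g})$. Closure of $H$ under products and inverses follows from continuity of these operations on operator-norm convergent sequences when truncated to compact sets of $\GL(n,\R)$, hence $H$ is a closed subgroup of $\GL(n,\R)$; Cartan's closed subgroup theorem upgrades it to an embedded Lie subgroup of $\Aut_g(\mf{g})$.

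The main obstacle is book-keeping of the grading: the hypothesis tells us only that the Lie brackets converge in $\mc{NL}_n$, whereas ``graded'' convergence additionally requires a common choice of complementary subspaces $V_0, \dots, V_r$ with compatible dimensions. Once we commit to such a decomposition (automatic when the $d_i$ are stable along the sequence, the natural interpretation of graded convergence), the polynomial upper semicontinuity argument above goes through. Note that the containment $H \subseteq \Aut_g(\mf{g})$ may be strict in general, since extra symmetries of a more degenerate limit $\mf{g}$ need not lift to nearby graded algebras $\mf{g}_k$—this is precisely why the proposition asserts only semicontinuity.
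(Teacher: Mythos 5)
Your proof is correct and follows essentially the same route as the paper's: both rely on the algebraicity of $\Aut_g(\mf{g}_k)$ (so that any limit of a sequence $\phi_k \in \Aut_g(\mf{g}_k)$ satisfies the polynomial relations defining $\Aut_g(\mf{g})$, since the structure constants $c_k \to c$), the preservation of the group axioms under taking limits of products and inverses, and the identification of a closed subgroup of a Lie group as a Lie subgroup. The only presentational differences are that you pass to a convergent subsequence in the pointed Hausdorff topology to produce a well-defined limit set $H$ (the paper instead works directly with ``the collection of limit points'' viewed as equivalence classes of convergent sequences $a_n \in \Aut_g(\mf{g}_n)$), you explicitly invoke Cartan's closed subgroup theorem, and you spell out the bookkeeping of fixing a common graded decomposition $\R^n = V_0 \oplus \dots \oplus V_r$ — a point the paper leaves implicit but which you correctly flag as necessary for the defining relations of the graded automorphism groups to converge in the advertised sense.
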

\begin{proof}
	 By continuity of the defining polynomial relations satisfied by the matrices in $\Aut_g(\mf{g}_k)$, any element of the limit will satisfy the polynomial relations required to be in $\Aut_g(\mf{g})$. Let $H\subset  \Aut_g(\mf{g})$ be the collection of limit points. By definition this is a closed subset of $\Aut_g(\mf{g})$. Considered as equivalence classes of convergent sequences, it is elementary to verify that the product structure given by $[\set{a_n}]*[\set{b_n}]=[\set{a_n b_n}]$ is well defined and gives $H$ a group structure compatible with that of $\Aut_g(\mf{g})$. Hence $H$ is a Lie subgroup of $\Aut_g(\mf{g})$.
\end{proof}

As a corollary, we deduce that the property of being asymmetric is stable in the variety of nilpotent Lie groups/algebras.
\begin{corollary}\label{cor:stable-asymmetry}
Let $G$ be a connected and simply connected Carnot Lie group of dimension $n$. If $G$ is asymmetric then there is a neighborhood $\mathcal U$ of $G$ in the variety of $n$-dimensional Carnot Lie groups such that $\mathcal U$ consists of asymmetric groups.
\end{corollary}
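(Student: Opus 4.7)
The plan is to argue by contradiction. Assume there is a sequence $\mf g_k \to \mf g$ in $\mc{NL}_n$ with $\mf g$ asymmetric but each $\mf g_k$ non-asymmetric. Writing $\Aut_g(\mf g) = L\mc D$ and $\Aut_g(\mf g_k) = L_k \mc D$ with $L, L_k \subset \SL_{\pm}(\mf g^0)$ via the faithful Carnot action on $\mf g^0$, the assumption means $L$ is compact, so $\mathrm{Lie}(L)$ is conjugate into $\mathfrak{so}(\mf g^0,\langle\cdot,\cdot\rangle_0)$ for an $L$-invariant inner product and hence consists of semisimple matrices with purely imaginary eigenvalues. Simultaneously, each $L_k$ is a non-compact real algebraic subgroup of $\SL_{\pm}(\mf g^0)$.

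The structure theory of real algebraic groups then supplies an offending direction in each $\mathrm{Lie}(L_k)$. Specifically, the Levi decomposition of $L_k$ combined with the real Jordan--Chevalley decomposition in $\mathrm{Lie}(L_k) \subset \mathfrak{sl}(\mf g^0)$ yields a nonzero element $X_k \in \mathrm{Lie}(L_k)$ which is either nilpotent (drawn from the Lie algebra of the unipotent radical of $L_k$) or real semisimple with a nonzero real eigenvalue (drawn from the Lie algebra of an $\R$-split torus of a Levi subgroup of $L_k$). Normalizing $\|X_k\|_{\mathrm{op}} = 1$ and passing to subsequences, I arrange that all $X_k$ share a single Jordan type and that $X_k \to X$ with $\|X\|_{\mathrm{op}} = 1$. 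Continuity of the linear equations defining the trace-zero graded derivations of $\mf g_k$ as the structure constants converge (the same continuity underlying Proposition \ref{prop:auto-semicont}) forces $X$ to be a trace-zero graded derivation of $\mf g$; combined with the trace condition this lands $X$ in the compact-type subspace $\mathrm{Lie}(L)|_{\mf g^0}$, so $X$ is semisimple with purely imaginary eigenvalues.

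The contradiction then comes from spectral incompatibility in the limit. If every $X_k$ is nilpotent, the closed condition $X_k^{d_0} = 0$ passes to $X^{d_0} = 0$, so $X$ is simultaneously nilpotent and semisimple, hence zero, contradicting $\|X\|_{\mathrm{op}} = 1$. If instead every $X_k$ is real semisimple, their eigenvalues are real and bounded by $\rho(X_k) \leq \|X_k\|_{\mathrm{op}} = 1$; continuity of the roots of the characteristic polynomial then implies that the eigenvalues of $X$ are real as well, while lying in $\mathrm{Lie}(L)$ forces them to be purely imaginary. They therefore all vanish, and semisimplicity again gives $X = 0$, a contradiction. The main obstacle to executing this plan is verifying that non-compactness of the algebraic group $L_k$ can always be detected by either a nilpotent or a real semisimple element of $\mathrm{Lie}(L_k)$; this is the standard fact that a real algebraic group is compact precisely when its unipotent radical is trivial and its reductive Levi has trivial $\R$-split rank.
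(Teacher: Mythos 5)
Your proof is correct, but it follows a genuinely different route from the paper's. The paper argues at the group level: by Proposition \ref{prop:auto-semicont} any limit of elements of $L_k$ lies in $\Aut_g(\mf{g})$, hence (having determinant $\pm 1$) in $L$, which after a fixed conjugation sits in $\OO(d_0,\R)$; if the identity components $L_k^0$ were not uniformly compact, connectedness and the intermediate value theorem produce elements $a_k\in L_k^0$ with $\norm{a_k}_{op}=2$, whose limit would be an element of $L$ of operator norm $2$ --- a contradiction --- and Whitney's finiteness of components of algebraic groups (via Proposition \ref{prop:Aut_algebraic}) upgrades the conclusion from $L_k^0$ to $L_k$. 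You instead work infinitesimally: non-compactness of the real algebraic group $L_k$ is witnessed, via the standard criterion that a real algebraic group is compact iff it is reductive and $\R$-anisotropic (Borel--Tits; Platonov--Rapinchuk), by a nonzero $X_k\in\mathrm{Lie}(L_k)$ that is either nilpotent or real-diagonalizable, and you derive a spectral contradiction in the limit since trace-zero graded derivations of $\mf{g}$ restrict to $\mathrm{Lie}(L)\subset\mf{so}(d_0)$ and so have purely imaginary spectrum. Both arguments use algebraicity of $\Aut_g$ and the same implicit framework of a fixed grading of $\R^n$; the paper's version is more elementary and self-contained, needing only the semicontinuity proposition and finiteness of components, while yours invokes heavier structure theory but handles the component group automatically and isolates a concrete infinitesimal ``witness'' to non-asymmetry, which is arguably more conceptual. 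Two points you should make explicit when writing it up: (i) non-asymmetry of $\mf{g}_k$ is equivalent to non-compactness of $L_k$ only because $L_k$ is algebraic, hence closed; and (ii) the limit step is cleanest if you carry the full $n\times n$ derivation matrices (the graded-derivation equations are polynomial jointly in the matrix and the structure constants, so the set is closed), and then use that for a Carnot algebra a graded derivation vanishing on $\mf{g}^0$ vanishes identically to pass between the full derivation and its restriction to $\mf{g}^0$, where your normalization and eigenvalue arguments live; with the graded dimensions fixed this bridging is routine, so it is a presentational rather than a mathematical gap.
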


\begin{proof}
	Let $(\mf{g}^k)$ be a sequence of Carnot Lie algebras converging to $\mf{g}$ in $\mc{NL}_n$. Write  $\Aut_g(\mf{g})=L\mc{D}$ where $L=\Aut_g(\mf{g})\cap \SL_{\pm}(d_0,\R)$ lies in some compact group $K$. Without loss of generality, we may assume after a conjugation that $L<K=\OO(d_0,\R)$. Similarly, let $\Aut_g(\mf{g}^k)=L_k\mc{D}$ for $L_k=\Aut_g(\mf{g}^k)\cap \SL_{\pm}(d_0,\R)$.
	
	By Proposition \ref{prop:auto-semicont}, the $L_k$ converge to a subgroup of $L$. We wish to show that $L_k$ belongs to a compact subgroup. We will first consider $L_k^0$, the connected component of the identity of $L_k$. We fix a norm on $\R^n$. If the $L_k^0$ are not contained in a compact group, then the operator norm, $\norm{\cdot}_{op}$ is unbounded on $L_k^0$. Then $L_k^0$ admits a path whose elements have norms taking on all values in $[1,\infty)$, and in particular there is an element $a_k$ with $\norm{a_k}_{op}=2$. The sequence $\set{a_k}$ admits a convergent subsequence to an element of determinant $\pm 1$, but which does not have norm one, leading to a contradiction.
	
	By Proposition \ref{prop:Aut_algebraic}, $\Aut_g(\mf{g}^k)$ and thus $L_k$  are algebraic groups. An algebraic group has only finitely many connected components by Theorem 3 of \cite{Whitney}. Therefore $L_k$ must lie in a compact subgroup.
\end{proof}

\section{Tangent Cones of Sub-Riemannian Manifolds}

In this section we analyze the structure of tangent cones and relate the derivatives of smooth maps preserving the horizontal distribution to the Carnot derivatives. Under suitable assumptions, this leads to resonances and even arithmeticity of the Lyapunov spectrum. 

Throughout this section we will be exploiting the properties of sub-Riemannian manifolds equipped with the generic and horizontal distribution as defined in Section \ref{subsec:intro_subRiem}. Toward this end, we first recall the notion of a tangent cone. 

\begin{definition}
Let $(X,d)$ be a metric space and $x\in X$. We call any pointed Gromov-Hausdorff limit of $(X,p, td)$ when $t\to \infty$ a {\em tangent cone} of $X$ at $p$.
\end{definition}

In general, tangent cones of metric spaces at a point may not exist, and when they do, they may not be unique. However, in the setting of sub-Riemannian manifolds, Mitchell \cite{Mitchell} showed that they exist, and Margulis and Mostow \cite{MargulisMostow00} proved that they are unique.

\begin{theorem}[Theorem 1 of \cite{Mitchell}, \cite{MargulisMostow00}]\label{thm:Mitchell}
Let $(N,d_C)$ be a sub-Riemannian manifold.  Suppose that the metric $d_C$ comes from a smooth horizontal distribution $E$. If the distribution $E$ is  generic in a neighborhood of $p\in N$ then the tangent cone of $N$ at $p$ exists, is unique and is a Carnot nilpotent Lie group with a right-invariant \CC metric induced from the distribution of homogeneous degree one vector fields tangent to $E$ with the inner product inherited from $E$. 
\end{theorem}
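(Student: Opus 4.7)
My plan is to follow Mitchell's approach via privileged coordinates and homogeneous approximation of the horizontal frame. First, I would choose a local frame $X_1,\dots,X_n$ in a neighborhood of $p$ adapted to the filtration $E^0 \subset E^1 \subset \cdots \subset E^r$, with $X_1,\dots,X_{n_i}$ spanning $E^i$; this is possible by the genericity hypothesis (the ranks $\dim E^i_x$ being locally constant near $p$). Assigning each $X_j$ the weight $w_j = i+1$ whenever $X_j \in E^i \setminus E^{i-1}$, I would introduce privileged coordinates $(x_1,\dots,x_n)$ centered at $p$ for which the coordinate $x_j$ has weight $w_j$ under the dilations $\delta_t(x_1,\dots,x_n) = (t^{w_1}x_1,\dots,t^{w_n}x_n)$ and $X_j = \partial_{x_j}$ at $p$.

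Next, I would Taylor expand the $X_j$ in these coordinates and extract their $\delta_t$-homogeneous components $\hat X_j$. The rescaled vector fields $t^{w_j}(\delta_{1/t})_* X_j$ then converge to $\hat X_j$ uniformly on compact sets in the $C^\infty$ topology. Since brackets of $\delta_t$-homogeneous vector fields are again homogeneous with additive weight, and since weights are bounded by $r+1$, the collection $\{\hat X_j\}$ generates a nilpotent Lie algebra $\mf n$ of step $r+1$. The genericity hypothesis forces $[\mf n^0,\mf n^i]=\mf n^{i+1}$, so $\mf n$ is Carnot. The corresponding simply connected graded nilpotent Lie group $\mc N = \exp(\mf n)$ has an underlying manifold naturally identified with the coordinate chart via the exponential map, and the $\hat X_j$ play the role of its right-invariant vector fields.

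The horizontal distribution on $\mc N$ is the span of $\hat X_1,\dots,\hat X_{n_0}$; equipping it with the inner product inherited from $(E_p,\inner{\cdot,\cdot}_p)$ defines a \CC distance $\hat d$ which is right-invariant because the $\hat X_j$ are. To identify $(\mc N,\hat d)$ with the tangent cone, I would invoke the Ball--Box Theorem of Nagel--Stein--Wainger/Mitchell: in privileged coordinates, $d_C(p,q)$ is comparable to $\sum_j |x_j(q)|^{1/w_j}$, with constants controlled on a neighborhood. Conjugating by $\delta_t$ and using the $C^\infty$ convergence $t^{w_j}(\delta_{1/t})_*X_j \to \hat X_j$, one shows that horizontal curves of bounded $t d_C$-length in $N$ converge after reparametrization to horizontal curves in $\mc N$ of the same limiting length, while curves in $\mc N$ can be approximated by horizontal curves in $N$. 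A standard compactness argument then yields $t d_C \to \hat d$ uniformly on compact subsets, which is pointed Gromov--Hausdorff convergence.

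For uniqueness, I would invoke the Pansu-style argument of Margulis--Mostow: any two pointed Gromov--Hausdorff limits of $(N,p,td_C)$ as $t\to\infty$ must be bi-Lipschitz equivalent, and such an equivalence, differentiated at the identity in the sense of Pansu, yields a graded isomorphism of Carnot groups intertwining the CC metrics. The main obstacle in this program is the uniform Ball--Box estimate and the ensuing convergence of the CC distance functions; the control requires careful analysis of piecewise horizontal curves and the interplay between the $X_j$ and their homogeneous approximations under the dilations $\delta_t$, and this is the technical heart of Mitchell's original proof.
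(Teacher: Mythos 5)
The paper does not supply a proof of this theorem; it imports it from Mitchell \cite{Mitchell} (existence and Carnot nilpotent structure) and Margulis--Mostow \cite{MargulisMostow00} (uniqueness), and then in Lemma~\ref{lem:isometricinclusion} separately proves the one assertion not explicit in those references, namely that the limiting \CC metric is the one induced by the inner product on $E(p)$ under the M\'etivier identification. Your sketch is a condensed retelling of Mitchell's original argument --- adapted frame, weighted dilations, homogeneous approximation of the frame, nilpotency of the approximating Lie algebra, and a ball--box plus curve-convergence argument for pointed Gromov--Hausdorff convergence --- together with a citation of Margulis--Mostow for uniqueness. This is the correct outline and agrees with what the paper summarizes in Section~\ref{subsec:Carnot}. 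Two small remarks. First, with your convention $\delta_t(x)=(t^{w_1}x_1,\dots,t^{w_n}x_n)$, the rescaling limit should read $t^{-1}(\delta_t)_*X_j\to\what X_j$ as $t\to\infty$ (as in M\'etivier's (3.2), recalled in Section~\ref{subsec:Carnot}); your expression $t^{w_j}(\delta_{1/t})_*X_j$ evaluates $X_j$ at $\delta_t(q)$, which escapes the coordinate chart for fixed $q\ne p$ as $t\to\infty$, so either the dilation or the direction of the parameter needs to be flipped. Second, the paper's Lemma~\ref{lem:isometricinclusion} obtains the inner-product identification by a short computation using the curve-class description of the tangent cone and Proposition~5.6 of \cite{MargulisMostow00} rather than by re-running ball--box estimates; both routes are valid, with yours hewing closer to Mitchell's original proof and the paper's being more economical once the Margulis--Mostow machinery is taken for granted.
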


The very last statement of the above theorem is not stated explicitly in the cited references, but follows from the construction as will be explained in the next subsection (\ref{subsec:Carnot}). There we will also define the notion of homogeneous vector fields.

We next discuss the construction and structure of this nilpotent group and its relationship to the tangent cone at each point $p\in N$ (cf. \cite[Section 3]{Mitchell}). We will need these details for our application to sub-Riemannian dynamics in Section \ref{sec:subRiem}.

\subsection{Carnot Metrics and the M\'etivier Correspondence}\label{subsec:Carnot}

M{\'e}tivier developed a correspondence between iterated brackets of horizontal vector fields in a neighborhood of a given point $p\in N$ and certain homogeneous vector fields possessing nilpotent bracket relations. We here present a compressed summary of the details.

Let $X_i$, $i=1,\dots,d_0=\dim E$ be smooth vector fields on $N$ that are tangent to and form a basis for the horizontal distribution in a neighborhood $U$ of a point $p\in N$. After possibly shrinking the neighborhood $U$, among the commutators of the $X_i$ we may choose an ordered subset $Y_1,\dots,Y_n$ that form a basis of $T_yN$ at each point $y\in U$ and such that $Y_1=X_1,\dots,Y_{d_0}=X_{d_0}$, and together with the next ordered subcollection ($Y_{d_0+1},\dots,Y_{d_0+d_1}$) form a basis for the first commutator subbundle $E^1(y)$, and so on. We call such a moving frame a {\em graded basis}. We sometimes index this basis according to the level as $(Y_{0,1},\dots,Y_{r,d_r})$.

For any smooth vector field $X$ on $N$, let $s\mapsto \exp s X$ be the one parameter subgroup of diffeomorphisms acting on $N$ induced by the flow of $X$, i.e. along integral curves of $X$. For each $y$ in the neighborhood $U$ of $p$ the map $y=\exp(\sum_i (y)_i Y_i)(p)\mapsto ((y)_i)\in \R^n$ is a diffeomorphism from $U$ to some neighborhood of $0$ in $\R^n$. We call the collection of components $(y)_i$ {\em normal coordinates} on $U$. We emphasize that normal coordinates do not define (commuting) coordinates in the ordinary sense since the distribution is not integrable. 

For a generic horizontal distribution $E$, we define the {\em partition floor function} to be $[i]_E:=k+1$ if $\dim E^{k-1}(x)<i\leq \dim E^k(x)$ for $k=0,\dots,r$ and where we adopt the convention that $\dim E^{-1}=0$. When the distribution $E$ is understood we simply write $[i]$ instead of $[i]_E$. We define a dilatation $\delta_s:U\to N$ for $s\in [0,\infty)$ in terms of normal coordinates by taking $\delta_s(y_i)=s^{[i]}y_i$. The $\delta_s$ are local diffeomorphisms for $s>0$.

For a vector field of the form $X=y_{i_1}^{\alpha_1}\dots y_{i_k}^{\alpha_k}Y_j$, written in terms of the fields $Y_j$ and their associated normal coordinate functions $y_j$, we define the {\em degree} of $X$  to be $[j]-\sum_{m=1}^k \alpha_m [i_m]$. By taking the maximum degree over monomial terms we can extend the notion of degree to vector fields of the form $X=\sum_i p_i(y_1,\dots,y_n) Y_i$  for polynomials $p_i$. Finally, we can extend this notion to arbitrary smooth vector fields in a neighborhood of $p$ by considering the degree of vanishing at $p$ (see Section 3 of \cite{MargulisMostow00} for details). A vector field is {\em homogeneous} of degree $i$ if each of its component terms has degree exactly $i$. We can decompose each vector field $X$ into its homogeneous components. If $X$ has degree at most $q$ and $X^{(q)}$ denotes the homogeneous degree $q$ part of $X$ at $p$, and similarly $Y$ has degree at most $s$ at $p$ and $Y^{(s)}$ is its homogeneous degree $s$ part, then by Section 3 of \cite{MargulisMostow00} the bracket $[X,Y]$ has degree at most $q+s$ at $p$ and moreover,
\begin{align}\label{eq:brackhat}
[X,Y]^{(q+s)}=[X^{(q)},Y^{(s)}].
\end{align}

For each horizontal vector field $X$ on a neighborhood of $p$, we obtain a homogeneous degree one vector field $\what{X}_p=\lim\limits_{t\to \infty} t^{-1}(\delta_t)_*(X)$ on $N$ (see equation (3.2) on p.487 of \cite{Metivier}).
The vector field $\what{X}_p$ is precisely the homogeneous degree 1 part at $p$ of the horizontal vector field $X$. Moreover, the following lemma shows that if $X_1$ and $X_2$ are two horizontal fields with $X_1(p)=X_2(p)$, then $\what{X}_{1;p}=\what{X}_{2;p}$.

\begin{lemma}\label{lem:constantsum}
If $X_1,\dots,X_k$ is a basis of horizontal vector fields in a neighborhood of $p\in N$ for a sub-Riemannian manifold $N$, and $X=\sum_{i=1}^{d_0} a_i X_i$ is any horizontal vector field on $N$, then $\what{X}_p=\sum_{i=1}^{d_0} a_i(p) \what{X}_{i;p}$.
\end{lemma}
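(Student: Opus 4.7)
The plan is to leverage the characterization of $\what{X}_p$ as the homogeneous degree one component of the horizontal vector field $X$ in the M\'etivier decomposition at $p$, together with linearity of this homogeneous component operation.

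First I would verify that any horizontal vector field has degree at most $1$ at $p$. In the graded frame $Y_1,\dots,Y_n$, the horizontal fields are $X_i = Y_i$ for $1\le i\le d_0$, each of which has degree $[i]=1$. Writing $X=\sum_{i=1}^{d_0}a_i X_i=\sum_{i=1}^{d_0}a_i Y_i$ displays $X$ as a sum of smooth-function multiples of degree-$1$ fields, so $X$ itself has degree at most $1$ at $p$.

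The key step is a Taylor expansion in normal coordinates of each coefficient, $a_i(y)=a_i(p)+\sum_{|\alpha|\ge 1}c_{i,\alpha}\, y^{\alpha}$. Any nonconstant monomial contribution has the form $c_{i,\alpha}\,y_{j_1}^{\alpha_1}\cdots y_{j_k}^{\alpha_k}Y_i$ with $|\alpha|\ge 1$; by the definition of degree given in the paper, its degree is
\[
[i]-\sum_{m=1}^k \alpha_m [j_m] \;\le\; 1 - 1 \;=\; 0 \;<\; 1,
\]
since each $[j_m]\ge 1$. Hence the only contribution to the degree-$1$ homogeneous part of $a_i Y_i$ comes from the constant term of the Taylor expansion of $a_i$, yielding $(a_i Y_i)^{(1)} = a_i(p)\, Y_i^{(1)} = a_i(p)\,\what{X}_{i;p}$. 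To make this rigorous for smooth (not polynomial) coefficients, I would invoke the notion of degree via vanishing order at $p$ referenced from Section 3 of \cite{MargulisMostow00}, which guarantees the remainder in Taylor's formula has strictly lower degree than the leading term.

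Finally, since the homogeneous decomposition is additive in the usual sense (the homogeneous degree-$q$ part of a sum is the sum of the homogeneous degree-$q$ parts, using the identification \eqref{eq:brackhat}-style on the underlying vector-valued polynomials), I conclude
\[
\what{X}_p \;=\; X^{(1)} \;=\; \sum_{i=1}^{d_0}(a_i Y_i)^{(1)} \;=\; \sum_{i=1}^{d_0} a_i(p)\,\what{X}_{i;p}.
\]
The main (mild) obstacle is purely bookkeeping: confirming the sign/convention for the partition-floor grading so that higher-order Taylor terms strictly decrease the M\'etivier degree, as opposed to preserving it, which is exactly what the inequality $[j_m]\ge 1$ guarantees.
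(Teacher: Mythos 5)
Your proof is correct and follows essentially the same route as the paper: both isolate the constant part $a_i(p)$ of each coefficient and observe that the remainder lowers the M\'etivier degree below $1$ (the paper does this directly by noting $a_i(y)-a_i(p)$ vanishes at $p$ and hence has degree at most $-1$, while you achieve the same via Taylor expansion of $a_i$). One small inaccuracy worth noting: your appeal to \eqref{eq:brackhat} is misplaced, since that identity concerns homogeneous parts of \emph{brackets}, not sums; the additivity of the homogeneous decomposition you actually need is elementary and requires no citation.
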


\begin{proof}
Since $X$ is horizontal it has degree at most one, and we can write it in the basis as $X(y)=\sum_i a_i(y) X_i$ for functions $a_i$ on the neighborhood $U$ of $p$. Express this as $X(y)= \sum_i a_i(p)X_i+ \sum_i (a_i(y)-a_i(p)) X_i$. Since the functions $a_i(y)-a_i(p)$ vanish at $p$, they have degree at most $-1$ at $p$ and $X_i$ have degree $1$. Hence $\sum_i (a_i(y)-a_i(p)) X_i$ has degree at most $0$ and $\what{X}_p=\sum_i a_i(p)\what{X}_{i;p}$. 
\end{proof}

M\'etivier \cite[Th\'eor\`eme 3.1]{Metivier} showed that the homogeneous vector fields $\what{X}_{i;p}$ generate a nilpotent Lie algebra through taking commutators, and Mitchell \cite[Lemmas 3.1 and 3.2]{Mitchell} showed that the corresponding simply connected nilpotent group with its corresponding \CC metric is isometric and isomorphic to the tangent cone at $p$. By Lemma \ref{lem:constantsum}, for any horizontal field $X$ we have $\what{X}_p$ is in the span of $\what{X}_{i;p}$.

The homogeneous degree one part $\what{X}_p$ of a horizontal field $X$ thus corresponds to a right invariant field $\bar{X}$ on $TC_pN$. In other words, $\bar{X}$ belongs to the the Lie algebra of $TC_pN$, which we denote by $\mf{n}_p=\op{Lie}(TC_pN)$. Let $\mf{n}^0_p\subset \mf{n}_p$ be the subspace spanned by the set of all $\bar{X}$, and inductively set $\mf{n}^i_p=[\mf{n}^{0}_p,\mf{n}^{i-1}_p]\subset \mf{n}_p$ for $i\in\set{1,\dots,r}$. The degree of homogeneity of the vector fields provides a natural grading for the Lie algebra $\mf{n}_p$ as pointed out in Section 8.2 of \cite{MargulisMostow95} and Section 4.0 of \cite{MargulisMostow00}, furnishing $\mf{n}_p$ with a Carnot nilpotent structure. 

Among the $k$-fold commutators of $\what{X}_{i;p}$ we may choose a set of vector fields $\what{X}_{k,j;p}$ whose corresponding right invariant fields $\bar{X}_{k,j}$ form a basis of the $k$-th commutator space $\mf{n}^{k}_p\subset \mf{n}_p$ of $\mf{n}^0_p$ respecting the grading. In particular, we may choose indices so that $\what{X}_{0,j;p}=\what{X}_{j;p}$. This provides an isomorphism of Lie algebras, $\mc{M}_p: \op{span}\set{\what{X}_{0,1;p},\dots,\what{X}_{r,d_r;p}}\to \mf{n}_p$, where the domain is a Lie algebra of vector fields under the ordinary bracket. We name this identification between linear combinations of the $\set{\what{X}_{k,j;p}}$ and elements of $\mf{n}_p$ the {\em M\'etivier correspondence} at $p$. Note that this correspondence depends on the choice of graded basis. By Lemma \ref{lem:constantsum}, the M\'etivier correspondence induces a linear inclusion $\imath:E(p)\to \mf{n}_p$ with image $\mf{n}^0_p$ by $X(p)\mapsto \bar{X}=\mc{M}_p(\what{X}_p)$. 
We will omit the ``$p$'' subscript on vector fields when the base point of the tangent cone is understood.

Following \cite{MargulisMostow00}, define an equivalence class on curves of $M$ by $\alpha\sim\beta$ if and only if 
\[
\lim\limits_{s\to 0} \frac{1}{s}d_C(\alpha(s),\beta(s))=0.
\]
 The tangent cone $TC_pN$ can be considered as the space of equivalence classes of curves $[\alpha]$ with $\alpha(0)=p$ which are  equivalent to a dilatation orbit curve at $p$, $[\delta_s y]$ for some $y\in N$, with the distance
\begin{align}\label{eq:dCC}
d([\alpha], [\beta]) = \lim_{s\to 0} s^{-1} d_C(\alpha(s),\beta(s)).
\end{align}

The above interpretation generalizes a standard interpretation of the tangent space to a differentiable manifold as the space of equivalence classes of differentiable curves.

In the next proposition we will consider the subspace topology on $\mc{NL}_n\subset \Hom(\Lambda^2\R^n,\R^n)$ instead of the coarser Zariski topology of the variety.

\begin{proposition}\label{prop:cont_of_tan_cone}
Let $(N,d_C)$ be a smooth sub-Riemannian manifold. Suppose that the metric $d_C$ comes from a generic smooth horizontal distribution $E$. Then the metrics, nilpotent structures, and the corresponding isomorphism classes, of the tangent cones of $N$ vary continuously on $N$.
\end{proposition}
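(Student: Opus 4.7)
The plan is to establish continuity locally around any fixed point $p_0 \in N$; continuity on all of $N$ then follows by covering. By genericity of $E$, near $p_0$ I would choose a smooth graded frame $(Y_{0,1},\dots,Y_{r,d_r})$ of $TN$ on some neighborhood $U$, with $(Y_{0,1},\dots,Y_{0,d_0})$ spanning $E$ and each higher-level $Y_{k,j}$ realized as a fixed iterated commutator of the horizontal fields. At every $p \in U$, the M\'etivier correspondence of Subsection~\ref{subsec:Carnot} then produces the homogeneous-degree-one fields $\hat Y_{0,j;p}$, their iterated brackets $\hat Y_{k,j;p}$, and, via $\mc M_p$, the right-invariant basis $(\bar Y_{k,j;p})$ of $\mf n_p$. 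Using this basis I identify each $\mf n_p$ with $\R^n$, producing a well-defined map $\Phi : U \to \mc{NL}_n$ whose value at $p$ records the structure constants of $\mf n_p$.

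The core step is to check that $\Phi$ is smooth. Because $\hat Y_{0,j;p}$ is by construction the leading-order part of the smooth field $Y_{0,j}$ in normal coordinates centered at $p$, and both the field and the normal coordinate system depend smoothly on $p$, the assignment $p \mapsto \hat Y_{0,j;p}$ is smooth. The compatibility identity $[X,Y]^{(q+s)} = [X^{(q)},Y^{(s)}]$ of~\eqref{eq:brackhat} propagates this smoothness through all iterated brackets, so that the structure constants $c^{(s,t)}_{(k,j),(l,m)}(p)$ defined by $[\bar Y_{k,j;p},\bar Y_{l,m;p}] = \sum_{s,t} c^{(s,t)}_{(k,j),(l,m)}(p)\,\bar Y_{s,t;p}$ depend smoothly on $p$. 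Continuity of the isomorphism class follows by composing $\Phi$ with the projection $\mc{NL}_n \to \mc{NL}_n/\GL(n,\R)$, which is continuous and whose target is Hausdorff by Theorem~\ref{thm:BorelSerre}.

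For continuity of the metric in the pointed Gromov-Hausdorff topology, recall that $TC_pN$ carries the left-invariant \CC metric determined by the bracket on $\mf n_p$ together with the inner product on $\mf n_p^0 \cong E(p)$ inherited from the sub-Riemannian metric on $E$. The inner product varies smoothly with $p$ via the linear inclusion $\imath : E(p) \to \mf n_p^0$, and the Lie bracket varies smoothly by the previous paragraph. Identifying all of the $\mf n_p$ (and hence, via exponential, the underlying manifolds of the tangent cones) with a fixed copy of $\R^n$ through the basis $(\bar Y_{k,j;p})$ reduces pointed Gromov-Hausdorff convergence to uniform convergence of the \CC distance functions on compact subsets of $\R^n$. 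I expect the main obstacle to be precisely this last reduction: the \CC distance is an infimum over horizontal curves, and comparing such infima across the differing (but nearby) Carnot structures on $\R^n$ requires controlling horizontal curves under simultaneous perturbation of the structure constants and of the horizontal inner product. I would handle this by expressing the length integrand as a continuous function of both sets of parameters and invoking uniform Ball-Box estimates valid throughout $U$ (available because the grading dimensions are locally constant by genericity), together with the uniqueness assertion in Theorem~\ref{thm:Mitchell} to ensure that the limiting cone is unambiguously the one produced by the M\'etivier construction at $p_0$.
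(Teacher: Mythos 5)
Your approach matches the paper's proof almost exactly: both fix a smooth graded frame near a basepoint using genericity, observe that the M\'etivier homogeneous fields $\what{Y}_{k,j;p}$ and hence the structure constants vary smoothly in $p$, pass to $\mc{NL}_n/\GL(n,\R)$ via Theorem~\ref{thm:BorelSerre} for the isomorphism classes, and recover metric continuity from the smooth dependence of the inner product on $\mf{n}_p^0 \cong E(p)$. Your final paragraph simply elaborates on the pointed Gromov--Hausdorff step that the paper dispatches with a one-line remark about convergence of metric nets, so there is no substantive difference in the argument.
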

\begin{proof}
	We need only show that the isomorphism class of the tangent cones vary continuously locally. By Theorem \ref{thm:Mitchell}, the nilpotent structures depend only on the horizontal distribution, not the choice of basis above used to construct the structure. Hence we may choose the initial basis freely. We now refer back to the notation at the beginning of Section \ref{subsec:Carnot}.
	
	For a neighborhood $U$ of a point $p\in N$, by the genericity assumption, we may choose the basis of fields $X_i$ for $i=1,\dots,d_0$ so that the commutators of $X_i$ in $E^k$ do not degenerate to $E^{k-1}$ at any point of $U$ if they do not do so at the point $p$. In particular the same indices can be used to choose the same fields $Y_i$, for $i=1,\dots,n$ at each point $x\in U$. Therefore, the construction of the $\what{X}_{k,j;x}$ all vary smoothly in a neighborhood of $p$. This implies that the structure constants of the Lie algebras of the tangent cones vary smoothly on $U$ and hence this implies that the nilpotent structure varies continuously on the space of nilpotent structures described in \ref{subsec:nil_structures}. This continuity thus passes to the orbit quotient space of nilpotent isomorphism classes as well.
	
	Lastly since the metric on $TC_xN$ is a \CC metric for the distributions induced by the inclusion of $E(x)$ in $\mf{n}_x$, the metric also varies continuously on $U$, and thus on all of $N$.
\end{proof}
\begin{remark}
	Observe that continuity in the subspace topology on $\mc{NL}_n$ equipped with metrics coming from the inner product on $E$ as it varies in an open set implies continuity in the pointed Gromov-Hausdorff topology as it implies convergence on metric nets.
\end{remark}

By Theorem \ref{thm:Mitchell}, the metric on $TC_pN$ is a \CC metric for the right invariant distribution induced from $\mf{n}^0_p\subset \mf{n}_p$. Next, we identify the norm on the distribution that gives rise to the metric.

\begin{lemma}\label{lem:isometricinclusion}
The \CC metric on $TC_pN$ is induced by the right invariant distribution corresponding to $\mf{n}^0_p\subset \mf{n}_p$ equipped with the Riemannian inner product coming from $E(p)$ under the above identification. 
\end{lemma}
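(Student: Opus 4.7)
Proof plan: By Theorem \ref{thm:Mitchell} and the M\'etivier correspondence developed above, we already know $d_{CC}$ is a right-invariant \CC metric on $TC_pN$ with horizontal distribution identified with $\mf{n}^0_p\subset\mf{n}_p$. Since a right-invariant \CC metric on a Carnot group is determined by its infinitesimal norm on the horizontal subspace of its Lie algebra, it suffices to show $\|\imath(v)\|_{d_{CC}}=\|v\|_E$ for every $v\in E(p)$.

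Fix $v\in E(p)$, let $X$ be a horizontal extension with $X(p)=v$, and let $\gamma(t)=\exp(tX)(p)$ be its integral curve. We bound $d_C(p,\gamma(s))$ for small $s>0$ from both sides. The upper bound $d_C(p,\gamma(s))\leq\int_0^s\|\gamma'(t)\|_E\,dt=s\|v\|_E+o(s)$ follows from horizontality of $\gamma$. For the lower bound, extend $\inner{\cdot,\cdot}_E$ to a Riemannian metric $g$ on $TN$; since horizontal curves form a subset of all curves and $\|\cdot\|_E=\|\cdot\|_g$ on horizontal vectors, $d_C\geq d_g$, and standard first-order Riemannian geometry gives $d_g(p,\gamma(s))=s\|v\|_E+o(s)$. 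Hence
\begin{equation*}
\lim_{s\to 0^+}s^{-1}d_C(p,\gamma(s))=\|v\|_E.
\end{equation*}

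By definition (\ref{eq:dCC}), the above limit equals $d_{CC}(e,[\gamma])$. Tracing through the dilatation construction of $TC_pN$, the rescaled curves $s\mapsto\delta_{s^{-1}}(\gamma(s))$ converge as $s\to 0^+$ to the integral curve at $p$ of the homogeneous degree-one vector field $\what X_p$, which under the M\'etivier correspondence is exactly the one-parameter subgroup $\tau\mapsto\exp(\tau\imath(v))$ of $TC_pN$. Equivalently, the higher-strata normal coordinates of $\gamma(s)$ vanish to an order strictly greater than their grading depth (the na\"ive $s^{[i]}$ ``area-enclosed'' contributions are cancelled by the normal-coordinate correction, as verified directly in the Heisenberg example), so $[\gamma]=\exp(\imath(v))$ and consequently $d_{CC}(e,\exp(\imath(v)))=\|v\|_E$.

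Applying this identity with $sv$ in place of $v$ and using linearity of $\imath$ yields $s^{-1}d_{CC}(e,\exp(s\imath(v)))=\|v\|_E$ for all small $s>0$; passing to $s\to 0^+$ gives $\|\imath(v)\|_{d_{CC}}=\|v\|_E$, so the inner product on $\mf{n}^0_p$ inducing $d_{CC}$ is $\imath_*\inner{\cdot,\cdot}_E$, proving the lemma. The main technical hurdle is the identification $[\gamma]=\exp(\imath(v))$, which amounts to unpacking the M\'etivier--Mitchell dilatation construction and verifying the claimed higher-order vanishing of the higher-strata normal coordinates of horizontal integral curves.
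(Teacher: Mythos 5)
Your outline is essentially sound, and it correctly reduces the lemma to showing $\lim_{s\to 0^+}s^{-1}d_C(p,\gamma(s))=\|v\|_E$ together with the identification $[\gamma]=\exp(\imath(v))$. The two-sided bound on $d_C(p,\gamma(s))$ (upper by horizontality, lower by comparing with a Riemannian extension) is correct and works for an arbitrary horizontal extension $X$ of $v$. However, the paper proceeds differently and more economically, and the difference is exactly at the point you flag as ``the main technical hurdle.'' The paper first invokes Proposition 5.6 of \cite{MargulisMostow00}, which says the induced \CC metric on the tangent cone does not depend on the choice of the graded basis used in the M\'etivier construction; this lets one take $\gamma$ to be a unit-speed sub-Riemannian geodesic and extend its tangent to a \emph{coordinate field} $X$ in that basis. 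Two things then become exact rather than asymptotic: $d_C(p,\gamma(s))=s$ (no $o(s)$ to chase), and $\delta_s\exp(X)(p)=\exp(sX)(p)=\gamma(s)$ because the dilatation $\delta_s$ is defined precisely by rescaling these normal coordinates, so $\what X=X$ along the integral curve and $[\delta_s\exp(tX)(p)]=e^{t\bar X}$ with no higher-strata correction to estimate. In other words, the paper arranges for the higher normal coordinates of $\gamma$ to vanish identically rather than proving they vanish to order $o(s^{[i]})$. Your route is workable but leaves that vanishing as an unproved estimate (the parenthetical ``verified directly in the Heisenberg example'' does not carry the general case), whereas the paper's basis-freedom trick removes the need for it entirely. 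If you want to keep your approach, you should either prove the higher-order vanishing for integral curves of general horizontal extensions, or adopt the coordinate-field normalization as the paper does.
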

 
We will denote the norm on $E(p)$ by $\norm{\cdot}_p$ and that on $\mf{n}^0_p$ by $\norm{\cdot}_{\mf{n}^0_p}$.

\begin{proof}

Given a unit speed sub-Riemannian geodesic segment $\gamma$ starting at $p$, let $X$ be its tangent field extended to a smooth horizontal field on the neighborhood $U$ of $p$. By Proposition 5.6 of \cite{MargulisMostow00}, the \CC metric induced on $TC_pN$ does not depend on the choice of basis vector fields (satisfying certain assumptions). Using $X$ as a coordinate field we obtain,
\[
d_C(p,\delta_s\exp(X)(p))=d_C(p,\exp(s X)(p))=d_C(p,\gamma(s))=s,
\]

This implies in $TC_pN$ that $d([p],[\delta_s \exp( X)(p)])=1$ by \eqref{eq:dCC}. Since the identity element in $TC_pN$ is the class $[p]$ of the constant curve, and $\what{X}=X$ along its integral curve, $[\delta_s \exp( t X)(p)]=e^{t\bar{X}}$ and therefore $d(1,e^{t \bar{X}})=t$. The induced norm on the first level $\mf{n}^0_p$ of the Lie algebra then gives $\|\bar{X}\|_{\mf{n}^0_p}=1$. 

Thus the identification preserves the norm of vectors. Since inner products are uniquely determined by their norms, the inner products are preserved as well.
\end{proof}

\subsection{Carnot Derivatives of Maps Preserving Horizontal Distributions.}

Next we recall the notion of the Carnot derivative which will be crucial to our purposes. This object was first introduced by Pansu in the setting of nilpotent Lie groups in \cite{Pansu} based on earlier work of Mostow on quasiconformal maps of boundaries of rank one symmetric spaces (\cite{Mostow73}). We will need the following version due to Margulis and Mostow in the setting of sub-Riemannian manifolds.

\begin{definition}\cite[Definition 10.3.1]{MargulisMostow95}\label{def:Carnot}
Let $(N,d_C)$ and $(N',d_C')$ be sub-Riemannian manifolds, and let $f:(N,d_C)\to (N',d_C')$ be a map. For every $p\in N$, we denote by $f_t: (N,p,td_C)\to (N',f(p),td_C')$ the induced map for every $t>0$. The function $f$ is called {\em Carnot differentiable} at $p$ if the sequence $f_t$ converges uniformly as $t\to \infty$ on compact sets to a map between tangent cones at $p$ and at $f(p)$. And we call $f$ {\em Carnot differentiable} if it is Carnot differentiable at every point.
\end{definition}

In the next few propositions we explore the relationship between the Carnot derivative and the ordinary derivative.

\begin{proposition}\label{prop:df_isom}
Let $f:(N,d_C)\to (N',d_C')$ be a $C^1$ local diffeomorphism between sub-Riemannian manifolds generated by generic horizontal distributions $E$ and $E'$. If $f$ preserves the distributions, i.e. $Df(E)=E'$, then $f$ is Carnot differentiable everywhere. Moreover, the Carnot derivative $f_*$ is an isomorphism between the corresponding tangent cones.
\end{proposition}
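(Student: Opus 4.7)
The plan is to construct the Carnot derivative $f_*$ directly from $Df_p$ using that $f$ preserves the full filtration $E = E^0 \subset E^1 \subset \cdots \subset E^r$, and then verify that the rescaled maps $f_t$ of Definition \ref{def:Carnot} converge to it.

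First, because $f$ is a $C^1$ diffeomorphism with $Df(E) = E'$ and each $E^i$ is canonically determined by $E$ (as the subbundle whose fiber at $x$ arises from iterated brackets of smooth horizontal fields through $x$), $Df_p$ sends $E^i(p)$ to $(E')^i(f(p))$ for every $i$. Under the M\'etivier correspondence recalled in Section \ref{subsec:Carnot}, which identifies $\mf{n}^i_p$ with $E^i(p)/E^{i-1}(p)$, the quotients of $Df_p$ by successive strata assemble into a graded linear isomorphism $A : \mf{n}_p \to \mf{n}_{f(p)}$. Next I would check that $A$ commutes with brackets by verifying that the graded bracket on $\mf{n}_p$ agrees with the intrinsic Levi bracket on $\bigoplus_i E^i(p)/E^{i-1}(p)$, which is natural under any $C^1$ diffeomorphism preserving the filtration. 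Exponentiating yields a candidate graded Lie group isomorphism $\tilde{A} : TC_pN \to TC_{f(p)}N'$.

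The main step is to show that $f_t \to \tilde{A}$ uniformly on compact sets. Fix a graded basis of iterated commutators of horizontal fields at $p$ with associated normal coordinates $(y_i)$ in which the dilatations read $\delta_s(y_i) = s^{[i]} y_i$, and build analogous normal coordinates at $f(p)$ from the pushed-forward horizontal basis. Expressed in these coordinates, the conjugation $\delta_s^{-1} \circ f \circ \delta_s$ admits a weighted Taylor expansion whose leading term is exactly $A$, while all other terms carry strictly negative net weight and therefore vanish on compact sets as $s \to \infty$. Passing through the description of the tangent cone as equivalence classes of curves in \eqref{eq:dCC} converts this into the claimed convergence of $f_t$ in the pointed Gromov-Hausdorff sense.

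The principal obstacle is the weighted Taylor estimate under the mere $C^1$ hypothesis: ordinary $C^1$ smoothness only guarantees remainders of size $o(|y|)$ in unweighted coordinates, while the stratum $i$ coordinates rescale under $\delta_s$ by $s^{[i]}$ with $[i]\geq 1$, which could amplify errors. The filtration-preservation $Df_p(E^i(p)) = (E')^i(f(p))$ is precisely what prevents any would-be dangerous mixed-weight cross-terms from appearing in the expansion, and I would adapt and simplify the argument of Margulis-Mostow \cite{MargulisMostow95} --- originally written for almost-everywhere Carnot differentiability of quasiconformal maps --- to leverage the stronger $C^1$-diffeomorphism hypothesis and yield convergence at every point.
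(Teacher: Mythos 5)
Your proposal is a genuinely different route from the paper's, and it has a real gap that needs flagging.

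\textbf{Where the paper goes instead.} The paper does not construct the limit by hand. It observes that a $C^1$ diffeomorphism $f$ with $Df(E)=E'$ is automatically quasi-conformal on a precompact neighborhood, and then invokes Theorem~10.5 of Margulis--Mostow \cite{MargulisMostow95} as a black box: that theorem already proves Carnot differentiability, with the limit an isomorphism of tangent cones, at every point of a certain full-measure set $U^0\cap f^{-1}\left(\bigcap_i \Omega_{X_i'}\right)$. The entire content of the paper's proof is to unwind the definitions of $U^0$ (density points of the set where $f$ is locally Lipschitz, which is all of $U$ when $f$ is $C^1$ on a precompact set) and of $\Omega_{X_i'}$ (density points of the set where the integral curve $\exp(sX_i')(y)$ agrees with the curve $\epsilon_y(e^{s\bar X_i'})$, which turns out to hold at \emph{every} $y$ because the $X_i'$ are themselves coordinate fields used to define the dilatation), and to conclude these are all of $U$. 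That upgrades ``a.e.'' to ``everywhere'' without redoing any analysis.

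\textbf{The gap in your approach.} You begin by asserting that because ``each $E^i$ is canonically determined by $E$,'' a $C^1$ diffeomorphism with $Df(E)=E'$ automatically satisfies $Df_p\left(E^i(p)\right)=(E')^i(f(p))$, and later that the Levi bracket on $\bigoplus_i E^i(p)/E^{i-1}(p)$ is ``natural under any $C^1$ diffeomorphism preserving the filtration.'' Neither of these is automatic. The filtration $E^i$ and the Levi bracket are built from iterated Lie brackets of smooth sections of $E$, and the Lie bracket is \emph{not} natural under $C^1$ pushforward: if $X$ is a smooth section of $E$, then $Df(X)$ is only a $C^0$ section of $E'$, and $[Df(X),Df(Y)]$ is not even defined in the classical sense, much less equal to $Df[X,Y]$. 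Establishing filtration preservation for a mere $C^1$ map already requires a bi-Lipschitz / ball-box argument comparing the \CC metrics, i.e.\ exactly the kind of estimate Margulis--Mostow develop. You do acknowledge, at the end, that the weighted Taylor estimate is the ``principal obstacle'' and that you would ``adapt and simplify'' Margulis--Mostow --- but that adaptation is the whole proof, and as written your preliminary steps (graded linear map, bracket compatibility) silently assume regularity that the $C^1$ hypothesis does not deliver. If you pursue this direct route you should either strengthen the hypothesis to $C^2$ (which would genuinely simplify the construction of $\tilde A$ and the Taylor estimate) or confront the $C^1$ issues head-on via the \CC metric, at which point the paper's strategy --- cite Theorem~10.5 and show the two exceptional null sets are empty --- becomes the more economical path.
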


\begin{proof}
	Since the statement is local, it is enough to consider any sufficiently small precompact neighborhood $U$ of an arbitrary point $p\in N$ where $f$ is a diffeomorphism and we use $f\rest{U}$ instead of $f$. We assume $U$ is chosen such that there exist smooth horizontal vector fields $X_1,\dots,X_{d_0}$ on $U$ which form a basis for $E(x)$ at each $x\in U$. We let $X_i'=Df(X_i)$ for each $i=1,\dots,d_0$. By hypothesis, $\set{X_1',\dots,X_{d_0}'}$ forms a basis of $E'(y)$ at each $y\in U'$. As above, these vector fields give us normal coordinates and we use them to define our respective dilatations $\delta_s$ on $U$ and $\delta_s'$ on $U'=f(U)$.
	
	We note that $f:U\to U'$ is a quasi-conformal diffeomorphism since $f$ has uniformly bounded derivatives on the compact set $\bar{U}$. By Theorem 10.5 of \cite{MargulisMostow95} and its proof, $f$ induces an isomorphism of tangent cones at each point $x$ belonging to a certain full measure set $U^0\cap f^{-1}(\Omega_{X_1'}\cap\dots\cap \Omega_{X_{d_0}'})$ to be defined shortly. (Note that the $f^{-1}$ was inadvertently left off of the $\Omega_{X_i'}$ in the proof of \cite{MargulisMostow95}.) We will show that both $U^0$ and $f^{-1}(\Omega_{X_i'})$ for any $i=1,\dots,d_0$ are all of $U$ in our case. 
	
	The definition of $U^0$ given in Section 10.1 of \cite{MargulisMostow95} simplifies in our situation. It consists of the density points for the Hausdorff measure of $d_C$ (cf. Definition 2.1.4 of \cite{MargulisMostow95}) for the set $A$ where the Lipschitz constant of $f$ is uniformly bounded. However, since $f$ is $C^1$ with uniformly bounded derivatives $A=U$. Moreover, every point $x\in U$ is a density point since $U$ is open.

	We now recall the definition of the $\Omega_{X_i'}$ from \cite{MargulisMostow95}. For $i=1,\dots, d_0$, the right invariant vector fields $\bar{X}_i'\in \mf{n}_y$ on $TC_yU'$ correspond to the homogeneous degree one fields $\what{X}_{i;y}'$, namely $\bar{X}_i'=\mc{M}_y(\what{X}_{i;y}')$. Recall the $X_i'$ were used to define the normal coordinates and the dilatation $\delta_s'$. Hence along the integral curve of $X_i'$ through $y$, we have $\frac1{t} (\delta_t')_* X_i'=X_i'$ and hence  $\what{X}_{i;y}'=\lim\limits_{t\to\infty} \frac1{t} (\delta_t')_* X_i'=X_i'$ along the same curve. Let $e^{s \bar{X}_i'}$ denote the one parameter subgroup in $TC_y U'$ of the right invariant field $\bar{X}_i'$, viewed as an element of the Lie algebra  $\mf{n}_y$. Furthermore, following Section 8.6 of \cite{MargulisMostow95}, let $\eps_y$ denote the homeomorphism of a neighborhood of $1\in TC_y U'$ to $U'$ which takes the equivalence class of the path ${\set{\delta_s' z}}_{s\in[0,1]}$ in $TC_y U'$ to the point $z\in U'$. (The equivalence classes of the germs as $s\to 0$ of the paths $\delta_s' z$ correspond to points in the tangent cone by Lemmas 3.1 and 3.2 of \cite{Mitchell}.)
	
	As in \cite{MargulisMostow95}, for $i=1,\dots,d_0$ let $A_{X_i'}$ be the set consisting of all points $y\in U'$ such that $s^{-1} d_C'((\exp s X_i')(y),\eps_y(e^{s \bar{X}_i'}))$ tends to $0$ uniformly as $s\to 0$.  Finally, the set $\Omega_{X_i'}$ consists of all the density points in $A_{X_i'}$ for the Hausdorff measure of $d_C'$. 
	
	Finally observe that since $\what{X}_{i;y}'=X_i'$ along its integral curve through $y$, we have 
	\[
	\delta'_t \exp(s X_i')(y)=\delta'_t \exp(s \what{X}_{i;y}')(y).
	\] 
	This latter curve in $s$ converges in the pointwise Gromov-Hausdorff sense through the metric spaces $(U',y,\frac{d'}{t})$ to the curve $e^{s\bar{X_i'}}\in TC_yN'$ as $t\to 0$. In other words, the curve $\eps_y(e^{s \bar{X_i'}})$ coincides with the curve $\exp(s X_i')(y)$ for every $y\in U'$. In particular, $A_{X_i'}=U'$ and therefore $\Omega_{X_i'}=U'$ since every point of $U'$ is a density point for the Hausdorff measure.

\end{proof}

Diffeomorphisms preserving the horizontal distribution $E$ share certain  properties with their corresponding automorphisms. To investigate this relationship we will need to make certain correspondences more explicit.

\begin{definition}
	Let $f:N\to N'$ be a diffeomorphism of sub-Riemannian manifolds sending the horizontal distribution $E$ to the horizontal distribution $E'$. Assume that $E$ is generic at a point $p\in N$. For a graded basis $Y_1,\dots,Y_n$ of coordinate vector fields on $N$ around $p$ (resp. $Y_1',\dots,Y_n'$ around $f(p)$) with $X_1=Y_1,\dots,X_{d_0}=Y_{d_0}$ horizontal, let $\mc{M}_p:\op{span}\set{\what{X}_{0,1},\dots,\what{X}_{r,d_r}}\to \mf{n}_p$ be the induced M\'etivier correspondence. We then define
	\[
	\what{f}_*:\op{span}\set{\what{X}_{0,1},\dots,\what{X}_{r,d_r}}\to  \op{span}\set{\what{X'}_{0,1},\dots,\what{X'}_{r,d_r}}
	\]
	by setting $\what{f}_*(Z)=\mc{M}_{f(p)}^{-1}\of df_* \of \mc{M}_p(Z)$ where $df_*:\mf{n}_p\to\mf{n'}_{f(p)}$ is the Lie derivative of the isomorphism $f_*:TC_pN\to TC_{f(p)}N'$.
\end{definition}

\begin{lemma}\label{lem:homogDf}
Let $f:N\to N'$ be a local $C^\infty$ diffeomorphism of a sub-Riemannian manifolds $N$ and $N'$ sending the horizontal distribution $E$ of $N$ to the horizontal distribution $E'$ of $N'$. Assume that $E$ is generic at a point $p\in N$, and let $f_*:TC_pN\to TC_{f(p)}N'$ be the induced isomorphism.
For any horizontal vector field $X$ we have, 
\[
\what{f}_*\what{X}=\what{Df(X)}.
\]
Moreover, if $({X}_{0,1},\dots,{X}_{r,d_r})$ is a graded basis around $p$ then $\what{f}_*[\what{X}_{i,k},\what{X}_{j,l}]=[\what{f}_*\what{X}_{i,k},\what{f}_*\what{X}_{j,l}]$.
\end{lemma}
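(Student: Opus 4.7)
The plan is to handle the two assertions separately. The second, bracket-preservation statement is essentially automatic from the existence of the Carnot derivative, while the first identity requires identifying $df_*$ on the first stratum of $\mf{n}_p$ with $Df|_{E(p)}$ via the M\'etivier correspondence.

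For the ``moreover'' statement, I would appeal to Proposition~\ref{prop:df_isom} together with Theorem~\ref{thm:tangentconeN}(2): the Carnot derivative $f_*\colon TC_pN\to TC_{f(p)}N'$ is a graded Lie group isomorphism, so its Lie algebra derivative $df_*\colon \mf{n}_p\to \mf{n}'_{f(p)}$ is a graded Lie algebra isomorphism. The M\'etivier correspondences $\mc{M}_p$ and $\mc{M}_{f(p)}$ are themselves Lie algebra isomorphisms by construction, as they intertwine the ordinary commutator bracket of homogeneous vector fields with the tangent cone brackets in $\mf{n}_p$ and $\mf{n}'_{f(p)}$ (see Section~\ref{subsec:Carnot}). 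Hence $\what{f}_* = \mc{M}_{f(p)}^{-1}\circ df_*\circ \mc{M}_p$ is a Lie algebra isomorphism on the span of the $\what{X}_{k,j}$, immediately yielding $\what{f}_*[\what{X}_{i,k},\what{X}_{j,l}]=[\what{f}_*\what{X}_{i,k},\what{f}_*\what{X}_{j,l}]$.

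For the first identity, unpacking definitions reduces $\what{f}_*\what{X}=\what{Df(X)}$ to showing $df_*(\bar{X})=\overline{Df(X)}$ in $\mf{n}'_{f(p)}$, where $\bar{X}=\mc{M}_p(\what{X})$. I would use the curve-class description~\eqref{eq:dCC} of the tangent cone. Writing $X=\sum_i a_i X_i$ in the horizontal frame, Lemma~\ref{lem:constantsum} gives $\bar{X}=\sum_i a_i(p)\bar{X}_i$. The proof of Proposition~\ref{prop:df_isom} shows that for each basis field $\eps_p(e^{s\bar{X}_i})$ coincides with the integral curve $\exp(sX_i)(p)$; an extension of this argument shows that the equivalence class of the horizontal curve $\alpha(s):=\exp(sX)(p)$ in $TC_pN$ is precisely $\eps_p(e^{s\bar{X}})$. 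Since $f$ is a diffeomorphism preserving $E$, the pushforward $Df(X)$ is a horizontal field on $N'$ with $f\circ \alpha(s)=\exp(sDf(X))(f(p))$. Because the Carnot derivative acts on curve classes by composition (well-defined since $f$ is locally bi-Lipschitz on the \CC metrics), we obtain $f_*(e^{s\bar{X}})=[f\circ\alpha(s)]=\eps_{f(p)}(e^{s\overline{Df(X)}})$, and differentiating at $s=0$ yields $df_*(\bar{X})=\overline{Df(X)}$. Applying $\mc{M}_{f(p)}^{-1}$ gives $\what{f}_*\what{X}=\what{Df(X)}$.

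The principal technical obstacle is extending the identification $\eps_p(e^{s\bar{X}})=[\exp(sX)(p)]$ from the basis fields, where it was built into the construction, to an arbitrary horizontal $X$. Morally this holds because the tangent cone at $p$ only records first-order horizontal behavior, but the proof requires a genuine estimate of the form $s^{-1}d_C(\exp(sX)(p),\exp(s\sum_i a_i(p)X_i)(p))\to 0$ as $s\to 0$, obtained by comparing the two horizontal curves using the dilation structure $\delta_s$ and the fact that $X$ and its frozen constant-coefficient approximation $\sum_i a_i(p)X_i$ agree horizontally to first order at $p$; a naive Riemannian Taylor expansion would give the wrong scaling under $d_C$.
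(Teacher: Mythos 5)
The bracket-preservation part of your argument is correct and matches the paper's reasoning: the M\'etivier correspondences are Lie-algebra isomorphisms, $df_*$ is a graded Lie-algebra isomorphism, so $\what{f}_*$ preserves brackets (the paper appeals to exactly this together with equation \eqref{eq:brackhat}).

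The first identity is where the gap lies, and you have flagged it yourself. Your curve-class route requires showing $\eps_p(e^{s\bar{X}})=[\exp(sX)(p)]$ for an \emph{arbitrary} horizontal $X$, not only the chosen basis fields where this is built into the construction of $\eps_p$. This is a genuinely nontrivial step: Lemma~\ref{lem:constantsum} gives equality of the homogeneous degree-one vector fields $\what{X}_p=\sum_i a_i(p)\what{X}_{i;p}$, but passing from equality of degree-one parts to the curve estimate $s^{-1}d_C\bigl(\exp(sX)(p),\exp\bigl(s\sum_i a_i(p)X_i\bigr)(p)\bigr)\to 0$ requires comparing two non-commuting horizontal flows under the dilatation structure, which is essentially the content of Section~8 of \cite{MargulisMostow95}. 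The paper sidesteps your flagged obstacle entirely by citing two of their results: Proposition~8.10 of \cite{MargulisMostow95} gives that the push-forward of a horizontal field under $f$ converges under metric rescaling to the right-invariant field $\bar{Df(X)}$ on the tangent cone, and the proof of their Theorem~10.5 shows that the Carnot derivative maps the one-parameter subgroup of $\bar{X}$ to the one-parameter subgroup of that limit field, i.e. $f_*(e^{s\bar{X}}g)=e^{s\bar{Df(X)}}f_*(g)$ for all $g$. Differentiating at $s=0$, $g=1$ gives $df_*(\bar{X})=\bar{Df(X)}$, which is the identity you reduced to. To close your gap from first principles you would, in effect, be reproving Proposition~8.10; citing it directly, as the paper does, is the intended shortcut.
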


\begin{proof}
By Proposition \ref{prop:df_isom} the limit of the maps $f_t$ given in Definition \ref{def:Carnot} exists and is $f_*:TC_pN\to TC_{f(p)}N'$.

The push-forward under $f$ of a horizontal vector field $X$ is also horizontal and therefore converges under metric rescaling to a right invariant horizontal field $\bar{Df(X)}$ by Proposition 8.10 of \cite{MargulisMostow95}. As shown in the proof of Theorem 10.5 of \cite{MargulisMostow95}, the image under $f_*$ of the one parameter subgroup $e^{s\bar{X}}$ is the one parameter subgroup of this limit push-forward field. In other words, for all $g\in TC_pN$ and $s\in \R$ we have,
\[
f_*(e^{s \bar{X}}g)=e^{s \bar{Df(X)}}f_*(g).
\]
Taking Lie derivatives we have
$df_*(\bar{X})=\bar{Df(X)}$ and therefore under the M\'etivier correspondence we have $\what{f}_*\what{X}=\what{Df(X)}$ as claimed.

The second statement follows directly from the M\'etivier correspondence and \eqref{eq:brackhat} and the fact that ${df}_*[\bar{X}_{i,k},\bar{X}_{j,l}]=[{df}_*\bar{X}_{i,k},{df}_*\bar{X}_{j,l}]$.
\end{proof}

By Lemma \ref{lem:isometricinclusion} the inclusion $\imath:E(p)\hookrightarrow \mf{n}_p$ defined earlier is an isometry onto its image $\mf{n}^0_p$ with respect to the Riemannian norm and norm inducing the \CC metric.
The following corollary gives further relationships between some of the objects defined thus far.

\begin{corollary}\label{cor:der-comparison}
	In the setting of Proposition \ref{prop:df_isom},   the following diagram commutes,
	\[
	\begin{tikzcd}
	E(p) \arrow[hookrightarrow]{d}{\imath} \arrow[r, "D_pf"]  & E'(f(p)) \arrow[hookrightarrow]{d}{\imath'} \\
	\mf{n}_p \arrow[r,"d(f_*)"] \arrow[d,"\mbf{exp}"] & \mf{n'}_{f(p)} \arrow[d,"\mbf{exp}"]\\
	TC_pN \arrow[r,"f_*"] & TC_{f(p)}N' \\
	\end{tikzcd}
	\]
	where $\mbf{exp}$ is the Lie exponential map $\mbf{exp}(X)=e^X$.
	Moreover, for all $\xi\in E(p)$, we have $\norm{D_pf(\xi)}_{f(p)}=\norm{df_*(\imath\xi)}_{\mf{n'}^0_{f(p)}}$.
\end{corollary}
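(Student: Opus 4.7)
The plan is to verify the two squares of the diagram separately and then extract the norm equality from commutativity of the upper square together with Lemma \ref{lem:isometricinclusion}.

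For the upper square, I would fix $\xi\in E(p)$ and extend it to a smooth horizontal vector field $X$ on a neighborhood of $p$ with $X(p)=\xi$. By definition of $\imath$, we have $\imath(\xi)=\bar X=\mc M_p(\what X_p)$. Since $f$ preserves the horizontal distributions, $Df(X)$ is a horizontal vector field on the target with $Df(X)(f(p))=D_pf(\xi)$, hence $\imath'(D_pf(\xi))=\bar{Df(X)}=\mc M_{f(p)}(\what{Df(X)}_{f(p)})$. Lemma \ref{lem:homogDf} gives $\what f_*\what X=\what{Df(X)}$, which by the very definition $\what f_*=\mc M_{f(p)}^{-1}\circ df_*\circ \mc M_p$ translates into $df_*(\bar X)=\bar{Df(X)}$. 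Combining, $df_*(\imath\xi)=\imath'(D_pf(\xi))$, so the top square commutes.

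For the bottom square, I would appeal to Proposition \ref{prop:df_isom}, which guarantees that $f_*:TC_pN\to TC_{f(p)}N'$ is a Lie group isomorphism of the tangent cones. By naturality of the Lie exponential under a Lie group homomorphism between connected, simply connected nilpotent Lie groups, $f_*\circ \mbf{exp}=\mbf{exp}\circ df_*$, where $df_*$ is the differential at the identity; this is exactly the commutativity of the lower square.

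Finally, for the norm identity, by Lemma \ref{lem:isometricinclusion} the inclusion $\imath':E'(f(p))\hookrightarrow \mf n'_{f(p)}$ is an isometry onto $\mf n'^{0}_{f(p)}$ with respect to the Riemannian norm on $E'(f(p))$ and the norm on $\mf n'^{0}_{f(p)}$ inducing the \CC metric on $TC_{f(p)}N'$. Using commutativity of the top square,
\[
\norm{D_pf(\xi)}_{f(p)}=\norm{\imath'(D_pf(\xi))}_{\mf n'^{0}_{f(p)}}=\norm{df_*(\imath\xi)}_{\mf n'^{0}_{f(p)}},
\]
completing the proof. The only nontrivial step is the verification of the top square; the identification of $df_*(\bar X)$ with the image horizontal field, packaged into Lemma \ref{lem:homogDf}, is the main ingredient, and everything else is a formal consequence of Proposition \ref{prop:df_isom} and the isometric nature of the inclusion $\imath'$.
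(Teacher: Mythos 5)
Your proposal is correct and follows essentially the same route as the paper: the upper square is established via Lemma \ref{lem:homogDf} and the definition of $\what f_*$, the lower square is the naturality of the Lie exponential for the group isomorphism from Proposition \ref{prop:df_isom}, and the norm identity follows from the isometric nature of the inclusions (Lemma \ref{lem:isometricinclusion}) combined with commutativity of the top square.
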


\begin{proof}
	The lower part of the diagram commutes as a standard consequence of Lie theory.
	
	By Lemma \ref{lem:homogDf}, we have $\what{f}_*\what{X}=\what{Df(X)}$. Since $X(p)=\what{X}_p(p)\in E(x)$ for a horizontal field $X$ we have, 
	\[
	df_*(\imath(X(p))=df_*(\bar{X})=df_*\mc{M}_p(\what{X})=\mc{M}_{f(p)}\what{f}_*\what{X}=\mc{M}_{f(p)}\what{Df(X)}=\bar{Df(X)}=\imath'(D_{p}f(X(p))),
	\]
	where the third equality is by the definition of $\what{f}_*$. This establishes commutativity of the upper diagram.

	The final statement follows from the commutativity of the diagram and the fact that the inclusions are isometric. 

\end{proof}

The following relationship between the differential of a local $C^\infty$ diffeomorphism at a fixed point and its induced automorphism will prove to be crucial. A priori there is no clear connection between the Carnot derivative and the ordinary derivative of a smooth sub-Riemannian map. However, we are able to establish such a connection in some important special cases. 
\begin{proposition}\label{prop:eval_products}
Let $f:N\to N$ be a local $C^\infty$ diffeomorphism of a sub-Riemannian manifold $N$ preserving the horizontal distribution $E$. Assume that $E$ is generic at a point $p\in N$ where $f(p)=p$, and let $f_*:TC_pN\to TC_pN$ be the induced automorphism. If $df_*\rest{\mf{n}^0}$ is diagonalizable with real eigenvalues $(\la_1,\dots,\la_{d_0})$, then there exists a basis of $T_pN$ such that $D_pf:T_pN\to T_pN$ is upper triangular with diagonal values $(\lambda_1,\dots,\lambda_{d_0},\lambda_{i_1}\la_{i_2},\dots,\lambda_{i_1}\la_{i_2}\la_{i_3},\dots)$. 
\end{proposition}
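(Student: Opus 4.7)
The plan is to reduce the problem to analyzing the action of $D_pf$ on the associated graded of $T_pN$ with respect to the filtration $E(p)=E^0(p)\subset E^1(p)\subset\cdots\subset E^r(p)=T_pN$. Since $f$ preserves the horizontal distribution $E$, push-forward of vector fields commutes with iterated brackets, so each bundle $E^k$ is $f$-invariant; evaluating at the fixed point, $D_pf$ preserves every $E^k(p)$. Thus with respect to \emph{any} basis of $T_pN$ refining this filtration, $D_pf$ is automatically block upper triangular, and it suffices to identify the diagonal blocks $(D_pf)_k\colon E^k(p)/E^{k-1}(p)\to E^k(p)/E^{k-1}(p)$.

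The next step is to identify each $E^k(p)/E^{k-1}(p)$ with $\mf{n}^k_p$ via the M\'etivier correspondence. Concretely, an iterated bracket $Y=[X_{i_1},[X_{i_2},\dots,[X_{i_k},X_{i_{k+1}}]\cdots]]$ of horizontal fields evaluated at $p$ corresponds, modulo $E^{k-1}(p)$, to $[\bar X_{i_1},[\bar X_{i_2},\dots,[\bar X_{i_k},\bar X_{i_{k+1}}]\cdots]]\in\mf{n}^k_p$. Using equation \eqref{eq:brackhat} together with the identity $[fU,V](p)=-V(f)(p)\,U(p)$ for $f$ vanishing at $p$, one checks that the Lie bracket descends to a well-defined graded Lie bracket on $\bigoplus_k E^k(p)/E^{k-1}(p)$ matching the Lie bracket on $\mf{n}_p$, and that the push-forward identity $Df([U,V])=[Df(U),Df(V)]$ makes $\op{gr}(D_pf)$ a graded Lie algebra automorphism. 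Since Corollary \ref{cor:der-comparison} supplies the identification $(D_pf)_0=df_*|_{\mf{n}^0_p}$ on the horizontal level, and $\mf{n}^0_p$ generates the Carnot Lie algebra $\mf{n}_p$, it follows that $\op{gr}(D_pf)=df_*$ on all of $\mf{n}_p$.

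Having made this identification, I would read off the diagonal entries. Choose eigenvectors $v_1,\dots,v_{d_0}\in\mf{n}^0_p$ of $df_*|_{\mf{n}^0_p}$ with eigenvalues $\lambda_1,\dots,\lambda_{d_0}$. The Lie algebra morphism property of $df_*$ gives that any nested bracket $[v_{i_1},[v_{i_2},\dots,[v_{i_k},v_{i_{k+1}}]\cdots]]\in\mf{n}^k_p$ is an eigenvector with eigenvalue $\lambda_{i_1}\lambda_{i_2}\cdots\lambda_{i_{k+1}}$. Because $\mf{n}_p$ is Carnot, such nested brackets span $\mf{n}^k_p$, so $df_*|_{\mf{n}^k_p}$ is diagonalizable with eigenvalues of this product form. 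Lifting an eigenbasis of each $df_*|_{\mf{n}^k_p}$ back to $E^k(p)$ via any section of $E^k(p)\to E^k(p)/E^{k-1}(p)$, and ordering the resulting vectors by increasing level, produces a basis of $T_pN$ in which $D_pf$ is upper triangular with the prescribed diagonal $(\lambda_1,\dots,\lambda_{d_0},\lambda_{i_1}\lambda_{i_2},\dots,\lambda_{i_1}\lambda_{i_2}\lambda_{i_3},\dots)$.

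The main obstacle I expect is the middle step: rigorously verifying the equality $\op{gr}(D_pf)=df_*$. This requires tracking two a priori distinct Lie bracket structures --- the one on $\bigoplus_k E^k(p)/E^{k-1}(p)$ induced by Lie brackets of local extensions, and the one on $\mf{n}_p$ defined via homogeneous degree-one fields --- and checking that they match under the M\'etivier correspondence, while also showing that all terms arising from coefficient functions vanishing at $p$ contribute only to strictly lower filtration levels and hence disappear on the graded. Once these compatibilities are established, the agreement $(D_pf)_0=df_*|_{\mf{n}^0_p}$ from Corollary \ref{cor:der-comparison} propagates to all higher levels by Carnot generation, and the rest of the proposition is immediate.
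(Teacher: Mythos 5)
Your proposal is correct and follows essentially the same route as the paper: both reduce to showing $D_pf$ preserves the filtration $E^0(p)\subset\cdots\subset E^r(p)$, identify $D_pf$ on the horizontal level with $df_*|_{\mf{n}^0_p}$ via Corollary \ref{cor:der-comparison}, and propagate eigenvalues multiplicatively through brackets using the M\'etivier correspondence (Lemma \ref{lem:homogDf} and equation \eqref{eq:brackhat}). The only difference is presentational — you phrase the induction in terms of the associated graded $\bigoplus_k E^k(p)/E^{k-1}(p)$ and the equality $\op{gr}(D_pf)=df_*$, whereas the paper works the same computation directly with degree decompositions of vector fields, and the ``main obstacle'' you flag (matching the induced graded bracket with $\mf{n}_p$) is precisely what the paper's explicit evaluation-at-$p$ argument establishes.
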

\begin{proof}
Again let $U\subset N$ be a small neighborhood around the point $p$ and  $X_1,\dots, X_n$ be a graded basis where $X_1,\dots, X_{d_0}$ are horizontal vector fields spanning $E$ on $U$. As above, normal coordinates define a dilatation $\delta_t:U\to N$. Then if $X$ and $Y$ are horizontal, by M\'etivier \cite{Metivier}, $X$ and $Y$ are vector fields of degree at most $1$ at $p$ and $\what{X}=\lim\limits_{t\to\infty}\frac{1}{t}(\delta_t)_* X$ is the homogeneous degree $1$ part of $X$.  Note that $[X, Y]$ is a vector field of degree at most $2$ and we denote the homogeneous degree 2 part by $[X,Y]^{(2)}=\lim\limits_{t\to\infty}\frac{1}{t^2}(\delta_t)_*[ X, Y]$.

Without loss of generality we may assume that the chosen basis of fields $X_1,\dots,X_{d_0}$ have corresponding right invariant fields $\bar{X}_i$ which diagonalize $f_*$ on the first level of the Lie algebra $\mf{n}$. 
Indeed by assumption, there are $\lambda_i\in \R$ such that $df_*\bar{X_i}=\lambda_i \bar{X_i}$ for $i=1,\dots,d_0$.

By Corollary \ref{cor:der-comparison}, for $i=1,\dots,d_0$ we have 
$\imath D_pf(X_i(p))=df_*(\imath(X_i(p)))=df_*(\bar{X}_i)=\la_i \bar{X}_i$. Taking $\imath^{-1}$, we have $D_pf (X_i(p)) = \lambda_i X_i(p)$.

By Lemma \ref{lem:homogDf}, if $[X_i,X_j]$ is a graded basis element, then $\what{f}_*[\what{X}_i,\what{X}_j]=[\what{f}_*\what{X}_i,\what{f}_*\what{X}_j]$.
Therefore we have,
\begin{align*}
df_*\mc{M}_p[\what{X}_i,\what{X}_j]&=\mc{M}_p[\what{f}_*\what{X}_i,\what{f}_*\what{X}_j], \\
df_*[\mc{M}_p\what{X}_i,\mc{M}_p\what{X}_j]&=\mc{M}_p[\what{Df {X}_i},\what{Df {X}_j}], \\
[df_*\bar{X}_i,df_*\bar{X}_j]&=\mc{M}_p[{Df {X}_i},{Df {X}_j}]^{(2)},\\
\la_i\la_j[\bar{X}_i,\bar{X}_j]&=\mc{M}_p[{Df {X}_i},{Df {X}_j}]^{(2)}.
\end{align*}
It then follows that, 
\begin{align}\label{eq:deg2part}
[{Df {X}_i},{Df {X}_j}]^{(2)} &=\la_i\la_j\mc{M}_p^{-1}[\bar{X}_i,\bar{X}_j]=\la_i\la_j[\what{X}_i,\what{X}_j]=\la_i\la_j[X_i,X_j]^{(2)}.
\end{align}

\sloppy
By hypothesis $\set{X_1,\dots,X_{d_0}}\cup\set{[X_i,X_j]:1\le i,j \le d_0}$ spans $E^1$. Since the homogeneity degree of $Df[X_i,X_j]=[Df(X_i),Df(X_j)]$ and $[X_i,X_j]$ at $p$ is at most $2$,  ${Df[X_i,X_j]-\la_i\la_j[X_i,X_j]}$ has homogeneity degree at most $1$. Evaluating \eqref{eq:deg2part} at $p$ we have $[Df(X_i),Df(X_j)](p)-\la_i\la_j[X_i,X_j](p)\in E(p)=E^0(p)$.

Continuing by induction, if $X_{i}$ and $X_{j}$ are graded basis fields of degree at most $q$ and $s$ respectively such that $[X_{i},X_{j}]$ is a basis field, then a nearly identical computation shows,
\[
[{Df {X}_i},{Df {X}_j}]^{(q+s)}=\sigma_i\sigma_j[X_i,X_j]^{(q+s)},
\]
where by inductive hypothesis $\sigma_i=\lambda_{i_1}\dots\lambda_{i_\kappa}$ and $\sigma_j=\la_{j_1}\cdots\la_{j_\tau}$ for some indices $i_1,\dots,i_\kappa$ and $j_1,\dots,j_\tau$.

This shows that in the $\set{X_i(p)}$ basis, $Df$ is upper triangular. If restricted to the horizontal distribution, the diagonal values are $\la_1,\dots,\la_{d_0}$, then other diagonal elements are products of the $\lambda_i$'s as claimed.
\end{proof}

Now we are equipped to prove Theorem \ref{thm:Lyap_exp} in the introduction.
\begin{reptheorem}{thm:Lyap_exp}
Let $f:N\to N$ be a local $C^\infty$ diffeomorphism of a sub-Riemannian manifold $N$. Assume that the horizontal distribution is generic at a point $p\in N$ where $f(p)=p$, and that the graded automorphism $f_*:TC_pN\to TC_pN$ induced from $f$ induces is a homothety. Then for some $\la>1$, $D_pf\rest{E^{i}_p}$ has Lyapunov exponents $\log \la,2\log \la,\dots,(i+1)\log\la$ with corresponding multiplicities $d_0=\dim \mf{n}^0,\dots,d_i=\dim \mf{n}^{i}$ for $i=0,\dots,r$.
\end{reptheorem}

\begin{proof}

Let $({X}_{0,1},\dots,{X}_{r,d_r})$ be a graded basis of $N$ in a neighborhood of $p$. As before, we let $\ds{\mc{M}_p: \op{span}\set{\what{X}_{0,1},\dots,\what{X}_{r,d_r}}\to \mf{n}_p}$ be the M\'etivier correspondence. We will show that with respect to the basis $({X}_{0,1}(p),\dots,{X}_{r,d_r}(p))$, the matrix for $D_pf$ is a block upper triangular matrix. Suppose that $f_*=\delta_{\la} h$, where $\delta_\la$ is the dilation of scale $\la>0$ and $h$ is an isometry of the \CC metric of $TC_pN$ which necessarily lies in a compact subgroup of the graded automorphism group.

By Lemma \ref{lem:homogDf}, $\what{f}_*\what{X}_{0,j}=\what{Df(X_{0,j})}$. Thus,
$$\what{Df(X_{0,j})}=\mc{M}_p df_* \mc{M}_p^{-1}\what{X}_{0,j}=\mc{M}_p df_* \bar{X}_{0,j}=\la\mc{M}_p dh \bar{X}_{0,j}=\la\mc{M}_p dh \mc{M}_p^{-1}\what{X}_{0,j}.$$
Letting $\what{dh}=\mc{M}_p dh \mc{M}_p^{-1}$, we can rewrite the above equation as $\what{Df(X_{0,j})}=\la\what{dh}\what{X}_{0,j}$. Since $dh$ is a graded Lie algebra homomorphism, $\what{dh}$ is also a graded Lie algebra homomorphism of $ \op{span}\set{\what{X}_{0,1},\dots,\what{X}_{r,d_r}}$. In particular, $\what{dh}$ preserves $ \op{span}\set{\what{X}_{i,1},\dots,\what{X}_{i,d_i}}$ for every $i=0,\dots, r$. It follows that $\what{dh}\what{X}_{0,j}\in  \op{span}\set{\what{X}_{0,1},\dots,\what{X}_{0,d_0}}$ for every $j=1,\dots, d_0$. Therefore, $D_pf(\what{X}_{0,j}(p))=\what{Df(X_{0,j})}(p)=(\what{dh}\what{X}_{0,j})(p)\in \op{span}\set{\what{X}_{0,1}(p),\dots,\what{X}_{0,d_0}(p)}=E(p)$.

Consider $X_{1,l}$. Suppose that $X_{1,l}=[X_{0,i},X_{0,j}]$. By Lemma \ref{lem:homogDf}, $\what{f}_*\what{X_{1,l}}=\what{f}_*[\what{X}_{0,i},\what{X}_{0,j}]=[\what{f}_*\what{X}_{0,i},\what{f}_*\what{X}_{0,j}]=[\what{DfX}_{0,i},\what{DfX}_{0,j}]$. And thus,
\begin{align}\label{eq:part2}
(Df[{X}_{0,i},{X}_{0,j}])^{(2)}=[{DfX}_{0,i},{DfX}_{0,j}]^{(2)}=[\what{DfX}_{0,i},\what{DfX}_{0,j}]=\la^2\what{dh}\what{X_{1,l}}.
\end{align}

Set $V^{(i)}(p)=\op{span}\set{{X}_{i,1}(p),\dots,{X}_{i,d_i}(p)}$ so that $E^{i}(p)=V^{(0)}(p)\oplus\cdots\oplus V^{(i)}(p)$. We note that $\what{dh}\what{X}_{0,l}\in \op{span}\set{\what{X}_{0,1},\dots,\what{X}_{0,d_0}}$ since $\what{dh}$ is a graded Lie algebra homomorphism. We observe that for every $l=1\dots,d_1$, we have $\what{X}_{1,l}(p) \in \op{span}\set{X_{1,l}(p), X_{0,1}(p),\dots, X_{0,d_0}(p)}$. It follows that $(Df[{X}_{0,i},{X}_{0,j}])^{(2)}(p)\in \op{span}set{\what{X}_{0,1}(p),\dots,\what{X}_{0,d_0}(p),\what{X}_{1,1}(p),\dots,\what{X}_{1,d_1}(p)}=E^{1}(p)$. On the other hand, 
$$(Df[{X}_{0,i},{X}_{0,j}])(p)=(Df[{X}_{0,i},{X}_{0,j}])^{(2)}(p)+(Df[{X}_{0,i},{X}_{0,j}])^{(\le 1)}(p),$$
where $(Df[{X}_{0,i},{X}_{0,j}])^{(\le 1)}$ is the homogeneous degree at most 1 part of $(Df[{X}_{0,i},{X}_{0,j}])$. Evaluating a vector field of degree at most 1 at $p$ we get a vector in $E(p)= \op{span}\set{\what{X}_{0,1}(p),\dots,\what{X}_{0,d_0}(p)}$. Therefore, $(Df[{X}_{0,i},{X}_{0,j}])(p)\in \op{span}\set{\what{X}_{0,1}(p),\dots,\what{X}_{0,d_0}(p),\what{X}_{1,1}(p),\dots,\what{X}_{1,d_1}(p)}=E^{1}(p)$.

Inductively, for $X_{m+1,l}=[X_{m,i},X_{0,j}]$, we obtain
\begin{align}\label{eq:part3}
(Df[{X}_{m,i},{X}_{0,j}])^{(m+2)}=[{DfX}_{m,i},{DfX}_{0,j}]^{(m+2)}=[({DfX}_{m,i})^{(m+1)},\what{DfX}_{0,j}]=\la^{m+2}\what{dh}\what{X}_{m+1,l}.
\end{align}
And it follows that
\begin{align}\label{eq:eval_m}
\begin{split}
(D_pf)(({X}_{m+1,l})(p))&=\la^{m+2}(\what{dh}\what{X}_{m+1,l})(p)+(Df{X}_{m+1,l})^{(\le m+1)}(p) \in E^{m+1}(p),
\end{split}
\end{align}
where the notation $Y^{(\le m+1)}$ indicates the sum of the homogeneous parts of any vector field $Y$ of degree at most $m+1$.

Therefore, with respect to the basis $({X}_{0,1}(p),\dots,{X}_{r,d_r}(p))$, $D_pf$ is a block upper triangular matrix. We let $A_i$ be the blocks on the diagonal of $D_pf$ corresponding to the basis elements $\set{X_{i,1}(p),\dots,X_{i,d_i}(p)}$, for every $i=0,\dots, r$. We note that the size of $A_i$ is $\dim(\mf{n}^{i}_p)\times \dim(\mf{n}^{i}_p)$. Since $(Df{X}_{m+1,l})^{(\le m+1)}(p)\in  E^{m}(p)$, the equation \ref{eq:eval_m} implies that
$$\proj_{V^{(m+1)}(p)}(D_pf)(({X}_{m+1,l})(p))=\proj_{V^{(m+1)}(p)}(\la^{m+2}(\what{dh}\what{X}_{m+1,l})(p)),$$
where $\proj_{V^{(m+1)}(p)}$ is the projection of $T_pN$ onto $V^{(m+1)}(p)$ along the subspace $\check{V}^{(m+1)}(p):=V^{(0)}(p)\oplus\cdots\oplus V^{(m)}(p)\oplus V^{(m+2)}(p)\oplus\cdots\oplus V^{(r)}(p)$. Thus the $(j,l)$ entry of the block matrix $A_{m+1}$ is the $X_{m+1,j}(p)$ component of $\la^{m+2}(\what{dh}\what{X}_{m+1,l})(p)$, for $j,l\in\set{1,\dots,d_{m+1}}$.

Next, we estimate Lyapunov exponents of block matrices $A_0, \dots, A_r$. Applying equation \ref{eq:part3} for $f^n$ instead of $f$, we obtain
\begin{align}\label{eq:part4}
\begin{split}
(Df^n[{X}_{m,i},{X}_{0,j}])^{(m+2)}&=[{Df^nX}_{m,i},{Df^nX}_{0,j}]^{(m+2)}\\
&=[({Df^nX}_{m,i})^{(m+1)},\what{Df^nX}_{0,j}]=\la^{n(m+2)}\what{dh}^n\what{X}_{m+1,l}.
\end{split}
\end{align}
Since \[
(Df^n[{X}_{m,i},{X}_{0,j}])(p)=(Df^n[{X}_{m,i},{X}_{0,j}])^{(m+2)}(p)+(Df^n[{X}_{m,i},{X}_{0,j}])^{(\le m+1)}(p),
\] 
and 
\[
(Df^n[{X}_{m,i},{X}_{0,j}])^{(\le m+1)}(p)\in E^{m}(p),
\]
then the projection of $(Df^n[{X}_{m,i},{X}_{0,j}])(p)$ along $\check{V}^{(m+1)}(p)$ onto $V^{(m+1)}(p)$ is the same as the projection of $\la^{n(m+2)}\what{dh}^n\what{X}_{m+1,l}(p)$ to $V^{(m+1)}(p)$ along $\check{V}^{(m+1)}(p)$.

We claim that,
\begin{align}\label{eq:intersect}
\op{span}\set{\what{X}_{m+1,1}(p),\dots,\what{X}_{m+1,k_{m+1}}(p)}\cap \check{V}^{(m+1)}(p)=\{0\}.
\end{align} 
Indeed, we have $\op{span}\set{\what{X}_{0,1}(p),\dots,\what{X}_{i,d_{i}}(p)}= \op{span}\set{{X}_{0,1}(p),\dots,{X}_{i,d_{i}}(p)}$ for all $i=0,\dots,m+1$. Moreover, these sets of vectors form bases of their corresponding spanned vector spaces. This implies the claim.

This claim implies that for every $w\neq 0$ in $\op{span}\set{\what{X}_{m+1,1}(p),\dots,\what{X}_{m+1,d_{m+1}}(p)}$, its projection to $V^{(m+1)}(p)$ along $\check{V}^{(m+1)}(p)$ is nonzero. For every $v\neq 0$ in $V^{(m+1)}(p)$, there are $c_1,\dots , c_{d_{m+1}}\in \R$ not all being zero such that $v=c_1 {X}_{m+1,1}(p) +\dots + c_{d_{m+1}}{X}_{m+1,d_{m+1}}(p)$. We then have
\begin{align*}
D_pf^nv&=c_1 (Df^n{X}_{m+1,1})(p) +\dots + c_{k_{m+1}}(D_pf^n{X}_{m+1,k_{m+1}}(p))\\
&=c_1 (Df^n{X}_{m+1,1})^{(m+2)}(p) +\dots + c_{k_{m+1}}(D_pf^n{X}_{m+1,k_{m+1}}^{(m+2)}(p))\\
&\phantom{aaaaaaaaaaa}+ c_1 (Df^n{X}_{m+1,1}^{(\le m+1)})(p) +\dots + c_{k_{m+1}}(D_pf^n{X}_{m+1,k_{m+1}}^{(\le m+1)})(p)\\
&=\lambda^{n(m+2)}\what{dh}^n (c_1 \what{X}_{m+1,1}(p) +\dots + c_{k_{m+1}}\what{X}_{m+1,k_{m+1}}(p))\\
&\phantom{aaaaaaaaaaa}+ c_1 (Df^n{X}_{m+1,1}^{(\le m+1)})(p) +\dots + c_{k_{m+1}}(D_pf^n{X}_{m+1,k_{m+1}}^{(\le m+1)})(p).
\end{align*}
By same argument as above, $D_pf^nv$ and $\lambda^{n(m+2)}\what{dh}^n (c_1 \what{X}_{m+1,1}(p) +\dots + c_{d_{m+1}}\what{X}_{m+1,d_{m+1}}(p))$ have the same projection onto $\op{span}\set{{X}_{m+1,1}(p),\dots,{X}_{m+1,d_{m+1}}(p)}$ along $\op{span}\set{{X}_{1,1}(p),\dots,{X}_{m,d_{m}}(p)}$. Since $c_1, \dots c_{d_{m+1}}$ are not all zero, the vector $c_1 \what{X}_{m+1,1}(p) +\dots + c_{d_{m+1}}\what{X}_{m+1,d_{m+1}}(p)$ is nonzero, and thus has a nonzero projection on $\op{span}\set{{X}_{m+1,1}(p),\dots,{X}_{m+1,d_{m+1}}(p)}$ by \eqref{eq:intersect}.  

Since $h$ is in a compact subgroup of $\GL(n,\R)$, there is a subsequence $(n_k)$ such that $\what{dh}^{n_k}$ converges to the identity. Choose an arbitrary norm $\|\cdot\|$ on $V^{(m+1)}(p)$. Therefore, we may find two constants $C_1,C_2>0$ independent of $k$ such that
$$C_1\norm{v}<\|\proj_{V^{(m+1)}(p)}(\what{dh}^{n_k}(v))\|<C_2\norm{v}.$$
Hence,
$$C_1\lambda^{n_k(m+2)}\|v\|<\|\proj_{V^{(m+1)}(p)}(D_pf^{n_k}v)\|<C_2\lambda^{n_k(m+2)}\|v\|,$$
for every $v\neq 0$ in $V^{(m+1)}(p)$. It follows that $A_{m+1}$ has all Lyapunov exponents equal to $(m+2)\log \lambda$.

Finally, as is well known, the Lyapunov exponents of a block upper triangular matrix are the same as those of the corresponding block diagonal matrix.
\end{proof}

If $f_*$ fails to be a homothety, we are still able to recover a system of inequalities among the Lyapunov exponents. This is Theorem \ref{thm:spectrum_ineq} from the introduction.

\begin{reptheorem}{thm:spectrum_ineq}
Let $f:N\to N$ be a local $C^\infty$ diffeomorphism of a sub-Riemannian manifold $N$. Assume that the horizontal distribution is generic at a point $p\in N$ where $f(p)=p$. Suppose the tangent cone $TC_pN$ is $(r+1)$-step and set $n_{-1}=0$ and $n_i=\dim E^{i}(p)$ for $i\in \set{0,\dots,r}$. If $D_pf\rest{E^{i}_p}$ has Lyapunov exponents $(\log \la_1,\log \la_2,\dots,\log \la_{n_0},\log \la_{{n_0}+1},\dots,\log \la_{n_1},\dots,\log \la_{n_{i}})$ listed with multiplicity and with $\log \la_{n_{j-1}+1},\dots,\log \la_{n_{j}}$ in nondecreasing order for each $j\in\set{0,\dots,i}$, then for each $i\in \set{0,\dots, r}$ and $j\in\set{n_{i-1}+1,\dots,n_{i}}$, we have $(i+1) \log \la_1\leq \log \la_{j}\leq (i+1)\log \la_{n_0} $.
\end{reptheorem}

\begin{proof}

\sloppy
By switching $f$ with $f^{-1}$ we observe that it is sufficient to prove just the lower bound.
Let $\set{{X}_{0,1},\dots,{X}_{r,d_r}}$ be a graded basis of $N$ in a neighborhood of $p$. We let $\ds{\mc{M}_p: \op{span}\set{\what{X}_{0,1},\dots,\what{X}_{r,d_r}}\to \mf{n}_p}$ be the M\'etivier correspondence. Note that $n_i=\sum_{j=0}^i d_i$ for $i=0,\dots,r$.

Our assumptions imply that the induced map $f_*$ exists as a graded automorphism. Its derivative $df_*$ and the corresponding  $\what{f}_*=\mc{M}_{f(p)}^{-1}\of df_* \of \mc{M}_p$ are graded Lie algebra automorphisms. With respect to the basis $(\what{X}_{0,1},\dots,\what{X}_{r,d_r})$, $\what{f}_*$ is a block diagonal matrix. We further assume that we have chosen the initial horizontal basis $X_{0,1},\dots,X_{0,d_0}$ so that $\what{f}_*$ is in real-Jordan canonical form with respect to $\what{X}_{0,1},\dots,\what{X}_{0,d_0}$.

We note that for every $0\le j\le r$, $\op{span}\set{\what{X}_{0,1}(p)  \dots,\what{X}_{j,d_j}(p)}=\op{span}\set{{X}_{0,1}(p),\dots,{X}_{j,d_j}(p)}\subset T_pN$. Since $\what{f}_*$ is a block diagonal matrix, $(\what{f}_*\what{X}_{j,l})(p)\in \op{span}\set{\what{X}_{j,1}(p),  \dots,\what{X}_{j,d_j}(p)}$ for every $1\le l\le d_j$. On the other hand, $(Df{X}_{j,l} - \what{f}_*\what{X}_{j,l})(p)\in \op{span}\set{\what{X}_{0,1}(p),  \dots,\what{X}_{j-1,d_{j-1}}(p)}$ by Lemma \ref{lem:homogDf}. Here we used the convention that $\op{span}\set{\what{X}_{0,1}(p),  \dots,\what{X}_{j-1,d_{j-1}}(p)}=\{0\}$ when $j=0$. By replacing $f$ by $f^n$, we have that $(Df^n{X}_{j,l} - \what{f}_*^n\what{X}_{j,l})(p)\in \op{span}\set{\what{X}_{0,1}(p),  \dots,\what{X}_{j-1,d_{j-1}}(p)}$ for every $n\in \N$. In particular, in the $\what{X}_{j,l}$ basis, $D_pf$ is block upper triangular and with diagonal blocks coinciding with the matrix of $\what{f}_*$ in this basis. Because $\what{f}_*^n\what{X}_{j,l}(p)\in \op{span}\set{\what{X}_{j,1}(p),\dots,\what{X}_{j,d_j}(p)}$ and $\op{span}\set{\what{X}_{j,1}(p),\dots,\what{X}_{j,d_j}(p)}\cap \op{span}\set{\what{X}_{0,1}(p),\dots,\what{X}_{j-1,d_{j-1}}(p)}=\{0\}$, the growth rate of $Df^nX_{j,l}(p)=D_pf(X_{j,l}(p))$ is at least as large as the growth rate of $\what{f}_*^n\what{X}_{j,l}(p)$ for all $1\leq l \leq d_j$. Hence for any $v=\sum_{i=1}^{d_j} a_i {X}_{j,1}(p)$ the growth rate of $Df^n v$ is at least as large as the growth rate of  $\what{f}_*^n(w)$ where $w=\sum_{i=1}^{d_j} a_i \what{X}_{j,1}(p)$. By the block upper triangular structure of $D_pf$, it follows that the growth rate of $D_pf^n(v)$ is at least as large as the minimum of the growth rates of $\what{f}_*^n$ restricted to $\op{span}\set{\what{X}_{j,1}(p), \dots, \what{X}_{j,d_j}(p)}$.

The conclusion now follows from the fact that the growth rate of $\what{f}_*^n\what{X}_{j,i}(p)$ is at least $(j+1)$-times the minimum of growth rates of $\what{f}_*^n\what{X}_{0,1}(p), \dots, \what{f}_*^n\what{X}_{0,d_0}(p)$ due to the bracket relations. (Recall from the choice of initial basis that $\what{f}_*$ achieves its minimal growth rate on one of the vectors $\what{X}_{0,1},\dots,\what{X}_{0,d_0}$.) Since ${X}_{0,1},\dots,{X}_{0,d_0}$ are horizontal, $\what{f}_*^n\what{X}_{0,i}(p)=D_pf^n({X}_{0,i}(p))$. Thus, the minimum of growth rates of $\what{f}_*^n\what{X}_{0,1}(p), \dots, \what{f}_*^n\what{X}_{0,d_0}(p)$ is at least $\log \lambda_1$.
\end{proof}

\begin{remark}
Note also that in the ordering of the Lyapunov exponents given in Theorem \ref{thm:spectrum_ineq}, we may have $\la_{d_i+1}<\la_{d_i}$. We note that since $D_pf^n(X_{j,l}(p))= \what{f}_*^n\what{X}_{j,l}(p)+ Y_n$ for some $Y_n\in \op{span}\set{\what{X}_{0,1}(p),\dots,\what{X}_{j-1,d_{j-1}}(p)}$, the iterates of the $Y_n$ component may grow faster than $\what{f}_*^n\what{X}_{j,l}(p)$ except when $l=d_j$. Hence we may not have $(i+1)\log\lambda_2\leq \log\lambda_{j}$ for any $j\in \set{n_i+1,\dots,n_{i+1}}$.
\end{remark}

\begin{remark}
We note that Proposition \ref{prop:eval_products} and Theorem \ref{thm:Lyap_exp} may be combined and slightly generalized as follows. Suppose $df_*:\mf{n}_p\to \mf{n}_p$ has the form $df_*=D\cdot h$ where $D$ is diagonalizable with positive diagonal values $(\la_1,\dots,\la_k)$ when restricted to $\mf{n}^{0}_p$ and $h$ is an isometry that preserves the eigenspaces of $D$. Then we obtain Lyapunov exponents of $d_pF$ of the form $\sum_{j=0}^{k} \log{\la_{i_j}}$ for $k\leq r$ and certain combinations of indices $i_j$.
\end{remark}

Next we present the proof of Theorem \ref{thm:Hei-spec}
\begin{reptheorem}{thm:Hei-spec}
Let $f:N\to N$ be a local $C^\infty$ diffeomorphism of a sub-Riemannian manifold $N$. Assume that the horizontal distribution $E$ is generic at a point $p\in N$ where $f(p)=p$. Suppose the tangent cone $TC_pN$ is isomorphic to the Heisenberg group $H^{2n+1}$. If $\log \lambda_1, \log \lambda_2, \dots, \log \lambda_{2n}$ are the Lyapunov exponents of $D_pf\rest{E}$ listed with multiplicity and $\log \la_{2n+1}$ is the remaining Lyapunov exponent of $D_pf$, then $\log \lambda_1+\log \lambda_2+\dots+\log \lambda_{2n}=n \log \lambda_{2n+1}$.
\end{reptheorem}

\begin{proof}
The Heisenberg group $H^{2n+1}$ has a distinguished basis of right invariant vector fields,  $\bar{X}_1,\dots , \bar{X}_n, \bar{Y}_1, \dots, \bar{Y}_n, \bar{Z}$ such that $[\bar{X}_i,\bar{Y}_j]=\delta_{ij}\bar{Z}$ and $[\bar{X}_i,\bar{X}_j]=[\bar{Y}_i,\bar{Y}_j]=[\bar{X}_i,\bar{Z}]=[\bar{Z},\bar{Y}_j]=0$. By assumption, $TC_pN$ is isomorphic to $H^{2n+1}$. Hence in a neighborhood of $p$ there is a graded basis of fields $X_1,\dots , X_n, Y_1, \dots, Y_n,Z=[X_1,Y_1]$ such that $\mc{M}_p(\what{X}_i)=\bar{X}_i$, $\mc{M}_p(\what{Y}_i)=\bar{Y}_i$ and $\mc{M}_p([\what{X}_i,\what{Y}_i])=\bar{Z}$ for $i\in\set{1,\dots,n}$. In other words, setting $\what{Z}=[\what{X}_1,\what{Y}_1]$ we have
$[\what{X}_i,\what{Y}_j]=\delta_{ij}\what{Z}$ and $[\what{X}_i,\what{X}_j]=[\what{Y}_i,\what{Y}_j]=[\what{X}_i,\what{Z}]=[\what{Z},\what{Y}_j]=0$ for all $i,j\in\set{1,\dots,n}$. Note that by the identity \eqref{eq:brackhat}, we have $Z^{(2)}=\what{Z}$.

With respect to the chosen basis we have an identification $\mf{n}_p=\R^{2n}\oplus \R$. The graded automorphism group of $H^{2n+1}$ is $\Z_2\ltimes (\op{Sp}(2n,\R) \times \op{Dil})$ where $\op{Sp}(2n,\R)$ is the symplectic group, $\op{Dil}\cong \R_+$ are the dilations and the $\Z_2$ generator switches $\bar{X}_i$ and $\bar{Y}_i$ for $i=1,\dots n$ (\cite{Tilgner70}). Since $df_*:\mf{n}_p\to \mf{n}_p$ is a graded automorphism, there is an $A\in \op{Sp}(2n,\R)$, a block matrix $U=\begin{bmatrix} 0 & I_n\\ I_n &0\end{bmatrix}$ or $U=I_{2n}$, and $\sigma>0$ such that $df_*$ has the form $df_*(x,z)=(\sigma U A x, \sigma^2z)$. (Here the $U$ matrix represents the corresponding element of $\Z_2$.) Since Lyapunov exponents of $f^2$ are twice those of $f$ and $(UA)^2\in \op{Sp}(2n,\R)$, without loss of generality we may replace $f$ with $f^2$ and assume $df_*(x,z)=(\sigma A x, \sigma^2z)$. Next we show that with respect to the basis $\set{X_1(p),\dots, X_n(p),Y_1(p),\dots,Y_n(p),Z(p)}$ of $T_pN$, $D_pf=\begin{bmatrix}\sigma A&B\\0&\sigma^2\end{bmatrix}$ for some $2n\times 1$ matrix $B$.

It follows that with respect to the basis $\{\what{X}_1,\dots , \what{X}_n, \what{Y}_1, \dots, \what{Y}_n\}$, $\what{f}_*(\what{X}_i)=\sigma A\what{X}_i$ and $\what{f}_*(\what{Y}_j)=\sigma A\what{Y}_j$. We note that $\what{X}_i(p)=X_i(p)$ and $\what{Y}_j(p)=Y_j(p)$. By Lemma \ref{lem:homogDf}, $\what{f}_*(\what{X}_i)=\what{DfX_i}$ and $\what{f}_*(\what{Y}_j)=\what{DfY_i}$. Thus with respect to the basis $\{X_1(p),\dots , X_n(p), Y_1(p), \dots, Y_n(p)\}$, by restricting of $D_pf$ on $\op{span}\set{X_1(p),\dots , X_n(p), Y_1(p), \dots, Y_n(p)}$, we have $D_pf(X_i(p))=AX_i(p)$ and $D_pf(Y_j(p))=AY_j(p)$.

\sloppy
Moreover, we have 
\[
\sigma^2Z^{(2)}=\what{f}_*(Z^{(2)})=\what{f}_*([\what{X}_1,\what{Y}_1])=[\what{f}_*\what{X}_i,\what{f}_*\what{Y}_i]=[\what{DfX}_1,\what{DfY}_1]=(Df[X_1,Y_1])^{(2)}=(DfZ)^{(2)}.
\]
Thus, $D_pf(Z(p))= DfZ(p)=(DfZ)^{(2)}(p)+(DfZ)^{(\le 1)}(p)=\sigma^2Z^{(2)}(p)+(DfZ)^{(\le 1)}(p)$. We note that $(DfZ)^{(\le 1)}(p)\in \op{span}\set{X_1(p),\dots , X_n(p), Y_1(p), \dots, Y_n(p)}$. Also, $Z=Z^{(2)}+Z^{(\leq 1)}$ with $Z^{(\leq 1)}(p)\in \op{span}\set{X_1(p),\dots , X_n(p), Y_1(p), \dots, Y_n(p)}$. Hence, with respect to the basis $\{{X}_1(p),\dots , {X}_n(p), {Y}_1(p), \dots, {Y}_n(p), Z(p)\}$ of $T_pN$, there is a $2n\times 1$ matrix $B$ such that $D_pf=\begin{bmatrix}\sigma A&B\\0&\sigma^2\end{bmatrix}$.

Now it follows that the Lyapunov exponents of $f$ are $2 \log \sigma$ and the logarithms of the moduli of the eigenvalues of $\sigma A$. Since $\det(\sigma A)=\sigma^{2n}$, we conclude that $n\log \la_{2n+1}=2n\log \sigma$ is equal to the sum of all other exponents.
\end{proof}

\section{Sub-Riemannian dynamics}\label{sec:subRiem}

We will apply the ideas from the last section to distributions tangent to foliations invariant under flows and diffeomorphisms. We start with a very general and abstract statement which we then develop in the context of hyperbolic dynamics. We will make the following hypotheses throughout this section.

\begin{StandingAssumptions}\label{assump}
We assume: 
\begin{itemize}
	\item $X$ is a smooth compact manifold with $f: X\to X$ a $C^1$ diffeomorphism.
	\item  $\mc{F}$ is an $f$-invariant foliation tangent to a continuous distribution $D$. 
	\item The leaves of $\mathcal F$ are $C^\infty$ and $f$ is $C^\infty$ along leaves. 
	\item There is a $df$-invariant continuous subdistribution $E\subseteq D$. 
\end{itemize}
\end{StandingAssumptions}
We define
$$\Omega=\{v\in X: E \text{ is } C^\infty, \text{ horizontal and generic in a neighborhood of } v \text{ in } \mc{F}(v)\}.$$
Moreover, at each point $v\in \mc{F}(v)$ we have a filtration 
\[
E(v)=E^0(v)\subset E^1(v)\subset E^2(v) \subset \cdots
\] 
of vector subspaces of $T_v\mc{F}(v)$ that are formed by taking the span of $i$-fold brackets of vector fields tangent to $E$ at $v$. These form well-defined distributions $E^i$ on $\Omega$ and we let $r=r(v)$ be the index such that $E^r(v)=T_v\mc{F}(v)$ for $v\in \Omega$. Observe that $r$ is constant on each connected component of $\mc{F}(v)\cap  \Omega$. 

We first establish the following generalization of Theorem \ref{thm:tangentconeN}.

\begin{theorem}[Foliated Tangent Cone Structure Theorem]\label{thm:tangentcone}
	Assume Hypotheses \ref{assump}. Then:

	\begin{enumerate}
		\item $\Omega$ is $f$-invariant and intersects every leaf of $\mc{F}$ in a (possibly empty) open set. 
		\item For every $v\in \Omega$, the tangent cone $TC_v\mc{F}(v)$ of $\mc{F}(v)$ at $v$ exists, is a Carnot nilpotent Lie group with a left invariant \CC metric and varies continuously on the intersection of $\Omega$ with each leaf. 
		\item The map $f$ induces a Lie group isomorphism between the tangent cones $TC_v\mc{F}(v)$ of $\mc{F}(v)$ at $v\in \Omega$ and $TC_{f(v)}\mc{F}(f(v))$ of $\mc{F}(f(v))$ at $f(v)$. We call this the {\em Carnot derivative} of $f$ at $v$.
		\item If for a sufficiently large $k$, the distribution $E$ is uniformly $C^k$  along $\mc{F}$ (see Definition \ref{def:Ckunif} below), then $\Omega$ is open in $X$ and the tangent cone $TC_v\mc{F}(v)$ varies continuously in $v\in \Omega$. 
	\end{enumerate}
Similarly, for a $C^1$ flow $\varphi_t: X\to X$ and flow invariant distributions $E\subseteq D$ we the analogous statements hold.
\end{theorem}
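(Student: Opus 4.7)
The plan is to reduce (1)--(3) to the single-manifold results of Section \ref{subsec:Carnot} applied to each leaf individually, and to handle (4) via a separate transverse regularity argument exploiting the uniform $C^k$ hypothesis.

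For (1), since $f|_{\mc{F}(v)} : \mc{F}(v) \to \mc{F}(f(v))$ is a $C^\infty$ diffeomorphism carrying $E|_{\mc{F}(v)}$ to $E|_{\mc{F}(f(v))}$, the leafwise conditions of being $C^\infty$, horizontal, and generic are preserved, giving $f(\Omega) = \Omega$. Openness of $\Omega \cap \mc{F}(v)$ inside the leaf is built into the definition, as genericity is leafwise open once $E$ is leafwise smooth. For (2) and (3), each connected component of $\Omega \cap \mc{F}(v)$ is a sub-Riemannian manifold in its own right with smooth, generic, horizontal distribution $E|_{\mc{F}(v)}$, so Theorem \ref{thm:tangentconeN} and Proposition \ref{prop:cont_of_tan_cone} yield the Carnot nilpotent tangent cone $TC_v\mc{F}(v)$ with its left-invariant \CC metric and leafwise continuity, while Proposition \ref{prop:df_isom} applied to $f|_{\mc{F}(v)}$ provides the Carnot derivative $(f_*)_v : TC_v\mc{F}(v) \to TC_{f(v)}\mc{F}(f(v))$ as a graded Lie group isomorphism.

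For (4), uniform $C^k$ regularity of $E$ along $\mc{F}$ (Definition \ref{def:Ckunif}) should assert that the leafwise $k$-jets of a smooth local frame for $E$ vary continuously in the transverse direction. To show $\Omega$ is open in $X$, pick $v \in \Omega$ and select, at each level $i \leq r$, a collection of $d_i - d_{i-1}$ iterated brackets of the frame that span a complement to $E^{i-1}(v)$ inside $E^i(v)$. For $k \geq r$, these brackets vary continuously in $v$ transversally and linear independence is an open condition, so they remain linearly independent at nearby $v'$, giving $\dim E^i(v') \geq d_i$. Conversely, every further $i$-fold bracket at $v$ is expressible as a linear combination of the chosen ones by solving a linear system whose coefficients depend continuously on the $k$-jet data; by uniform $C^k$ regularity the same expression, with nearly the same coefficients, holds at $v'$, yielding $\dim E^i(v') \leq d_i$. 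Hence the filtration type is constant in a full transverse neighborhood of $v$, and $\Omega$ is open.

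For transverse continuity of $TC_v\mc{F}(v)$, recall from Section \ref{subsec:Carnot} that the structure constants of $\mf{n}_v$ and the inner product inducing its \CC metric are built algebraically from the $r$-jets at $v$ along the leaf of a graded basis extending a horizontal frame. For $k \geq r$ these jets vary continuously in $v$ transversally, so the Carnot Lie algebras $\mf{n}_v$ vary continuously inside $\mc{NL}_n$ and their inner products vary continuously; the proof of Proposition \ref{prop:cont_of_tan_cone} then applies verbatim to give Gromov-Hausdorff continuity of $TC_v\mc{F}(v)$ on all of $\Omega$. The flow case is handled identically, replacing $f$ by the time-$t$ maps $\varphi_t$ throughout (all of which are $C^\infty$ along leaves and preserve $\mc{F}$ and $E$). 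The main obstacle is (4): identifying the correct value of $k$ in Definition \ref{def:Ckunif} and verifying that the transverse-continuity arguments above are uniform enough to propagate the entire filtration type across $\Omega$, not merely the rank of $E$ itself.
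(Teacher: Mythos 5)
For parts (1)--(3), your argument follows the same route as the paper's Lemma \ref{lem:tangentcone}: everything reduces to the one-manifold results (Theorem \ref{thm:tangentconeN}, Propositions \ref{prop:cont_of_tan_cone} and \ref{prop:df_isom}) applied leafwise. The paper establishes leafwise openness of $\Omega$ via lower semicontinuity of $n_i(w)=\dim E^i(w)$ within the leaf, whereas you point out it is already built into the definition of genericity at a point; both are correct.

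For part (4), your lower-bound argument is sound: a collection of iterated brackets of a $C^k$ frame that is linearly independent at $v$ stays linearly independent at nearby $v'$, giving $\dim E^i(v')\geq d_i$. The gap is in the reverse inequality. You write that since every further $i$-fold bracket at $v$ is a linear combination of the chosen ones, ``the same expression, with nearly the same coefficients, holds at $v'$.'' This does not follow: exact linear dependence is not stable under perturbation. A bracket that lies \emph{in} the span at $v$ is only guaranteed to be \emph{close} to that span at a nearby $v'$, and the small transverse component can and does make $n_i(v')$ jump up. Indeed, $n_i$ is lower semicontinuous in general, and one can exhibit $C^\infty$ models (with $E$ uniformly $C^\infty$ across the foliation) in which every point is leaf-regular with one filtration type on a single leaf and a strictly shorter one on all neighboring leaves, so the upper bound, as formulated for the set $\Omega$, genuinely fails. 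The cure is to shrink $\Omega$ to the set of points where the growth vector is locally constant in \emph{all} of $X$, not merely within the leaf; that set is open by the very semicontinuity argument you already use, is still $f$-invariant, and on it the common-graded-basis argument for continuity of the tangent cone goes through. You should be aware that the paper's Lemma \ref{lem:transverse_cont} is itself terse at precisely this point (it asserts the chosen brackets ``form a basis of $\mc{M}_v^{-1}(\mf{n}_v^{i-1})$ for all $v\in U$'' but only explains why they remain linearly independent, not why they continue to span); still, your explicit attempt to fill the gap via stability of linear relations does not work as written.
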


Inspecting the proof below it will be clear that the degree of uniform regularity $C^k$ of $E$ suffices to be one less than the maximum nilpotency degree of the tangent cones on $\Omega$. These nilpotency degrees are in turn all bounded by the dimension of the leaves of $\mc{F}$ minus one.

We will break the proof into two main lemmas.

\begin{lemma}\label{lem:tangentcone}
Assume Hypotheses \ref{assump}. Statements (1)-(3) of Theorem \ref{thm:tangentcone} hold.
\end{lemma}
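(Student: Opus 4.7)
The plan is to reduce each of the three statements to the non-foliated case (Theorem \ref{thm:tangentconeN}, which in turn packages Theorem \ref{thm:Mitchell}, Proposition \ref{prop:cont_of_tan_cone}, and Proposition \ref{prop:df_isom}) by working leaf-by-leaf. The key observations made available by Hypotheses \ref{assump} are: (a) each leaf $\mc{F}(v)$ carries a $C^\infty$ structure; (b) on $\mc{F}(v)\cap \Omega$ the subdistribution $E$ is, by definition, smooth, horizontal, and generic; and (c) the restriction $f|_{\mc{F}(v)}:\mc{F}(v)\to \mc{F}(f(v))$ is a $C^\infty$ diffeomorphism that intertwines $E|_{\mc{F}(v)}$ and $E|_{\mc{F}(f(v))}$ by the $df$-invariance of $E$.

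For statement (1), the openness of $\Omega\cap \mc{F}(v)$ inside each leaf is immediate from the definition, since each defining condition is phrased as holding on an open neighborhood within the leaf. For $f$-invariance, the leaf diffeomorphism $f|_{\mc{F}(v)}$ carries smooth horizontal generic distributions to smooth horizontal generic distributions, and sends open leaf-neighborhoods of $v$ to open leaf-neighborhoods of $f(v)$. Hence $v\in \Omega$ if and only if $f(v)\in \Omega$.

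For statement (2), fix $v\in \Omega$ and a leaf-neighborhood $U\subset \mc{F}(v)$ on which $E$ is smooth, horizontal, and generic. Equip $E|_U$ with any smooth Riemannian inner product (for instance, the restriction of one chosen along the leaf), turning $U$ into a sub-Riemannian manifold in the sense of Section \ref{subsec:intro_subRiem}. Theorem \ref{thm:Mitchell} then yields the existence of $TC_v U = TC_v \mc{F}(v)$ as a Carnot nilpotent Lie group with left-invariant \CC metric, and Proposition \ref{prop:cont_of_tan_cone} gives continuity of these tangent cones on the intersection of $\Omega$ with the leaf. For statement (3), applying Proposition \ref{prop:df_isom} to the leaf diffeomorphism $f|_{\mc{F}(v)}$ restricted to such a sub-Riemannian neighborhood and its image produces the Carnot derivative $(f_*)_v : TC_v\mc{F}(v)\to TC_{f(v)}\mc{F}(f(v))$ as a graded Lie group isomorphism.

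The only subtlety worth flagging is that the auxiliary inner products on $E$ chosen on different leaves need not vary with any regularity across leaves, and the tangent cone metric depends on this choice. However, claims (1)--(3) are intrinsic to a single leaf or to the leafwise $C^\infty$ map $f$, so no cross-leaf compatibility is required; choosing the inner product once on each leaf suffices. I expect the genuinely harder work lies not here but in statement (4) of the theorem, where one must upgrade leafwise continuity to continuity of tangent cones on open sets of $X$ transverse to $\mc{F}$, and this is exactly where uniform $C^k$ regularity of $E$ along $\mc{F}$ will be needed to apply Proposition \ref{prop:cont_of_tan_cone} with parameters varying continuously in the transverse direction.
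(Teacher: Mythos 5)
Your proof is correct and takes essentially the same route as the paper's: apply Theorem \ref{thm:Mitchell}, Proposition \ref{prop:cont_of_tan_cone}, and Proposition \ref{prop:df_isom} to sub-Riemannian leaf-neighborhoods, using the $df$-invariance of $E$ (and the bracket identity $df[X,Y]=[df(X),df(Y)]$) to push the hypotheses forward under $f$. The only stylistic difference is in statement (1): the paper introduces the dimension functions $n_i(w)=\dim E^i(w)$, shows they are $f$-invariant and lower semicontinuous along leaves, and identifies $\Omega$ leafwise as the open set where each $n_i$ is locally constant, whereas you read openness directly off the definition and deduce $f$-invariance from invariance of the three defining conditions; both are valid, and your flagged remark about the leafwise choice of inner product is a reasonable piece of bookkeeping that the paper leaves implicit.
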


\begin{proof}
For every $w\in \mc{F}(v)$, we let $E^i(w)$ be the subspace of $D(w)$ spanned by all brackets of vector fields tangent to $E(w)$ of orders at most $i$. Let $n_i(w)=\dim E^i(w)$. Then $n_i(w)$ is $f$-invariant by the $f$-relation of brackets ($df[X,Y]=[df(X),df(Y)]$). Moreover, $n_i$ is lower semicontinuous. Indeed, if a bracket of vector fields of order $i$ does not vanish at a point then it does not vanish in a neighborhood in the unstable manifold of that point. The semicontinuity of $n_i$ implies that $n_i$ is locally constant on an open set of each leaf for every $i$. This latter open subset of each leaf is the intersection of $\Omega$ with that leaf. Because $n_i$ is $f$-invariant, $\Omega$ is also $f$-invariant. 

For every $v\in \Omega$, we consider a small enough neighborhood of $v$ in $\mc{F}(v)$. This neighborhood is a sub-Riemannian manifold on which the horizontal distribution $E$ is $C^\infty$ and generic. By Theorem \ref{thm:Mitchell}, the tangent cone $TC_v\mc{F}(v)$ exists. The continuity of tangent cones follows from Proposition \ref{prop:cont_of_tan_cone}. Moreover, the map $f$ is a $C^1$ diffeomorphism from this neighborhood of $v$ in $\mc{F}(v)$ onto a neighborhood of $f(v)\in \Omega$ in $\mc{F}(f(v))$. The map $f$ preserves the distribution $E$, and so by Proposition \ref{prop:df_isom}, $df$ induces an isomorphism $(df)_*:TC_v\mc{F}(v)\to TC_{f(v)}\mc{F}(f(v))$.

The flow case follows by restricting to a single time map $f=\varphi_t$.
\end{proof}

Note that while $\Omega$ is open in each leaf, the entire $\Omega$ may not be open in $X$. Next, we will discuss the continuity of tangent cones transversely to the foliation $\mc{F}$. We first recall the relevant topology. 

\begin{definition}\label{def:Ckunif}
	Assume a foliation $\mc{F}$ of $X$ has $C^k$ leaves. A function on $X$ is called {\em (transversely) uniformly $C^k$ along $\mc{F}$} if its $k$-th order derivatives exist along leaves of the foliation and are continuous on $X$. A distribution is called uniformly $C^k$ along $\mc{F}$ if there are local vector fields forming a basis of the distribution that are uniformly $C^k$ along $\mc{F}$.
\end{definition} 

We complete the proof of Theorem \ref{thm:tangentcone} with the following Lemma.

\begin{lemma}\label{lem:transverse_cont}
Assume Hypotheses \ref{assump}. If the distribution $E$ is uniformly $C^k$  along $\mc{F}$ for a sufficiently large $k$, then the set $\Omega$ is open in $X$ and the tangent cone $TC_v\mc{F}(v)$ varies continuously in $v\in \Omega$. In particular, statement (4) of Theorem \ref{thm:tangentcone} holds. 
\end{lemma}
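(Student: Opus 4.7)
The plan is to transfer the leafwise argument of Proposition \ref{prop:cont_of_tan_cone} to a transverse setting by exploiting the uniform $C^k$ hypothesis to maintain continuity of the M\'etivier construction across leaves. I would first fix $v\in\Omega$ and let $r+1$ denote the nilpotency step of $TC_v\mc{F}(v)$; taking $k\geq r$ ensures that every $i$-fold bracket (for $i\leq r$) of uniformly $C^k$ leafwise vector fields is at least continuous on $X$. Following the choice made in the proof of Proposition \ref{prop:cont_of_tan_cone}, I would then pick horizontal frame fields $X_1,\dots,X_{d_0}$ for $E$ that are defined and uniformly $C^k$ on a transverse neighborhood $U$ of $v$, and use iterated brackets among them to form a graded basis $\set{X_{i,j}}$ of $T_v\mc{F}(v)$.

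Since continuity of the bracket fields on $U$ combined with the fact that $\set{X_{i,j}(v)}$ is a basis of $T_v\mc{F}(v)$ allows me to shrink $U$ so that $\set{X_{i,j}(w)}$ remains a basis of $T_w\mc{F}(w)$ for every $w\in U$, this immediately yields the lower semicontinuity $\dim E^i(w)\geq n_i:=\dim E^i(v)$ together with horizontality $E^r(w)=T_w\mc{F}(w)$. The main obstacle is the reverse inequality $\dim E^i(w)\leq n_i$: a priori the filtration could jump upward transversely, because while any iterated bracket $[X_\ell,X_m](v)$ lies, by genericity at $v$, inside the appropriate level of the graded basis span at $v$, continuity alone only forces the corresponding bracket at $w$ to lie \emph{close} to that span, not within it. To handle this I would argue that the structure functions expressing an iterated bracket in the graded basis on a leaf-neighborhood of $v$ extend continuously to $U$ by uniform $C^k$ regularity, and that the equality $\dim E^i=n_i$ translates into an open algebraic condition on the structure constants (non-vanishing of the appropriate minors together with the vanishing of the coefficients of higher-degree basis vectors in the expansion of iterated brackets). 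Genericity at $v$ puts the condition at a stable point, so after further shrinking $U$ the filtration dimensions are constant on $U$, giving $w\in\Omega$ and hence openness of $\Omega$ in $X$.

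Transverse continuity of $TC_w\mc{F}(w)$ then follows by running the M\'etivier correspondence on $U$ with the common graded basis. The structure constants of $\mf{n}_w$ are polynomial expressions in the values at $w$ of the continuous bracket fields, hence vary continuously with $w$, and by Lemma \ref{lem:isometricinclusion} the \CC metric on $TC_w\mc{F}(w)$ is determined by the continuously varying inner product on $E(w)$. By the remark following Proposition \ref{prop:cont_of_tan_cone}, joint continuity of the graded Lie algebra structure and the inner product on $\mf{n}_w^0$ implies continuity of $TC_w\mc{F}(w)$ in the pointed Gromov-Hausdorff topology, completing the proof. The threshold value of $k$ is dictated by the continuity of $r$-fold brackets, giving the bound $k\geq r$ mentioned in the remark following Theorem \ref{thm:tangentcone}.
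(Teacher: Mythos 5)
Your approach mirrors the paper's: choose a graded basis of leafwise vector fields near $v_0\in\Omega$ using the uniform $C^k$ hypothesis, argue continuity of the iterated brackets transversally, and deduce continuity of the nilpotent structure and the \CC metric on $TC_w\mc{F}(w)$; your threshold $k\geq r$ matches the paper's remark, and lower semicontinuity of $\dim E^i(w)$ is correctly established. You also correctly flag the subtle point --- which the paper passes over quickly --- that lower semicontinuity alone does not rule out the intermediate filtration dimensions jumping up transversally.

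Your resolution of that point is, however, logically flawed. You claim $\dim E^i(w)=n_i$ is ``an open algebraic condition'' given by ``non-vanishing of the appropriate minors together with the vanishing of the coefficients of higher-degree basis vectors.'' But vanishing of coefficients is a \emph{closed} condition, so the conjunction is locally closed, not open, and genericity at $v_0$ --- which only furnishes the condition on the positive-codimension submanifold $V\subset\mc{F}(v_0)$ --- does not push it to a full $X$-neighborhood: a continuous (even uniformly $C^k$) function that vanishes identically on $V$ can be nonzero off $V$, so a higher-level coefficient of some iterated bracket may be nonzero at $w$ arbitrarily close to $v_0$, raising $\dim E^i(w)$ above $n_i$ with no contradiction to lower semicontinuity. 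Since the transverse constancy of the $n_i$ determines the grading (and hence the dilations and the M\'etivier fields), this step is essential to both openness of $\Omega$ and continuity of $v\mapsto TC_v\mc{F}(v)$, and your argument as written does not establish it. The paper's own proof is also terse here --- condition (2) of its basis choice encodes both linear independence and spanning of $\mc{M}_v^{-1}(\mf{n}_v^{i-1})$, but the justification offered (``nondegeneracy is an open condition'') addresses only the linear independence; still, the paper does not make the explicit and incorrect claim that the dimension equality itself is an open condition, as you do.
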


\begin{proof}
	Let $v\in \Omega$. The distribution $E$ is horizontal and generic in a neighborhood of $v$ in $\mc{F}(v)$. Since $E$ is uniformly $C^k$, a neighborhood of $v$ in $X$ is contained in $\Omega$ and thus $\Omega$ is open. 
	
	For the second claim, choose $v_0\in \Omega$. As in the proof of \ref{prop:cont_of_tan_cone}, on any sufficiently small neighborhood $U\subset \Omega$ of $v_0$ we may choose a basis $X_{1},\dots,X_{d_0}$ of vector fields spanning $E$ such that:
	\begin{enumerate}
	\item the $X_j$ vary continuously in the $C^k$ topology, and 
	\item if $\set{(j_1,\dots,j_i)}\subset \set{1,\dots,d_0}^i$ is a family of indices  whose corresponding iterated brackets $[\what{X}_{j_1},[\what{X}_{j_2},[\what{X}_{j_3},\dots]\dots]$ form the chosen basis $\set{\what{X}_{i,j}}$ of $\mc{M}_{v_0}^{-1}(\mf{n}_{v_0}^{i-1})$, then the corresponding brackets also form a basis of $\mc{M}_{v}^{-1}(\mf{n}_{v}^{i-1})$ for all $v\in U$.
	\end{enumerate} 
	Indeed, for any fixed index $j$ the vector fields $\what{X}_{j}=\what{X}_{j,v}$ vary continuosly in $v$, as do the bracket operations, and thus the nondegeneracy of these iterated brackets is an open condition. It follows that the corresponding nilpotent structures vary continuously as do the isomorphism classes of the nilpotent tangent cones.
		
	Since the metric on each tangent cone $TC_v\mc{F}(v)$ is the \CC metric for the distributions induced by the inclusion of $E(v)$ in the Lie algebra $\mf{n}_v$ of $TC_v\mc{F}(v)$, the metric also varies continuously on the neighborhood $U$, and thus on all of $\Omega$. 

\end{proof}

Combining this result with good dynamical properties, we obtain the following.

\begin{corollary}\label{cor:uniqueclass}
Assume Hypotheses \ref{assump}. If the distribution $E$ is uniformly $C^k$ for a sufficiently large $k$, then the isomorphism class of tangent cones $TC_v\mc{F}(v)$ is continuous and $f$-invariant for $v\in \Omega$. Moreover, if $f$ is transitive and $E$ is $C^\infty$ somewhere on a leaf then $\Omega$ is open and dense and the isomorphism class of any tangent cone is constant on $\Omega$.  The same statements hold for $f$ replaced by a flow $\varphi_t$.	
\end{corollary}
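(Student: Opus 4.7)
The plan is to assemble three ingredients already established in the paper: continuity of $v \mapsto TC_v\mc{F}(v)$ from Lemma \ref{lem:transverse_cont}, the fact from Theorem \ref{thm:tangentcone}(3) that $f$ induces an isomorphism between tangent cones at $v$ and $f(v)$, and Hausdorffness of the moduli space $\mc{NL}_n/\GL(n,\R)$ (with $n=\dim D$) noted as a consequence of Theorem \ref{thm:BorelSerre}.

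First I would handle continuity and $f$-invariance of the isomorphism class. Lemma \ref{lem:transverse_cont} yields continuity of the structure constants of $TC_v\mc{F}(v)$ at points of $\Omega$, viewed as a map into $\mc{NL}_n$ via a continuously chosen graded basis as in the proof of that lemma; postcomposing with the quotient by $\GL(n,\R)$ produces a continuous map from $\Omega$ into the Hausdorff space of isomorphism classes. The $f$-invariance of the isomorphism class is immediate from Theorem \ref{thm:tangentcone}(3), since $(df)_*$ is a Lie group isomorphism between the tangent cones at $v$ and $f(v)$.

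Next I would handle the transitivity clause. Openness of $\Omega$ is Lemma \ref{lem:transverse_cont}, and nonemptiness of $\Omega$ follows by choosing a point $v_0$ in the open dense generic subset of the leaf on which $E$ is $C^\infty$. Combined with $f$-invariance of $\Omega$ from Theorem \ref{thm:tangentcone}(1), transitivity forces $\Omega$ to be dense: otherwise $\Omega$ and the interior of its complement would be two disjoint nonempty open $f$-invariant sets, contradicting the existence of a dense orbit. To conclude constancy, pick a point $x\in X$ with dense forward $f$-orbit. Since $\Omega$ is open and nonempty, some iterate $f^N(x)$ lies in $\Omega$, and by $f$-invariance the tail $\{f^n(x): n\ge N\}$ is contained in and dense in $\Omega$. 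The $f$-invariance from the previous step makes the isomorphism class constant along this orbit, and continuity into the Hausdorff moduli space upgrades constancy on a dense subset to constancy on all of $\Omega$. The flow case is identical after substituting $\varphi_t$ for $f$ throughout, using density of a $\varphi_t$-orbit instead of an $f$-orbit. No single step presents a serious obstacle; the only subtle point, and the reason to work in the quotient space at all, is that the passage from ``constant along a dense orbit'' to ``constant on $\Omega$'' requires the Hausdorffness of $\mc{NL}_n/\GL(n,\R)$ supplied by Theorem \ref{thm:BorelSerre}.
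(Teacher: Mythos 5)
Your proposal is correct and follows essentially the same route as the paper: continuity from Theorem \ref{thm:tangentcone}(4)/Lemma \ref{lem:transverse_cont}, $f$-invariance from Theorem \ref{thm:tangentcone}(3), density of $\Omega$ from openness plus invariance plus transitivity, and then constancy of the isomorphism class on $\Omega$ via a dense orbit together with Hausdorffness of $\mc{NL}_n/\GL(n,\R)$ (Theorem \ref{thm:BorelSerre}). The only cosmetic difference is your slightly more elaborate phrasing of the density and dense-tail argument, which amounts to the same observation the paper makes in one line.
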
 

In this setting, we call the isomorphism class of tangent cones obtained in Corollary \ref{cor:uniqueclass} the {\em Carnot nilpotent structure associated to $f$ or $\varphi_t$}.

\begin{proof}
	The first claim follows from Theorem \ref{thm:tangentcone}. The topological transitivity of $f$ implies that there is a dense orbit in $X$ which must intersect $\Omega$ since $\Omega$ is open. If $E$ is $C^\infty$ on any open set of a leaf then $\Omega$ is also nonempty and therefore dense since it is invariant under $f$. The isomorphism class of tangent cones on $\Omega$ is constant because isomorphism classes are constant on a dense orbit, depend continuously on the underlying point, and the space of isomorphism classes is Hausdorff by Theorem \ref{thm:BorelSerre}. Lastly, the proof holds verbatim for flows by replacing $f$ by $\varphi_t$.
\end{proof}
\
Kalinin (\cite{Kalinin11}) defined a notion of closing property for diffeomorphisms. We define the analogous notion of {\em closing property} for flows. 
\begin{definition}\label{def:SCP}
	A flow $\varphi_t:X\to X$ of a compact manifold $X$ has the {\em stable closing property} (with exponent $\la$) if there are $c,\lambda, \delta_0>0$ with $\lambda>0$ such that for every $x\in X$ and some $T\in \R$ with $d(x,\varphi_T(x))<\delta_0$ then there are $p,y\in X$ such that
\begin{enumerate}
\item $p$ is periodic with period $T_0$ where $|T_0-T|<\delta_0$, and
\item $d(\varphi_t(x),\varphi_t(p))<c e^{-\la\min\{t,T_0-t\}}d(x,\varphi_T(x))$ for $0\le t\le T_0$, and
\item $d(\varphi_t(p),\varphi_t(y))<c e^{-\la t}d(x,\varphi_T(x))$ and $d(\varphi_t(x),\varphi_t(y))<c e^{-\la(T_0-t)}d(x,\varphi_T(x))$.
\end{enumerate}
\end{definition}
Here $y$ plays a connecting role of being stably related to $p$ and unstably related to $x$.

Examples of dynamical systems having the closing property include Anosov flows and diffeomorphisms and flows over subshifts of finite type (cf. \cite{Kalinin11}).

We deduce Theorem \ref{thm:equal} from the introduction as a conclusion of this section.

\begin{reptheorem}{thm:equal}
	Assume Hypotheses \ref{assump}. Suppose the distribution $E$ is uniformly $C^k$ for a sufficiently large $k$ and $\mc{F}$ is transversally H\"{o}lder continuous. Also assume that the $C^1$-diffeomorphism $f$ (or flow $\varphi_t$) is topologically transitive, satisfies the stable closing property and that $df\rest{D}$ is transversally H\"{o}lder continuous. Lastly, suppose that the graded nilpotent group associated to $D$ on $\Omega$ is asymmetric and $(r+1)$-step. Let $\mu$ be any finite $f$-invariant (resp. $\varphi_t$-invariant) ergodic measure whose support $\op{Supp}(\mu)$ satisfies $\op{Supp}(\mu)\cap\Omega\neq \emptyset$. Then the Lyapunov exponents of $f$ (resp. $\phi_1$) along $\mc{F}$ are $\log\lambda, 2\log\lambda, \dots, (r+1)\log\lambda$ with multiplicities $\dim E,\dim E^{1}-\dim E,\dots,\dim E^{r}-\dim E^{r-1}$ for some $\la=\la_\mu>1$.
\end{reptheorem}

\begin{proof}
Recall $\Omega$ is the subset obtained from Corollary \ref{cor:uniqueclass}. Let $v\in \Omega$ be a periodic point of period $T$. It follows that $(f^T\rest{\mc{F}(v)})_*:TC_v\mc{F}(v)\to TC_{v}\mc{F}(v)$ is a graded automorphism of the nilpotent group $TC_v\mc{F}(v)$. On the other hand, the isomorphism class of $TC_v\mc{F}(v)$ is asymmetric. Hence $(f^T\rest{\mc{F}(v)})_*$ is a homothety.  Suppose $TC_v\mc{F}(v)$ is an $(r+1)$-step Carnot nilpotent Lie group. By Theorem \ref{thm:Lyap_exp}, there exists $\lambda(v)$ such that the Lyapunov exponents of $f^T$ along $\mc{F}$ for $v$ are $T\log\lambda(v), 2T\log\lambda(v), \dots, (r+1)T\log\lambda(v)$.

Since $\op{Supp}(\mu)\cap\Omega\neq \emptyset$, $\Omega$ has full $\mu$-measure as it is invariant and $\mu$ is ergodic. By \cite[Proposition 3.2]{Kalinin11}, Lyapunov exponents of $df\rest{D}$ with respect to any invariant ergodic measure are approximated by those at periodic points ones. (This proposition is stated for standard $\op{GL}(n,\R)$-valued cocycles, however following the discussion after Proposisition 2.6 of \cite{GS2} these estimates may be applied to derivative cocycles as well.)  Therefore, there is $\lambda >1$ such that Lyapunov exponents of $f$ along $\mc{F}$ with respect to $\mu$ are $\log\lambda, 2\log\lambda, \dots, (r+1)\log\lambda$. The proof in the case of flows follows verbatim after replacing $f$ by $\varphi_t$.
\end{proof}

While we need greater regularity in Theorem \ref{thm:equal} the H\"{o}lder continuity of $E$ often follows from the dominated splitting assumption. For example, we have the following (cf. also Theorem 5.3.2 of \cite{BarreiraPesin}).

\begin{theorem}[Theorem 4.11 and Remark 4.12 of \cite{CrovisierPotrie}]\label{thm:Holder-dominated}
	Let $f: X \rightarrow X$ be a $C^{2}$-diffeomorphism and $\Lambda \subset X$ a compact $f$-invariant set admitting a dominated splitting of the form $T_{\Lambda} X=E \oplus F .$ Then, there is $\theta \in(0,1]$ such that the bundles $E$ and $F$ are $\theta$
	H\"{o}lder continuous. Moreover, $\theta \in(0,1]$ can be chosen to be any number such that for all sufficiently large $N\ge 1$, we have
	\[
	\|Df^N\rest{E(x)}\|\|Df^N\rest{F(x)}\|^\theta<\|Df^{-N}\rest{F(x)}\|^{-1} \text{ for every } x\in \Lambda.
	\]
\end{theorem}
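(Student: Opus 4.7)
The plan is to proceed via a classical invariant-section argument of Hirsch--Pugh--Shub type, applied to the action of $Df$ on the Grassmannian bundle of $TX$ over $\Lambda$. I will sketch the argument for $F$; the case of $E$ is symmetric, obtained by applying the same reasoning to $f^{-1}$ and noting that the hypothesis on $\theta$ is preserved under swapping the roles of the two bundles.

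First I would reduce to the case $N=1$ by passing to a sufficiently high iterate $f^N$, since H\"older regularity for $f$ is equivalent to H\"older regularity for $f^N$. Writing $\alpha=\|Df|_E\|/m(Df|_F)<1$ for the domination ratio and $\beta=\|Df|_F\|$, the hypothesis on $\theta$ becomes $\alpha\beta^\theta<1$. In the invariant splitting, $Df(x)$ is block-diagonal, $Df(x)=A(x)\oplus D(x)$, with $\|A(x)\|\le\|Df|_E\|$ and $m(D(x))\ge m(Df|_F)$. Subspaces of dimension $\dim F$ near $F(x)$ in the Grassmannian fiber over $x$ are uniquely parametrized as graphs of linear maps $L\in\Hom(F(x),E(x))$; the graph transform induced by $Df$ sends such an $L$ at $x$ to $A(x)LD(x)^{-1}$ at $fx$, a linear contraction of operator norm at most $\alpha$.

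For nearby $x,y\in\Lambda$, transport $F(y)$ into the fiber at $x$ via a local exponential chart to obtain a subspace $\widetilde{F(y)}$, and set $\omega_n=d_{\mathrm{Gr}}(F(f^nx),\widetilde{F(f^ny)})$ and $\delta_n=d(f^nx,f^ny)$. The $C^2$ regularity of $f$ controls the discrepancy between $Df$ at $f^nx$ and the transport of $Df$ at $f^ny$, producing the recursive inequality $\omega_{n+1}\le\alpha\,\omega_n+C\delta_n$, while the adapted spreading estimate $\delta_{n+1}\le\beta\,\delta_n$ holds up to lower-order corrections. H\"older regularity is then extracted by a Gronwall-type bootstrap: introducing $u_n=\omega_n+A\delta_n^\theta$ for $A$ sufficiently large, the domination $\alpha\beta^\theta<1$ yields $u_{n+1}\le\rho\, u_n$ for some $\rho<1$. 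Combining this with the a priori bound $\omega_n\le M$ from compactness of the Grassmannian, and running the estimate to the scale $N\approx\log(1/\delta_0)/\log\beta$ at which $\delta_N$ is of order one, gives $\omega_0\le C\delta_0^\theta$.

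The main obstacle is closing this bootstrap. The natural forward recursion bounds $\omega_n$ in terms of $\omega_0$ and the $\delta_k$, whereas what is needed is the reverse: an upper bound on $\omega_0$ in terms of $\delta_0^\theta$. This reversal is achieved by the scale-induction over $n$ (equivalently, by the auxiliary quantity $u_n$), and it is precisely the inequality $\alpha\beta^\theta<1$ that allows the estimate to close with exponent $\theta$. Secondary technicalities include uniformity of constants over $\Lambda$ (from compactness) and absorbing the parallel-transport and $C^2$ errors at each iteration into the $C\delta_n$ contribution, so that they do not accumulate along the orbit of length $N$.
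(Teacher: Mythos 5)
The statement you are proving is not actually proved in the paper: it is quoted verbatim from Crovisier--Potrie (Theorem~4.11 and Remark~4.12 there), so there is no ``paper's own proof'' to compare against and you are essentially being asked to reconstruct a known argument. Your overall framework --- graph transform on the Grassmannian with $Df$ block-diagonalized relative to the splitting, $C^2$ control of the error term, and a scale comparison to the time $N$ at which $\delta_N$ becomes macroscopic --- is the correct one.

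However, the bootstrap as written has the inequality going the wrong way, and this is not a cosmetic issue. With $\omega_{n+1}\le\alpha\omega_n + C\delta_n$ and $\delta_{n+1}\le\beta\delta_n$, your quantity $u_n=\omega_n+A\delta_n^\theta$ gives $u_{n+1}\le\alpha\omega_n + C\delta_n + A\beta^\theta\delta_n^\theta$; to conclude $u_{n+1}\le\rho u_n$ you would need $A\beta^\theta\le\rho A$, i.e.\ $\beta^\theta<1$, which fails whenever $\|Df|_F\|\ge 1$ (the typical case: $F$ is the dominant direction). Moreover, even if the contraction $u_{n+1}\le\rho u_n$ held, iterating it forward gives $u_N\le\rho^N u_0$, which, combined with $u_N\ge A\delta_N^\theta\gtrsim A$ at the macroscopic scale, yields a \emph{lower} bound on $u_0$, i.e.\ on $\omega_0$, which is the opposite of what you want. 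The issue is that $F$ is attracting under the \emph{forward} graph transform, so the forward recursion $\omega_{n+1}\le\alpha\omega_n+C\delta_n$ transports smallness from time $0$ to time $N$, whereas you need to transport boundedness from the macroscopic scale back to the microscopic one. The correct setup runs the \emph{backward} orbit $x_j=f^{-j}x$, $y_j=f^{-j}y$: with $\omega_j$ and $\delta_j$ defined on this orbit, the graph transform going from time $-(j+1)$ to time $-j$ contracts, giving $\omega_j\le\alpha\omega_{j+1}+C\delta_{j+1}$, and this iterates to $\omega_0\le\alpha^N\omega_N+C\sum_{j\ge 1}\alpha^{j-1}\delta_j$, which together with $\omega_N\le M$ and $\delta_j\le\Lambda^j\delta_0$, $\Lambda^N\delta_0\approx 1$, produces the H\"older bound. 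Note that this exchange also changes the spreading constant: for $F$, the relevant Lipschitz rate is $\Lambda=\|Df^{-1}\|$ (backward expansion), not $\|Df|_F\|$ as in your sketch; the constant $\|Df|_F\|$ is the right one for the $E$ bundle, which you iterate \emph{forward}, where the forward spreading is $\|Df\|\approx\|Df|_F\|$ in an adapted metric after passing to a high iterate. You should address this asymmetry between $E$ and $F$ explicitly, and verify that the single threshold $\|Df^N|_E\|\|Df^N|_F\|^\theta<\|Df^{-N}|_F\|^{-1}$ in the statement really does cover both cases --- that step requires a bit more than the symmetry you appeal to, and is where Crovisier--Potrie's Remark~4.12 does some bookkeeping.
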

	
\begin{remark}
	The analogous statement for flows $\varphi_t:X\to  X$ follows from the above result applied to the time one map. Moreover, the same proof works for arbitrary $C^{\alpha}$ cocycles over $C^1$-diffeomorphisms or flows admitting a dominated splitting for $\alpha\in (0,1]$.
		
	Lastly, note that Anosov flows with $E=E^s$ is a special case (cf.  \cite[Appendix by Brin, Proposition 4.4]{BallmannBook}).
\end{remark}

In the next sections we consider applications of our previous results to sundry geometric settings.

\section{Application: Local Rigidity of Smooth Dominated Splittings}

In this section we will present some cases where the existence of a leafwise smooth invariant splitting of the unstable bundle leads to some form of rigidity. This provides some evidence for positive answers to the Questions \ref{ques:splitting}. However, we first show that smooth time changes of flows with uniformly $C^k$ slow distributions along the unstable foliation maintain this property. Hence, in general, the best rigidity we may expect is smooth orbit equivalence.

\subsection{Time Changes}\label{subsec:time-change}
Let $\varphi_t:{N}\to {N}$ be the $C^{\infty}$ flow generated by the vector field $x\mapsto X_x$. Given a positive $C^\infty$ function $\ell:{N}\to (0,\infty)$ consider the $C^\infty$ flow $\psi_t$ whose generating vector field is $x\mapsto\ell(x) X_x$. The cumulative time change $\tau:{N}\times \R\to \R$ is given by $\tau(x,t)=\int_0^t \ell(\psi_s x)ds$, and hence $\psi_t(x)=\varphi_{\tau(x,t)}(x)$. The inverse function $\sigma(x,t)=\tau^{-1}(x,t)$ satisfies $\varphi_t(x)=\psi_{\sigma(x,t)}(x)$ and corresponds to the cumulative time change for the field rescaling function $k(x)=\frac{1}{\ell(x)}$.

Anosov and Sinai (\cite{AnosovSinai67}) showed that if $\varphi_t$ is a $C^{\infty}$ Anosov flow and $\psi_t$ is obtained from $\varphi_t$ by the multiplication of speeds by $\ell$, then $\psi_t$ is Anosov. 
In fact, they showed that we have a hyperbolic splitting,
\[
T{N}=E^{u}_{\psi_t}\oplus E^{s}_{\psi_t}\oplus E^0_{\psi_t}
\]
for $D \psi_{t}$ where 
\[
E^{u}_{\psi_t}(x)=\set{ v+z(x,v)X_x \,:\, v\in E^u_{\varphi_t}(x) }
\]
for some real valued function $z(x,\cdot):E^u_{\varphi_t}(x)\to \R$ which is linear in its second argument and continuous in $x$. Following Parry \cite{Parry86} we may express the function $z$ as,
\begin{align*}
z(x, \xi)=-\ell(x) \int_{0}^{\infty}\left(D \varphi_{s} \xi(\ell) / \ell\left(\varphi_{s} x\right)^{2}\right) d s=\ell(x) \int_{0}^{\infty} \xi\left(k \circ \varphi_{s}\right) ds.
\end{align*}

We now observe that if $\ell$ is smooth then so are $\tau$, $k$ and $\sigma$ as functions on $N$ or $N\times \R$. Moreover, if the invariant splitting for $E^u_{\varphi_t}=E^u_{slow,\varphi_t}\oplus E^u_{fast,\varphi_t}$ is leafwise smooth, then the graphs,
\[
E^u_{slow,\psi_t}=\set{v+ z(x,v)X_x \,:\, v\in E^u_{slow,\varphi_t}}\quad\text{and}\quad E^u_{fast,\psi_t}=\set{v+ z(x,v)X_x \,:\, v\in E^u_{fast,\varphi_t}}
\]  
remain leafwise smooth and invariant under ${\psi_t}$. This last statement follows from the formula,
\begin{align*}
	D \psi_{\sigma(x,t)}\left(v+z(x, v) X_{x}\right)=D \varphi_{t} v+z\left(\varphi_{t} x, D \varphi_{t} v\right) X_{\varphi_{t}(x)}
\end{align*}
which we apply in the forward time to $E^u_{slow,\psi_t}$ and the backward time in the case of $E^u_{fast,\psi_t}$. From the explicit formula for $z(x,v)$ we see that its overall regularity restricted to the subbundle $E^u_{slow,\varphi_t}$ will be the same as that of $E^u_{slow,\varphi_t}$ since $\ell$, $k$ and $\varphi_s$ are smooth.

We summarize this discussion in the following proposition.
\begin{proposition}
If a $C^\infty$ Anosov flow $\varphi_t$ has a uniformly $C^k$ slow unstable distribution along its unstable foliation, then a smooth time change $\psi_t$ also has this property. 
\end{proposition}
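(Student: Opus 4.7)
The plan is to make rigorous the heuristic already sketched above: that the graph formula for $E^u_{slow,\psi_t}$ propagates the uniform $C^k$ regularity of $E^u_{slow,\varphi_t}$. I would begin by choosing, on a small open set, a local frame $V_1,\dots,V_m$ of $E^u_{slow,\varphi_t}$ that is uniformly $C^k$ along $\varphi_t$-unstable leaves, which exists by hypothesis. The candidate frame of $E^u_{slow,\psi_t}$ is then
\[
W_i(x) := V_i(x) + z(x, V_i(x))\, X_x,
\]
which spans $E^u_{slow,\psi_t}$ by the displayed graph description.

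Next I would identify the foliation along which regularity must be checked. The weak unstable foliations of $\varphi_t$ and $\psi_t$ coincide as set-theoretic foliations (a smooth time change does not alter orbit structure), and the strong unstable foliations of both flows sit inside this common weak unstable foliation as codimension-one subfoliations. Consequently, the uniform $C^k$ regularity of $W_i$ along $\psi_t$-unstable leaves is equivalent to its uniform $C^k$ regularity along weak unstable leaves, modulo the smooth flow direction which is manifestly fine. The question therefore reduces to showing that the scalar function $x \mapsto z(x, V_i(x))$ is uniformly $C^k$ along weak unstable leaves of $\varphi_t$.

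For this I would differentiate Parry's integral formula $z(x,\xi)=\ell(x)\int_0^\infty \xi(k\circ\varphi_s)\,ds$ with respect to $x$ along weak unstable leaves, with $\xi = V_i(x)$. The integrand involves only the $C^\infty$ functions $\ell$ and $k$, the $C^\infty$ flow $\varphi_s$ restricted to smooth weak unstable leaves, and the uniformly $C^k$ field $V_i$; all its leafwise $x$-derivatives up to order $k$ therefore exist and are continuous. The main obstacle is justifying the interchange of the leafwise differentiation with the improper integral uniformly on the compact manifold $X$. This reduces to establishing uniform exponential decay of the relevant leafwise derivatives of $\xi(k\circ\varphi_s) = dk|_{\varphi_s x}(D\varphi_s\xi)$ in the appropriate time direction, a standard consequence of the Anosov property together with smoothness of $\ell$, $k$ on compact $X$. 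Once this uniform convergence is secured, $z(\cdot, V_i(\cdot))$ is uniformly $C^k$ along weak unstable leaves, hence so is $W_i = V_i + z(\cdot, V_i(\cdot))X$, completing the argument.
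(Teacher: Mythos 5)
Your proposal is correct and follows essentially the same route as the paper: use the graph description $E^u_{slow,\psi_t}=\{v+z(x,v)X_x : v\in E^u_{slow,\varphi_t}\}$ together with Parry's explicit integral formula for $z$ to propagate the uniform leafwise $C^k$ regularity of $E^u_{slow,\varphi_t}$. You supply noticeably more detail than the paper's one-line justification---in particular the reduction to regularity along the common weak unstable foliation (since the strong unstable foliations of $\varphi_t$ and $\psi_t$ do not coincide) and the need to justify differentiating under Parry's improper integral via uniform exponential decay of the leafwise derivatives of $D\varphi_{-s}$---both of which the paper leaves implicit.
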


Observe that, in general, such $C^\infty$ time changes $\psi_t$  are not conjugate to $\varphi_t$ as periodic orbits may have different periods. 
To obtain explicit examples, we may apply the above proposition to geodesic flows of compact locally symmetric spaces or suspensions of Anosov diffeomorphisms with smooth slow distributions. Flows of the latter type arise from algebraic examples of splittings, e.g. from products or toral automorphisms. 

\subsection{Geodesic Flows}
We next apply the discussion on sub-Riemannian dynamics from the last section to geodesic flows of closed manifolds of negative sectional curvature.
We will prove a local rigidity statement for certain perturbations of a  locally quaternionic hyperbolic or octonionic hyperbolic closed manifold.

\begin{definition}\label{def:unifCr}
	Let $\mc{A}$ be a topological space and let $\set{\mc{F}_\rho}_{\rho\in\mc{A}}$ be a parametrized family of foliations of a manifold $M$. We say that a parametrized family of dominated splittings $\set{T\mc{F}_\rho=E_\rho\oplus F_\rho}_{\rho \in \mc{A}}$ is {\em uniformly $C^k$ continuous} in the parameter if $E_\rho$ and $F_\rho$ are $C^k$ along each leaf of the foliation $\mc{F}_{\rho}$ and  their derivatives up to order $k$ vary continuously in the parameter $\rho$ with respect to the given topology on the index set $\mc{A}$. We say two splittings $E_\rho\oplus F_\rho$ and $E_{\rho_0}\oplus F_{\rho_0}$ from a uniformly $C^k$ family of splittings are {\em $\eps$-uniformly $C^k$ close} if their derivatives up to order $k$ along their respective foliations $\mc{F}_{\rho}$ and $\mc{F}_{\rho_0}$ are $\eps$-close.
\end{definition}

\begin{remark}
	By the Permanence Theorem \cite[Theorem 6.8]{HirschPughShub} the $W^u_g$ foliations for the geodesic flow of $g$, though only transversely H\"older on all of $M$, have leafwise smooth distributions that vary continuously in the $C^k$ topology as $g$ varies continuously in the $C^{k+1}$ topology. Even though $E^u_{fast,g}$ is uniformly $C^k$ along $W^u_g$ on each $(M,g)$, $E^u_{slow,g}$ is only continuous along $W^u_g$ in general. However, even when $E^u_{slow,g}$ is assumed to be $C^\infty$ along leaves of $W^u_g$, it is not necessarily uniformly $C^k$ on $M$ and may not even be uniformly $C^k$ in the metric parameter $g$ as it varies in the $C^{k+1}$ topology for any $k>0$. We will need to assume this latter property in some of the statements below.
\end{remark}

	Consider a smooth n-manifold $X$. For $n_1\leq n_2\leq \dim X$, let $\op{Flag}(n_1,n_2)\to X$ be the smooth fiber bunde whose fiber $\op{Flag}_x(n_1,n_2)$ is the space of all 2-flags of $n_1$ dimensional subspaces of $n_2$ dimensional subspaces of $T_xX$. Now let $\scr{EF}$ be the subset of $C^0$ sections $x\to (E_x\subset F_x) \in\op{Flag}_x(n_1,n_2)$ such that the corresponding distribution $F$ generates a $C^0$ foliation $\mc{F}$ with $C^\infty$ leaves and the $E$ distribution is $C^\infty$ along the leaves of the foliation $\mc{F}$. We write these sections as $E\subset F$ for corresponding distributions $E$ and $F$. The $C^{0,k}$ topology on $\scr{EF}$ is the maximal refinement of the subspace topology inherited from the compact-open topology on $C^0$ sections of $\op{Flag}(n_1,n_2)$ such that the partial derivatives up to order $k$ of $E$ along the leaves of $\mc{F}$ vary continuosly.

	\begin{lemma}\label{lem:C0k}
		Let $X$ be a smooth closed manifold with a pair of distributions $(E_0\subset F_0) \in \scr{EF}$ such that $E_{0}$ restricted to each leaf is generic of order $r+1$ along the leaves of $\mc{F}$. Then there is an open set $U\subset \scr{EF}$ in the $C^{0,r}$ topology such that each $(E\subset F)\in  U$ is generic of order $r+1$ and the subbundles $E=E^0,E^1,\dots,E^r$ of iterated bracket spaces vary continuously as subbundles of $TX$.
	\end{lemma}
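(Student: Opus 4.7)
The plan is to leverage the openness of linear independence for iterated brackets and then globalize via compactness. First I would work locally: at any $x_0 \in X$, choose a foliation chart for $\mc{F}_0$ on a small neighborhood $V$ together with a smooth-along-leaves frame $X_1^0, \ldots, X_{d_0}^0$ for $E_0$. Using the genericity hypothesis, select multi-indices $I_1, \ldots, I_n$ of depth at most $r$ so that the iterated brackets $Y_i^0 := [X^0]_{I_i}$ form a graded basis of $F_0 = T\mc{F}_0$ on $V$, with $Y_1^0, \ldots, Y_{n_i^0}^0$ spanning $E_0^i$ for each $i$.

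Next, for any $(E \subset F) \in \scr{EF}$ sufficiently $C^{0,r}$-close to $(E_0 \subset F_0)$, pick local frames $X_1, \ldots, X_{d_0}$ for $E$ varying continuously in $C^r$ along leaves with the parameter. The iterated brackets $Y_i := [X]_{I_i}$ involve at most $r$ differentiations, so each $Y_i$ is $C^0$-close to $Y_i^0$ on $V$. Since the $Y_i^0$ form a pointwise basis on a slightly smaller neighborhood $V' \Subset V$, pointwise nondegeneracy of the appropriate $n \times n$ minor is an open condition, so for perturbations close enough the $Y_i$ remain pointwise independent on $V'$, and their first $n_i^0$ members span a continuously varying $n_i^0$-dimensional subbundle $\tilde E^i \subset F$.

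To conclude, I would identify $\tilde E^i$ with the full iterated bracket distribution $E^i$ of the perturbed $E$. The inclusion $\tilde E^i \subseteq E^i$ is immediate by construction. For the reverse, any other iterated bracket $[X^0]_J$ of depth at most $i$ equals a $C^\infty$-along-leaves linear combination of the $Y_l^0$'s on $V$. Applying the same coefficient functions to the $Y_l$'s produces a vector field that differs from $[X]_J$ by a $C^0$-small remainder lying in $F$ (by Frobenius involutivity of $F$). A Grassmannian comparison argument, using that $\tilde E^i$ lies close to $E_0^i$ and the remainder has $C^0$-norm proportional to the perturbation size, shows these errors cannot populate new independent directions in $F/\tilde E^i$, whence $[X]_J \in \tilde E^i$ and $E^i = \tilde E^i$. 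In particular $E^i$ has constant dimension $n_i^0$, varies continuously in the parameter, and $E$ is generic of order $r+1$. Globalization then follows by covering $X$ with finitely many such charts by compactness and intersecting the corresponding neighborhoods in $\scr{EF}$.

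The main obstacle is the reverse-inclusion step: controlling perturbation errors to prevent upward jumps in the filtration dimensions. Lower semicontinuity of the rank of bracket maps in the parameter generically permits such jumps, and one must verify that the $C^{0,r}$-smallness of the perturbation yields a proportional $C^0$-bound on the error vector fields, too small to populate new dimensions of $F/\tilde E^i$. The Frobenius involutivity of $F$, which confines all remainders to $F$, together with the comparison against the fixed reference frame $\{Y_l^0\}$, is what I expect to make this argument go through.
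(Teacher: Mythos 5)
Your proposal follows the same basic route as the paper's proof: cover $X$ by foliation charts, choose a graded basis of iterated brackets for $E_0$, observe that pointwise linear independence of the corresponding perturbed brackets is an open condition in the $C^{0,r}$ topology, and globalize by compactness. Where you diverge is in explicitly attempting the reverse inclusion $E^i \subseteq \tilde E^i$, which the paper's proof does not address at all --- it only verifies that the chosen basis brackets ``continue to be nonzero,'' i.e.\ remain pointwise independent, which gives $\dim E^i(x) \ge n_i$ but says nothing about other brackets escaping the span.

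However, the Grassmannian comparison you sketch for that reverse inclusion does not work, and the obstacle you flag is genuine, not a presentational issue. Writing $[X]_J = \sum_l c_l \tilde Y_l + R$ with $R$ a $C^0$-small remainder shows only that $[X]_J$ is $C^0$-close to the subbundle $\tilde E^i$; this does not place $[X]_J$ inside $\tilde E^i$, since a vector field can be uniformly close to a subbundle without being a section of it. Involutivity of $F$ confines $R$ to $F$ but gives no control on its component in $F/\tilde E^i$, so the ``errors cannot populate new directions'' step has no justification. In fact a bracket $[X^0]_J$ that vanishes identically for the reference frame (so contributes nothing to $E_0^i$) can become nowhere-vanishing and transverse to $\tilde E^i$ after an arbitrarily $C^{\infty}$-small perturbation of the frame, making $\dim E^i$ jump upward on the whole chart --- lower semicontinuity of rank permits exactly such jumps. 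The case $r=1$ is immune, since then $\tilde E^1 = T\mc F$ forces $E^1 \subseteq T\mc F = \tilde E^1$ automatically, but for $r>1$ neither your argument nor the paper's as written bounds $\dim E^i$ from above; some additional ingredient (for instance a dynamical invariance argument pinning the filtration dimensions, or a more restrictive hypothesis on the allowed perturbations) appears to be needed.
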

	
	\begin{proof}
		Since $X$ is closed, $F_0$ defines a foliation $\mc{F}$, and $E_0$ is a given distribution, we may find a finite open cover of foliation charts for $(X,\mc{F})$ where on each open chart $V$, the distribution $E_0$ is spanned by continuous vector fields $X_1,\dots,X_{d_0}$ which are $C^\infty$ on each leaf of $\mc{F}$ and whose successive brackets at each point $x$ generate a filtration $E_0(x)=E^0_0(x)\subset E_0^1(x)\subset E_0^r(x)=F_x$ and fit together to form subbundles locally and hence globally. 
		
		Since  the $X_i$ vary continuously and the iterated brackets along each leaf vary continuously in the parameters of the $C^{0,r}$ topology, there is an open neighborhood $U\subset\scr{EF}$ of $(E_0\subset F_0)$ in the $C^{0,r}$ topology for which the iterated brackets of the nearby $X_i$ spanning $E$ continue to be nonzero on all of $V$. (For sufficiently close foliations $\mc{F}$ for the distribution $F$, the open set $V$ still serves as a chart.)
		
		Since there are only finitely many charts $V$, by intersecting the resulting open sets $U$ we may find a common neighborhood. 
	\end{proof}

\begin{lemma}\label{lem:generic}
	Let $(M,g_0)$ be a closed negatively curved manifold admitting a leafwise $C^\infty$ dominated splitting $E^u_{\phi_t^0}=E^u_{fast,\phi_t^0}\oplus E^u_{slow,\phi_t^0}$ for its geodesic flow $\phi_t^0$. Assume $(E^u_{slow,\phi_t^0}\subset E^u_{\phi_t^0})\in \scr{EF}$ and that $E^u_{slow,\phi_t^0}$ is generic and horizontal of order $r+1$ along the leaves of $W^u_{\phi_t^0}$. Recall by Proposition \ref{prop:dom-perturb} there is a $C^{1}$ neighborhood $U$ of $\phi_t^0$ such that any flow $(SM,\phi_t)\in U$ has a dominated splitting $E^u_{fast,\phi_t}\oplus E^u_{slow,\phi_t}$. If we further assume that this splitting is $C^\infty$ along unstable leaves and sufficiently uniformly $C^r$ close (along the unstable foliation) to that of $\phi_t^0$ then $(E^u_{slow,\phi_t}\subset E^u_{\phi_t})\in \scr{EF}$ and $E^u_{slow,\phi_t}$ is generic and horizontal of order $r+1$ along the leaves of $\mc{F}$. Furthermore, the tangent cone map $v\mapsto TC_vW^u_{\phi_t}$ associated to $(M,\phi_t)$ exists and is $C^0$ close to  $v\mapsto TC_vW^u_{\phi_t^0}$, the tangent cone map for $\phi_t^0$, with respect to the Hausdorff topology on the moduli space $\mc{NL}_{n-1}/\GL(n-1,\R)$.
\end{lemma}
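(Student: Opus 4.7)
The plan is to deduce the two inclusion/genericity statements as a direct application of Lemma~\ref{lem:C0k}, and to obtain the continuity of the tangent cone map by explicitly tracking the $\phi_t$-dependence of the Métivier correspondence from Section~\ref{subsec:Carnot}.

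First I would check that the hypotheses place $(E^u_{slow,\phi_t}\subset E^u_{\phi_t})$ in $\scr{EF}$ and in a $C^{0,r}$-neighborhood of $(E^u_{slow,\phi_t^0}\subset E^u_{\phi_t^0})$. Proposition~\ref{prop:dom-perturb} supplies the continuous $\phi_t$-invariant splitting $E^u_{\phi_t}=E^u_{fast,\phi_t}\oplus E^u_{slow,\phi_t}$; the unstable bundle itself and the unstable foliation $W^u_{\phi_t}$ depend continuously, with continuously varying leafwise $C^\infty$ structure, by structural stability and the Permanence Theorem \cite[Theorem~6.8]{HirschPughShub}. Leafwise smoothness of $E^u_{slow,\phi_t}$ is assumed, and continuity of its $C^r$-jets along $W^u_{\phi_t}$ is precisely the uniform $C^r$-closeness of Definition~\ref{def:unifCr}. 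Hence $(E^u_{slow,\phi_t}\subset E^u_{\phi_t})\in \scr{EF}$, proving (1).

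Next, Lemma~\ref{lem:C0k} applied with $X=SM$, $n_1=\dim E^u_{slow,\phi_t^0}$ and $n_2=\dim E^u_{\phi_t^0}$ shows that for $\phi_t$ sufficiently uniformly $C^r$-close to $\phi_t^0$ the distribution $E^u_{slow,\phi_t}$ remains generic of order $r+1$ along $W^u_{\phi_t}$, and the iterated-bracket filtration $E^0\subset E^1\subset\dots\subset E^r$ varies continuously as a family of subbundles of $TW^u_{\phi_t}$. Since for $\phi_t^0$ we have $E^r=E^u_{\phi_t^0}$ by assumption and the dimensions of the strata are preserved under sufficiently small perturbation, $E^r=E^u_{\phi_t}$ for the perturbation as well, giving the horizontality in (2).

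For (3), I would fix $v_0\in SM$, choose a small neighborhood in $W^u_{\phi_t^0}(v_0)$ with a basis $X_{0,1},\dots,X_{0,d_0}$ of horizontal vector fields spanning $E^u_{slow,\phi_t^0}$, and complete it via iterated brackets to a graded basis $\{X_{i,j}\}$ of the bracket filtration as in Section~\ref{subsec:Carnot}. By the proof of Lemma~\ref{lem:C0k} together with the $C^{0,r}$-closeness, any $\phi_t$ sufficiently close admits analogously-constructed basis fields $X^{\phi_t}_{i,j}$ in a correspondingly close neighborhood on $W^u_{\phi_t}$, with no degeneration of the indicated brackets, and whose structure constants for the Lie algebra $\mf{n}^{\phi_t}_v$ vary continuously in $(v,\phi_t)$. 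These structure constants, together with the inner product inherited from the horizontal distribution via Lemma~\ref{lem:isometricinclusion}, completely determine each tangent cone $TC_vW^u_{\phi_t}$ as a metric Lie group; continuity in the subspace topology on $\mc{NL}_{n-1}$ then descends to continuity in the quotient moduli space $\mc{NL}_{n-1}/\GL(n-1,\R)$, which is Hausdorff by Theorem~\ref{thm:BorelSerre}. Combining with Proposition~\ref{prop:cont_of_tan_cone} for the leafwise continuity and covering $SM$ by finitely many such charts yields the claimed $C^0$-closeness of the tangent cone maps.

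The main obstacle will be making the local construction uniform across the compact manifold $SM$: one has to choose a finite atlas of foliation charts for $W^u_{\phi_t^0}$ whose graded bases of iterated brackets transfer, without degeneration, to compatible graded bases on $W^u_{\phi_t}$ for every $\phi_t$ in a fixed $C^1$-neighborhood. This is enabled by the continuous dependence of the unstable foliation on $\phi_t$, the openness provided by Lemma~\ref{lem:C0k}, and the compactness of $SM$, but it is the step whose bookkeeping has to be done carefully to ensure the closeness constants are truly uniform in $v$.
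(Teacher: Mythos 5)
Your proposal follows essentially the same route as the paper's own proof: apply Lemma~\ref{lem:C0k} to obtain genericity, horizontality, and continuity of the iterated-bracket filtration of $E^u_{slow,\phi_t}$, and then track the $\phi_t$-dependence of the M\'etivier construction (graded basis fields and their structure constants) to deduce $C^0$-continuity of the nilpotent structure in $\mc{NL}_{n-1}/\GL(n-1,\R)$, invoking Proposition~\ref{prop:cont_of_tan_cone} and Lemma~\ref{lem:transverse_cont} as the paper does. Your proposal is somewhat more explicit — you name the Permanence Theorem and Theorem~\ref{thm:BorelSerre} where the paper is terse, and you flag the (real but routine, by compactness) issue of making the finite atlas of foliation charts uniform across $SM$ — but there is no genuine divergence of method. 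One small remark: since the claimed $C^0$-closeness is stated only in $\mc{NL}_{n-1}/\GL(n-1,\R)$, i.e.\ at the level of isomorphism classes of graded Lie algebras, the appeal to Lemma~\ref{lem:isometricinclusion} for the metric structure is not actually needed for this part of the conclusion; the paper also makes the point that each $k$-fold bracket field $\what{X}_{k,j;x}$ involves exactly $k$ leafwise derivatives of the initial horizontal fields, which is precisely why the $C^r$ uniform closeness hypothesis is the correct regularity to require, a bookkeeping detail you implicitly use but could make explicit.
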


\begin{proof}
	Recall that the dominated splitting condition is an open condition in the $C^r$ topology and thus it holds on some neighborhood $U$ of $\phi_t^0$. 
		
	As in the proof of Lemma \ref{lem:transverse_cont}, we exploit the fact that both the Carnot metric and algebraic structure of the tangent cone depends on the construction of the M\'{e}tivier fields $\what{X}_{j,k;x}$.
		
	By assumption, the 2-flag of distributions $E^u_{slow,\phi_t}\subset E^u$ are $C^\infty$ along $W^u_{\phi_t}$ and uniformly close to $E^u_{slow,\phi_t^0}\subset E^u_{\phi_t}$ in the $C^r$ topology. Provided they are close enough, then by Lemma \ref{lem:C0k} $E^u_{slow,\phi_t}$ is generic of order $r+1$, with nearby subbundles of iterated bracket spaces. In particular $E^u_{slow,\phi_t}$ is horizontal as well. Moreover, the construction of the $\what{X}_{r-i,j;x}$  for $i=0,\dots,r$ vary continuously in the $C^i$ topology on the vector fields $X_{0,1;x},\dots, X_{0,d_0;x}$ locally spanning $E^u_{slow,\phi_t}$. By hypothesis these latter fields depend continuously in the $C^r$ topology in $\phi_t$. We conclude as in the proof of Lemma \ref{lem:transverse_cont} and \ref{prop:cont_of_tan_cone} that the nilpotent structure in $\mc{NL}_{n-1}/\GL(n-1,\R)$ is $C^0$ close as claimed.
\end{proof}

\begin{remark}\label{rem:metrics}
	We observe that if metrics vary in the $C^{r+1}$ topology, then the geodesic flows vary in the $C^r$ topology. Hence the same Lemma holds for metrics except that $U$ is a $C^2$ neighborhood of $g_0$.
\end{remark}

\begin{reptheorem}{thm:locally_rigid_flow}
	Let $\phi_t^0$ be the geodesic flow on a locally quaternionic hyperbolic or octonionic hyperbolic closed manifold $M$. Then if $\phi_t$ is any $C^\infty$ flow $C^1$ close to $\phi_t^0$ for which $E^u_{slow,\phi_t}$ remains $C^\infty$ along unstable leaves and is sufficiently uniformly $C^1$ close (in the sense of Definition \ref{def:unifCr}) to that of $\phi_t^0$, then $\phi_t$ is $C^\infty$ orbit equivalent to $\phi_t^0$. 

\end{reptheorem}

\begin{proof}
	We observe that the symmetric flow $\phi_t^0$ has a dominated splitting $E^u=E^u_{fast}\oplus E^u_{slow}$ which is globally $C^\infty$ and for which $E^u_{slow}$ is generic of order $1$ and horizontal. By Lemma \ref{lem:generic} there is a $C^1$ open neighborhood $U$ of $\phi_t^0$ where $E^u_{slow,\phi_t}$ is generic of order $1$ and horizontal whenever $\phi_t\in U$ has a dominated splitting which is $C^\infty$ along unstable leaves and uniformly $C^1$ close along the unstable foliation to that of $\phi_t^0$. Moreover the associated tangent cones $TC_vW^u(v)$ for $\phi_t$ are $C^0$ close to those of $\phi_t^0$.
	
	Recall that by Proposition \ref{examp:nilrigid} the tangent cone associated to $(M,\phi_t^0)$ are asymmetric. By Corollary \ref{cor:stable-asymmetry}, after possibly shrinking $U$, the tangent cone associated to $(M,\phi_t)$ is also asymmetric. We can shrink $U$ to an even smaller set so that the tangent cone associated to $(M,\phi_t)$ is 2-step. Thus by Theorem \ref{thm:equal}, the unstable Lyapunov exponents are $\log\lambda$ and $2\log\lambda$ for a $\lambda>1$. The multiplicity of the exponents are the dimensions of the slow and the fast unstable distribution. Therefore, the Lyapunov spectra of the geodesic flows on $(M,\phi_t)$ and $(M,\phi_t^0)$ are proportional.
	
	We wish to apply Theorem 3.2 of \cite{Butler:17} which is stated for the so-called {\em horizontal measure}, $\mu_M$. We need to verify that this measure has full support. We observe that by Section 2.8 of \cite{Butler:17} this is a Gibbs state for a H\"older potential. These have full support in our setting (\cite[p.50]{PaulinEtAl15} or \cite[Theorem 4.11]{ClimenhagaEtAl:18}). 
\end{proof}

\begin{remark}
	By the Livsic Theorem whenever two orbit equivalent flows have the same periods on periodic points they are conjugate. The regularity of the conjugacy will be at least one less than the regularity of the time change. 
\end{remark}

\begin{reptheorem}{thm:locally_rigid}
	Let $g_0$ be a locally quaternionic hyperbolic or octonionic hyperbolic metric on a smooth closed manifold $M$. Then $g_0$ is locally rigid within the family of $C^2$ close $C^\infty$ metrics whose $E^u_{slow}$ remains $C^\infty$ along unstable leaves and is sufficiently uniformly $C^1$ close (in the sense of Definition \ref{def:unifCr}) to that of $g_0$.

\end{reptheorem}

\begin{remark}
	The above theorem is reminiscent of the global rigidity result (Theorem 3) in \cite{BFL92}. While we do not assume any transverse regularity of the splitting in our theorem, we do require that the metric is nearby the locally symmetric one. We also note that there is a $C^2$ open neighborhood of $g_0$ such that the unstable distributions of the perturbed manifolds admit dominated splittings by Proposition \ref{prop:dom-perturb}.
\end{remark}

\begin{proof}
	The proof is nearly identical to that of Theorem \ref{thm:locally_rigid_flow}, except that we note by Remark \ref{rem:metrics} there is a $C^2$ open neighborhood $U$ of $g_0$ with the desired properties. Moreover, we finish using Theorem 1.5 of \cite{Butler:17} instead of Theorem 3.2.
\end{proof}

\begin{remark}
	If $g_0$ is any metric as in Lemma \ref{lem:generic} whose associated nilpotent tangent cone has an asymmetric Lie algebra, then by Theorem \ref{thm:equal}  its unstable Lyapunov spectrum is of the form $\log \la,2\log \la, \dots, (r+1) \log \la$ for some $\la>1$. We can conclude by the proof of Theorem \ref{thm:locally_rigid} that any metric that is sufficiently uniformly $C^2$ close along unstable leaves will also have a Lyapunov spectrum of the same form. 
\end{remark}

\subsection{Diffeomorphisms of nilmanifolds}
In this section we apply our results to perturbations of automorphisms of certain nilmanifolds with smooth dominated splittings. 

\noindent {\em Standing assumption:} Let $M$ be a closed manifold and let $f_0$ be a transitive $C^\infty$ Anosov diffeomorphism such that tangent cones of unstable leaves exist everywhere and are isomorphic to a fixed asymmetric $(r+1)$-step Carnot nilpotent Lie group $N$. Let $E^u_{slow,f_0}$ be the horizontal distribution along unstable leaves that gives rise to the tangent cone structure.

We obtain the following local rigidity theorem.
\begin{reptheorem}{thm:samespec}
	 There is a $C^1$ open neighborhood $U$ of $f_0$ in $\Diff^\infty(M)$ such that if $f \in U$ admits a smooth splitting $E^u_f=E^u_{fast,f}\oplus E^u_{slow,f}$ along unstable leaves with $\dim(E^u_{slow,f})=\dim(E^u_{slow,f_0})$, and $E^u_{slow,f}$ is sufficiently uniformly $C^{r}$ close along unstable leaves to $E^u_{slow,f_0}$, then for any invariant ergodic measure $\mu$ there is $\lambda_\mu>1$ such that the unstable Lyapunov exponents of $f$ with respect to $\mu$, are $\log \lambda_\mu,2\log\lambda_\mu,\dots,(r+1)\log\lambda_\mu$ with the same multiplicity as for $f_0$.
\end{reptheorem}

\begin{proof}
By the construction of $f_0$, tangent cones of unstable manifolds of $f_0$ exist everywhere and are isomorphic as graded nilpotent Lie groups to $N$.

We argue as in the proof of Theorem \ref{thm:locally_rigid}. The tangent cones of unstable manifolds of $f$ exist everywhere, and moreover such tangent cones are close to $N$ in the variety of Carnot nilpotent Lie groups. By Corollary \ref{cor:stable-asymmetry}, those tangent cones are asymmetric. By Theorem Theorem \ref{thm:equal}, there is $\la_\mu>1$ such that all unstable Lyapunov exponents, without multiplicity, of $f$ with respect to $\mu$, are $\log\lambda_\mu,2\log\lambda_\mu,\dots,(r+1)\log\lambda_\mu$. The multiplicities are given by the dimensions of the bracket spaces starting with $E^u_{slow,f}$ which have the same dimensions as for $f_0$.
\end{proof}

Recall that for any transitive Anosov diffeomorphism $f$ there is a unique invariant SRB measure, which we call $\sigma$ (See 20.3.8 of \cite{HasselblattKatok} and \cite{Sinai:72}). For $f_0$ as in Theorem \ref{thm:samespec}, we denote by $\sigma_0$ the unique SRB measure for $f_0$. In the next corollary we obtain a rigidity statement under an additional assumption which is analogous to a higher hyperbolic rank condition (cf. below).
 
\begin{corollary}
	Suppose we are in the setting as Theorem \ref{thm:samespec} with $M$ a nilmanifold and $f_0$ an automorphism. In addition assume $\la_{\sigma}(f)\geq \la_{\sigma_0}(f_0)$, then $\sigma$ is the measure of maximal entropy for $f$ and has the same Lyapunov spectrum as $\sigma_0$ for $f_0$.
\end{corollary}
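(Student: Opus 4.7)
The plan is to combine Theorem \ref{thm:samespec} with Pesin's entropy formula and the structural stability of Anosov systems on nilmanifolds. First I would invoke the Franks--Manning theorem: since $f_0$ is a hyperbolic automorphism of the nilmanifold $M$ and $f\in U$ is $C^1$-close and hence Anosov, there is a topological conjugacy between $f$ and $f_0$, so $h_{\op{top}}(f)=h_{\op{top}}(f_0)$. For the linear automorphism $f_0$ the Haar measure coincides with $\sigma_0$ and is the unique measure of maximal entropy; Pesin's entropy formula for $\sigma_0$, together with Theorem \ref{thm:samespec}, yields
$$h_{\op{top}}(f_0)=h_{\sigma_0}(f_0)=K\log\la_{\sigma_0}(f_0),\qquad K:=\sum_{i=0}^{r}(i+1)m_i,$$
where $m_i=\dim E^{i}_{f_0}-\dim E^{i-1}_{f_0}$ are the multiplicities produced by Theorem \ref{thm:samespec}.

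Next I would apply Theorem \ref{thm:samespec} to the ergodic measure $\mu=\sigma$: the unstable Lyapunov exponents of $f$ with respect to $\sigma$ are $\log\la_\sigma(f), 2\log\la_\sigma(f), \dots, (r+1)\log\la_\sigma(f)$ with the \emph{same} multiplicities $m_0,\dots,m_r$ as for $f_0$. Since $f$ is $C^\infty$ and $\sigma$ is an SRB measure, Pesin's entropy formula gives $h_\sigma(f)=K\log\la_\sigma(f)$. Chaining the hypothesis $\la_\sigma(f)\ge \la_{\sigma_0}(f_0)$ with the variational principle produces
$$K\log\la_{\sigma_0}(f_0)=h_{\op{top}}(f_0)=h_{\op{top}}(f)\ge h_\sigma(f)=K\log\la_\sigma(f)\ge K\log\la_{\sigma_0}(f_0).$$
Equality throughout forces both $h_\sigma(f)=h_{\op{top}}(f)$, so $\sigma$ is a measure of maximal entropy for $f$, and $\la_\sigma(f)=\la_{\sigma_0}(f_0)$, so the two Lyapunov spectra coincide.

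The main obstacle is bookkeeping rather than a new dynamical idea: one must verify that the SRB measure for $f$ exists and is ergodic (which follows since $f$ is a $C^\infty$ Anosov diffeomorphism inheriting topological transitivity from $f_0$ through the Franks--Manning conjugacy), that Pesin's equality applies on both sides (both diffeomorphisms are $C^\infty$), and that Theorem \ref{thm:samespec} correctly identifies the multiplicities for $f$ with those of $f_0$ (guaranteed by the hypotheses that $\dim E^u_{slow,f}=\dim E^u_{slow,f_0}$ and that $E^u_{slow,f}$ is uniformly $C^r$ close to $E^u_{slow,f_0}$, which pins down the filtration dimensions via Lemma \ref{lem:C0k}). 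Once these classical facts are in place, the corollary reduces to the two-line entropy comparison above.
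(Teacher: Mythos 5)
Your proof is correct and follows essentially the same strategy as the paper's: apply Theorem \ref{thm:samespec} to identify the spectrum up to the scalar $\la_\sigma(f)$, invoke Pesin's formula and structural stability (Franks--Manning on a nilmanifold) to compare entropies, and close the inequality chain using the hypothesis $\la_\sigma(f)\geq\la_{\sigma_0}(f_0)$ together with the variational principle. The only difference is cosmetic---you cite Franks--Manning rather than generic Anosov structural stability, and you write the entropy chain with the constant $K$ made explicit, which incidentally avoids a small index typo in the paper's displayed formula.
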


\begin{proof}
	Set $\dim E^{-1}=0$ and let $d_i=\dim E^{i}-\dim E^{i-1}$ for $i=0,\dots,r$. By the Pesin formula and Theorem \ref{thm:samespec}, we also have 
	$h_{\sigma}(f)=\sum_{i=0}^r (i+1) \log\la_{\sigma}(f) d_i$. Moreover, by Structural Stability $f$ and $f_0$ are conjugate and so they share the same topological entropies $h_{top}(f)=h_{top}(f_0)$. Since $f_0$ is algebraic, the Lyapunov exponents are independent of the measure and so Haar measure $\sigma_0$ is the measure of maximal entropy. Together with our assumptions we have,

	\[
	h_{top}(f)\geq h_{\sigma}(f)=\sum_i i \log\la_{\sigma}(f) d_i \geq  \sum_i i \log\la_{\sigma_0}(f_0) d_i =h_{\sigma_0}(f_0)=h_{top}(f_0)=h_{top}(f).
	\]
	Hence all of the inequalities are equalities. Therefore $h_{top}(f)=h_{\sigma}(f)$ and hence $\sigma$ is the measure of maximal entropy for $f$. Moreover, $\la_{\sigma}(f)=\la_{\sigma_0}(f_0)$ and so the spectra coincide.
\end{proof}

As an application we describe a class of examples of special Anosov nil-automorphisms.
\begin{definition}\label{def:rationalG}
	A nilpotent Lie algebra $\mf{n}$ is called rational if it is a Lie algebra over $\Q$. A nilpotent Lie group $N$ is called rational if there is a rational Lie algebra $\mf{n}$ such that $Lie(N)=\mf{n} \otimes \R$.
\end{definition}

\begin{example}\cite[Theorem 3.1]{Lauret03}\label{ex:alg-Anosov}
	Let $N$ be a graded rational nilpotent group. Then there is a lattice $\Gamma<N\times N$ such that the nilmanifold $M=(N\times N)/\Gamma$ admits an Anosov automorphism in which unstable and stable manifolds are vertical and horizontal copies of of $N$.
\end{example}
We obtain the following as an immediate corollary to Theorem \ref{thm:samespec}.
\begin{repcorollary}{cor:nilexamp}
	Let $N$ be an asymmetric $k$-step Carnot rational nilpotent group. Let $f_0$ be the Anosov automorphism of $M=(N\times N)/\Gamma$ obtained from Example \ref{ex:alg-Anosov}. Then there is a $C^1$ open neighborhood $U$ of $f_0$ in $\Diff^\infty(M)$ such that if $f \in U$ admits a smooth dominated splitting $E^u_f=E^u_{fast,f}\oplus E^u_{slow,f}$ along unstable leaves and $E^u_{slow,f}$ is sufficiently uniformly $C^{r}$ close along unstable leaves to $E^u_{slow,f_0}$, then for any invariant ergodic measure $\mu$ there is $\lambda_\mu>1$ such that the unstable Lyapunov exponents of $f$ with respect to $\mu$, are $\log\lambda_\mu,2\log\lambda_\mu,\dots,(r+1)\log\lambda_\mu$ with the corresponding multiplicities as for $f_0$.
\end{repcorollary}

\begin{remark}
	In the above corollary, one may replace the hypothesis of a smooth dominated splitting with that of a slow distribution $E^u_{slow,f}$ smooth along unstable leaves. This readily follows from Proposition \ref{prop:dom-perturb}.
\end{remark}

Even though the Heisenberg group is not asymmetric, we may still obtain resonance of Lyapunov exponents for Anosov diffeomorphisms on products of this group.

\begin{example}\label{ex:Smale-Anosov}
	The following example due to Borel \cite[Example 1 in I-(3.8)]{Smale67} provides an Anosov diffeomorphism on a nilmanifold whose unstable tangent cones are not asymmetric nilpotent groups. Let $N$ be the 3-dimensional Heisenberg group. Consider $(N\times N)/\Gamma$ where $\Gamma$ is the lattice subgroup whose elements (with respect to the standard basis on the Lie algebra) are of the form $(\alpha,\beta,\gamma,\alpha^{\sigma},\beta^{\sigma},\gamma^{\sigma})$ for $\alpha,\beta,\gamma\in \Z[\sqrt{3}]$ and where $\sigma$ represents the Galois automorphism with $(a+b\sqrt{3})^\sigma=a-b\sqrt{3}$. Let $f_0$ be the Anosov diffeomorphism induced from the Lie algebra map $A(a,b,c,d,e,f)=(\la a,\la^2 b,\la^3 c,\la^{-1} d,\la^{-2} e,\la^{-3}f)$ for $\la=2+\sqrt{3}$. Here the unstable corresponds to the first factor of $N$ with coordinates $(a,b,c)$ and the slow distribution has coordinates $(a,b)$ which is the standard horizontal (and generic) distribution of $N$.
\end{example}

We may obtain a similar corollary to Corollary \ref{cor:nilexamp} for the above example.

\begin{repcorollary}{cor:nilexamp2}
	Let $N$ be the 3-dimensional Heisenberg group. Let $\Gamma$ be the lattice and $f_0$ the Anosov automorphism of $M=(N\times N)/\Gamma$ obtained from Example \ref{ex:Smale-Anosov}. Then there is a $C^1$ open neighborhood $U$ of $f_0$ in $\Diff^\infty(M)$ such that if $f \in U$ admits a smooth splitting $E^u_f=E^u_{fast,f}\oplus E^u_{slow,f}$ along unstable leaves and $E^u_{slow,f}$ is sufficiently uniformly $C^{1}$ close along unstable leaves to $E^u_{slow,f_0}$, then for any invariant ergodic measure $\mu$ the unstable Lyapunov exponents for $f$ are $\log \lambda_1,\log \lambda_2$ and $\log \lambda_1+\log \lambda_2$ for some $\la_1>1$ and $\la_2>1$ depending on $\mu$.
\end{repcorollary}

\begin{proof}
We note that this result follows immediately from the analogue of Theorem \ref{thm:samespec} for Heisenberg groups in place of asymmetric groups and resonance in place of arithmeticity in the conclusion. The proof may be duplicated except for two points.

First, we need to show that, under the perturbation, the unstable tangent cones for $f$ are still isomorphic to the Heisenberg group.  Since $E^u_{slow,f}$ is sufficiently uniformly $C^1$ close along unstable leaves to $E^u_{slow,f_0}$, the horizontal distribution of $TC_vW^u_f(v)$ remains codimension one, horizontal and generic. Hence $TC_vW^u_f(v)$ is a 2-step Carnot nilpotent Lie group with one dimensional center for every $v\in SM$. Moreover, the tangent cones for $f$ are $C^0$ close to those of $f_0$. The Lie bracket induces a  bilinear form  $[\cdot,\cdot]:\mf{n}_v^0\times \mf{n}_v^0\to \mf{n}_v^1$ between  the first and second levels $\mf{n}_v^0$ and $\mf{n}_v^1$ of the Lie algebra $\mf{n}_v$ of $TC_vW^u_f(v)$, and it remains nondegenerate. Consequently, as the center is one-dimensional and there is only one non-degenerate skew symmetric bilinear form on $\mf{n}_v^0$ for every $v\in SM$, $TC_vW^u(v)$ is  isomorphic as a graded nilpotent group to the Heisenberg group $N$.

Second, we need an analogous result to Theorem \ref{thm:equal}. In turn, we observe that in the proof of Theorem \ref{thm:equal}, we may replace Theorem \ref{thm:Lyap_exp} with Theorem \ref{thm:Hei-spec} to obtain the corresponding analogue for the Heisenberg group in the hypotheses and the resonance in the conclusion. Having done so, the analogous result follows.
\end{proof}

It is easy to construct examples of Anosov diffeomorphisms with integrable slow distributions on tori or products of nilmanifolds. One might ask whether for irreducible Anosov automorphisms of nonabelian nilpotent groups the slow distribution must be horizontal and generic. However, this is not the case. Indeed, the slow unstable distribution of \cite[Example 2 on p.189]{Shub1969} is integrable.

As our final application of smooth splittings we will investigate the local rigidity of the projective action of a quaternionic or octonionic hyperbolic lattice on its ideal boundary sphere.

\subsection{Local Rigidity of Projective Boundary Actions}\label{subsec:boundary_actions}

Sullivan initiated the study of local rigidity of lattice actions on the boundary in \cite{Sullivan85}. Ghys (\cite{Ghys93}) introduced a suspension construction which relates actions of fundamental groups of surfaces on the circle to Anosov flows. This was later adapted by Yue (\cite{Yue95}) to the study of lattice actions on higher dimensional spheres. We will employ this construction to obtain a rigidity theorem for boundary actions (cf. also \cite[Theorem 6.1]{Asaoka17}). 

Consider a rank one symmetric space $X$ with connected component $G<\Isom(X)$ of its isometry group. As is well known, the ideal boundary $\partial X$ of $X$ may be identified as $\partial X=G/P$ for a (minimal) parabolic subgroup $P$. For a discrete subgroup $\Gamma < \Isom(X)$, let $\rho_0$ be the  action of $\Gamma$ on $\partial X$ induced from the action of $\Gamma$ on $X$. Note that $\rho_0$ is nothing but the left action on $G/P$ and thus is $C^\infty$. This action preserves the projection to the boundary of the slow unstable distribution $E_0$ for the geodesic flow since the stable foliation and the holonomy along it are $C^\infty$ and $\Gamma$-equivariant. 

We briefly recall the suspension construction mentioned above. Let $\rho:\Gamma\to \op{Diff}^\infty(\partial X)$ be a $C^1$ close perturbation of  $\rho_0$. More precisely, we assume that $\rho$ is $C^1$-close to $\rho _0$ on a fixed finite set of generators of $\Gamma$ (we recall that lattices in semisimple groups  are  always finitely generated). Consider the unit tangent bundle $SM$ for the locally symmetric space and note $\til M=X$. We have a diffeomorphism between $SX$ and $(\partial X\times \partial X \bs D)\times \R$ where $D$ is the diagonal of  $\partial X\times \partial X$.

We also have a diffeomorphism $\what{q}:SX\to X\times \partial X$ via $v\mapsto \what{q}(v)=(\pi(v),v(\infty))$, where $\pi(v)$ is the projection of $v$ to its base point. The map $\what{q}$ descends to a diffeomorphism $q:SM\to (X\times\partial X)/\what{\rho}_0$ where $\what{\rho}_0(\gamma)(x,\xi)=(\gamma x,\rho_0(\gamma) \xi)$ for all $\gamma\in \Gamma$ and $(x,\xi)\in X\times \partial X$. (Here $\Gamma=\pi_1(M)$ acts by deck transformations on $X$.)

As $\rho$ is $C^1$-close to $\rho_0$, by Proposition \ref{prop:Benven} there is a $C^\infty$ diffeomorphism $f:(X\times\partial X)/\what{\rho}\to (X\times\partial X)/\what{\rho}_0$ where $\what{\rho}(\gamma)(x,\xi)=(\gamma x,\rho(\gamma)\xi)$. 
Observe that the actions of $\what{\rho}$ and $\what{\rho}_0$ both preserve the leaves of the first-factor foliation $\set{X\times \set{\xi}:\xi\in \partial X}$.  Moreover, the map $f$ maps horizontal leaves in $(X\times\partial X)/\what{\rho}$ to leaves that are $C^1$-close to horizontal leaves in $(X\times\partial X)/\what{\rho}_0$. The manifolds $\what{q}^{-1}(X\times \set{\xi})$, as $\xi$ varies, are the leaves of the center-stable foliation $W^{cs}$ for the geodesic flow $\phi_t$. For each $x\in X$ we let $\sigma_x: \partial X\to \partial X$ be the map which takes $\xi\in\partial X$ to the opposite endpoint of the geodesic line through $x$ and $\xi$. Then $\what{q}^{-1}(\set{(y,\sigma_y\of\sigma_x(\xi)): y\in X})$ is precisely the center-unstable manifold $W^{cu}(\what{q}^{-1}(x,\xi))\subset SX$ for the geodesic flow $\phi_t$ on $S\til M$. 

Let $V^{cs}$ be the $C^\infty$ foliation of $SM$ whose leaves are the manifolds $V^{cs}(v)=q^{-1}\of f([X\times{v(\infty)}])$  for $v\in SM$, where square brackets denote the $\what{\rho}$-equivalence class. 
Note similarly that the center-stable foliation $W^{cs}$ on $SM$ for the locally symmetric metric has leaves $W^{cs}(v)=q^{-1}([X\times{v(\infty)}])$ where square brackets denote the $\what{\rho}_0$-equivalence class.

Intersecting $V^{cs}\cap W^{cu}$ gives a $C^\infty$ one dimensional foliation $\scr{F}$ which is $C^1$ close to the locally symmetric geodesic foliation.
Note that for all $y$ in the geodesic line through $x$ and $\xi$, $\sigma_y\of\sigma_x(\xi)=\xi$. Hence the foliation $\scr{F}$ is the image under $q^{-1}\of f$ of the quotient by $\what{\rho}$ of the symmetric geodesic foliation (of the first factor) in $X\times \partial X$.

Using the locally symmetric metric, we obtain a norm on the tangent space to $\scr{F}$ from which we may construct a unit vector field $Y$ on $SM$ tangent to these curves. By structural stability, the field $Y$ defines a smooth Anosov flow $\psi_t$ which is H\"older orbit equivalent to the locally symmetric flow $\phi_t$. (Note that by invariance of $V^{cs}$ and $W^{cu}$, and the fact that $\psi_t$ is $C^1$ close to $\phi_t$, it follows that $V^{cs}$ and $W^{cu}$ are the center-stable and center-unstable foliations for $\psi_t$.)

\begin{reptheorem}{thm:boundary_rigid}
	Let $X$ be a quaternionic hyperbolic space or the Cayley plane, and let  $M=X/\Gamma$ for a cocompact lattice $\Gamma<\Isom(X)$. Let $\rho:\Gamma\to \op{Diff}^\infty(\partial X)$ be a $C^1$ close perturbation of  $\rho_0$ which still preserves a $C^\infty$ distribution $E$ nearby to $E_0$. Then $\rho$ is $C^\infty$ conjugate to $\rho_0$.
\end{reptheorem}

\begin{proof}
	Consider the slow distribution $E^u_{slow}$ for $\psi_t$. Since $V^{cs}$ is $C^\infty$, the center-stable holonomy is $C^\infty$. Moreover $E^u_{slow}\subset V^u$ is holonomy invariant (e.g. by Lemma 4 of \cite{FeresKatok1990} and cf. Lemma 5.5 \cite{CNS2018}). Since $\phi_t$ has a dominated splitting on $W^{u}$ and is $C^1$-close to $\psi_t$, the latter flow also admits a dominated splitting of $V^{u}\subset W^{cu}$ and $E^u_{slow}\subset V^{u}$ is also continuous. In what follows let $\psi_t^{\rho_0}$ denote the flow on $(X\times \partial X)/\what{\rho_0}$ given by  pulling back $\psi_t$ under $q^{-1}$. Similarly, let $\psi_t^{\rho}$ denote the flow on  $(X\times \partial X)/\what{\rho}$ given by pulling back $\psi_t^{\rho_0}$ under $f$.
	
	It is easier to understand the center-stable holonomy using $\psi_t^{\rho}$ instead of $\psi_t$. On $(X\times\partial X)/\what{\rho}$, the center-stable leaves are quotients of leaves of form $X\times \{b\}$, for $b\in \partial X$. The center-stable holonomy on the cover $X\times \partial X$ of $(X\times\partial X)/\what{\rho}$ is the map preserving the second component in $\partial X$. There is also a smooth map from each stable leave in $X\times \partial X$ into $\partial X$ by projecting to the second factor. This map is invariant under center-stable holonomy and $\psi_t^{\rho}$. It follows that $df^{-1}\of q(E^u_{slow})$ projects to a well-defined and smooth distribution on $\partial X$. Consequently, we get a distribution on $\partial X$ that is $C^1$-close to $E$ and invariant under $\rho(\Gamma)$. Inverses of projections from the boundary $\partial X$ onto unstable leaves maps $E$ to a sub-distribution of $E^u$ that is $C^1$-close to $df^{-1}\of q(E^u_{slow})$ and is invariant under the flow $\psi_t^{\rho}$. Thus these two distributions coincide. It follows that $df^{-1}\of q(E^u_{slow})$ projects to $E$ under the projection onto the second factor $\partial X$. Since $E$ is $C^\infty$ and is $C^1$-close to $E_0$, we have that $E^u_{slow}$ is horizontal and generic everywhere. By Theorem \ref{thm:Mitchell}, we therefore have a tangent cone at each point of the leaves of $V^{cs}$ and this is a perturbation of the quaternionic or octonionic Heisenberg group by Corollary \ref{cor:uniqueclass}. The corresponding nilpotent group is asymmetric by Corollary \ref{cor:stable-asymmetry}. Finally by Proposition \ref{prop:eval_products}, the Lyapunov exponents all have ratio 2.
	
	By \cite[Theorem 3.6]{Butler:17}, the flow $\psi_t$ is $C^\infty$ orbit equivalent to the flow $\phi_t$ by a diffeomorphism $G:SM\to SM$. We lift $G$ to the universal covers and conjugate by $\what{q}$ to obtain a diffeomorphism $\what{G}:X\times \partial X \to X\times \partial X$ which intertwines $(q^{-1})^*\phi_t$ and $\psi_t^\rho$ as well as their corresponding center-stable foliations whose leaves are of the form $X\times \{b\}$ for $b\in \partial X$. Thus $\what{G}$ induces a well-defined diffeomorphism from $\partial X$ to $\partial X$ that intertwines the $\rho_0$ and $\rho$ actions. In other words, the perturbed $\Gamma$-action is smoothly conjugate to the original $\Gamma$-action.

\end{proof}

\section{Local Hyperbolic Rank Rigidity}\label{sec:hyprank}

In this section we prove the local rigidity results of Theorems \ref{thm:local-hyp}, \ref{thm:complexcase} and \ref{thm:boundary_rigid}. We first show in Section \ref{sec:holonomy} that perturbations of locally symmetric metrics with higher hyperbolic rank have the same hyperbolic rank as the locally symmetric metric. This is a key step for Theorem \ref{thm:local-hyp}, whose proof we provide in Section \ref{subsec:proof-local-hyp}.

We recall some facts from \cite{CNS2018} about hyperbolic rank here. 
\begin{lemma}\label{lem:facts}
	Suppose $M$ is a closed Riemannian manifold with sectional curvatures $0>\kappa \ge -1$ and of hyperbolic rank $k$. Let $\what{\mc{E}}(v)\subset v^\perp$ be the subspace consisting of initial vectors $w$ of unstable Jacobi fields of the form $t\mapsto e^t\|_{\varphi^t v}w$.
\begin{enumerate}
	\item $\what{\mc{E}}(v)$ is of dimension at least $k$ for every $v$ --- by definition.
	\item The set $\what{\mc{R}}\subset SM$ where $\what{\mc{E}}(v)$ is of constant dimension $k$ is open and dense --- by Lemma 2.4 of \cite{CNS2018}.
	\item The map $v\mapsto \what{\mc{E}}(v)$ is $C^\infty$ on $\what{\mc{R}}$ --- by Proposition 3.3 of \cite{CNS2018}.
	\item The lift $E_1^u(v)$ of $\what{\mc{E}}(v)$ to $ T_vW^u(v)=E^u(v)\subset TSM$ is a distribution on $\what{\mc{R}}$ which is uniformly $C^\infty$ along $W^u$ restricted to $\what{\mc{R}}$ which consists precisely of the Lyapunov spaces in $E^u$ of exponent $1$ --- by Lemma 4.2 of \cite{CNS2018}. (We will sometimes refer to the $E^u_1(v)$ space as $E^u_{fast}(v)$.) 
	\item Denote by $\what{\mc{E}}(v)^\perp$ the orthogonal complement of $\what{\mc{E}}(v)$ within $v^\perp$ with respect to the Sasaki metric. The lift $E_{<1}^u$ of $\what{\mc{E}}^\perp$ to $E^u$ is the direct sum of unstable Lyapunov spaces of exponents different from $1$ --- by Lemma 4.3 of \cite{CNS2018}. (We will sometimes refer to the $E^u_{<1}(v)$ space as $E^u_{slow}(v)$.) 
\end{enumerate} 

\end{lemma}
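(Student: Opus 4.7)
My plan is straightforward: each of the five items is already established in the authors' prior paper \cite{CNS2018}, so the ``proof'' here should be a short pointer-plus-sketch, walking through why each statement is the one we need and where it was shown. I would organize the write-up by the five bullets of the lemma in order, with item (1) being essentially by definition and the remaining items invoking a specific result already cited inline.

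For (1) I would simply unpack the definition of higher hyperbolic rank $k$: along every geodesic $c(t) = \varphi^t v$, higher hyperbolic rank guarantees at least $k$ linearly independent perpendicular unstable Jacobi fields of the specific form $t \mapsto e^{t}\|_{\varphi^t v}w$, so $\dim \what{\mc E}(v) \ge k$. For (2) I would point out that $v \mapsto \dim \what{\mc E}(v)$ is lower semicontinuous by continuous dependence of Jacobi fields on initial data, hence the minimum value $k$ is attained on an open set; invariance of this set under the geodesic flow and topological transitivity of $\varphi^t$ (from negative curvature) give density, as spelled out in Lemma 2.4 of \cite{CNS2018}.

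For (3) I would describe $\what{\mc E}(v)$ as the $1$-eigenspace of the symmetric stable Riccati operator $U^+(v)$ acting on $v^\perp$, which depends smoothly on $v$ since it is the limit of the second fundamental forms of stable horospheres; on $\what{\mc R}$ the multiplicity of the eigenvalue $1$ is locally constant, so its eigenspace varies smoothly by the usual analytic-perturbation argument, which is Proposition 3.3 of \cite{CNS2018}. Items (4) and (5) then follow because, under the canonical identification of $E^u(v)$ with $v^\perp$ via stable Jacobi fields, the Lyapunov spaces of $d\varphi_t|_{E^u}$ correspond to eigenspaces of $U^+(v)$; the exponent $1$ is the largest possible under the bound $\kappa \ge -1$, matches $\what{\mc E}$, and Sasaki-orthogonality of the complement comes from symmetry of $U^+(v)$, giving Lemmas 4.2 and 4.3 of \cite{CNS2018}.

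The main subtlety, which is why these results needed to be developed in \cite{CNS2018} rather than quoted from classical references, is the uniform $C^\infty$ regularity along $W^u$ in (4): Pesin theory only gives measurable Lyapunov splittings in general, and smoothness is available here precisely because $E^u_1$ is cut out by a sharp eigenvalue condition (the extremal value $1$ compatible with $\kappa \ge -1$) which is a robust spectral condition on a smoothly varying symmetric operator. Since all of this has already been carried out carefully, my role in this lemma is just to collect the statements for convenient reference later in Section \ref{sec:hyprank}.
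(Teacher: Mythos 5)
Your overall strategy matches the paper's exactly: this lemma is a convenience collection of facts established in \cite{CNS2018}, and the paper itself offers no proof beyond the inline citations, so a pointer-plus-sketch is precisely what is called for. However, two points in your sketch are mathematically off, and since you offer them as explanations they should be corrected.

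First, in your discussion of item (2), the dimension $v\mapsto \dim\what{\mc E}(v)$ is \emph{upper} semicontinuous, not lower. The defining condition on $w\in\what{\mc E}(v)$ is a closed condition (an intersection over all $t\geq 0$ of linear constraints depending continuously on $v$), so $\limsup_{v\to v_0}\what{\mc E}(v)\subseteq\what{\mc E}(v_0)$. It is upper semicontinuity that, combined with the lower bound $\dim\what{\mc E}(v)\geq k$ from (1), makes $\{v:\dim\what{\mc E}(v)=k\}$ open; lower semicontinuity would instead make that set closed and would not give openness.

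Second, your explanation of item (3) via the Riccati operator and second fundamental forms of horospheres does not establish the transverse $C^\infty$ regularity that (3) asserts. Horospheres and the unstable Riccati solution are only H\"older in $v$ transversally (they are smooth along unstable leaves, which is the content of item (4), not item (3)). The correct source of smoothness is the Jacobi curvature operator $R(\cdot,v)v$ on $v^\perp$, which is genuinely $C^\infty$ in $v$ because it is built from the Riemannian curvature tensor. On $\what{\mc R}$, $\what{\mc E}(v)$ coincides with its $(-1)$-eigenspace, of locally constant multiplicity, and smoothness of the eigenspace follows. (Relatedly, your attribution of $U^+$ to ``stable'' horospheres is backwards: a Jacobi field of the form $e^t\|_t w$ has $J'(0)=J(0)$, so $w$ is a $+1$-eigenvector of the \emph{unstable} Riccati operator.) These corrections do not change your recommended structure for the write-up, but as stated they would mislead a reader about why the regularity in (3) is true.
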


We remark that $E_{<1}^u$ is uniformly $C^\infty$ along $W^u$ restricted to $\what{\mc{R}}$ since it is orthogonal to $E^u_1$ in $E^u$.

\subsection{Holonomy Groups of Perturbations and Hyperbolic Rank} \label{sec:holonomy}
The goal of this subsection is to prove the following proposition and its Corollary \ref{cor:perturb-rank}.

\begin{proposition}\label{prop:perturb-rank}
Let $(M, g_0)$ be a closed complex, quaternionic or octonionic hyperbolic locally symmetric manifold. Then there is an open $C^2$ neighborhood $U$ of $g_0$ such that for any $g\in U$, if $(M,g)$ has higher hyperbolic rank and $\kappa_g\ge -1$ then $(M,g)$ has hyperbolic rank at least $1$, $3$ or $7$ respectively.
\end{proposition}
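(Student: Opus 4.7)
The plan is to combine three ingredients: stability of the dominated splitting on the unstable bundle, the pinching $\kappa_g\geq -1$ which caps Lyapunov exponents along $E^u$ at $1$, and the structural stability of the Brin--Pesin asymptotic holonomy group established elsewhere in this paper. Throughout, by shrinking $U$ we may assume $\kappa_g<0$ uniformly on $M$ for every $g\in U$, by $C^2$-continuity of curvature; so the flow remains Anosov and the hyperbolic machinery applies.

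For $g_0$, the unstable bundle of the geodesic flow admits the dominated splitting $E^u_{g_0}=E^u_{slow,g_0}\oplus E^u_{fast,g_0}$ with $\dim E^u_{fast,g_0}$ equal to $1$, $3$, or $7$ in the complex, quaternionic, or octonionic case respectively. Since $g\mapsto\varphi_t^g$ is $C^1$-continuous as $g$ varies in $C^2$, Proposition \ref{prop:dom-perturb} provides a $C^2$ neighborhood $U$ of $g_0$ on which the unstable bundle admits a dominated splitting $E^u_g=E_g\oplus F_g$ of the same index, with summands $C^0$-close to those of $g_0$; in particular $\dim F_g=\dim E^u_{fast,g_0}$.

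Next, the bound $\kappa_g\geq -1$ forces every Lyapunov exponent along $E^u_g$ to be at most $1$, and domination gives that every exponent in $E_g$ is strictly less than every exponent in $F_g$. Hence vectors in $E_g$ have exponent strictly less than $1$. By Lemma \ref{lem:facts}(4), the higher hyperbolic rank subspace $\what{\mc{E}}(v)$ lifts to the $+1$-Lyapunov subspace $E^u_1(v)\subset E^u_g$, so $E^u_1(v)\cap E_g(v)=0$ and therefore $E^u_1(v)\hookrightarrow F_g(v)$. The higher hyperbolic rank hypothesis guarantees $E^u_1(v)\neq 0$ at every $v\in SM$.

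The decisive third step is to promote the inclusion $E^u_1(v)\hookrightarrow F_g(v)$ to the equality $E^u_1(v)=F_g(v)$. For this I invoke the structural stability of the Brin--Pesin asymptotic holonomy group for frame flows promised in the introduction. In the symmetric case, the Brin--Pesin holonomy group is known to act transitively on the unit sphere of $E^u_{fast,g_0}(v)$. Structural stability allows us to shrink $U$ so that for every $g\in U$ the corresponding asymptotic holonomy group still acts transitively on the unit sphere of $F_g(v)$. The subspace $E^u_1(v)\subset F_g(v)$ is invariant under stable holonomy, using the same invariance as in Lemma 5.5 of \cite{CNS2018} combined with preservation of the Lyapunov decomposition inside the dominated fast bundle. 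Since $E^u_1(v)\neq 0$, transitivity then forces $E^u_1(v)=F_g(v)$, whence $\dim\what{\mc{E}}(v)=\dim F_g(v)\in\{1,3,7\}$ as required.

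The main obstacle is the third step: the structural stability of the Brin--Pesin asymptotic holonomy group and the continued invariance of $E^u_1$ under the perturbed holonomy. The first is the headline deformation theorem for asymptotic holonomy alluded to in the introduction; the second should follow from the $C^\infty$ regularity of $E^u_1$ along unstable leaves (Lemma \ref{lem:facts}(4)) together with the standard preservation of Lyapunov subspaces by stable holonomy inside a dominated splitting. Once those inputs are in place, the pinching and domination steps above close the proof.
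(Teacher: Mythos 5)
Your overall architecture — pinching plus domination to get $E^u_1(v)\subset F_g(v)$, then the Brin--Pesin holonomy to force equality — is genuinely different from the paper's proof, which argues by contradiction and compares the dimensions of $L_{\eps_n}$-orbits of hyperbolic rank $2$-frames $(v_n,w_n)$ directly against the dimension of the $H_v$-orbit in the symmetric case, never passing through the dominated splitting $F_g$ at all (see Lemmas \ref{lem:perturb_grow}, \ref{lem:preserve_rank}, \ref{lem:stabs_converge}).

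The gap in your proposal is the assertion that ``Structural stability allows us to shrink $U$ so that for every $g\in U$ the corresponding asymptotic holonomy group still acts transitively on the unit sphere of $F_g(v)$.'' Lemma \ref{lem:perturb_grow} does not deliver this: it only produces a conjugate copy $L_\eps=h_\eps H_v^0 h_\eps^{-1}$ inside $H_v^\eps$, and the $L_\eps$-invariant decomposition of $v^\perp$ is $h_\eps\big(\what{\mc{E}}_{g_0}(v)\big)\oplus h_\eps\big(\what{\mc{E}}_{g_0}(v)^\perp\big)$, which has no a priori reason to coincide with the projection of $E_g\oplus F_g$ to $v^\perp$. Indeed, nothing in the paper or your argument establishes that the Brin--Pesin holonomy (a limit of parallel transports, not the $W^{cs}$-foliation holonomy) preserves the dominated splitting $F_g(v)$, and the citation of Lemma 5.5 of \cite{CNS2018} is addressed to a different holonomy. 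What \emph{is} available is Lemma \ref{lem:preserve_rank}: $H_v^\eps$, hence $L_\eps$, preserves $\what{\mc{E}}(v)$. Your argument can be repaired by discarding $F_g(v)$ at this stage and instead observing that $\what{\mc{E}}(v)$ is a nonzero $L_\eps$-invariant subspace of $v^\perp$ whose dimension is bounded above by $\dim F_g(v)=\dim\what{\mc{E}}_{g_0}(v)$ (which you already have from pinching and the index of the dominated splitting); since the only nonzero proper $L_\eps$-invariant subspaces of $v^\perp$ are $h_\eps\big(\what{\mc{E}}_{g_0}(v)\big)$ and its complement, of dimensions $\dim\what{\mc{E}}_{g_0}(v)$ and strictly larger, the dimension bound forces $\what{\mc{E}}(v)=h_\eps\big(\what{\mc{E}}_{g_0}(v)\big)$, giving the desired rank. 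That dimension-and-irreducibility step replaces, and must replace, the unjustified transitivity claim; as written, the proposal does not close.
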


The proof of this proposition will rely on an analysis of the Brin-Pesin asymptotic holonomy group. Before giving the proof we will establish some notation and several lemmas.

Let $FM\to SM$ denote the full othonormal frame bundle which is a (right) principal $\SO(n-1)$ bundle over $SM$. Let $F_2M$ denote the $2$-frame bundle over $SM$ which is a fiberwise quotient of $FM$ by $\SO(n-2)$ acting on the right, i.e. we have bundle quotient maps 
\[
\begin{tikzcd}
FM \arrow{rd}[swap]{\pi_{n\to 1}} \arrow[r, "\pi_{n\to 2}"]  & F_2M \rlap{$ =FM/\SO(n-2)$} \arrow[d, "\pi_{2\to 1}"] \\
& SM \rlap{$=FM/\SO(n-1).$}
\end{tikzcd}
\]

We endow $FM$ and $F_2M$ with the natural extensions of the Liouville measure by the Haar measures on the fiber. These measures are invariant under their respective frame flows. The $k$-frame flows commute with the corresponding right action of $\SO(n-1)$ on $F_kM$. Consider the ergodic component $\mc{E}(v,w)$ of the 2-frame flow containing $(v,w)$ with respect to this measure. Moreover, assume $(v,w)$ is a generic  2-frame in the ergodic component $\mc{E}(v,w)$. (This means that time averages over the orbit of $(v,w)$ coincide with the space average over $\mc{E}(v,w)$ with respect to its ergodic measure.) Without loss of generality we may assume $(v,w)$ is the first two vectors of a full frame $f$ which is also generic in its ergodic component $\mc{E}(f)\subset FM$ for the full frame flow. Lemmas 5.1 (see the formulation of the ergodic component $Q(\omega)$ in the proof) and 5.2 of Brin \cite{Brin82} show that
\[
\mc{E}(f)=\overline{\cup_{t\in\R}\Phi_t(C(f))}
\]
where $C(f)$ is the smallest set saturated by entire leaves of $W^s_f,W^u_f$ and containing $f$. Moreover, there is a compact subgroup $B^\infty_f< \SO(n-1)$, called the {\em ergodic component group}, satisfying
\[
\mc{E}(f)\cap \pi^{-1}_{n\to 1}(v)=f\cdot B^\infty_f.
\]
This immediately implies that,
\[
\mc{E}(v,w)\cap \pi^{-1}_{2\to 1}(v)=\pi_{n\to 2}(\mc{E}(f))\cap \pi^{-1}_{2\to 1}(v)=\pi_{n\to 2}(f\cdot B^\infty_f)\cap \pi^{-1}_{2\to 1}(v)=f\cdot B^\infty_f \SO(n-2).
\]
It is clear that if $f'=f\cdot h$ for $h\in \SO(n-1)$ then $B^\infty_{f'} = h^{-1}B^\infty_fh$. And by definition if $f$ and $f'$ are in the same ergodic component, then $B^\infty_f=B^\infty_{f'}$. We call $B^\infty_f$ the {\em (Brin-Pesin) asymptotic holonomy group}, and denote its conjugacy class in $SO(n-1)$ by $B^\infty$.

Suppose $v\in SM$ and $v'\in W^s_{\phi_t}(v)$. Fix $k\in \set{2,\dots,n}$ and let $p\left(v, v^{\prime}\right)$ be the map from the fiber of the $k$-frame bundle, $F_{k} M$, over $v$ to the fiber over $v^{\prime}$ that takes each frame $f$ to $p\left(v, v^{\prime}\right)(f)=\pi^{-1}_{k\to 1}\left(v^{\prime}\right) \cap W_{\Phi_t}^{s}(f).$ The map $p\left(v, v^{\prime}\right)$ corresponds
to a unique isometry between $v^{\perp}$ and $v^{\prime \perp}$ which commutes with the right action of $\SO(n-1)$. We will mainly consider the case of the map for $k=2$. After lifting to the universal cover, one can think of $p\left(v, v^{\prime}\right)(f)$ as the result of ``parallel transporting'' $f$ along $\gamma_{v}(t)$ out to the boundary at infinity of $\tilde{M}$ and then back to $v^{\prime}$ along $\gamma_{v^{\prime}}(t)$. 

Similarly, when $v^{\prime}$ and $v$ belong to the same leaf of $W_{\phi_t}^{u}$, there is a map $p(v,v')(f)=\pi^{-1}_{k\to 1}\left(v^{\prime}\right) \cap W_{\Phi_t}^{u}(f)$. Following Brin (see Definition 4.4 of \cite{Brin82}) we define the transitivity group at $v$ as follows:

\begin{definition}\label{def:transitivity_group}
	Given any sequence $\sigma=\left\{v_{0}, v_{1}, \ldots, v_{k}\right\}$ with $v_{0}=v, v_{k}=\phi_{T}(v)$ for some $T\in \R$ such that each pair $\left\{v_{i}, v_{i+1}\right\}$ lies on the same leaf of $W_{\phi_t}^{s}$ or $W_{\phi_t}^{u}$ we have an isometry of $v^{\perp}$ given by
\[
I(\sigma,T)=\Phi_{-T} \circ \prod_{i=0}^{r} p\left(v_{i}, v_{i+1}\right)
\]
The closure of the group generated by all such isometries with $v_0=v$ is denoted by $H_{v}<\Isom(v^\perp)$ and is called the {\em transitivity group}. 
\end{definition}

After picking an orthonormal full frame $f$ with $v$ as its first vector, $H_{v}$ can be identified as a subgroup of $\SO(n-1)$. This identification depends on the choice of frame $f$. Different choices give conjugate identifications. It is well-known that the ergodic component group $B^\infty_f<\SO(n-1)$ at a frame $f$ over $v$ coincides with the transitivity group $H_v$ under this identification (see the remark before Lemma 5.2, or Remark 2 of \cite{Brin75a} and Proposition 2 of \cite{Brin75b}). For clarity, we provide the details of this equivalence below.
\begin{lemma}\label{lem:erg=hol}
 For each $k\in\set{2,\dots,n}$, the ergodic component group coincides with the transitivity group at each $v\in SM$ and $f\in \pi_{k\to 1}^{-1}(v)$, i.e. $B^\infty_f=H_v$, using the identification provided by the frame $f$.

\end{lemma}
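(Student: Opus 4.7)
The plan is to establish equality as closed subgroups of $\SO(n-1)$ by showing both inclusions $H_v\subseteq B^\infty_f$ and $B^\infty_f\subseteq H_v$. I will argue first for the full frame bundle (the case $k=n$); the cases $2\le k<n$ then follow by applying the smooth equivariant projection $\pi_{n\to k}:FM\to F_kM$, which carries frame flows, stable and unstable leaves, ergodic components, and leafwise holonomies on $FM$ onto their images on $F_kM$. The statement then descends after identifying frames in $F_kM$ via extension to a full frame.

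For $H_v\subseteq B^\infty_f$, take a generator $I(\sigma,T)$ of $H_v$ with $\sigma=\{v_0=v,v_1,\dots,v_m=\phi_T(v)\}$. Starting with $f_0=f$, I lift the chain by setting $f_{i+1}=p(v_i,v_{i+1})(f_i)$. By the defining property of the leafwise holonomy $p$, consecutive $f_i,f_{i+1}$ lie on a common leaf of $W^s_{\Phi_t}$ or $W^u_{\Phi_t}$ in $FM$; hence every $f_i\in C(f)$ and in particular $\Phi_{-T}(f_m)\in\Phi_{-T}(C(f))\subseteq\mc{E}(f)$ is a frame over $v$. Expressed in the trivialization given by $f$, it equals $f\cdot h$ with $h\in\SO(n-1)$ representing $I(\sigma,T)$, so $h\in B^\infty_f$. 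Closedness of $B^\infty_f$ then yields $H_v\subseteq B^\infty_f$.

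For the reverse inclusion, fix $h\in B^\infty_f$; then $f\cdot h\in\mc{E}(f)=\overline{\bigcup_{t\in\R}\Phi_t(C(f))}$, so there exist sequences $t_n\in\R$ and $g_n\in C(f)$ with $\Phi_{t_n}(g_n)\to f\cdot h$, hence $\phi_{t_n}(\pi_{n\to 1}(g_n))\to v$. Each $g_n$ is obtained from $f$ by a finite chain $\sigma_n$ of stable and unstable jumps. Using the local product structure of the Anosov foliation at $v$, I augment $\sigma_n$ by a vanishingly short stable or unstable segment from $\pi_{n\to 1}(g_n)$ onto $\phi_{T_n}(v)$ for some $T_n\to 0$, producing a genuine loop $\sigma_n'$ in the sense of Definition \ref{def:transitivity_group} together with a corresponding element $h_n=I(\sigma_n',T_n)\in H_v$. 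Uniform continuity of $p(\cdot,\cdot)$ and of $\Phi$ on compact sets then gives $f\cdot h_n\to f\cdot h$, hence $h_n\to h$, and closedness of $H_v$ forces $h\in H_v$.

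The main obstacle is the closing step in this second inclusion: the basepoint $\pi_{n\to 1}(g_n)$ need not lie on the flow orbit of $v$, so the chain producing $g_n$ is not itself a valid $\sigma$ in Definition \ref{def:transitivity_group}. Appending a short stable or unstable segment terminating on $\phi_{T_n}(v)$ requires the local product structure, and controlling the extra holonomy contribution so it tends to the identity requires uniform continuity of the leafwise holonomies in a neighborhood of $v$. A diagonal approximation is additionally needed when $g_n$ itself only lies in the closure of $\bigcup_t\Phi_t(C(f))$: one first approximates $g_n$ by elements strictly in $C(f)$ and then applies the loop-closing construction, absorbing the accumulated approximations through closedness of $H_v$. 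This mirrors the arguments behind Proposition 2 of \cite{Brin75b} and the discussion preceding Lemma 5.2 of \cite{Brin82}.
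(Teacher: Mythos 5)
Your approach is genuinely different from the paper's. For $B^\infty_f\subseteq H_v$, the paper uses a functional-separation argument: it extends any continuous function on $\pi_{k\to1}^{-1}(v)/H_v$ via the leafwise holonomies $p(\cdot,\cdot)$ to a continuous flow-invariant function on $F_kM$, and then combines the fact that such functions are constant on ergodic components with the fact that continuous functions on $\pi_{k\to1}^{-1}(v)/H_v$ separate points. For the reverse inclusion the paper just cites Remark 2 of \cite{Brin75a}. Your first inclusion $H_v\subseteq B^\infty_f$ (lift the chain $\sigma$ into $C(f)$, conclude $\Phi_{-T}(f_m)\in\mc{E}(f)\cap\pi_{n\to1}^{-1}(v)=f\cdot B^\infty_f$) is correct and is essentially Brin's direct argument.

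The second inclusion, however, has a concrete error in the loop-closing step. You have $\Phi_{t_n}(g_n)\to f\cdot h$, hence $\phi_{t_n}(\pi_{n\to1}(g_n))\to v$, and you propose to append a ``vanishingly short stable or unstable segment from $\pi_{n\to1}(g_n)$ onto $\phi_{T_n}(v)$ for some $T_n\to 0$.'' But $\pi_{n\to1}(g_n)$ itself need not be anywhere near $v$ --- only its flow-image under $\phi_{t_n}$ is, and $t_n$ may be unbounded --- so there is no short bridge from $\pi_{n\to1}(g_n)$ to $\phi_{T_n}(v)$ with $T_n\to 0$, and the local product structure you invoke is not available where you are trying to use it. The construction can be repaired: use the product structure at $v$ to write $\phi_{t_n}(\pi_{n\to1}(g_n))$ as a short $u$--$s$ bridge to $\phi_{\tau_n}(v)$ with $\tau_n\to0$, flow the two bridging legs back by $\phi_{-t_n}$ and attach them to $\sigma_n$ to get a valid loop with $T_n=-t_n+\tau_n$ (which need not be small), and then exploit that the leafwise holonomies conjugate correctly under the flow, $p(\phi_s a,\phi_s b)=\Phi_s\circ p(a,b)\circ\Phi_{-s}$, so that after composing with $\Phi_{-T_n}$ the unbounded flow times cancel and one is left with $\Phi_{-\tau_n}\circ p(z_n,\phi_{\tau_n}v)\circ p(w_n,z_n)$ applied to $\Phi_{t_n}(g_n)$, which does tend to $f\cdot h$. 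This flow-conjugation of the holonomy is the step that actually makes the estimate close, and it is missing from your write-up; as stated, ``uniform continuity of $p(\cdot,\cdot)$ in a neighborhood of $v$'' does not control a bridge based at $\pi_{n\to1}(g_n)$. Finally, your ``diagonal approximation'' remark is misdirected: $g_n$ is in $C(f)$ by construction (only the limit $f\cdot h$ lies merely in the closure), so no additional diagonal layer is needed beyond the approximating sequence you already have.
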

\begin{proof}
	Fix $v\in SM$, and consider any continuous function $\alpha$ on the homogeneous space $\pi_{k\to 1}^{-1}(v)/H_v$. Lift $\alpha$ to a function $\bar{\alpha}:\pi_{k\to 1}^{-1}(v)\to \R$. Using the maps $p(v,v')$ for $v\in SM$ we obtain a continuous extension $\what{\alpha}: F_kM\to \R$ given by $\what{\alpha}(f')=\bar{\alpha}(p(v',v)(f'))$ for any $f'\in \pi_{k\to 1}^{-1}(v')$. Observe that $\what{\alpha}$ is well-defined and invariant under both the action of $H_v$ and the frame flow. Since continuous functions on $\pi_{k\to 1}^{-1}(v)/H_v$ separate points, and continuous flow invariant functions on $F_kM$ are constant on ergodic components, we obtain a surjection from the space of ergodic components to $\pi_{k\to 1}^{-1}(v)/H_v$. Therefore, $B^\infty_f< H_v$ for any $f\in \pi_{k\to 1}^{-1}(v)$. On the other hand, as explained in Remark 2 of \cite{Brin75a}, we also have $H_v< B^{\infty}_f$.
\end{proof}

\begin{remark}\label{rem:Brinhyp}
Note that the entire formulation of the ergodic component groups, asymptotic holonomy groups and transitivity groups makes sense in the broader context of compact (isometric) group extensions of Anosov flows. Moreover the above lemma also holds (see \cite{Brin75b}).
\end{remark}

For the next lemma, recall that an $\eps$-net $F\subset X$ of a metric space $X$ is a subset whose points are pairwise separated with distance at least $\eps$, and such that every point $x\in X$ is at distance at most $\eps$ to a point of $F$.

\begin{lemma}\label{lem:group-net}
For any connected semisimple Lie subgroup $K<SO(n-1)$, let $F\subset K$ be a $\delta$-net of $K$ that generates a dense subgroup of $K$. There is a sufficiently small $\eps_0>0$ such that for any $\eps<\eps_0$ every $\eps$-perturbation of $F$ generates a subgroup of $SO(n-1)$ whose closure contains a conjugate $h_\eps K h_\eps^{-1}$ of $K$. Moreover, $h_\eps$ can be chosen so that $h_\eps\to e$ as $\eps\to 0$.
\end{lemma}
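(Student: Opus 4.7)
My plan is to show that $L_\eps := \overline{\langle F_\eps\rangle}$ contains a Lie subgroup close to $K$, then invoke the rigidity of semisimple subgroups to conjugate $K$ into $L_\eps$ by a small element. I will proceed in three steps.

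\emph{Step 1 (Dimension bound).} For $\eps$ sufficiently small, I will show $\dim L_\eps \geq \dim K$. Since $F$ is a $\delta$-net of $K$ and $F_\eps$ is an $\eps$-perturbation, every element of $K$ lies within distance $\delta+\eps$ of some element of $F_\eps \subseteq L_\eps$. The collection $\mathcal{S}$ of closed Lie subgroups of $\SO(n-1)$ of dimension strictly less than $\dim K$ is compact in the Chabauty topology, and the function $H \mapsto \sup_{k \in K} d(k, H)$ is strictly positive and lower semicontinuous on $\mathcal{S}$, hence bounded below by some $c > 0$. Taking $\delta + \eps < c$ rules out $L_\eps \in \mathcal{S}$ and forces $\dim L_\eps \geq \dim K$.

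\emph{Step 2 (Lie algebra capture).} Using the density of $\langle F \rangle$ in the connected group $K$, I will select words $w_1, \dots, w_m$ in $F \cup F^{-1}$ whose values in $K$ lie in a small neighborhood of $e$ and whose logarithms form a basis of $\mathfrak{k}$. The same words in $F_\eps \cup F_\eps^{-1}$ yield elements $w_j^\eps \in L_\eps$ close to $w_j$, hence close to $e$, and thus lying in the identity component $L_\eps^0$. By taking iterated commutators of these elements and applying the Baker--Campbell--Hausdorff formula, together with the semisimplicity identity $[\mathfrak{k}, \mathfrak{k}] = \mathfrak{k}$, I will show that the Lie algebra $\mathfrak{l}_\eps$ of $L_\eps^0$ contains a Lie subalgebra $\mathfrak{k}_\eps$ converging to $\mathfrak{k}$ in the Grassmannian of $\dim(K)$-dimensional subspaces of $\mathfrak{so}(n-1)$ as $\eps \to 0$.

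\emph{Step 3 (Rigidity and conjugation).} Since $\SO(n-1)$ is compact, the $\Ad$-orbit of $\mathfrak{k}$ in the Grassmannian is a closed smooth submanifold, with stabilizer the normalizer of $K$. Weil rigidity for semisimple Lie subalgebras implies that any Lie subalgebra of the same isomorphism type sufficiently close to $\mathfrak{k}$ lies on this orbit. Therefore $\mathfrak{k}_\eps = \Ad(h_\eps)\mathfrak{k}$ for some $h_\eps \in \SO(n-1)$ with $h_\eps \to e$, and the corresponding connected Lie subgroup $h_\eps K h_\eps^{-1}$ is contained in $L_\eps^0 \subseteq L_\eps$.

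The main obstacle will be Step 2: lifting a discrete collection of elements near the identity to the Lie algebra structure of $L_\eps^0$. A closed subgroup of $\SO(n-1)$ may contain elements arbitrarily close to $e$ yet have trivial Lie algebra (for example, large cyclic subgroups). The dimension bound from Step 1 together with the semisimplicity of $\mathfrak{k}$---which forces the Lie algebra to be regenerated by iterated commutators and thus to be resilient to small perturbations of the generators---are essential. Indeed, the statement genuinely fails without semisimplicity: a $\delta$-net generating a dense subgroup of a torus can be $\eps$-perturbed to generate a finite cyclic subgroup.
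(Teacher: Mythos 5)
There is a genuine gap in Step 1, and Step 2 is where you correctly locate the real difficulty but leave it unresolved. The Chabauty compactness claim in Step 1 is false: the space of all closed subgroups of $\SO(n-1)$ is compact in the Chabauty topology, but the subset $\mathcal{S}$ of those with dimension $< \dim K$ is not closed in it. For instance, the circles $T_m=\{(e^{i\theta},e^{im\theta})\}\subset T^2\subset \SO(n-1)$ are one-dimensional closed subgroups whose Hausdorff limit is all of $T^2$, so dimension can jump up in the limit. Consequently the infimum of $H\mapsto\sup_{k\in K}d(k,H)$ over $\mathcal{S}$ can be $0$ even though the function is strictly positive on $\mathcal{S}$, and the desired constant $c>0$ need not exist. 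Worse, the conclusion $\dim L_\eps\ge\dim K$ is simply false without semisimplicity (as your own closing example with a torus shows), so any valid argument for it must already invoke semisimplicity, which Step 1 as written does not. The same dense circles $T_m$ expose the difficulty in Step 2: knowing $w_j^\eps\in L_\eps^0$ is near $e$ in $\SO(n-1)$ does not give you control near $e$ along $L_\eps^0$, so $\log_{\SO(n-1)} w_j^\eps$ need not lie in, or even near, $\mathfrak{l}_\eps$. Iterated commutators produce more elements of $L_\eps^0$ near $e$, but the same obstruction recurs at each stage, and the sketch supplies no mechanism to extract from them a subalgebra of $\mathfrak{l}_\eps$ close to $\mathfrak{k}$.

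The paper avoids both problems by arguing by contradiction with Hausdorff limits instead of trying to control $L_\eps$ directly. It passes to a Hausdorff limit $H=\lim_n H_n$ of the identity components $H_n=\overline{\langle F_{\eps_n}\rangle}^0$; this never requires the individual $H_n$ to have any dimension bound or Lie-algebra approximants. Semisimplicity is used through the Levi decomposition $H_n=H_n^{ss}H_n^a$ (a direct product for compact connected $H_n$), which behaves well under Hausdorff limits and forces $K\subseteq H^{ss}$ because $K$, being semisimple, projects trivially to the abelian torus factor $H^a$. Finally, rather than Weil rigidity of subalgebras, the paper uses the cruder but sufficient fact that $\SO(n-1)$ has only finitely many conjugacy classes of connected semisimple subgroups, so $H_n^{ss}$ eventually lies in the conjugacy class of $H^{ss}$ and hence contains a conjugate of $K$. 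To rescue your route you would essentially have to import the Levi structure of $L_\eps^0$ (or a Jordan-type finiteness theorem) to justify Steps 1 and 2, at which point you are reconstructing the paper's argument; granting Steps 1 and 2, your Step 3 via Weil rigidity is a perfectly good substitute for the paper's finitely-many-conjugacy-classes step.
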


\begin{proof}
First suppose there is no such $\eps_0$, then there is a sequence $\eps_n\to 0$ and an $\eps_n$-perturbation $F_{\eps_n}\subset \SO(n-1)$ of $F$, with respect to the Hausdorff metric on closed subsets, such that the closure of the group generated by $F_{\eps_n}$ does not contain a conjugate of $K$. Let $H_n=\overline{\inner{F_{\eps_n}}}^0$ be the identity component of the closure of the group generated by $F_{\eps_n}$. Consider the Levi decomposition of the connected Lie group $H_n$, namely $H_n=H_n^{ss} H_n^{a}$ where $H_n^{ss}$ is maximal semi-simple and $H_n^{a}$ is a closed connected solvable normal subgroup (see Theorem 3.18.13 of \cite{Varadarajan} and 1.4.3 of \cite{Gorbatsevich:94}). Since $H_n^a$ is compact connected and solvable, it is an abelian torus. The automorphism group of this torus is discrete. Since $H_n$ is connected the image of the conjugation action $H_n\to \Aut(H_n^a)$ is trivial. Note also that $H^{ss}\cap H^a$ is semisimple and therefore trivial. This implies that $H_n$ is a group product of $H_n^{ss}$ and $H_n^a$. 

Since the topology induced by the Hausdorff metric on compact subsets of a compact space is compact, we may pass to a subsequence such that $H_n$ converges to $H=\lim_{n} H_n$. Let $H^{ss}=\lim_{n} H^{ss}_n$ and $H^a=\lim_{n}H^a_n$. The limit of abelian groups is abelian so $H^a$ is abelian. Hence $H=H^{ss} H^a$ since the limit of a product is a product. Since the generators converge, $H$ therefore contains $\overline{\inner{F}}^0$, but this is just $K$. The projection of $K$ to $H^a$ is trivial since $K$ is semisimple and $H^a$ is connected, and hence $H^{ss}$ contains $K$. 

There are only finitely many conjugacy classes of connected semi-simple subgroups of $\SO(n-1)$. For each $n$ the corresponding connected semi-simple group $H_n^{ss}$ must belong to one of these. Since $H^{ss}$ contains $K$, the groups $H^{ss}_n$ must eventually belong to the same conjugacy class, also containing $H^{ss}$. Therefore for all sufficiently large $n$, the group $H^{ss}_n$ must contain a conjugate of $K$. This contradiction establishes the existence of $\eps_0$.

Moreover, we have shown that for every $\eps<\eps_0$, if $F_\eps$ is an $\eps$-perturbation of $F$ generating a subgroup with closure $H_{\eps}<\SO(n-1)$, then  $H_{\eps}^{ss}={h_\eps} Q_\eps h_\eps^{-1}$ for a compact semisimple group $Q_\eps$ containing $K$ and belonging to one of finitely many conjugacy classes. In particular $h_\eps K h_\eps^{-1}< H_\eps^{ss}$. Moreover $Q_\eps$ is close to $H^{ss}_\eps$ as $\eps\to 0$, and so we may assume we have chosen $h_\eps$ so that  $h_\eps\to e$ as $\eps\to 0$.
\end{proof}

Given two Riemannian metrics $g, g' \in \operatorname{Sym}\left(T^{*}M \otimes T^{*}M\right)$ there is a unique positive-definite vector bundle endomorphism $A$ of $TM$ such that $g'_x(v, w)=g_x(A_x v, w)$ at each point $x\in M$. Let $q:TM\to TM$ be the bundle endomorphism $q:=A^{-\frac12}$. If $f$ is a g-orthonormal frame then $q(f)$ is an $g'$-orthonormal frame. Thus $q$ defines a smooth $\SO(n)$-equivariant map of $g$-orthonormal frames to $g'$-orthonormal frames. In other words $q$ induces a principal $\SO(n)$-bundle isomorphism $q^F:(FM)_g\to (FM)_{g'}$ (see Proposition 1 of \cite{Bourguignon-Gauduchon}).

Now consider a $C^2$ perturbation $g_\eps$ through $C^\infty$ metrics of a negatively curved metric $g_0$. As indicated above, there is a vector bundle isomorphism $q_\eps:TM\to TM$ which restricts to a fiber bundle isomorphism $q_\eps:(SM)_\eps\to (SM)_0$ between the unit tangent bundle $(SM)_\eps\to M$ for the $g_\eps$ metric and $(SM)_0\to M$ for the metric $g_0$. Moreover, we have a principal $\SO(n)$-bundle isomorphism $q_\eps^F:(FM)_\eps\to (FM)_0$ between the orthonormal frame bundles $(FM)_\eps\to (SM)_\eps$ and $(FM)_0\to (SM)_0$ which fibers over $q_\eps$. We may conjugate the geodesic flow $\phi^\eps_t$ for $g_\eps$ by $q_\eps$ to obtain a flow $\what{\phi}^\eps_t:(SM)_0\to (SM)_0$. Similarly, we may conjugate the frame flow $\Phi^\eps_t$ for $g_\eps$ by $q_\eps^F$ to obtain a flow $\what\Phi^\eps_t:FM\to FM$ which extends $\what\phi^\eps_t$. 

Using Remark \ref{rem:Brinhyp}, we may consider the transitivity group $H_v^\eps<\Isom(v^\perp)$ for the flow $\what{\Phi}^\eps_t$. 
Also, since the map $q_\eps^F$ is equivariant with respect to the right $\SO(n)$ action, the asymptotic holonomy group for $\Phi^\eps_t$ is the same as $B^\infty_\eps$ up to conjugacy.

\begin{lemma}\label{lem:perturb_grow}
Let $g_\eps$ be a $C^2$ perturbation through $C^\infty$ metrics of a negatively curved metric $g_0$. Let $H_v^\eps$ be the transitivity group of a vector $v$ for the metric $g_\eps$, and suppose transitivity group $H_v^0$ for $g_0$ is connected and semisimple. Then for all sufficiently small $\eps$, $H_v^\eps$ contains a conjugate of $H_v^0$ which limits to $H_v^0$ as $\eps\to 0$.
\end{lemma}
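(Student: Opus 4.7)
The plan is to produce, for each small $\eps$, a finite set $F_\eps \subset H_v^\eps$ which is a small perturbation of a fixed $\delta$-net $F$ of $H_v^0$, and then invoke Lemma \ref{lem:group-net} directly. Since $H_v^0$ is compact and, by definition, equals the closure of the group generated by the transitivity isometries $I(\sigma, T)$ of $\phi_t^0$, I would first choose a finite $\delta$-net $F = \{I(\sigma_1, T_1), \ldots, I(\sigma_m, T_m)\}$ of $H_v^0$ consisting of such isometries, enlarging $F$ if necessary so that it generates a dense subgroup of $H_v^0$. Here each $\sigma_i = \{v_0 = v, v_1, \ldots, v_{k_i} = \phi_{T_i}^0(v)\}$ is a finite sequence of stable/unstable transitions for $\phi_t^0$.

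Next I would construct, for each $i$, a perturbed sequence $\sigma_i^\eps$ and time $T_i^\eps$ for the conjugated flow $\what\Phi^\eps_t$ on $(FM)_0$ such that $h_i^\eps := I(\sigma_i^\eps, T_i^\eps) \in H_v^\eps$ converges to $I(\sigma_i, T_i)$ as $\eps \to 0$. The $C^2$ closeness of $g_\eps$ to $g_0$ makes $\what\phi^\eps_t$ $C^1$-close to the Anosov flow $\phi^0_t$, so by persistence of the hyperbolic splitting the stable and unstable foliations $W^{s,\eps}$ and $W^{u,\eps}$ vary continuously in the $C^0$-topology on compact sets. Replacing each stable (respectively unstable) arc $v_j \to v_{j+1}$ of $\sigma_i$ by one along the corresponding perturbed leaf yields a sequence ending at some $v_{k_i}^\eps$ close to $\phi_{T_i}^0(v)$ and hence close to $\what\phi^\eps_{T_i}(v)$. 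To force termination exactly on the $\what\phi^\eps$-orbit of $v$, I would exploit local product structure for the perturbed Anosov flow: for $v_{k_i}^\eps$ sufficiently close to $\what\phi^\eps_{T_i}(v)$ there is a unique nearby intersection $w^\eps \in W^{s,\eps}_{\mathrm{loc}}(v_{k_i}^\eps) \cap W^{u,\eps}_{\mathrm{loc}}(\what\phi^\eps_{T_i^\eps}(v))$ for some $T_i^\eps$ close to $T_i$, and appending the two transitions $v_{k_i}^\eps \to w^\eps \to \what\phi^\eps_{T_i^\eps}(v)$ produces an admissible $\sigma_i^\eps$ whose two added frame holonomies are close to the identity.

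With $F_\eps = \{h_1^\eps, \ldots, h_m^\eps\} \subset H_v^\eps$, continuity of the frame-bundle holonomies $p_\eps(\cdot,\cdot)$ in the foliations (and hence in $\eps$), together with the convergence of the frame flow $\what\Phi^\eps_{-T_i^\eps}$, guarantees that $F_\eps$ is an $\eps'(\eps)$-perturbation of $F$ in $\Isom(v^\perp)$ with $\eps'(\eps) \to 0$. Since $H_v^\eps$ is closed, $\overline{\langle F_\eps \rangle} \subset H_v^\eps$, and Lemma \ref{lem:group-net} applied with $K = H_v^0$ then furnishes $h_\eps \in \SO(n-1)$ with $h_\eps \to e$ and $h_\eps H_v^0 h_\eps^{-1} \subset \overline{\langle F_\eps \rangle} \subset H_v^\eps$, as desired.

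The main obstacle I anticipate is twofold: first, verifying that the stable/unstable holonomies on frames for the compact group extension $\what\Phi^\eps_t$ depend continuously on $\eps$, which requires transplanting persistence of hyperbolic foliations from the geodesic flow up to the frame bundle via the continuous family of bundle isomorphisms $q_\eps^F$; and second, executing the closing-up step uniformly in $\eps$ so that the two added holonomies genuinely tend to the identity. Both are expected to follow from standard uniform hyperbolicity arguments combined with uniform local product structure for the perturbed flow, but these are where the bulk of the careful estimates lie.
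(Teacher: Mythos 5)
Your proposal takes essentially the same route as the paper: both describe elements of $H_v^\eps$ via $u$-$s$ paths for $\what\Phi^\eps_t$, invoke the Permanence Theorem for $C^0$ (resp.\ $C^1$) continuity of the perturbed stable/unstable foliations in $\eps$, extract from this a small perturbation of a net in $H_v^0$ lying inside $H_v^\eps$, and then apply Lemma \ref{lem:group-net}. You additionally make explicit the closing-up step via local product structure (forcing the perturbed $u$-$s$ chain to end on the $\what\phi^\eps_t$-orbit of $v$), which the paper treats implicitly; this is a genuine and welcome clarification but not a different approach.
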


\begin{proof}

In order to show there is a relation between $H_v^\eps$ for different $\eps$, we use the description of the transitivity group. Namely, we may construct a net of elements in $H_v^\eps$ by following a sequence of paths along leaves of $W^u_{\what\Phi_t}$ and $W^s_{\what\Phi_t}$ starting from $v$ which lie in a compact set. By the Permanence Theorem \cite[Theorem 6.8]{HirschPughShub}, the unstable and stable manifolds for $\what\Phi^\eps_t$, as a compact extension of $\what\phi^\eps_t$ on $(SM)_0$, vary continuously in the $C^1$ topology under $C^2$ perturbations. As these manifolds converge on compacta, using sufficiently long $u-s$ paths, we obtain for any $\eps>0$ an $\eps$-net in $H_v^\eps$ that is $\eps$-close to an $\eps$-net in $H_v^0$.

By assumption, the transitivity group $H_v^0$ is connected and semisimple. Consequently, by Lemma \ref{lem:group-net}, the transitivity group $H_v^\eps$ contains a conjugate of $H_v^0$ for sufficiently small perturbation parameter $\eps>0$, and these subgroups converge to $H_v^0$ as $\eps\to 0$.
\end{proof}

The following lemma can be found in \cite{Constantine08}; we provide a short proof for convenience. We call a 2-frame $(u,w)$ \emph{higher hyperbolic rank} if the sectional curvatures satisfy  $\kappa(\phi_t(u)\wedge\|_t w)\equiv -1$ for all $t\in \R$ where $\|_t$ is parallel translation along $\phi_t(u)$.

\begin{lemma}\label{lem:preserve_rank}
For a flow-invariant, full Liouville-measure subset of $v\in SM$, $H_v$ sends higher hyperbolic rank 2-frames over $v$ to higher hyperbolic rank 2-frames over $v$.
\end{lemma}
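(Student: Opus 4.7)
The plan is to identify the higher hyperbolic rank 2-frames over $v$ with a distinguished geometric subspace of $v^\perp$, and then to show that the action of $H_v$ preserves this subspace. The flow-invariant, full Liouville-measure subset will be $\what{\mc{R}}$ itself. Indeed, by Lemma \ref{lem:facts}(2), $\what{\mc{R}}$ is open, dense and $\phi_t$-invariant, and since the geodesic flow on a closed negatively curved manifold is ergodic with respect to the Liouville measure, $\what{\mc{R}}$ has full Liouville measure. By the very definition of $\what{\mc{E}}(v)$, a unit vector $w\in v^\perp$ yields a higher hyperbolic rank 2-frame $(v,w)$ if and only if $w\in \what{\mc{E}}(v)$. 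Hence the lemma reduces to showing that, for $v\in\what{\mc{R}}$, the transitivity group $H_v<\Isom(v^\perp)$ preserves the linear subspace $\what{\mc{E}}(v)$.

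By Definition \ref{def:transitivity_group}, $H_v$ is the closure of the group generated by compositions $I(\sigma,T)=\phi_{-T}\circ\prod_{i} p(v_i,v_{i+1})$, where $v_0=v$, $v_k=\phi_T(v)$, and each $\{v_i,v_{i+1}\}$ lies on a common stable or unstable leaf. So it is enough to check that each factor carries $\what{\mc{E}}$ at its base point to $\what{\mc{E}}$ at the image. The flow factor $\phi_{-T}$ is immediate: the defining identity $\kappa(\phi_s v\wedge\|_s w)\equiv -1$ for all $s\in\R$ is manifestly invariant under time translation of the underlying geodesic, so the set of higher hyperbolic rank 2-frames is $\phi_t$-invariant. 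The substantive work is to handle the stable and unstable holonomy factors $p(v_i,v_{i+1})$.

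The main obstacle is therefore showing that the stable and unstable holonomies preserve $\what{\mc{E}}$, equivalently (via Lemma \ref{lem:facts}(4)) that they preserve the fast unstable Lyapunov subspace $E^u_{1}=E^u_{fast}$. For this step I would invoke the Feres--Katok-style holonomy-invariance principle, namely that any flow-invariant subdistribution of $E^u$ which is $C^\infty$ along $W^u$ is automatically preserved by both the stable and the unstable holonomy (Lemma 4 of \cite{FeresKatok1990}; see also Lemma 5.5 of \cite{CNS2018}). The hypotheses of that principle are satisfied on $\what{\mc{R}}$ since $E^u_{fast}$ is uniformly $C^\infty$ along $W^u$ there by Lemma \ref{lem:facts}(4), and $E^u_{fast}$ is flow-invariant as a Lyapunov subbundle. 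Composing over the finitely many factors making up any $I(\sigma,T)$ and then passing to the closure in $\Isom(v^\perp)$ yields that all of $H_v$ preserves $\what{\mc{E}}(v)$. Consequently $H_v$ sends higher hyperbolic rank 2-frames over $v$ to higher hyperbolic rank 2-frames over $v$, as desired.
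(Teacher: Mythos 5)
Your reduction of the statement to ``$H_v$ preserves $\what{\mc{E}}(v)$'' is sensible, and the identification of higher hyperbolic rank 2-frames $(v,w)$ with $w\in\what{\mc{E}}(v)$ is correct (it uses $\kappa\ge -1$ and a comparison argument, which is part of the CNS machinery). However, the proposal takes a genuinely different route from the paper and has a real gap in the main step. The transitivity group $H_v$ is generated by the Brin--Pesin stable/unstable holonomies $p(v_i,v_{i+1})$ of the 2-frame flow $\Phi_t$ -- these are isometries of $v^\perp$ built from asymptotics of the parallel-transport cocycle. The Feres--Katok invariance principle (Lemma 4 of \cite{FeresKatok1990}, Lemma 5.5 of \cite{CNS2018}) pertains to a different holonomy: the stable holonomy of the derivative cocycle $d\phi_t$ acting on $E^u\subset TSM$, defined via the stable manifolds of the geodesic flow. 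These two cocycles (parallel transport on $v^\perp$ vs.\ the Jacobi-field derivative cocycle on $E^u$) are not the same, and their stable/unstable holonomies do not agree in general. Under the graph embedding $\imath:\what{\mc{E}}(v)\hookrightarrow E^u_1(v)$ they would agree precisely because on $E^u_1$ the derivative cocycle is $e^t$ times parallel transport -- but that is exactly the content of $\what{\mc{E}}$, so invoking it to transfer invariance back and forth is perilously close to assuming what is to be proved, and in any case requires an argument that the proposal does not supply.

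There are two further gaps. First, the smoothness needed by Feres--Katok is only established on $\what{\mc{R}}$, which is open, dense, flow-invariant, but not known to be saturated by full stable and unstable leaves; the su-paths defining $I(\sigma,T)$ may exit $\what{\mc{R}}$, so the hypotheses of Lemma 4 of \cite{FeresKatok1990} are not clearly checkable along the relevant paths. Second, Feres--Katok-type results are stated for stable (center-stable) holonomy of $E^u$; a separate argument is needed for the unstable factors $p(v_i,v_{i+1})$ with $v_{i+1}\in W^u(v_i)$, and the proposal does not provide one. The paper's actual proof sidesteps all of this. It is a soft dynamical argument with no reference to smoothness or Feres--Katok: it observes that the set of higher hyperbolic rank 2-frames is $\Phi_t$-invariant, propagates ``curvature limiting to $-1$'' forward and backward along the asymptotic pairs $\{v_i,v_{i+1}\}$ defining $H_v$, and then uses recurrence (together with $\kappa\ge -1$) to upgrade ``limits to $-1$'' to ``identically $-1$''. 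The full-measure flow-invariant set is accordingly the set of bi-recurrent vectors, not $\what{\mc{R}}$. Notably the paper's argument engages directly with the curvature bound $\kappa\ge -1$, which your proposal never uses even though the statement is false without it.
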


\begin{proof}
The set of 2-frame $(u,w)$ of higher hyperbolic rank remains invariant under parallel transport. Any 2-frame $(u',w')$ forward asymptotic to the parallel frame field of $(u,w)$ must therefore also have curvature limiting to $-1$. If $(u',w')$ is recurrent then $(u',w')$ must have higher hyperbolic rank. If $(u'',w'')$ forward limiting to the parallel frame field of $(-u',w')$ is recurrent then it must similarly have higher hyperbolic rank. Lastly if the frame $(-u,\rho w)$ for $\rho\in H_u<\Isom(u^\perp)$ which forward limits to $(u'',w'')$ is recurrent then it has higher hyperbolic rank as well. Finally, the set of bi-recurrent vectors is a flow-invariant full Liouville measure set.
\end{proof}

The following lemma is trivial, but we include it for completeness.
\begin{lemma}\label{lem:stabs_converge}
If a sequence of subgroups $K_n<\SO(n)$ converges to $K < \SO(n)$, and a sequence of vectors $v_n$ converges to a vector $v$ then the stabilizers in $K_n$ of vectors $v_n$, $\op{Stab}_{\SO(n)}(v_n)\cap K_n$, converge to a subgroup of $\op{Stab}_{\SO(n)}(v)\cap K$. 
\end{lemma}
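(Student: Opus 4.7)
The plan is to unpack the meaning of Hausdorff convergence for closed subgroups of $\SO(n)$ (which, as noted earlier in the paper after Proposition \ref{prop:auto-semicont}, amounts to pointed Hausdorff convergence on bounded sets, equivalently Kuratowski convergence since $\SO(n)$ is compact) and then use continuity of the evaluation map $(g,w)\mapsto g\cdot w$.

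Concretely, I would take any element $g$ in the limit, which by definition arises as $g=\lim_{k\to\infty} g_{n_k}$ for some subsequence and elements $g_{n_k}\in \op{Stab}_{\SO(n)}(v_{n_k})\cap K_{n_k}$. Since $K_{n_k}\to K$ in the Hausdorff sense and $g_{n_k}\in K_{n_k}$, closedness of $K$ together with the Hausdorff convergence forces $g\in K$. For the stabilizer condition, I would use the defining identity $g_{n_k}\cdot v_{n_k}=v_{n_k}$. Passing to the limit in both sides using joint continuity of the linear action of $\SO(n)$ on $\R^n$, together with $g_{n_k}\to g$ and $v_{n_k}\to v$, yields $g\cdot v=v$, so $g\in \op{Stab}_{\SO(n)}(v)$. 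Combining, $g\in \op{Stab}_{\SO(n)}(v)\cap K$, which is what the statement claims.

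There is really no obstacle here; the only thing to be careful about is what ``converges to a subgroup of'' means. The statement is asserting Kuratowski upper semicontinuity of the stabilizer, not equality in the limit — and indeed equality can fail (the stabilizers in $K_n$ may shrink to something strictly smaller than $\op{Stab}_{\SO(n)}(v)\cap K$ if $v_n$ approaches $v$ from directions that break additional symmetry). So the proof is exactly the two-line argument above: take a convergent subsequence from the prelimit stabilizers, observe membership in $K$ passes to the limit by Hausdorff convergence, and observe fixing of the vector passes to the limit by continuity of the action.
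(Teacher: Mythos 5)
Your argument is exactly the paper's one-line proof, only written out more explicitly: take a limit point $g=\lim g_{n_k}$ with $g_{n_k}\in \op{Stab}_{\SO(n)}(v_{n_k})\cap K_{n_k}$, deduce $g\in K$ from Hausdorff convergence of the $K_n$, and deduce $g\cdot v=v$ by passing to the limit in $g_{n_k}\cdot v_{n_k}=v_{n_k}$. Your remark that this is upper semicontinuity rather than equality is also correct and consistent with the paper's phrasing ``converge to a subgroup of.''
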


\begin{proof}
	If $k_n\in \op{Stab}_{\SO(n)}(v_n)\cap K_n$ converges to $k\in K$, then clearly $k\in \op{Stab}_{\SO(n)}(v)$.
\end{proof}

Before proving Proposition \ref{prop:perturb-rank}, we describe the Brin-Pesin groups and their actions in the locally symmetric case. 

Let $(M, g_0)$ be a closed complex, quaternionic or octonionic hyperbolic locally symmetric manifold. Hence $M=\Gamma\bs \til{M}$ for a cocompact lattice $\Gamma<G:=\op{Isom}^0(\til{M},g_0)$ and let $K_x<G$ be the stabilizer of a point $x$ and $L_v<K_x$ the stabilizer of a tangent vector $v\in T_xM$. Observe that the left action of $G$ on the full (oriented) orthonormal frame bundle decomposes $(F\til{M})_{g_0}$ into $G$-orbits, which descend to closed subsets of $(FM)_{g_0}$ since $G$ acts cocompactly on $(F\til{M})_{g_0}$. Since the frame flow corresponds to a right action by the 1-parameter split Cartan subgroup, by Moore's Theorem (Theorem 2.2.6 of \cite{Zimmer}) the quotient of the $G$ orbits by $\Gamma$ coincide with the ergodic components of the frame flow on $(FM)_{g_0}$ for the lift of the Liouville measure. Since the intersection of the fiber over $v$ with a $G$ orbit coincides with the action of the stabilizer $L_v$, which is faithful on frames, we have $H_v=L_v$. 

For the quaternionic hyperbolic space $\til{M}=\KH_{\KH}^n$ we have $H_v=L_v\cong B^\infty=\Sp(n-1)\Sp(1)$
(here $\Sp(k):=\Sp(2k,\C)\cap U(2k)$), and for the Cayley plane $\til{M}=\KH_{\KO}^2$ we have $H_v=L_v\cong B^\infty=\Spin(8)\Spin(1)$.
Quaternionic symmetric space comes endowed with  a quaternionic K\"ahler structure and the Cayley plane with an octonionic K\"ahler structure (see e.g. \cite{Besse87}). These structures are parallel and thus preserved under the full holonomy group. Thus each higher hyperbolic rank 2-frame is fixed by $\Sp(n-1)$ and $\Spin(8)$ respectively. However, the set of all such frames is acted on transitively by the $\Sp(1)$ and $\Spin(1)$ groups of dimension 3 and 7 respectively since these groups act transitively on the perpendicular to $v$ in the $\KH$-line (resp. $\KO$-line) through $v$.

\begin{proof}[Proof of Proposition \ref{prop:perturb-rank}]
By the hyperbolic rank condition, the statement for the complex hyperbolic case is trivial. We proceed by contradiction and suppose there is a sequence of metrics $g_{\eps_n}$ of higher hyperbolic rank with $\norm{g_{\eps_n}-g_0}_{C^2}<\eps_n$ where $\eps_n$ tends to $0$, but whose hyperbolic ranks are all less than the hyperbolic rank of $g_0$.

By Lemma \ref{lem:preserve_rank}, for almost every $v\in SM$, $H_{\eps_n,v}$ preserves the hyperbolic rank 2-frames over $v$. Consider a sequence of generic 2-frames $(v_n,w_n)$ for $g_{\eps_n}$ that limit to a hyperbolic 2-frame $(v,w)$ for $g_0$. By Lemma \ref{lem:perturb_grow}, if $\eps_n$ is sufficiently small, the asymptotic holonomy group $H_{\eps_n,v_n}$ for the perturbed metric $g_{\eps_n}$ contains a conjugate copy $L_{\eps_n}$ of $H_{v}$, and these converge to $H_{v}$ as $\eps_n \to 0$.  Let $Q_{\eps_n}<L_{\eps_n}$ be the stabilizers in $L_{\eps_n}$ of $(v_n,w_n)$. By Lemma \ref{lem:stabs_converge}, $Q_{\eps_n}$ converges to a subgroup of a conjugate of $\Sp(n-1)$ or $\Spin(8)$ respectively. Hence if $\eps_n$ is sufficiently small and $(v_n,w_n)$ is chosen to be a hyperbolic 2-frame for $g_{\eps_n}$ then the $L_{\eps_n}$-orbit of $(v_n,w_n)$, consisting of hyperbolic 2-frames, has dimension at least as large as the dimension of the $H_{v}$-orbit of $(v,w)$. This contradicts the assumption that hyperbolic ranks of $g_{\eps_n}$ are less than that of $g_0$.
\end{proof}

\begin{corollary}\label{cor:perturb-rank}
Let $(M, g_0)$ be a closed complex, quaternionic or octonionic hyperbolic locally symmetric manifold. Then there is an open $C^2$ neighborhood $U$ of $g_0$ such that for any $g\in U$, if $(M,g)$ has higher hyperbolic rank and $\kappa_g\ge -1$ then the hyperbolic rank of every tangent vector to $(M,g)$ is exactly $1$, $3$ or $7$ respectively.
\end{corollary}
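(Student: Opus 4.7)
The plan is to combine Proposition \ref{prop:perturb-rank} with the stability of dominated splittings (Proposition \ref{prop:dom-perturb}) and Jacobi field comparison to obtain a pointwise upper bound on $\dim\what{\mc{E}}(v)$ matching the lower bound provided by the proposition. By Proposition \ref{prop:perturb-rank} and Lemma \ref{lem:facts}(1), the hyperbolic rank of every tangent vector of $(M,g)$ is already bounded below by $k=1,3$ or $7$ in the respective cases, so it suffices to show $\dim\what{\mc{E}}(v)\le k$ for every $v\in SM$.

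The key step will be to exploit the dominated splitting $E^u_{g_0}=E^u_{slow,g_0}\oplus E^u_{fast,g_0}$ of the locally symmetric metric, where $\dim E^u_{fast,g_0}=k$. By Proposition \ref{prop:dom-perturb}, after possibly shrinking the $C^2$ neighborhood $U$, every $g\in U$ admits a dominated splitting $E^u_g=F_g\oplus G_g$ of the same index, with $\dim G_g=k$ and $F_g$ dominated by $G_g$. Assume for contradiction that $\dim\what{\mc{E}}(v_0)>k$ at some $v_0\in SM$. Lifting $\what{\mc{E}}(v_0)$ to a subspace $V_0\subset E^u_g(v_0)$ via the standard bijection between initial conditions of unstable Jacobi fields and vectors in $E^u$, the inequality $\dim V_0+\dim F_g(v_0)>\dim E^u_g(v_0)$ produces a nonzero $u\in V_0\cap F_g(v_0)$. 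Since $u\in V_0$, the associated Jacobi field is $J_u(t)=e^t\|_t w$ for some nonzero $w\in v_0^\perp$; a direct computation in the Sasaki norm then gives $\|d\phi_t^g u\|=e^t\|u\|$ for all $t\ge 0$. On the other hand, Jacobi comparison under $\kappa_g\ge -1$ yields $\|d\phi_t^g|_{E^u_g(v)}\|\le C_0 e^t$ uniformly in $v$. Combined with \eqref{eq:DS_norm} and $\|d\phi_{-t}^g|_{G_g(\phi_t v_0)}\|^{-1}\le \|d\phi_t^g|_{G_g(v_0)}\|$, this yields
\[
e^t\|u\|=\|d\phi_t^g u\|\le \|d\phi_t^g|_{F_g(v_0)}\|\,\|u\|\le C\lambda^t\|d\phi_t^g|_{G_g(v_0)}\|\,\|u\|\le CC_0\lambda^t e^t\|u\|,
\]
so $1\le CC_0\lambda^t\to 0$ as $t\to\infty$, a contradiction.

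The main obstacle is the technical requirement that the dominated splitting persists with $\dim G_g=k$ under our perturbation. This is where the $C^2$ topology on metrics enters: $C^2$ closeness of the metric ensures $C^1$ closeness of the geodesic flow and hence $C^0$ closeness of the derivative cocycle on $E^u$, which by Proposition \ref{prop:dom-perturb} preserves the dominated splitting of the same index. Once this is in hand, the dimension count and Jacobi field estimate above give $\dim\what{\mc{E}}(v)\le k$ at every $v$, which combined with the lower bound from Proposition \ref{prop:perturb-rank} completes the proof.
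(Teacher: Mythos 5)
Your proof is correct, but it takes a genuinely different and noticeably heavier route for the upper bound than the paper does. The paper simply observes that $\what{\mc{E}}_g(v)$ lies in the $(-1)$-eigenspace of the curvature operator $R_v^g = R(\cdot,v)v$ on $v^\perp$, and since for $g_0$ this operator has eigenvalues exactly $-1$ (with multiplicity $k$) and $-\tfrac14$ (with the complementary multiplicity), $C^2$-closeness of $g$ to $g_0$ (hence continuity of the curvature tensor and of the spectrum of $R_v$) forces $\dim\ker(R_v^g + I) \le k$ for all $v$, giving $\dim\what{\mc{E}}_g(v)\le k$ directly. You instead deduce the upper bound dynamically: you lift $\what{\mc{E}}_g(v_0)$ into $E^u_g(v_0)$ as the space of Jacobi data $(w,w)$, invoke Proposition \ref{prop:dom-perturb} to get a persistent dominated splitting $E^u_g = F_g\oplus G_g$ with $\dim G_g = k$, and show that a nontrivial intersection $V_0\cap F_g(v_0)$ would produce a vector in the dominated piece whose Sasaki norm grows \emph{exactly} like $e^t$, contradicting domination combined with the Jacobi comparison bound $\|d\phi_t^g\rest{E^u_g}\|\le C_0 e^t$ coming from $\kappa_g\ge -1$. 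The dimension count, the computation $\|d\phi_t^g u\|=e^t\|u\|$ for $u$ in the lift of $\what{\mc{E}}_g$, and the chain of inequalities through \eqref{eq:DS_norm} are all sound. The paper's argument is shorter and entirely pointwise-geometric; yours deploys the dominated splitting machinery that the rest of the paper develops, and its advantage is that it only uses that $g_0$ has a Lyapunov (or Riccati) spectral gap isolating a $k$-dimensional fast piece, rather than that the slow curvature is exactly $-\tfrac14$. In the locally symmetric setting both hypotheses hold, so both arguments close the gap; the paper's is the more economical one here.
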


\begin{proof}
If $v$ is a unit tangent vector of $(M,g_0)$ then $v^{\perp_{g_0}}=\what{\mc{E}}_{g_0}(v)\oplus \what{\mc{E}}_{g_0}(v)^{\perp_{g_0}}$, where the sectional curvature $\kappa(v\wedge w)=-1$ for every $w\in \what{\mc{E}}_{g_0}(v)$ and $\kappa(v\wedge u)=-\frac{1}{4}$ for every $u\in \what{\mc{E}}_{g_0}(v)^{\perp_{g_0}}$.

Similarly, if $v$ is a unit tangent vector of $(M,g)$, we also have $v^{\perp_{g}}=\what{\mc{E}}_{g}(v)\oplus \what{\mc{E}}_{g}(v)^{\perp_{g}}$, where the sectional curvature $\kappa(v\wedge w)=-1$ for every $w\in \what{\mc{E}}_{g}(v)$. Since $g$ is sufficiently $C^2$-close to $g_0$, the continuity of curvatures in the $C^2$-topology of Riemannian metrics implies that $\dim(\what{\mc{E}}_{g}(v))\le \dim (\what{\mc{E}}_{g_0}(v))$ for every $v$. In particular, the hyperbolic rank of every tangent vector of $(M,g)$ is at most the hyperbolic rank of $(M,g_0)$.

On the other hand, by Proposition \ref{prop:perturb-rank}, the hyperbolic rank of $(M,g)$ is not smaller than the hyperbolic rank of $(M,g_0)$. By item (1) of Lemma \ref{lem:facts}, the hyperbolic rank of every tangent vector of $(M,g)$ is exactly the hyperbolic rank of $(M,g_0)$.
\end{proof}

\subsection{Quaternionic and octonionic hyperbolic local rigidity}\label{subsec:proof-local-hyp}
We are now ready to prove the main result about higher hyperbolic rank stated in the introduction.
\begin{reptheorem}{thm:local-hyp}
	Let $(M, g_0)$ be a closed quaternion or octonionic hyperbolic locally symmetric manifold. Then there is an open $C^3$ neighborhood $U$ of $g_0$ such that for any $g\in U$, if $(M,g)$ has higher hyperbolic rank and $\kappa_g\ge -1$ then $(M,g)$ is locally symmetric and isometric to $(M,g_0)$. 
\end{reptheorem}

	\begin{proof}[Proof of Theorem \ref{thm:local-hyp}]
	By Corollary \ref{cor:perturb-rank}, we may choose $U$ such that the hyperbolic rank of every tangent vector of $(M,g)$ is the same as the hyperbolic rank of $(M,g_0)$.
	
	Next, let $J_{\xi}(t)$ denote the unstable Jacobi field along $\phi_t(v)$ with initial vector $\xi\in \what{\mc{E}}(v)$. By Lemma \ref{lem:facts}, we can make the following observations about Oseledets spaces,	
	\[
	E_1^u(v)=\set{(\xi, J_{\xi}'(0)): \xi\in \what{\mc{E}}(v)} \quad\text{and}\quad E_{slow}^u(v)=\set{(\xi, J_{\xi}'(0)): \xi\in \what{\mc{E}}(v)^\perp}.
	\]
	By Corollary \ref{cor:perturb-rank}, $\dim E_1^u$ is constant and by Lemma \ref{lem:facts}, $E_1^u$ is a globally $C^\infty$ distribution on $SM$. In particular, $E_1^u$ is uniformly $C^\infty$ along $W^u$ for each such metric $g$. Moreover, as the curvature operator, and hence Jacobi fields, vary $C^1$ in the $C^3$ topology on smooth metrics, $E_1^u$ and its first derivatives along $W^u$ vary continuously in the metric parameter with respect to the $C^3$ topology on smooth metrics. By \cite[Lemma 4.3]{CNS2018}, $E_1^u$ and $E_{slow}^u$ are perpendicular with respect to the Sasaki metric for $g$. Hence the dominated splitting $E^u=E_1^u\oplus E_{slow}^u$ is uniformly $C^\infty$ along unstable leaves.
	By the Permanence Theorem \cite[Theorem 6.8]{HirschPughShub}, $E_{slow}^u$ has $C^1$ dependency on $g$ as $g$ varies in the $C^3$ topology.  Therefore by Theorem \ref{thm:locally_rigid}, $M$ is symmetric for sufficiently $C^3$ nearby smooth metrics.
	\end{proof}

\begin{remark}
One can show that $E^u_1$ varies continuously in the $C^1$-topology as the metric $g$ varies in the $C^2$-topology. However, since $E^u$ may not vary continuously in the $C^1$-topology as the metric $g$ varies in the $C^2$-topology, we are not able to conclude that $E^u_{slow}$ varies $C^1$ in this topology. The latter is necessary to obtain the isomorphism of tangent cones. Hence we require the metrics to be  $C^3$ close.
\end{remark}

We next observe that the higher hyperbolic rank condition in the last theorem may be replaced by a condition on Lyapunov exponents. 
\begin{corollary}\label{cor:lyapunovcase}
	In Theorem \ref{thm:local-hyp}, the conclusion still holds if the higher hyperbolic rank assumption is replaced with the assumption that some ergodic measure of full support has a Lyapunov exponent of $1$.
\end{corollary}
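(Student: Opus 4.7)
My plan is to reduce the Lyapunov-exponent hypothesis to higher hyperbolic rank and then invoke Theorem~\ref{thm:local-hyp}. Let $\mu$ be the given ergodic measure of full support whose Lyapunov spectrum for the geodesic flow of $g$ contains the exponent~$1$. Since $\kappa_g\geq -1$, the value $1$ is the largest possible Lyapunov exponent on the unstable bundle, so the Oseledets space of exponent~$1$ is nontrivial at $\mu$-a.e.\ $v$.

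First I would observe that the set $\what{\mc{R}}\subset SM$ of Lemma~\ref{lem:facts}(2), on which $\dim\what{\mc{E}}_g$ is locally constant equal to the hyperbolic rank $k$ of $(M,g)$, is open, dense, and flow-invariant (the latter since $\what{\mc{E}}_g(\phi_t v)=\|_t\what{\mc{E}}_g(v)$ by construction). Full support of $\mu$ yields $\mu(\what{\mc{R}})>0$, and ergodicity upgrades this to $\mu(\what{\mc{R}})=1$. Next, I would apply Lemma~\ref{lem:facts}(4), which identifies $E^u_1(v)$ with the exponent-$1$ Oseledets space on $\what{\mc{R}}$. Combined with the previous step this gives $\dim\what{\mc{E}}_g(v)=\dim E^u_1(v)\geq 1$ for $\mu$-a.e.\ $v\in\what{\mc{R}}$; since $\dim\what{\mc{E}}_g\equiv k$ on $\what{\mc{R}}$, we conclude $k\geq 1$. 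To promote this to higher hyperbolic rank---i.e.\ $\dim\what{\mc{E}}_g(v)\geq 1$ for \emph{every} $v\in SM$---I would use that $\dim\what{\mc{E}}_g$ is lower semi-continuous: $\what{\mc{E}}_g(v)\subset v^\perp$ is cut out by the closed orbit-wise condition $R_g(\|_t w,\dot\gamma_v(t))\dot\gamma_v(t)=-\|_t w$ for all $t\in\mb{R}$, so a convergent sequence of unit vectors $w_n\in\what{\mc{E}}_g(v_n)$ with $v_n\to v$ has its limit in $\what{\mc{E}}_g(v)$ by continuity of $R_g$ and of parallel transport. Density of $\what{\mc{R}}$ then forces $\dim\what{\mc{E}}_g\geq k\geq 1$ everywhere, so $(M,g)$ has higher hyperbolic rank and Theorem~\ref{thm:local-hyp} concludes the proof.

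The main obstacle I foresee is verifying that Lemma~\ref{lem:facts}(4)---which identifies the geometric subspace $E^u_1(v)$ with the exponent-$1$ Oseledets space---applies to the perturbed metric $g$ even in the a priori degenerate case $k=0$. Since that identification rests only on the Riccati comparison afforded by $\kappa_g\geq -1$ together with Oseledets' theorem, it applies verbatim in the perturbed setting and the reduction proceeds unhindered.
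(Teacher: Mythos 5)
Your proof is correct and follows essentially the same strategy as the paper: show that the Lyapunov-exponent hypothesis forces higher hyperbolic rank (via the Riccati comparison/Oseledets identification of $E^u_1$ with the exponent-$1$ space, exactly as in Corollary 1.4 of \cite{CNS2018}) and then invoke Theorem~\ref{thm:local-hyp}. The only minor remark is that the final semi-continuity step is redundant once you have established that the hyperbolic rank $k$ is $\geq 1$, since by Lemma~\ref{lem:facts}(1) the rank is by definition the minimum of $\dim\what{\mc{E}}_g(v)$ over all $v\in SM$.
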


This follows from the following lemma, whose proof is essentially that of Corollary 1.4 in \cite{CNS2018}. (See also Theorem 1.3 of \cite{Connell03} for the case $\kappa \leq -1$.) 

\begin{lemma}
	Let $M$ be a closed Riemannian manifold with $\kappa\geq -1$. If, with respect to the geodesic flow on $SM$, some ergodic measure of full support has a Lyapunov exponent equal to $1$, then $M$ has higher hyperbolic rank.
\end{lemma}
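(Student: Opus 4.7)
The approach is the classical Riccati comparison for unstable Jacobi fields, driven by the lower curvature bound. At any $\mu$-generic $v\in SM$ the hypothesis yields a nonzero $\xi\in T_vSM$ with $\lim_{t\to\infty}\frac{1}{t}\log\|d\phi_t\xi\|=1$, so the associated Jacobi field $J(t)$ along $\gamma_v$ is an unstable one, and $J'(t)=U(\phi_tv)J(t)$ for the symmetric unstable Riccati tensor $U$ (defined $\mu$-a.e.\ since the exponent $1>0$ forces a one-dimensional unstable Oseledets space there). Because $\kappa\geq -1$ translates to $R(\cdot,\gamma')\gamma'\geq -\mathrm{Id}$, comparing the matrix Riccati equation $U'+U^2+R=0$ with the constant solution $V=\mathrm{Id}$ of $V'+V^2=\mathrm{Id}$ gives the pointwise bound $U\leq\mathrm{Id}$, hence
\[
\tfrac{d}{dt}\log\|J(t)\|=\langle U(\phi_tv)e(t),e(t)\rangle\leq 1,\qquad e(t):=J(t)/\|J(t)\|.
\]

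The Birkhoff time-average of this bounded-by-one integrand equals the Lyapunov exponent, namely $1$, so the integrand is $1$ for a.e.\ $t$ along $\mu$-a.e.\ orbit. H\"older continuity of $U$ along the orbit and continuity of $e(t)$ then upgrade this to $\langle U(\phi_tv)e(t),e(t)\rangle\equiv 1$ for every $t$. Symmetry of $U$ together with $U\leq\mathrm{Id}$ forces $U(\phi_tv)J(t)=J(t)$, i.e.\ $J'(t)=J(t)$ for all $t$. Differentiating once more and substituting into the Jacobi equation $J''+R(J,\gamma')\gamma'=0$ yields $R(J,\gamma')\gamma'=-J$, which is exactly $\kappa(\gamma'\wedge J)\equiv -1$ along $\gamma_v$.

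To upgrade from $\mu$-a.e.\ to every $v\in SM$ I will use $\mathrm{Supp}(\mu)=SM$: given $v_0\in SM$, pick generic $v_n\to v_0$ carrying unit Jacobi fields $J_n$ with $(J_n(0),J_n'(0))=(J_n(0),J_n(0))$ as above. After passing to a subsequence the initial data converge to $(w,w)$ for some unit $w\perp v_0$; by continuous dependence of the Jacobi equation on parameters, the corresponding Jacobi field $J_\infty$ along $\gamma_{v_0}$ is the limit of $J_n$, still satisfies $J_\infty'=J_\infty$ (the relation passes to the limit), and so $J_\infty''=J_\infty$, which again via Jacobi forces $\kappa(\gamma'\wedge J_\infty)\equiv -1$ on $\gamma_{v_0}$. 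Thus every geodesic admits a Jacobi field realizing curvature $-1$ with $\gamma'$, giving higher hyperbolic rank.

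The main obstacle will be the continuity/propagation step used to turn the a.e.\ equality $\langle U e,e\rangle=1$ into equality for every $t$: one must justify both that $U$ is well-defined and continuous along the chosen orbit and that the $1$-eigenspace of $U$ is flow-invariant (so the identity $Ue=e$ survives, rather than collapsing back to $\langle Ue,e\rangle<1$). This flow-invariance is ultimately a consequence of $U\leq\mathrm{Id}$ combined with the Riccati equation, which forces any eigenvalue attaining the ceiling $1$ to remain at $1$ and to carry curvature $-1$ with it — exactly the identity $R(J,\gamma')\gamma'=-J$ derived above.
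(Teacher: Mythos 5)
Your strategy---Riccati comparison against the curvature $(-1)$ model, Birkhoff to force the a.e.\ equality case, then full support plus continuous dependence of Jacobi fields to propagate to every geodesic---is the right one and matches the Rauch/Riccati argument the authors are pointing to (Corollary~1.4 of \cite{CNS2018}, which the paper cites in lieu of a proof).

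Two steps are not justified as written. First, the parenthetical that ``the exponent $1>0$ forces a one-dimensional unstable Oseledets space'' is false: the Oseledets subspace $V_1(v)$ carrying the exponent $1$ can have any multiplicity (it is $n-1$ for real hyperbolic $n$-space). The Birkhoff ergodic theorem needs a genuine $\mu$-integrable function on $SM$, not a quantity depending on an arbitrary choice of unit vector in $V_1(v)$; you should either replace $\langle Ue,e\rangle$ by the trace of $U$ restricted to the projection of $V_1(v)$ into $v^{\perp}$ (after noting that subspace is carried by the Jacobi cocycle, hence $U$-invariant), whose $\mu$-average equals the sum of the exponents while its pointwise ceiling is $\dim V_1$, or make a flow-covariant measurable selection. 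Second, the inference ``time average $=1$ and integrand $\le 1$, so integrand $=1$ for a.e.\ $t$'' is not valid on its own: a $C^1$ function bounded above by $1$ can have Ces\`aro average $1$ while dipping strictly below $1$ on an unbounded set of times. The correct chain is Birkhoff to pass to the space average, so $\int h\,d\mu=1$ with $h\le 1$ gives $h=1$ $\mu$-a.e.; then Fubini gives that the orbit of $\mu$-a.e.\ $v$ meets $\{h<1\}$ in a Lebesgue-null set of times; only then does your continuity-along-the-orbit step close the gap. A smaller caveat: you compute with the growth exponent of $\|J(t)\|$ while the hypothesis concerns the Sasaki exponent of the pair $(J,J')$, and equating the two needs $\|J'\|\lesssim\|J\|$ along the orbit, i.e.\ a lower bound on $U$ (and indeed the very existence and boundedness of the unstable Riccati solution) that the comparison $U\le\mathrm{Id}$ alone does not supply under the bare hypothesis $\kappa\ge -1$.
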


\subsection{Measure of Maximal Entropy for Perturbations} \label{sec:complex-entropy}
For the case of complex hyperbolic metrics, we are not able to obtain full local rigidity. Indeed,  our methods require that the tangent cones of unstable leaves are asymmetric which the Heisenberg group is not. However, we still obtain equality of the Liouville measure and the Bowen-Margulis measure (the unique measure of maximal entropy) for a higher rank perturbation.  The precise result of this subsection is Theorem \ref{thm:complexcase} from the introduction.

\begin{reptheorem}{thm:complexcase}
Let $(M,g_0)$ be a closed complex hyperbolic manifold. There is an open neighborhood $U$ of $g_0$ in the $C^3$-topology among $C^\infty$ metrics such that if $g\in U$ and $(M,g)$ has higher hyperbolic rank and sectional curvature $\kappa\ge -1$ then the Liouville measure on $SM$ coincides with the (unique) measure of maximal entropy for the geodesic flow of $g$ on $SM$.
\end{reptheorem}
\begin{proof}
We first show that the tangent cone $TC_vW^u(v)$ exists for every $v\in SM$. 

By Corollary \ref{cor:perturb-rank}, $E_1^u\subset E^u$ is one dimensional everywhere. By (4) of Lemma \ref{lem:facts}, $E_1^u$ forms a smooth distribution along each leaf of the foliation $W^u$. By item (5) of Lemma \ref{lem:facts}, the lift $E_{slow}^u=E_{<1}^u$ of $\what{\mc{E}}^\perp$ to $E^u$ is the direct sum of unstable Lyapunov spaces of exponents different from $1$. Therefore, $E_{slow}^u$ is uniformly $C^\infty$ along $W^u$ since it is orthogonal to $E^u_1$ in $E^u$ with respect to the Sasaki metric.

By the same argument used in the proof of Theorem \ref{thm:local-hyp}, $E_{slow}^u$ has $C^1$ dependency on $g$ as $g$ varies in the $C^3$ topology. In particular, $E^u_{slow}$ is generic everywhere on $SM$. Also, by Theorem \ref{thm:Mitchell}, the tangent cone $TC_vW^u _g$ exists everywhere.

By Corollary \ref{cor:perturb-rank}, the horizontal distribution of $TC_vW^u_g(v)$ has codimension 1.  Hence $TC_vW^u_g(v)$ is a 2-step Carnot nilpotent Lie group with one dimensional center for every $v\in SM$. By Lemma \ref{lem:generic}, $TC_vW^u_g(v)$ depends continuously on the metric $g$ in the $C^3$ topology. Recall that the Lie bracket induces a  bilinear form  $[\cdot,\cdot]:\mf{n}_v^0\times \mf{n}_v^0\to \mf{n}_v^1$ between  the first and second levels $\mf{n}_v^0$ and $\mf{n}_v^1$ of the Lie algebra $\mf{n}_v$ of $TC_vW^u_g(v)$.  As it is non-degenerate for $g_0$, it will also be non-degenerate for $g$ sufficiently close to $g_0$. Consequently, as the center is one-dimensional and there is only one, up to isomorphism, non-degenerate skew symmetric bilinear form on $\mf{n}_v^0$ for every $v\in SM$, $TC_vW^u(v)$ is  isomorphic as a graded nilpotent group to the Heisenberg group $H^{2k-1}$ where $\dim M=2k$.

By Theorem \ref{thm:Hei-spec}, if $v\in SM$ is a periodic vector then the sum of the positive Lyapunov exponents (with multiplicity) at $v$ is $k$. By \cite[Theorem 1.4]{Kalinin11}, it follows that with respect to any ergodic invariant probability measure, the entropy of geodesic flow is bounded above by $k$. On the other hand, by Pesin's entropy formula, the entropy of the geodesic flow with respect to Liouville measure (rescaled to have total volume 1) is $k$. Therefore the (rescaled) Liouville measure is the measure of maximal entropy.
\end{proof}

\begin{remark}
	 
Local rigidity for higher hyperbolic rank metrics near the complex hyperbolic one would follow from  Katok's well-known Entropy  Conjecture stating that the Liouville measure has maximal entropy precisely for locally symmetric manifolds among closed negatively curved manifolds. Alternately, and possibly simpler,  it would  follow from strict convexity of the difference of topological and measure theoretic entropy near the complex hyperbolic metric. For real hyperbolic metrics, this was proven by Flaminio  (\cite[Theorem A]{Flaminio95}).
\end{remark}

\begin{remark}
	As in Corollary \ref{cor:lyapunovcase} we may replace the condition on higher hyperbolic rank in Theorem \ref{thm:complexcase} with the condition that there is a Lyapunov exponent $1$ for some invariant ergodic measure of full support.
\end{remark}

\appendix
\section{A Variation on Benveniste's Lemma}

In this appendix we present the proof of the following Proposition which may be considered by some to be folklore. However, we could not locate a precise reference for it in the literature. (For the case of $C^0$ semiconjugacy see \cite{BowdenMann19}.) In what follows, if $\Gamma$ acts by $C^k$ diffeomorphisms on a $C^\infty$ manifold $X$, and $\tau:\Gamma\to \op{Diff}^k(F)$ is a representation for a $C^\infty$ manifold $F$, then we denote by $\what{\tau}:\Gamma\to \op{Diff}^k(X\times F)$ the representation given by $\what{\tau}(\gamma)(x,f)=(\gamma x,\tau(\gamma)(f)).$

\begin{proposition}(cf. \cite[Lemma 5.2]{Benveniste00})\label{prop:Benven}
Suppose $\Gamma$ acts freely, properly discontinuously and cocompactly by $C^k$ diffeomorphisms on a $C^\infty$ manifold $X$ for $k\geq 1$. Let $F$ be a closed $C^\infty$ manifold. Let $\tau_0$ and $\tau$ be homomorphisms from $\Gamma\to \op{Diff}^k(F)$ such that $\tau$ be $C^\ell$-close to $\tau_0$ for $\ell\geq 1$. Then there is $C^\infty$ diffeomorphism between suspensions $f:(X\times F)/\what{\tau}\to (X\times F)/\what{\tau}_0$. Moreover, the push-forward by $f$ of the horizontal foliation $\{X\times\{z\}: z\in F\}$ in $(X\times F)/\what{\tau}$ is $C^\ell$ close to the horizontal foliation in $(X\times F)/\what{\tau_0}$.
\end{proposition}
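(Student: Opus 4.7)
The strategy is to lift the problem to the common cover $X\times F$ and build a $\Gamma$-equivariant diffeomorphism $\tilde f\colon X\times F\to X\times F$ intertwining $\what\tau$ with $\what\tau_0$ of the form $\tilde f(x,z)=(x,G(x,z))$; once this is achieved $\tilde f$ descends to the desired $f$. The equivariance $\tilde f\of\what\tau(\gamma)=\what\tau_0(\gamma)\of \tilde f$ reduces to
$$
G_{\gamma x}= \tau_0(\gamma)\of G_x \of \tau(\gamma)^{-1}, \qquad G_x(z):=G(x,z).
$$
A naive construction would fix a fundamental domain $D$ for $\Gamma$, declare $G_x=\id_F$ on $D$, and extend by equivariance to obtain $G_{\gamma x}=\tau_0(\gamma)\tau(\gamma)^{-1}$; this fails to be smooth across $\partial D$, so the plan is to smooth it out by a weighted Riemannian average in $F$ of these candidate values.

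To obtain an averaging tool compatible with $\what\tau_0$, I first fix any smooth Riemannian metric on the compact manifold $(X\times F)/\what\tau_0$ and lift it to $X\times F$; restricting to fibers produces a smoothly varying family of metrics $\set{m_x}_{x\in X}$ on $F$ such that each $\tau_0(\gamma)\colon (F,m_x)\to(F,m_{\gamma x})$ is an isometry. Next choose a smooth bump $\chi\colon X\to[0,1]$ of compact support with $\sum_{\delta\in\Gamma}\chi(\delta^{-1}x)\equiv 1$, available by cocompactness. For each $\delta\in\Gamma$ set $\psi_\delta:=\tau_0(\delta)\tau(\delta)^{-1}\in\Diff^k(F)$ and define
$$
G(x,z):=\op{cm}_{m_x}\set{(\chi(\delta^{-1}x),\,\psi_\delta(z))}_{\delta\in\Gamma},
$$
where $\op{cm}$ denotes the Karcher (Riemannian) center of mass. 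For $x$ in any compact subset of $X$ only finitely many $\delta$ contribute, and they range over a fixed finite subset $S\subset\Gamma$ determined by $\chi$; once $\tau$ is $C^\ell$-close enough to $\tau_0$ on $S$, each $\psi_\delta$ is $C^\ell$-close to $\id_F$, every $\psi_\delta(z)$ lies in a geodesically convex ball of $(F,m_x)$ about $z$, and the Karcher mean is uniquely defined and smooth in its inputs.

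The equivariance is then a direct calculation: substituting $\delta=\gamma\eps$ in the sum, using $\psi_{\gamma\eps}\of\tau(\gamma)=\tau_0(\gamma)\of\psi_\eps$ and the fact that $\tau_0(\gamma)$ is an isometry $(F,m_x)\to(F,m_{\gamma x})$ which therefore intertwines the two Karcher means, yields $G(\gamma x,\tau(\gamma)z)=\tau_0(\gamma)G(x,z)$. Smoothness of $\tilde f$ is inherited from that of the Karcher mean; since each $G_x$ is $C^\ell$-close to $\id_F$, each is a diffeomorphism of $F$ and $\tilde f$ is a global diffeomorphism. The induced map on quotients is the desired $f$. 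For the last assertion, the image under $\tilde f$ of a horizontal leaf $X\times\set{z_0}$ is the graph $\set{(x,G(x,z_0)):x\in X}$; the $C^\ell$-closeness of $G(\cdot,z_0)$ to the constant $z_0$ over a fundamental domain (and hence everywhere, by the $\Gamma$-equivariance of the construction) gives the required $C^\ell$-closeness of the pushed-forward foliation to the horizontal foliation of $(X\times F)/\what\tau_0$.

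The principal technical obstacle is ensuring the Karcher mean is well-defined, unique, and jointly smooth: all weighted points $\psi_\delta(z)$ must lie in a common geodesically convex ball of $(F,m_x)$, which must be controlled uniformly in $x$. Compactness of $M=\Gamma\backslash X$ provides uniform injectivity-radius and curvature bounds for the family $\set{m_x}$, so this convexity reduces to a quantitative $C^0$-closeness of $\tau$ to $\tau_0$ on the finite set $S\subset\Gamma$. Once this smallness is arranged, the smoothness, equivariance, and $C^\ell$ closeness conclusions all follow from standard properties of the Karcher mean.
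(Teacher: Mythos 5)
Your approach is a genuinely different route from the paper's: instead of the \v{C}ech-cohomological argument via an implicit function theorem on Banach manifolds of maps (the paper's route, following Benveniste), you lift to the cover and directly build a $\Gamma$-equivariant diffeomorphism by averaging the candidate intertwiners $\psi_\delta = \tau_0(\delta)\tau(\delta)^{-1}$ with a weighted Karcher center of mass taken with respect to a $\what\tau_0$-invariant family of fiber metrics. The equivariance computation---substituting $\delta=\gamma\eps$, using $\psi_{\gamma\eps}\circ\tau(\gamma)=\tau_0(\gamma)\circ\psi_\eps$ and the fact that the isometry $\tau_0(\gamma)\colon (F,m_x)\to(F,m_{\gamma x})$ intertwines the two centers of mass---is correct, and the idea is elegant and more direct than the paper's.

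There are, however, genuine gaps in the regularity bookkeeping. First, the fiber metrics $m_x$ cannot be ``smoothly varying'' as claimed: the $\what\tau_0(\gamma)$ are only $C^k$ diffeomorphisms of $X\times F$, so any $\what\tau_0$-invariant Riemannian metric on $X\times F$ (equivalently, any metric on $(X\times F)/\what\tau_0$ pulled back along the $C^k$ covering projection) is at best $C^{k-1}$ with respect to the product $C^\infty$ structure on $X\times F$. Indeed, an invariant metric of higher regularity would force the action itself to be smoother. Second, the Karcher mean theory you invoke---uniqueness in convex balls, smooth dependence on the data via the implicit function theorem applied to $\sum_\delta w_\delta\exp_q^{-1}(p_\delta)=0$---requires the underlying metric to be at least $C^2$, and the regularity of the output $G(x,z)$ is bounded above both by that of the metric (roughly $C^{k-2}$) and by that of the input points $\psi_\delta(z)$ (only $C^k$ in $z$). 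Consequently, $\widetilde f$ is in no case $C^\infty$; it is at best of some finite regularity depending on $k$, and the construction does not run at all when $k=1$ or $k=2$, whereas the proposition (and the paper's \v{C}ech argument) allows any $k\geq 1$. Third, you assert that ``the induced map on quotients is the desired $f$,'' but the statement requires a $C^\infty$ diffeomorphism between the Whitney-smoothened quotients; you are missing the final step, which the paper supplies, of approximating the finite-regularity $f'$ by a $C^\infty$ diffeomorphism (using $\ell\geq 1$) via Whitney's theorem. These defects are fixable for $k$ large enough, and the strategy is sound in spirit, but as written the proof does not establish the stated proposition, and the low-regularity cases appear genuinely out of reach for the Karcher-mean approach.
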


Here the map $f$ is $C^\infty$ with respect to the unique compatible $C^\infty$ structures on $(X\times F)/\what{\tau}$ and $(X\times F)/\what{\tau}_0$ compatible with the respective $C^k$ structures. The above result was proved by Benveniste in the $C^{\infty}$-case in \cite{Benveniste00} using the tame Fr\'{e}chet group structure of $\Diff ^{\infty} (F)$ and the Hamilton-Nash-Moser implicit function theorem. We follow his argument closely. However, even though the Banach manifolds arising in the $C^k$ setting are still tame Fr\'{e}chet manifolds, $\Diff^k(F)$ is not a tame Fr\'{e}chet Lie group  and we must instead use   implicit function theorems for  Banach manifolds, in Lemma \ref{lem:implicit}.

\begin{lemma}\label{lem:implicit}
    Let $\mc X, \mc Y$, and $\mc Z$ be Banach manifolds, and  $H:\mc X\to \mc Y$ and $G:\mc Y\to \mc Z$ be $C^1$ maps such that $G\circ H(x)=z_0$ for every $x\in \mc X$. Suppose that $P:T_{H(x_0)}\mc Y\to T_{x_0}\mc X$ and $Q:T_{z_0}\mc Z\to T_{H(x_0)}\mc Y$ are bounded linear transformation such that $D_{x_0}H\circ P+Q\circ D_{H(x_0)}G=\id$. Then there exist neighborhoods $U$ and $V$ of $x_0$ and $H(x_0)$ respectively such that $G^{-1}(z_0)\cap V \subset H(U)$. 
\end{lemma}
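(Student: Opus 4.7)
The plan is to reduce to Banach spaces via local charts centered at $x_0$, $H(x_0)$ and $z_0$, and then combine the Banach space inverse function theorem with the Lyusternik--Graves open mapping theorem. In those charts let $X, Y, Z$ denote the model Banach spaces, set $A = D_{x_0}H$ and $B = D_{H(x_0)}G$, and note that differentiating $G \circ H \equiv z_0$ at $x_0$ gives $BA = 0$. Combined with the hypothesis $AP + QB = I_Y$ this yields $BAP = 0$ (so $\im(AP) \subset \ker B$) and $APw = w$ for every $w \in \ker B$. In particular $A$ surjects onto the closed subspace $\ker B$ and $P|_{\ker B}$ is a continuous right inverse.

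Next I will introduce two auxiliary maps. Let $\Phi \colon Y \to Y$ be defined by $\Phi(y) = APy + QG(y)$. Its derivative at $0$ equals $AP + QB = I_Y$, so by the inverse function theorem $\Phi$ is a $C^1$ diffeomorphism from some neighborhood $V_1$ of $0$ onto a neighborhood of $0$ in $Y$. Let $\Psi \colon X \to \ker B$ be $\Psi(x) = \Phi(H(x))$; since $G \circ H \equiv 0$ in charts, this reduces to $\Psi(x) = APH(x)$, which lies in $\ker B$. Its derivative at $x_0$ is $APA = A - QBA = A$, which by the first paragraph is surjective onto $\ker B$ with continuous right inverse $P|_{\ker B}$. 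The Lyusternik--Graves theorem then produces neighborhoods $U$ of $x_0$ in $X$ and $W$ of $0$ in $\ker B$ with $\Psi(U) \supset W$.

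To conclude, I will shrink $U$ if necessary so that $H(U) \subset V_1$, and pick a neighborhood $V$ of $0$ inside $V_1$ small enough that $APy \in W$ for all $y \in V$. For any $y \in G^{-1}(0) \cap V$ we have $\Phi(y) = APy + Q \cdot 0 = APy \in W$, so some $x \in U$ satisfies $\Psi(x) = APy$, equivalently $\Phi(H(x)) = \Phi(y)$. Since $H(x)$ and $y$ both lie in $V_1$, where $\Phi$ is injective, this forces $H(x) = y$, yielding $G^{-1}(z_0) \cap V \subset H(U)$ back in the original charts.

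The central difficulty is to identify the correct target for Lyusternik--Graves: the derivative $A \colon X \to Y$ fails to be surjective onto $Y$, but the relation $BA = 0$ constrains its image into $\ker B$, and the hypothesis furnishes a continuous right inverse precisely onto that closed subspace. The role of $\Phi$ is to straighten $G^{-1}(z_0)$ into $\ker B$ at the linearized level, and its local injectivity is what lets the final step upgrade the intermediate equality $\Phi(H(x)) = \Phi(y)$ to the full conclusion $H(x) = y$.
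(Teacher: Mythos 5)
Your proof is correct, and it takes a genuinely different route from the paper's. The paper's argument is very short: it uses the identity $\id = D_0H\circ P + Q\circ D_0G$ together with $D_0G\circ D_0H = 0$ to establish the exactness $\im D_0H = \ker D_0G$ at the linearized level (which you establish in your first paragraph as $\im A = \ker B$), and then immediately cites Theorem~2.1 of An--Neeb \cite{AnNeeb09} to conclude. You instead make the argument self-contained: the auxiliary map $\Phi(y) = APy + QG(y)$ is a local diffeomorphism of $Y$ (by the inverse function theorem, since $D_0\Phi = AP+QB = \id$) that flattens $G^{-1}(z_0)$ into the closed subspace $\ker B$; the composite $\Psi = \Phi\circ H$ lands in $\ker B$ with $D_{x_0}\Psi = A$ surjective onto $\ker B$, so Lyusternik--Graves makes $\Psi$ locally open; and local injectivity of $\Phi$ upgrades $\Phi(H(x))=\Phi(y)$ to $H(x)=y$. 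What the paper buys by citing An--Neeb is brevity; what your approach buys is transparency about the mechanism and independence from an external implicit function theorem, at the cost of needing two classical ingredients (IFT on $Y$ and Lyusternik--Graves into $\ker B$) rather than one citation. One small housekeeping note: the order of choices in your final paragraph should be to first shrink $U$ so that $H(U)\subset V_1$, and only then invoke local openness of $\Psi$ to obtain $W\subset\Psi(U)$, so as not to lose the inclusion $\Psi(U)\supset W$ after shrinking; this is implicit in your "shrink $U$ if necessary" but worth stating in the right order.
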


\begin{proof}
Using local charts, we can assume that $\mc X, \mc Y$, $\mc Z$ are Banach spaces and $x_0, H(x_0), z_0$ are zero vectors in the Banach spaces. Since $G\circ H =0$ we have $ \im D_0H\subset \ker D_0G$. On the other hand if there is $v\in \ker D_0G- \im D_0H$, then $v=\id(v)=D_0H\circ P(v)+Q\circ D_0G (v)\in \im D_0H$, which is not possible. Thus, $\im D_0H= \ker D_0G$. Now the conclusion follows immediately from \cite[Theorem 2.1]{AnNeeb09}.
\end{proof}

Given  closed manifolds $M$ and $N$, then $\Diff^k(M,N)$ is a Banach manifold for every $k\in \N$ (\cite{Wittmann19}). We note that by Banach manifold, we always mean a Banach manifold of class $C^\infty$.

We also need a discussion about the space of mappings. Let $E$ be a Banach space and let $U$ be a domain in $\mathbb R^n$ or in a smooth manifold such that $U$ has smooth boundary. For $\ell\in \N$, we let $C^\ell(U,E)$ denote the space of $C^\ell$ functions from the closure of $U$ to $E$. Then $C^\ell(U,E)$ is a Banach space. By \cite[Theorem 5.1]{Eliasson67} (with $\mf{S}=C^k$ and $s=\infty$) and the remark before Theorem 5.2 of \cite{Eliasson67}, if $D$ is a smooth Banach manifold admitting a $C^\infty$ connection, then $C^\ell(U,D)$ is a smooth Banach manifold. This condition is satisfied for $D=\Diff^k(F)$. Indeed the latter is open in $C^k(F,F)$ for $k\geq 1$ by the inverse function theorem and $C^k(F,F)$ satisfies the condition by the discussion at the top of p.170 in \cite{Eliasson67}. In particular $C^\ell(U, \Diff^k(F))$ is a smooth Banach manifold. Moreover, by \cite[Theorem 5.2]{Eliasson67}, the tangent bundle to $C^\ell(U,D)$ is naturally isomorphic to $C^\ell(U,TD)$.
We note that if $h\in C^\ell(U, \Diff^k(F))$ then the map $U\times F\ni(u,f)\mapsto (u,h(u)(f))\in U\times F$ is $C^{\min\{\ell,k\}}$.

\begin{lemma}\label{lem:Benven1}
Let $F\to P\to M$ be a fiber bundle, where $F$ and $M$ are closed manifolds. Let $\{U_1,\dots, U_k\}$ be a cover of $M$ so that $P|_{U_\alpha}$ is trivializable for each $\alpha$. Let $\{\phi_\alpha\}$ be trivializations, and let $\{\phi_{\alpha\beta}\}$ be the corresponding transition functions. Let $\{\phi_{\alpha\beta}'\}$ be a collection of transition functions which are close to the $\{\phi_{\alpha\beta}\}$ in the $C^\ell$-topology. Let $V_\alpha\subset U_\alpha$ be compactly contained open sets such that $\{V_\alpha\}$ cover M. Then there are maps $\{h_\alpha :
V_\alpha \to \Diff^k(F)\}$, $C^\ell$-close to the constant map to the identity, such that $h_\alpha(u)\phi_{\alpha\beta}(u) = \phi_{\alpha\beta}'(u)h_\beta(u)$ for all $u\in V_\alpha\cap V_\beta$, and all $\alpha$ and $\beta$.
\end{lemma}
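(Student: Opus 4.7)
The plan is to apply the Banach-manifold implicit function theorem (Lemma~\ref{lem:implicit}) to a map $H$ encoding the twisted conjugation action of $(h_\alpha)$ on the original cocycle, together with a map $G$ enforcing the cocycle condition on the target. Set
\begin{align*}
\mathcal X &= \prod_\alpha C^\ell(\bar V_\alpha, \Diff^k(F)), \\
\mathcal Y &= \prod_{\alpha,\beta} C^\ell(\bar V_\alpha \cap \bar V_\beta, \Diff^k(F)), \\
\mathcal Z &= \prod_{\alpha,\beta,\gamma} C^\ell(\bar V_\alpha \cap \bar V_\beta \cap \bar V_\gamma, \Diff^k(F)),
\end{align*}
each a smooth Banach manifold by \cite[Theorem 5.2]{Eliasson67} together with the discussion immediately preceding Lemma~\ref{lem:Benven1}. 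Define
\[
H(h)_{\alpha\beta}(u) = h_\alpha(u)\,\phi_{\alpha\beta}(u)\,h_\beta(u)^{-1}, \qquad G(\psi)_{\alpha\beta\gamma}(u) = \psi_{\alpha\beta}(u)\,\psi_{\beta\gamma}(u)\,\psi_{\gamma\alpha}(u).
\]
The cocycle identity for $\phi$ gives $G \circ H \equiv \id_F$, so Lemma~\ref{lem:implicit} is set up at $x_0 = e$ (the constant $\id_F$), with $H(e)=\phi$ and $z_0 = \id_F$. Since $\phi'$ is itself a family of transition functions, $G(\phi') = \id_F$; once bounded linear splittings $P$, $Q$ with $D_e H \circ P + Q\circ D_\phi G = \id_{T_\phi\mathcal Y}$ are produced, Lemma~\ref{lem:implicit} yields $(h_\alpha)\in\mathcal X$ near $e$ with $H(h) = \phi'$, which is exactly the relation $h_\alpha\phi_{\alpha\beta} = \phi'_{\alpha\beta}h_\beta$ on $\bar V_\alpha\cap\bar V_\beta$.

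Next I would identify the derivatives. The tangent spaces at $e$ and $\phi$ consist of $C^\ell$-maps into sections of $TF$ pulled back along the relevant diffeomorphisms, after identifying vector fields along each $\phi_{\alpha\beta}$ with a vector field on $F$ via $\phi_{\alpha\beta}$. A direct computation of the derivatives of composition and inversion in $\Diff^k(F)$ gives
\begin{align*}
(D_e H)(\xi)_{\alpha\beta} &= \xi_\alpha\circ\phi_{\alpha\beta} - (\phi_{\alpha\beta})_*\xi_\beta, \\
(D_\phi G)(\eta)_{\alpha\beta\gamma} &= \eta_{\alpha\beta}\circ\phi_{\beta\gamma}\phi_{\gamma\alpha} + (\phi_{\alpha\beta})_*\bigl(\eta_{\beta\gamma}\circ\phi_{\gamma\alpha}\bigr) + (\phi_{\alpha\beta}\phi_{\beta\gamma})_*\,\eta_{\gamma\alpha}.
\end{align*}
Unravelling the cocycle identity for $\phi$ shows $D_\phi G\circ D_e H = 0$, so the associated linear sequence is a complex; producing $P,Q$ is the linear \v{C}ech-cohomology vanishing statement at the middle term.

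The construction of $P$ uses a smooth partition of unity $\{\rho_\alpha\}$ on $M$ subordinate to $\{V_\alpha\}$. For $\eta \in T_\phi\mathcal Y$ set
\[
P(\eta)_\alpha \;=\; \sum_\gamma \rho_\gamma\cdot(\phi_{\alpha\gamma})_*\eta_{\gamma\alpha},
\]
extended by zero off $\op{supp}\rho_\gamma\subset V_\gamma$. Using $\sum_\gamma\rho_\gamma \equiv 1$ on $M$ and the cocycle identity for $\phi$, a direct computation shows $(D_eH\circ P)(\eta)_{\alpha\beta} = \eta_{\alpha\beta}$ modulo a weighted combination of entries of $(D_\phi G)(\eta)$; defining $Q$ by an analogous partition-of-unity formula on $T_{\id}\mathcal Z$ absorbs this remainder and yields the required splitting identity. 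Both $P$ and $Q$ are bounded linear on the relevant $C^\ell$ spaces, since they only involve multiplication by the fixed smooth bump functions $\rho_\gamma$ and pre/post-composition with the $\phi_{\alpha\beta}$, all of which are uniformly $C^\ell$-bounded on the compact closures $\bar V_\alpha$ (this is where precompactness $V_\alpha\Subset U_\alpha$ is used).

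The main obstacle will be writing down the explicit formulas for $P$ and $Q$ and checking that the identity $D_eH\circ P + Q\circ D_\phi G = \id$ holds on the nose on $T_\phi\mathcal Y$. This is essentially the standard partition-of-unity proof that the first \v{C}ech cohomology of a fine sheaf vanishes, but in the nonabelian setting here one must carefully track pushforwards and pre/post-compositions against the cocycle relation for $\phi$ rather than appealing to abstract abelian nonsense. Once that algebra is in hand, the smoothness of $H,G$ on the Banach manifolds follows from the calculus of mapping spaces recalled before the lemma, and the $C^\ell$-closeness of the resulting $h_\alpha$ to the identity follows from continuity of the implicit-function-theorem inverse together with $\phi'$ being $C^\ell$-close to $\phi$.
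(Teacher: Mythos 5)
Your proposal is essentially the paper's proof: the same coboundary maps $H=\Delta^0$ and $G=\Delta^1$, the same partition-of-unity splitting, and the same appeal to the Banach implicit function theorem (Lemma~\ref{lem:implicit}). There is one technical gap you should fix. You put the Banach-manifold structure on $C^\ell(\bar V_\alpha\cap\bar V_\beta,\Diff^k(F))$ and $C^\ell(\bar V_\alpha\cap\bar V_\beta\cap\bar V_\gamma,\Diff^k(F))$ by citing Eliasson, but his theorem requires the domain to have smooth boundary, and closures of intersections $\bar V_\alpha\cap\bar V_\beta$ generally do not. The paper handles this by inductively choosing intermediate domains $W_{\alpha_0\cdots\alpha_i}$ with smooth boundary satisfying $V_{\alpha_0}\cap\cdots\cap V_{\alpha_i}\Subset W_{\alpha_0\cdots\alpha_i}\Subset W_{\alpha_0\cdots\alpha_{i-1}}\cap U_{\alpha_i}$ and forming the Banach manifolds over those; you should do likewise. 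Beyond that, you defer the explicit formula for $Q$ (the paper takes $(\kappa^1\sigma)_{\alpha\beta}=\sum_\gamma\lambda_\gamma\,\sigma_{\alpha\beta\gamma}\phi_{\alpha\beta}$) and the verification of the identity $D_eH\circ P+Q\circ D_\phi G=\id$, which the paper itself outsources to the computation on p.~523 of Benveniste; your plan to carry it out directly is fine but is the same calculation.
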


\begin{proof}
For each $\alpha$, we let $W_\alpha$ be an open set with a smooth boundary such that $V_\alpha\subset\subset W_\alpha\subset\subset U_\alpha$. Inductively, we choose $W_{\alpha_0\alpha_1\dots\alpha_i}$ with smooth boundary such that $V_{\alpha_0}\cap V_{\alpha_1}\cap\dots\cap V_{\alpha_i}\subset\subset W_{\alpha_0\dots\alpha_i}\subset\subset W_{\alpha_0\dots\alpha_{i-}}\cap U_{\alpha_i}$. For each $i=0,1\dots$, we let $\mc C^i=\oplus_{\alpha_0\neq\alpha_1\neq\dots\neq\alpha_i}C^\ell(W_{\alpha_0\dots\alpha_i},\Diff^k(F))$. Each of $\mc C^i$ is a Banach manifold. Hence we have,
\[
T\mc{C}^i=\oplus_{\alpha_0\neq\alpha_1\neq\dots\neq\alpha_i}C^\ell(W_{\alpha_0\dots\alpha_i},T\Diff^k(F)).
\]

We define the following coboundary maps:
$$\Delta^0:\mc C^0\to \mc C^1$$
by $(\Delta^0(u))_{\alpha\beta}=u_\alpha\phi_{\alpha\beta}u_\beta^{-1}$ for every $u\in \mc C^0$; and
$$\Delta^1:\mc C^1\to \mc C^2$$
by $(\Delta^1(v))_{\alpha\beta\gamma}=v_{\alpha\beta}v_{\beta\gamma}v_{\alpha\gamma}^{-1}$ for every $v\in \mc C^1$. We note that since $\{\phi_{\alpha\beta}\}$ and $\{\phi_{\alpha\beta}'\}$ are transition maps of fiber bundles, $\Delta^1(\phi)=\Delta^1(\phi')=\id$, where $\id$ here denotes the constant map to the identity of $\Diff^k(F)$. Moreover, we also have $\Delta^0(\id)=\phi$.
To see that $\Delta^i$ are $C^\ell$ maps, we may compute the derivatives:

$$(D_u\Delta^0(\xi))_{\alpha\beta}=\xi_\alpha(\phi_{\alpha\beta}u_\beta^{-1})-(u_\alpha\phi_{\alpha\beta})\xi_{\beta},$$
for every $\xi\in T_u\mc C^0$, and
$$D_v\Delta^1(\zeta)_{\alpha\beta\gamma}=\zeta_{\alpha\beta}(v_{\beta\gamma}v_{\alpha\gamma}^{-1})+v_{\alpha\beta}(\zeta_{\beta\gamma}v_{\alpha\gamma}^{-1})-(v_{\alpha\beta}v_{\beta\gamma})\zeta_{\alpha\gamma},$$
for every $\zeta\in T_1\mc C^1$, where here we have used the following conventions: 
if $u: Q \rightarrow Q$ and $v: Q \rightarrow Q$ are smooth maps and $\xi$ is a section of $v^{*} T Q$, we write $u \xi$ for the section of $(u v)^{*} T Q$ defined by $u \xi(x)=d u_{x}\left(\xi_{x}\right)$ and write $\xi u$ for the section of  $(u v)^{*} T Q$ given by $\xi u(x)=\xi_{u(x)}.$ 

From the formulas, we see $D_u\Delta^0$ and $D_v\Delta^1$ are bounded and continuous in $u\in \mc{C}^0$ and $v\in \mc{C}^1$ respectively.

Let $\left\{\lambda_{\alpha}\right\}$ be a partition of unity subordinate to the cover $\left\{V_{\alpha}\right\}$. We define maps $\kappa^0:T_\phi\mc C^1\to T_{\id}\mc C^0$ and $\kappa^1:T_\phi\mc C^2\to T_{\id}\mc C^1$  by the formula:
\[
\left(\kappa^{0}(\zeta)\right)_{\alpha}=\sum_{\gamma} \lambda_{\gamma} \zeta_{\alpha \beta} \phi_{\alpha \gamma}^{-1}
\]
and
\[
\left(\kappa^{1}(\sigma)\right)_{\alpha \beta}=\sum_{\gamma} \lambda_{\gamma} \sigma_{\alpha \beta \gamma} \phi_{\alpha \beta}.
\]
It follows that $\kappa^0$ and $\kappa^1$ are bounded.

Following the  proof given on page 523 of \cite{Benveniste00} verbatim, these satisfy the identity:
\[
D_{\id}\Delta^0\circ \kappa^0+ \kappa^1\circ D_{\phi}\Delta^1=\id_{T_\phi\mc C^1}.
\]
By Lemma \ref{lem:implicit}, there is an $h=\{h_\alpha\}\in \mc C^0$ close to $\set{\id}\in\mc C^0$ such that $\Delta^0(h)=\phi'$. Equivalently, $h_\alpha(u)\phi_{\alpha\beta}(u) = \phi_{\alpha\beta}'(u)h_\beta(u)$ for all $u\in V_\alpha\cap V_\beta$, and all $\alpha$ and $\beta$.
\end{proof}

Now we are prepared to finish the proof of the main result of this appendix.

\begin{proof}[Proof of Proposition \ref{prop:Benven}]
Let $\pi :X \to X/\Gamma$ be the natural covering map. Following \cite{Benveniste00}, we start with an appropriately chosen open cover $\{U_\alpha\}$ of $X/\Gamma$ with lifts $\{\til U_\alpha\subset X\}$ such that $\til U_\alpha\to U_\alpha$ is a $C^\infty$  diffeomorphism  for each index $\alpha$. The choice of $\til U_\alpha$ determines  trivialisations of the bundles $(X\times F)/\what{\tau}_0\to X/\Gamma$ and $(X\times F)/\what{\tau}\to X/\Gamma$. If $U_\alpha\cap U_\beta\neq \varnothing$, then $\pi|_{\til U_\beta}^{-1}\circ\pi|_{\til U_\alpha\cap\pi^{-1}(U_\beta)}$ is the restriction of an element $\gamma_{\alpha\beta}\in\Gamma$.
The transition functions for the bundles $(X\times F)/\what{\tau}_0$ and $(X\times F)/\what{\tau}$ are $\tau_0(\gamma_{\alpha\beta})$ and $\tau(\gamma_{\alpha\beta})$ respectively. By Lemma \ref{lem:Benven1},  there exists an open  cover $V_\alpha\subset\subset U_\alpha$ of $X/\Gamma$ and a family $\{h_\alpha:V_\alpha\to \Diff^k(F)\}$ of $C^\ell$ maps that are $C^\ell$-close with the constant  map
to the identity.
Since $h_\alpha \tau_0(\gamma_{\alpha\beta})=\tau(\gamma_{\alpha\beta})h_\beta$, the family $\{h_\alpha\}$ defines a $C^\ell$ bundle isomorphism $f':(X\times F)/\what{\tau}\to (X\times F)/\what{\tau}_0$ covering the identity map on $X$.

By Whitney's Theorem, the manifolds $(X\times F)/\what{\tau}_0$ and $(X\times F)/\what{\tau}$ admit $C^\infty$ structures compatible with their natural $C^k$ structures. Moreover, there is a $C^\infty$ diffeomorphism $f$ that is $C^\ell$-close to $f'$ since $\ell\geq 1$. The last statement regarding the foliations follows immediately.
\end{proof}

\providecommand{\bysame}{\leavevmode\hbox to3em{\hrulefill}\thinspace}
\providecommand{\MR}{\relax\ifhmode\unskip\space\fi MR }
\providecommand{\MRhref}[2]{%
  \href{http://www.ams.org/mathscinet-getitem?mr=#1}{#2}
}
\providecommand{\href}[2]{#2}


\end{document}